\def\g{\mathfrak{g}}
\def\gl{\mathfrak{gl}}
\def\tr{\mathrm{Tr}}
\def\C{\mathcal{C}}
\def\h{\mathfrak{h}}
\def\H{\mathcal{H}}
\def\End{\mathrm{End}}
\def\Hom{\mathop{\mathrm{Hom}}\nolimits}
\def\ker{\mathop{\mathrm{ker}}\nolimits}
\def\gr{\mathop{gr}}
\def\Lie{\mathop{\mathrm{Lie}}\nolimits}
\def\Z{\mathbb{Z}}
\def\n{\mathfrak{n}}
\def\m{\mathfrak{m}}
\def\ad{\mathrm{ad}}
\def\Ext{\mathop{\mathrm{Ext}}\nolimits}
\def\O{\mathcal{O}}
\def\Spec{\mathop{\mathrm{Spec}}\nolimits}
\def\Ver{\mathrm{Ver}}
\def\Vec{\mathrm{Vec}}
\def\ind{\mathrm{ind}}
\def\pro{\mathrm{pro}}
\def\Ann{\mathop{\mathrm{Ann}}\nolimits}
\def\m{\mathfrak{m}}
\def\H{\mathcal{H}}
\def\A{\mathcal{A}}
\def\I{\mathcal{I}}
\def\sVec{\mathrm{sVec}}
\def\k{\mathbf{k}}
\def\id{\mathrm{id}}
\def\Ker{\mathop{\mathrm{Ker}}\nolimits}
\def\Ann{\mathop{\mathrm{Ann}}\nolimits}
\def\Ver{\mathrm{Ver}}
\def\ev{\mathrm{ev}}
\def\coev{\mathrm{coev}}
\def\ind{\mathrm{ind}}
\def\pro{\mathrm{pro}}
\def\Bl{\mathop{\mathrm{Bl}}\nolimits}
\def\alg{\mathrm{alg}}
\def\Prim{\mathop{\mathrm{Prim}}\nolimits}
\def\FOLie{\mathop{\mathrm{FOLie}}\nolimits}
\def\H{\mathcal{H}}
\def\iHom{\mathop{\underline{\mathrm{Hom}}}\nolimits}
\def\Corad{\mathop{\mathrm{Corad}}\nolimits}
\def\susbeteq{\subseteq}
\begin{document}

\title{Harish-Chandra pairs in the Verlinde category in positive characteristic}
\author{Siddharth Venkatesh}

\maketitle

\begin{abstract}

In this article, we develop the theory of commutative and cocommutative Hopf algerbas in the Verlinde category and prove that the category of affine group schemes of finite type in the Verlinde category is equivalent to the category of Harish-Chandra pairs in the Verlinde category. Subsequently, we extend this equivalence to an equivalence between corresponding representation categories 

\end{abstract}

\maketitle

\section{\Large{\textbf{Introduction}}}

Fix an algebraically closed field $\mathbf{k}$ of characteristic $p > 0$. The Verlinde category $\Ver_{p}$ is the semisimplification of the category of finite dimensional $\k$-representations of $\Z/p\Z$. It can also be constructed as the semisimplification of the category of tilting $SL_{2}$-modules over $\k$. This is a symmetric fusion category over $\k$ that is a universal base for all such categories. More precisely, we have the following theorem of Ostrik (\cite{O}):

\begin{theorem} Let $\C$ be any symmetric fusion category over $\k$. Then there exists a symmetric tensor functor $F: \C \rightarrow \Ver_{p}$.

\end{theorem}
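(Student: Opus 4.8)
The plan is to argue by induction on the Frobenius--Perron dimension $\FPdim(\C)$, peeling off a Tannakian part at each stage. Recall that a symmetric fusion category is finite and semisimple. Suppose first that $\C$ contains a nontrivial Tannakian subcategory $\Rep(G) \subseteq \C$; since $\k$ is algebraically closed and $\Rep(G)$ is semisimple, $G$ is an ordinary finite group of order prime to $p$ (Maschke). Then $A = \O(G)$, the algebra of functions on $G$, is a commutative separable algebra object in $\C$; the category $\C_G$ of $A$-modules in $\C$ --- the de-equivariantization of $\C$ along $\Rep(G)$ --- is again a symmetric fusion category, the free-module functor $\C \to \C_G$, $X \mapsto A \otimes X$, is a symmetric tensor functor, and $\FPdim(\C_G) = \FPdim(\C)/|G| < \FPdim(\C)$. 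By the inductive hypothesis there is a symmetric tensor functor $\C_G \to \Ver_p$; composing gives the desired $F \colon \C \to \Ver_p$. (The recursion terminates, as $\FPdim$ drops by a factor of at least two at each step while staying bounded below by one.)

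This reduces the theorem to the case in which $\C$ has no nontrivial Tannakian subcategory, which is the heart of the argument. The first step is to pin down the pointed part of $\C$. An invertible object $g$ of categorical dimension $+1$ and order $n$ generates a copy of $\Rep(\Z/n)$, which is a nontrivial Tannakian subcategory when $p$ does not divide $n$ and is not semisimple --- hence cannot sit inside the fusion category $\C$ --- when $p$ divides $n$; so $\C$ has no invertible of dimension $+1$ besides the unit. Consequently an invertible $g$ of dimension $-1$ has $g^{\otimes 2}$ of dimension $+1$, hence $g^{\otimes 2} \cong \mathbf 1$, and if $g \neq g'$ were two such invertibles then $g \otimes g'$ would be a nontrivial invertible of dimension $+1$, which is impossible. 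So the pointed part of $\C$ is either $\Vec$ or $\sVec$; in the latter case $\sVec$ already embeds into $\Ver_p$ via the invertible simple object $L_{p-1}$, whose square is the unit and whose categorical dimension equals $p-1 = -1$ in $\k$.

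The decisive step is then to prove that a symmetric fusion category with no nontrivial Tannakian subcategory is equivalent to one of $\Vec$, $\sVec$, the even part $\Ver_p^+$ (generated by the odd-indexed simple objects of $\Ver_p$), or $\Ver_p$ itself --- each of which is a fusion subcategory of $\Ver_p$, so that the corresponding inclusion is the sought functor $F$. For this I would pick a non-invertible simple object $X$ of smallest Frobenius--Perron dimension and analyze the tensor subcategory it generates: the symmetric-group actions on the tensor powers $X^{\otimes n}$ together with their symmetrizing and antisymmetrizing idempotents, the finiteness of the Grothendieck ring (which forces the class $[X]$ to satisfy a polynomial relation, leaving only finitely many possibilities once positivity is imposed), and the positive-characteristic constraint that all categorical dimensions lie in $\k$, should together pin the fusion rules of $X$ down to those of a simple object of $\Ver_p$. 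Here the realization of $\Ver_p$ as the semisimplification of the category of tilting modules for $SL_2$ --- equivalently, of the category of $\k$-representations of $\Z/p$ --- is indispensable: it is what matches the abstract fusion datum obtained this way with an actual fusion subcategory of $\Ver_p$ and produces the equivalence, and it explains why $\Ver_p^+$ and $\Ver_p$ appear, alongside $\Vec$ and $\sVec$, as the complete list of symmetric fusion categories without a nontrivial Tannakian subcategory.

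I expect this classification to be the main obstacle. It is an essentially positive-characteristic phenomenon: in characteristic zero the analogous reduction (Deligne's theorem) leaves behind only $\Vec$ and $\sVec$, whereas here one must both rule out exotic minimal symmetric fusion categories and locate the surviving fusion data explicitly inside $\Ver_p$, each of which requires a careful combinatorial and representation-theoretic study of the low-dimensional cases.
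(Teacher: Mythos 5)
This statement is not proved in the paper at all: it is Ostrik's theorem, quoted from \cite{O}, and its proof is far deeper than anything your sketch supplies. Your reduction step already contains a genuine error. In characteristic $p$, an invertible object $g$ of categorical dimension $+1$ and order $n$ generates the pointed subcategory $\Vec_{\Z/n\Z}$ with its trivial symmetric braiding, i.e.\ $\Rep(\mu_n)$, which is semisimple for \emph{every} $n$ (it is not $\Rep(\Z/n\Z)$ unless $p \nmid n$). So your claim that such a $g$ cannot exist when $p \mid n$ is false, and with it the analysis of the pointed part collapses. More importantly, the ``decisive step'' you assert --- that a symmetric fusion category with no nontrivial Tannakian subcategory is one of $\Vec$, $\sVec$, $\Ver_{p}^{+}$, $\Ver_{p}$ --- is false as stated: $\Rep(\mu_p)$ (that is, $\Z/p\Z$-graded vector spaces with the trivial symmetric braiding) is a symmetric fusion category with $p$ invertible simple objects, it contains no nontrivial subcategory of the form $\Rep(G)$ for a constant finite group $G$, and it is not equivalent to any of your four candidates. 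This example also shows why the theorem only asserts the existence of a symmetric tensor functor and not an embedding: on such a piece the functor is the forgetful functor to $\Vec \subseteq \Ver_{p}$, which is not faithful on isomorphism classes. If instead you enlarge ``Tannakian'' to include group schemes like $\mu_n$, then your inductive step breaks, since $\O(\mu_n)$ is not separable in characteristic $p$ and the de-equivariantization $\mathrm{Mod}_{\O(\mu_n)}(\C)$ is no longer a fusion category.

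Beyond these concrete failures, the entire substance of the theorem is concentrated in the step you leave as a heuristic (``finiteness of the Grothendieck ring \dots should together pin the fusion rules down''). Ostrik's actual argument does not proceed by classifying minimal symmetric fusion categories through Grothendieck-ring combinatorics; it constructs the functor directly, by a delicate analysis involving the cyclic $\Z/p\Z$-action on $p$-th tensor powers (a categorical Frobenius-type construction) together with the identification of $\Ver_{p}$ as a semisimplification, and dimension arguments of Brauer-character type. None of that machinery, nor any substitute for it, appears in your proposal, so what you have is a (partially incorrect) reduction plus a restatement of the hard problem rather than a proof.
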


A consequence of this theorem is that if $\C$ is any $\k$-linear symmetric tensor category (\cite{EGNO} for the definition) fibered over a symmetric fusion category, then it is equivalent to the category of representations of some affine group scheme in $\Ver_{p}$, i.e., it is equivalent to the category of comodules of some commutative ind-Hopf algebra in $\Ver_{p}$. The goal of this paper is to better understand such Hopf algebras and their comodule categories. 

To do so, we will relate them to objects in $\Ver_{p}$ that are slightly easier to work with algebraically and combinatorially. These will be what we call Harish-Chandra pairs in $\Ver_{p}$. Roughly speaking, a Harish-Chandra pair in $\Ver_{p}$ is the data of an affine group scheme $G_{0}$ of finite type over $\k$, a Lie algebra $\g$ in $\Ver_{p}$ such that $\g_{0} = \Lie(G_{0})$, along with an extension of the adjoint action of $G_{0}$ on $\g_{0}$ to an action of $G_{0}$ on $\g$. We will give a more precise formal definition of a Harish-Chandra pair in the relevant section in the paper but this one here suffices for us to be able to state the main results of the paper. 

\begin{theorem} \label{Harish-Chandra} The category of affine group schemes of finite type in $\Ver_{p}$ is equivalent to the category of Harish-Chandra pairs in $\Ver_{p}$. This equivalence sends an affine group scheme $G$ of finite type in $\Ver_{p}$ to $(G_{0}, \Lie(G))$, where $G_{0}$ is the underlying ordinary affine group scheme associated to $G$ and $\Lie(G)$ is the Lie algebra of $G$.

\end{theorem}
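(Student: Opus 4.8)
The plan is to adapt the classical Harish-Chandra pair correspondence for affine group schemes over a field of characteristic $p$ (in the spirit of the results of Masuoka and others) to the symmetric tensor category $\Ver_p$. The key structural observation is that an affine group scheme $G$ of finite type in $\Ver_p$ has a Frobenius filtration: the Frobenius kernels $G_1 \subset G_2 \subset \cdots$ exhaust $G$ in the sense that the coordinate Hopf algebra $\O(G)$ is the union of the (finite-dimensional in $\Ver_p$) Hopf algebras dual to $\O(G_r)$, and $G_0$ (the maximal ordinary subgroup scheme, obtained by quotienting out the part of $\O(G)$ supported on the nontrivial simples of $\Ver_p$) together with the Lie algebra $\g = \Lie(G)$ should reconstruct each $G_r$ step by step. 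So the strategy splits into: (i) define the functor from Harish-Chandra pairs back to group schemes; (ii) show the two functors are mutually inverse.

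First I would make precise the two categories and the functor $G \mapsto (G_0, \Lie(G))$ of the statement, checking that the adjoint action of $G_0$ on $\g$ is well-defined in $\Ver_p$ (this is the restriction to $G_0 \hookrightarrow G$ of the adjoint action of $G$ on its own Lie algebra, which exists because $\Lie$ is functorial and $\Ver_p$ is a symmetric tensor category, so group objects act on their Lie algebras by conjugation internally). Next, for the reverse functor, given a Harish-Chandra pair $(G_0, \g)$ I would build the Hopf algebra $\O(G)$ as a kind of smash/bosonization-type construction: the underlying coalgebra is $\O(G_0) \otimes \Sym(\g^*)^{\circ}$-type object (more precisely, one wants the algebra of distributions $\Dist(G)$ to be the smash product of $\Dist(G_0)$ with the restricted enveloping algebra $u(\g)$ built inside $\Ver_p$ — recall $\Ver_p$ has enough structure, e.g. divided-power / Frobenius-twist phenomena, that one must use the appropriate notion of restricted envelope for a Lie algebra in $\Ver_p$), with the two factors glued via the $G_0$-action on $\g$ and the requirement that the two induced $\g_0$-actions agree. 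I would then dualize to get $\O(G)$ as an ind-Hopf algebra in $\Ver_p$. Checking this is a well-defined commutative Hopf algebra in $\Ver_p$, that the construction is functorial, and that it lands among group schemes of finite type, is the bulk of the routine work.

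The two compositions are then compared. Starting from $G$: one must show $G$ is generated by $G_0$ and its Frobenius kernels, equivalently that $\Dist(G)$ is generated as an algebra by $\Dist(G_0)$ and $u(\Lie G)$ — this is where the finite-type hypothesis and the special structure of $\Ver_p$ (in particular control of the Frobenius kernel $G_1$ via its Lie algebra, and an inductive handle on $G_r/G_{r-1}$) enter; one proves inductively that $\Dist(G_r)$ is the smash product of $\Dist(G_0)$ with the appropriate truncation of the envelope of $\g$. Starting from $(G_0,\g)$: one reads off $\Lie$ of the constructed $G$ as $\g$ and the maximal ordinary subgroup as $G_0$, which is essentially built in.

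The main obstacle I expect is the step showing that a finite-type affine group scheme $G$ in $\Ver_p$ is reconstructed from $G_0$ and $\Lie(G)$ — i.e. that the Frobenius kernels together with $G_0$ generate everything, with the correct smash-product structure on distributions. Over an ordinary field this is a theorem that uses the classification of infinitesimal group schemes via Frobenius kernels and a delicate analysis of how $\Dist(G_r)$ sits inside $\Dist(G_{r+1})$; in $\Ver_p$ one has to redo this with a Lie algebra object that carries extra internal structure (the objects $L_i$ of $\Ver_p$ with $i \geq 2$ have their own Frobenius-type operations), so the correct statement of "restricted enveloping algebra" and the verification that the Frobenius kernel $G_1$ in $\Ver_p$ is controlled by $u(\Lie G)$ is the crux. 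I would isolate this as a separate lemma about $G_1$ and infinitesimal group schemes in $\Ver_p$, prove it by a dimension count in $\Ver_p$ (comparing $\FPdim$ of $\O(G_1)$ with that of $u(\g)^*$) together with the standard fact that a surjection of Hopf algebras of equal finite dimension is an isomorphism, and then bootstrap to all $G_r$ by induction using the Frobenius twist functor on $\Ver_p$.
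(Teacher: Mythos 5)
Your proposal rests on two steps that do not work in $\Ver_{p}$ as stated, and they are precisely where the real content lies. First, the Frobenius-kernel/restricted-envelope strategy is the wrong mechanism here. The crux of the actual situation is that the non-trivial part of the Lie algebra carries \emph{no} Frobenius or divided-power structure at all: by the nilpotence lemma $S^{N}(L_{i})=0$ for $N>p-i$ when $i>1$, so there is no $p$-operation on $\g_{\not=0}$, no restricted enveloping algebra to form, and no tower of Frobenius kernels transverse to $G_{0}$ to induct on (indeed the Frobenius twist is not an endofunctor of $\Ver_{p}$, so ``bootstrap using the Frobenius twist functor on $\Ver_{p}$'' has no meaning in this category). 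The correct statement is that the distribution-type algebra of $G$ is $U(\g)\otimes_{U(\g_{0})}\O(G_{0})^{\circ}$ (the \emph{ordinary} envelope amalgamated over $U(\g_{0})$ with the full restricted dual of $\O(G_{0})$ — not just distributions, since group-likes are needed when $G_{0}$ is disconnected), and the assertion that there are no divided powers in the $\g_{\not=0}$ direction is not a dimension count against a restricted envelope: it is proved cohomologically, by computing the coHochschild (Koszul) cohomology of $S(X)$ in degrees $<p$ and running an induction along the coradical filtration. Your proposed comparison of $\FPdim\,\O(G_{1})$ with a restricted-envelope dual presupposes exactly the structure that fails to exist.

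Second, ``then dualize to get $\O(G)$'' hides the other half of the theorem. Passing from the cocommutative side back to a finitely generated commutative ind-Hopf algebra requires (i) non-degeneracy of the pairing between $\O(G)$ and $\O(G)^{\circ}$, which in $\Ver_{p}$ rests on Noetherianity, Artin--Rees and the Krull intersection theorem for finitely generated commutative ind-algebras, and (ii) an explicit construction of the candidate function algebra inside the completed smash product $\O(G_{0})\,\widehat{\otimes}\,T_{c}(\g^{*}_{\not=0})$ as the orthogonal complement of the ideal of relations, together with the splitting $\O(G)\cong \O(G_{0})\otimes S(\g^{*}_{\not=0})$ as a comodule algebra (relative smoothness) to see that this recovers $\O(G)$ and is of finite type. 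None of this is routine, and a naive double-dual of a distribution algebra does not return $\O(G)$ (already for positive-dimensional or non-connected $G_{0}$). So the proposal, as written, would not close: the key lemma you isolate (control of $G_{1}$ by a restricted envelope) is not the right lemma, and the reconstruction of the commutative side is asserted rather than proved.
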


Let us call the functor assigning a Harish-Chandra pair to an affine group scheme in $\Ver_{p}$ the Harish-Chandra functor and denote it by $\mathbf{HC}$. This theorem essentially states that all the new $\Ver_{p}$ specific behavior of an affine group scheme in $\Ver_{p}$ comes from its Lie algebra. Since the Lie algebra of an affine group scheme of finite type in $\Ver_{p}$ is an object of  finite length, in contrast to the possibly infinite length commutative Hopf algebra of functions, it tends to be significantly easier to work with algebraically. Hence, this theorem greatly simplifies the study of affine group schemes of finite type in $\Ver_{p}$. One particular consequence of the theorem is a correspondence between closed subgroups and Lie subalgebras. 

\begin{corollary} \label{subgroups} Let $G$ be an affine group scheme of finite type in $\Ver_{p}$ and let $(G_{0}, \g)$ be the corresponding Harish-Chandra pair in $\Ver_{p}$. The Harish-Chandra functor establishes a bijection between the set of closed subgroups of $G$ and the set 
$$\{(G'_{0}, \g') : G'_{0} \text{ a closed subgroup of } G_{0}, \; \g' \text{ a Lie subalgebra of } \g \text{ with } \Lie(G'_{0}) = \g'_{0}\}.$$

\end{corollary}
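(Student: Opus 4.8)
The strategy is to leverage the equivalence of categories in Theorem \ref{Harish-Chandra}: since $\mathbf{HC}$ is an equivalence, closed subgroups of $G$ should correspond to "closed sub-Harish-Chandra pairs" of $(G_0,\g)$, and the content of the corollary is to identify the latter with the combinatorial data on the right-hand side. First I would make precise what a morphism of Harish-Chandra pairs is — namely a pair $(f_0, f)$ with $f_0 : G'_0 \to G_0$ a homomorphism of affine group schemes, $f : \g' \to \g$ a homomorphism of Lie algebras in $\Ver_p$, compatible with the $G'_0$-actions (via $f_0$ and the adjoint action on $\g$) and such that $f$ restricts to $\Lie(f_0) : \g'_0 \to \g_0$ on the degree-zero part. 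Then I would identify which morphisms of Harish-Chandra pairs correspond, under the (quasi-inverse of the) equivalence, to closed embeddings of affine group schemes in $\Ver_p$.

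The key step is a "closed embedding criterion": a morphism $(f_0,f): (G'_0,\g') \to (G_0,\g)$ corresponds to a closed subgroup of $G$ if and only if $f_0$ is a closed embedding of ordinary affine group schemes \emph{and} $f : \g' \to \g$ is injective (equivalently $\g'$ is a Lie subalgebra of $\g$ in the honest sense). One direction is essentially formal: if $G' \hookrightarrow G$ is a closed subgroup in $\Ver_p$, then applying the (exact, faithful) forgetful-type structure gives that $G'_0 \hookrightarrow G_0$ is closed, and $\Lie$ is left exact so $\Lie(G') \hookrightarrow \Lie(G)$. For the converse one must produce, from such combinatorial data, an actual closed subgroup. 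Here I would use that an affine group scheme of finite type in $\Ver_p$ has a commutative ind-Hopf algebra of functions $\O(G)$, and that a closed subgroup is the same as a Hopf ideal; alternatively — and this is cleaner — use the construction of the quasi-inverse $\mathbf{HC}^{-1}$ explicitly on the sub-pair $(G'_0,\g')$ to get $G'$, and then check functoriality of that construction in the pair produces the closed embedding $G' \to G$ lifting $G'_0 \to G_0$. The compatibility conditions in the definition of "sub-pair" (that $G'_0$ normalizes $\g'$ inside $\g$ via the adjoint action, and $\Lie(G'_0) = \g'_0$) are exactly what is needed for $(G'_0,\g')$ to \emph{be} a Harish-Chandra pair in its own right, so the map $(G'_0,\g') \to (G_0,\g)$ is automatically a morphism of pairs.

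The main obstacle will be showing that a morphism of Harish-Chandra pairs which is "injective" in the above sense actually goes to a \emph{closed} embedding, not merely a monomorphism, of affine group schemes in $\Ver_p$ — in other words that the corresponding map of function algebras $\O(G) \to \O(G')$ is surjective. I expect to reduce this to the classical case and the infinitesimal case separately: factor $G' \to G$ (or rather the corresponding pair morphism) through the pair $(G'_0, \g'_0 + [\text{image of } \g' ])$ — more carefully, I would filter using the "$n$-th Frobenius kernel"-type or nilpotence filtration on $G$ coming from Theorem \ref{Harish-Chandra}, so that surjectivity of $\O(G)\to\O(G')$ follows once it is known on $\O(G_0)\to\O(G'_0)$ (classical, by hypothesis $f_0$ closed embedding) and on the Lie algebra level $\mathrm{Sym}$-type graded pieces (which follows from injectivity of $\g' \to \g$ and the fact that $\g',\g$ have finite length in $\Ver_p$, so a subobject has a complement after passing to the associated graded of the relevant PBW-type filtration). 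Finally I would check the bijection is inverse to the obvious map sending a closed subgroup $G'$ to $(G'_0, \Lie(G'))$, using Theorem \ref{Harish-Chandra} once more to see these constructions are mutually inverse and that no data is lost.
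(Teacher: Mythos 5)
Your proposal is correct and takes essentially the same route as the paper, which deduces the corollary directly from the equivalence of Theorem \ref{Harish-Chandra} (the paper's own proof, given after Theorem \ref{HCPtheorem}, is a one-line appeal to that correspondence); the closed-embedding criterion you worry about is exactly the detail the paper leaves implicit, and it is supplied by Lemma \ref{smoothness} and Proposition \ref{HCPBW}, which give $\O(G) \cong \O(G_{0}) \otimes S(\g^{*}_{\not=0})$ so that surjectivity of $\O(G) \to \O(G')$ reduces to the classical part and the symmetric-algebra part, as you predict. One small point in your favor: you correctly note that the subalgebra $\g'$ must also be stable under the adjoint action of $G'_{0}$, a condition the paper's restatement of this corollary in Section 7 includes explicitly but the introduction's statement omits.
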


This theorem also extends to an equivalence between representation categories. A representation of $G$ in $\Ver_{p}$ is simply a comodule for $\O(G)$, the commutative ind-Hopf algebra defining $G$. A representation for a Harish-Chandra pair $(G_{0}, \g)$ in $\Ver_{p}$ is an object in $\Ver_{p}$ equipped simultaneously with an action of $G_{0}$ and $\g$ such that the $\g$-action map is $G_{0}$-linear and such that the two restrictions to $\g_{0}$ are the same.

\begin{corollary} \label{repHC} Let $G$ be an affine group scheme of finite type in $\Ver_{p}$ and let $(G_{0}, \g)$ be the corresponding Harish-Chandra pair in $\Ver_{p}$. Then, the category of representations of $G$ in $\Ver_{p}$ is equivalent to the category of representations of $(G_{0}, \g)$ in $\Ver_{p}$.

\end{corollary}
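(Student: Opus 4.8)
The plan is to deduce Corollary~\ref{repHC} from Theorem~\ref{Harish-Chandra} by a Tannakian-style argument, realizing both representation categories as module categories over a single (ind-)coalgebra and then identifying those coalgebras. Concretely, I would first recall that the category of representations of $G$ in $\Ver_p$ is the category of $\O(G)$-comodules, where $\O(G)$ is the commutative ind-Hopf algebra in $\Ver_p$ defining $G$. On the other side, a representation of the Harish-Chandra pair $(G_0,\g)$ consists of an object $V\in\Ver_p$ with a $G_0$-comodule structure (i.e.\ an $\O(G_0)$-comodule structure) and a $\g$-action $\g\otimes V\to V$ that is $G_0$-equivariant and restricts on $\g_0\subseteq\g$ to the action induced by differentiating the $G_0$-action. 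The first real step is to assemble this data into a comodule structure over a single ind-coalgebra $\mathcal{C}(G_0,\g)$ in $\Ver_p$ built functorially from the pair, the natural candidate being the ``distribution'' or measures coalgebra $\mathcal{C}(G_0,\g) := \O(G_0)^\vee \# U(\g)$ assembled from the (restricted) enveloping algebra $U(\g)$ of $\g$ in $\Ver_p$ and the algebra of distributions of $G_0$, with the smash-product/crossed-product structure encoding the $G_0$-action on $\g$; one then checks that $(G_0,\g)$-representations are exactly $\mathcal{C}(G_0,\g)$-comodules (equivalently, suitably locally finite modules), this being a routine unwinding of the compatibility axioms once the Hopf-algebraic bookkeeping is set up.

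The second step is to show that the Harish-Chandra equivalence of Theorem~\ref{Harish-Chandra} is compatible with this construction, i.e.\ that the ind-Hopf algebra $\O(G)$ of the group scheme $G$ corresponding to $(G_0,\g)$ has continuous dual (hyperalgebra / algebra of distributions) isomorphic, as an algebra in $\Ver_p$, to $\mathcal{C}(G_0,\g)$. This should follow by tracking the proof of Theorem~\ref{Harish-Chandra}: since that equivalence is (presumably) constructed by exhibiting $G$ as built from $G_0$ and the formal/infinitesimal data carried by $\g$ — e.g.\ via an exact sequence $1\to G^{\inf}\to G\to G_0\to 1$ with $G^{\inf}$ an infinitesimal (Frobenius-kernel-like) group whose distribution algebra is $U(\g)$ in $\Ver_p$ — the distribution algebra of $G$ is the corresponding crossed product, which is exactly $\mathcal{C}(G_0,\g)$. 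Then I would invoke the standard fact (valid in any reasonable symmetric tensor category, in particular in $\Ver_p$ where all the relevant objects are colimits of finite-length objects and the dual pairing is perfect on each piece) that for an affine group scheme of finite type, comodules over $\O(G)$ are the same as locally finite modules over the distribution algebra $\O(G)^\vee$; applying this on both sides and combining with the coalgebra identification yields the desired equivalence of categories, and functoriality/naturality in $G$ comes for free from the functoriality of all the constructions involved.

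The main obstacle I anticipate is the comodule-versus-module dictionary in the ind-setting: $\O(G)$ is genuinely an ind-object (infinite length) in $\Ver_p$, so its ``dual'' $\O(G)^\vee$ is a pro-object, and one must be careful about what ``locally finite module over a pro-algebra'' means and why the category of such is equivalent to $\O(G)$-comodules. In the classical case over a field this is standard (Sweedler, Takeuchi), but $\Ver_p$ is not the category of vector spaces — still, it is a semisimple symmetric tensor category with finitely many simple objects, so every ind-object is a filtered colimit of its finite-length subobjects and duality behaves well on finite-length objects, which is exactly what makes the classical arguments transport. A secondary subtlety is verifying that the $G_0$-equivariance axiom on the $\g$-action corresponds precisely to the crossed-product relations in $\mathcal{C}(G_0,\g)$ and that the ``two restrictions to $\g_0$ agree'' condition is automatically built in (rather than an extra constraint) once one passes to $\mathcal{C}(G_0,\g)$-comodules; this is where the precise definition of the Harish-Chandra pair and the construction in the proof of Theorem~\ref{Harish-Chandra} must be used carefully, but it should be a matter of diagram-chasing rather than a genuine difficulty.
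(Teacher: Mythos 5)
There is a genuine gap at the step where you invoke the ``standard fact'' that comodules over $\O(G)$ are the same as locally finite modules over the distribution algebra $\O(G)^{\vee}$ (equivalently $\O(G)^{\circ}$). This is false for general affine group schemes, even classically: distributions only see the formal neighborhood of the identity together with the $\k$-points, so locally finite $\O(G)^{\circ}$-modules need not integrate to $\O(G)$-comodules (think of a torus in characteristic $p$, or any group with a nontrivial \'etale part). The paper states this explicitly: a $G$-representation is an $\O(G)^{\circ}$-module, ``the converse is not necessarily true,'' and $G$-representations are the \emph{integrable} $\O(G)^{\circ}$-modules. Because your argument dualizes the $G_{0}$-direction, it cannot recover the comodule structure and gives no reason why the resulting modules integrate; the whole content of the corollary is precisely this integration statement. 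The paper's proof avoids the problem by never dualizing the $G_{0}$-part: from an HC-pair representation it builds a coaction of the completed smash coproduct $\widehat{\A}(\O(G_{0}),\g^{*})=\O(G_{0})\widehat{\otimes}T_{c}(\g^{*}_{\not=0})$, and then uses the nondegenerate pairing with $\H(\O(G_{0})^{\circ},\g)$ together with the fact that the Lie action kills the ideal $I(\O(G_{0})^{\circ},\g)$ to show the coaction factors through $A(\O(G_{0}),\g^{*})\cong\O(G)$ (Theorem \ref{HCPtheorem}). Integrability is only an issue in the $\g_{\not=0}$-direction, where it holds automatically because $S(\g_{\not=0})$ has finite length (Lemma \ref{nilpotence} plus PBW); your proposal never isolates this $\Ver_{p}$-specific input, which is what actually makes the corollary true.

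A secondary inaccuracy: the candidate coalgebra $\O(G_{0})^{\vee}\#U(\g)$ is not the right object even at the level of distributions. The distribution algebra of $G$ is $U(\g)\otimes_{U(\g_{0})}\O(G_{0})^{\circ}$ (the paper's $U(J,\g)\cong H(J,\g)$), i.e.\ the smash product modulo the identification of the two copies of $U(\g_{0})$; the full smash product double-counts $\g_{0}$. Correspondingly, the axiom that the two restrictions to $\g_{0}$ coincide is exactly what lets a module over the smash product descend to this quotient --- it is an essential constraint encoded in the ideal, not something that becomes automatic after passing to (co)modules, contrary to what you hope in your last paragraph.
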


This corollary is very important because it allows us to construct representations for affine group schemes $G$ in $\Ver_{p}$ via representations of ordinary algebraic groups and representations for the Lie algebra, both of which are much easier to construct. Additionally, this corollary says that if we have a representation for the Lie algebra of $G$, then testing whether that representation integrates to the group is the same as seeing if the representation integrates to $G_{0}$. This will be very useful in a follow up paper to classify irreducible representations of certain simple affine group schemes in $\Ver_{p}$.

This paper is organized as follows. In section 2, we give a detailed construction of the Verlinde category and restate some key results from \cite{Ven1} regarding finitely generated commutative algebras in $\Ver_{p}$. In section 3, we build these results to prove some more fundamental commutative algebra results in $\Ver_{p}$ that will prove useful in relating commutative Hopf algebras with their dual coalgebras. In section 4, we develop some of the theory of cocommutative ind-coalgebras in $\Ver_{p}$, with a particular focus on coradical filtrations and relative coradical filtrations, as well as structure of the dual coalgebra of a commutative ind-algebra. In section 5, we define dual Harish-Chandra pairs and Harish-Chandra pairs in $\Ver_{p}$ and the functors from cocommutative ind-Hopf algebras to dual Harish-Chandra pairs and from commutative ind-Hopf algebras to Harish-Chandra pairs in $\Ver_{p}$. In section 6, we study some PBW properties of cocommutative ind-Hopf algebras and dual Harish-Chandra pairs in $\Ver_{p}$ and use this to prove the equivalence between these two categories. In section 7, we establish some dualities between the cocommutative and the commutative setting and use this to prove Theorem \ref{Harish-Chandra} and its corollaries. Finally, in section 8, we use this theorem to prove Corollary \ref{repHC}.

\subsection{Acknowledgements:}  I am very grateful to my advisor Pavel Etingof for both suggesting the problems studied in this paper and providing a large amount of helpful advice in how to approach the proofs of the main theorems. The work in this paper also owes a large debt to the work of Akira Masuoka on Harish-Chandra pairs in the category of supervector spaces over $\k$ (\cite{M1}, \cite{M2}). Additionally, I am grateful to the referees for numerous insightful comments. The proof of lemma \ref{smoothness} in particular was made much clearer thanks to input from the referees. 

\section{\Large{\textbf{Technical Background}}}

\subsection{Notation and Conventions}

These notations and conventions will be brought up in the relevant sections as well but are all stated here for convenience of reader.

\begin{enumerate}

\item[1.] Unless specified otherwise, $\k$ will be an algebraically closed field of characteristic $p>0$.

\item[2.] By a category over $\k$, we mean a $\k$-linear, locally finite and Artinian category.

\item[3.] If $\C$ is a symmetric tensor category, we will always use $c$ to denote the braiding on $\C$. When the objects on which the braiding is acting need to be specified, we will explicitly write $c_{X, Y}$ instead of $c$.

\item[4.] In comparison between a symmetric tensor category $\C$ and its ind-completion $\C^{\ind}$, we will use the word ``object'' to mean an object in $\C$, i.e., one of finite length, and we will use the phrase ``ind-object'' to refer more generally to an object in $\C^{\ind}$, one that may possibly be of infinite length. 

\item[5.] For objects $X$ inside $\Ver_{p}^{\ind}$, we will use $X_{0}$ to denote the isotypic component corresponding to the monoidal unit $\mathbf{1}$, and $X_{\not=0}$ to denote the sum of all other isotypic components.

\item[6.] We will consistently use $C$ to denote cocommutative coalgebras and Hopf algebras in $\Ver_{p}^{\ind}$ and $A$ to denote commutative algebras and Hopf algebras in $\Ver_{p}^{\ind}$. We will use $J$ to denote cocommutative coalgebras and Hopf algebras over $\k$ and $H$ to denote commutative algebras and Hopf algebras over $\k$.

\end{enumerate}

\subsection{Tensor Category Technicalities}

For definitions of tensor categories, braided tensor categories, symmetric tensor categories and symmetric tensor functors the reader is referred to \cite{EGNO}. Here, we present some examples.

\begin{example}

\begin{enumerate}

\item[(a)] The simplest examples of symmetric tensor categories are $\Vec$ and $\sVec$ which are, respectively, the categories of finite dimensional $\k$-vector spaces and finite dimensional $\k$-vector superspaces. Here, the braiding is just the swap map and the signed swap map, respectively.

\item[(b)] Similarly, the category of finite dimensional representations over $\k$ of a finite group $G$ is a symmetric finite tensor category over $\k$ with braiding given by the swap map. More generally, the category of finite dimensional comodules over a commutative Hopf algebra $H$ is a symmetric tensor category as well. 

\item[(c)] A slightly more complicated category is the universal Verlinde category in characteristic $p > 0$, which we denote as $\Ver_{p}$. This is constructed as a quotient of the category of finite dimensional representations of $\mathbb{Z}/p\mathbb{Z}$ over $\k$ of characteristic $p$. The full details regarding the construction are given in a later subsection.

\end{enumerate}

\end{example}

The main technical construction in this section that we need encapsulates the notion of infinite-dimensionality. We will be working with algebras inside a symmetric tensor category that are not necessarily ``finite dimensional'', since finite dimensional algebras tend to be a fairly limited class. Hence, we need the notion of the ind-completion of a category. 

\begin{definition} \label{ind}

Let $\C$ be a symmetric tensor category. By $\C^{\ind}$, we denote the ind-completion of $\C$, i.e., the closure of $\C$ under taking filtered colimits of objects in $\C$. 

\end{definition}

The tensor product in $\C$ is exact due to rigidity of $\C$. Hence, it commutes with taking filtered colimits and hence extends to an exact tensor product on $\C^{\ind}$. Additionally, naturality of the braiding implies that the braiding extends to a symmetric structure on $\C^{\ind}$. $\C^{\ind}$ is thus a symmetric $\k$-linear abelian monoidal category in which the tensor product structure $\otimes$ is exact (but it is neither rigid nor locally finite). A specific example of $\C^{\ind}$ that we will repeatedly use in the rest of this paper is the case where $\C$ is  a symmetric fusion category, i.e., when $\C$ is finite and semisimple. In this case, the objects of $\C^{\ind}$ are precisely the (possibly infinite) direct sums of the simple objects in $\C$. 

\begin{remark}

Throughout this paper, we will view the subcategory of $\C$ generated additively by the monoidal unit $\mathbf{1}$ as the category of vector spaces. This gives us a canonical embedding of $\Vec$ inside every symmetric tensor category over $\k$. Objects that are inside this subcategory will be called \emph{trivial objects}.

\end{remark}

\subsection{Algebras, Hopf algebras and modules}

Symmetric tensor categories are naturally equipped with notions of multiplication, associativity, unitality and commutativity. Hence, we can define the notion of an algebra or Hopf algebra fairly naturally inside such a category or its ind-completion, and also examine several important properties such as associativity or unitality.

\begin{definition} Let $(\C, c)$ be a symmetric tensor category over $\k$, with ind-completion $\C^{\ind}$. An associative, unital algebra in $\C^{\ind}$ (also called an ind-algebra in $\C$) is an object $A \in \C^{\ind}$ equipped with multiplication maps $m: A \otimes A \rightarrow A$, $\iota: \mathbf{1} \rightarrow A$ such that the standard commutative diagrams defining associativity and unitality hold. An ind-algebra $A$ is \emph{commutative} if $m \circ c_{A, A} = m.$  If $A, B$ are ind-algebras in $\C$, a morphism $f: A \rightarrow B$ in $\C^{\ind}$ is a \emph{homomorphism} of algebras if $f \circ \iota_{A} = \iota_{B} \text{ and }  f \circ m_{A} = m_{B} \circ (f \otimes f).$

\end{definition}

We can similarly define coalgebras, bialgebras and Hopf algebras.

\begin{definition} Let $\C$ be a symmetric tensor category and $\C^{\ind}$ the ind-completion. A coassociative, counital coalgebra $H$ in $\C^{\ind}$ (also called an ind-coalgebra in $\C$) is object in $\C^{\ind}$ equipped with morphisms $\Delta: H \rightarrow H \otimes H$, $\epsilon : H \rightarrow \mathbf{1}$ that satisfy the standard coassociativity and counitality diagrams. A coalgebra $C$ is \emph{cocommutative} if $c_{C, C} \circ \Delta = \Delta.$

\end{definition}

We can also analogously define a homomorphism of coalgebras.

\begin{definition} A \emph{bialgebra} in $\C^{\ind}$ (also called an ind-bialgebra in $\C$) is an object $B \in \C^{\ind}$ equipped with the structure of both an associative, unital ind-algebra and a coassociative, counital ind-coalgebra such that the comultiplication and counit maps are algebra homomorphisms (or equivalently, the multiplication and unit maps are coalgebra homomorphisms). Here $B \otimes B$ is given the algebra structure by multiplying independently in each tensor component.

\end{definition}

\begin{definition} A Hopf algebra in $\C^{\ind}$ (also called an ind-Hopf algebra) is an object $H \in \C^{\ind}$ equipped with the structure of a bialgebra and an antipode map $S: H \rightarrow H$ that is an isomorphism such that

$$m \circ (S \otimes \id_{H}) \circ \Delta = \iota \circ \epsilon = m \circ (\id_{H} \otimes S) \circ \Delta.$$
If $H$ is a commutative Hopf algebra in $\C^{\ind}$, we will think of it as the \emph{algebra of functions} on an affine group scheme in $\C$. The data of the affine group scheme is equivalent to the data of its commutative Hopf algebra of functions. For affine group schemes $G$ in $\C$, we will use $\O(G)$ to denote the corresponding algebra of functions that defines it.

\end{definition}

For the rest of this section, fix a symmetric tensor category $\C$ over $\k$.

\begin{definition} \label{algebra}

\begin{enumerate}

\item[1.] If $A$ is a commutative ind-algebra in $\C$, then a unital subalgebra $B$ of $A$ is a subobject such that $m(B \otimes B) = B$ and $B$ contains the image of $\iota$.  An ideal $I$ in $A$ is a subobject of $A$ such that $m(A \otimes I) = I$. If $X$ is a subobject, the ideal generated by $X$ is the image $m(A \otimes X)$ under the multiplication map. 

\item[2.] Similarly, if $H$ is a Hopf algebra, then a Hopf subalgebra is a subalgebra $H'$ such that $\Delta(H') \subseteq H' \otimes H'$, and a Hopf ideal is an ideal $I$ such that $\Delta(I) \subseteq H \otimes I \oplus I \otimes H.$

\item[3.] Let $A$ be a commutative ind-algebra in $\C$, with $\C$ semisimple. The \emph{underlying ordinary commutative algebra} is the quotient $\overline{A}:= A/I$, where $I$ is the ideal generated by all simple subobjects of $A$ not isomorphic to $\mathbf{1}$. $\overline{A}$ is an ordinary commutative $\mathbf{k}$-algebra (viewed as an ind-algebra in $\C$ via the canonical inclusion of $\Vec$). A priori this quotient algebra could be $0$, but this does not turn out to be the case for finitely generated algebras in $\Ver_{p}$, due to results proved in \cite{Ven1}.

\item[4.]  If $A$ is a commutative ind-Hopf algebra in $\C$, and $\C$ is semisimple, then $I$ is a proper Hopf ideal. In this case, we call $\overline{A}$ the \emph{underlying ordinary commutative Hopf algebra} associated to $A$.

\item[5.] The invariant subalgebra $A_{0}$ of $A$ is the sum of all the simple subobjects of $A$ isomorphic to $\mathbf{1}$. Note that this is not necessarily a Hopf subalgebra, if $A$ is a Hopf algebra.

\item[6.] Let $H$ be an ind-Hopf algebra in $\C$. The subobject of \emph{primitives} inside $H$, denoted $\Prim(H)$, is the kernel of $\Delta - \id_{H} \otimes \iota - \iota \otimes \id_{H}: H \rightarrow H \otimes H.$

A subobject $X \cong \mathbf{1} \subseteq H$ is \emph{grouplike} if $\Delta(X) = X \otimes X.$

\item[7.] Finally, an important notion is that of a \emph{module}. A left module for an ind-algebra $A$ in $\C$ is an object $M \in \C^{\ind}$ equipped with a map $a: A \otimes M \rightarrow M$
such that the diagrams

$$\begin{tikzpicture}
\matrix (m) [matrix of math nodes,row sep=4em,column sep=4em,minimum width=4em]
{A \otimes A \otimes M & A \otimes M   \\
A \otimes M & M\\};
\path[-stealth]
(m-1-1) edge node[above] {$m \otimes \id_{M}$} (m-1-2)
(m-1-2) edge node[right] {$a$} (m-2-2)
(m-1-1) edge node[left ] {$\id_{A} \otimes a$} (m-2-1)
(m-2-1) edge node[above] {$a$} (m-2-2);
\end{tikzpicture}$$
and

$$\begin{tikzpicture}
\matrix (m) [matrix of math nodes,row sep=4em,column sep=4em,minimum width=4em]
{\mathbf{1} \otimes M & A \otimes M  \\
& M\\};
\path[-stealth]
(m-1-1) edge node[above] {$\iota \otimes \id_{M}$} (m-1-2)
(m-1-2) edge node[left] {$a$} (m-2-2)
(m-1-1) edge node[left=0.2cm] {$\id_{M}$} (m-2-2);
\end{tikzpicture}$$
commute. Note that $A$ is a left module over itself and left ideals are simply left submodules of $A$. If $M, N$ are left A-modules, a \emph{homomorphism} of left $A$-modules from $M$ to $N$ is a morphism $f \in \Hom_{\C^{\ind}}(M, N)$ such that $f \circ a_{M} = a_{N} \circ f.$

We can analogously define right comodules over an ind-coalgebra H via a coaction map $\rho: M \rightarrow M \otimes H$ and define homomorphisms of right comodules.

\end{enumerate}

\end{definition}

The last definition gives the structure of an abelian category to the category of modules over a fixed algebra in $\C^{\ind}$ (or a category of comodules over a fixed coalgebra in $\C^{\ind}$) and these categories naturally come equipped with a faithful, exact functor to $\C^{\ind}$. Moreover, if $H$ is an ind-Hopf algebra then we have the following:

\begin{proposition} There is a natural structure of a tensor category on the category of modules (resp. comodules) over $H$ in $\C^{\ind}$. This category is also equipped with a symmetric structure if $H$ is cocommutative (resp. commutative)

\end{proposition}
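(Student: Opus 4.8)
The plan is to transport the tensor structure on modules/comodules over a Hopf algebra from the classical setting to $\C^{\ind}$ using the categorical tools already available, so that everything reduces to checking that the relevant maps are morphisms in $\C^{\ind}$ rather than re-deriving compatibility axioms from scratch.

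First I would treat the comodule case for a commutative ind-Hopf algebra $H$ (the module case over a cocommutative ind-Hopf algebra being entirely dual). Given two right $H$-comodules $(M, \rho_M)$ and $(N, \rho_N)$, I would equip $M \otimes N$ with the coaction
$$
M \otimes N \xrightarrow{\rho_M \otimes \rho_N} M \otimes H \otimes N \otimes H \xrightarrow{\id_M \otimes c_{H,N} \otimes \id_H} M \otimes N \otimes H \otimes H \xrightarrow{\id_{M \otimes N} \otimes m} M \otimes N \otimes H,
$$
where $c$ is the braiding of $\C^{\ind}$ and $m$ is the multiplication of $H$; note this uses the braiding in an essential way, which is why $\C$ being symmetric (indeed having a braiding) is needed. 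I would then check that this is counital and coassociative: counitality follows from counitality of $\rho_M, \rho_N$ together with the unit axiom of $H$, and coassociativity follows from coassociativity of the individual coactions, associativity of $m$, the hexagon axiom for $c$, and the fact that $\Delta$ is an algebra homomorphism (so that $m$ is compatible with the coalgebra structure on $H \otimes H$). The monoidal unit is $\mathbf{1}$ with the trivial coaction via $\iota \colon \mathbf{1} \to H$ (i.e. $\mathbf{1} \cong \mathbf{1} \otimes \mathbf{1} \xrightarrow{\id \otimes \iota} \mathbf{1} \otimes H$), and the associativity and unit constraints of $\C^{\ind}$ are checked to be comodule morphisms using naturality. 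Duals on finite-length objects use the antipode $S$ to define the coaction on $M^*$, exactly as in the classical Hopf-algebra argument, and rigidity of the relevant subcategory of $\C$ is inherited.

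Next I would address the symmetric structure: when $H$ is commutative, the braiding $c_{M,N} \colon M \otimes N \to N \otimes M$ of $\C^{\ind}$ is a morphism of $H$-comodules precisely because $m \circ c_{H,H} = m$ (commutativity of $H$) together with naturality of $c$ and the hexagon identities; this is the one place where commutativity rather than mere bialgebra structure is used, and it is a short diagram chase. I would then invoke the fact that the forgetful functor to $\C^{\ind}$ is faithful, exact, and strict monoidal, so that all the coherence axioms (pentagon, triangle, hexagons) for the tensor/symmetric structure on the comodule category follow automatically from those in $\C^{\ind}$.

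The main obstacle, such as it is, is bookkeeping rather than conceptual: one must be careful that the tensor product of two ind-objects is again an ind-object with a well-defined coaction (this is fine because $\otimes$ on $\C^{\ind}$ commutes with filtered colimits and $H$ is a fixed ind-object, as recalled after Definition~\ref{ind}), and one must verify that the coaction defined above genuinely lands in $M \otimes N \otimes H$ and not merely in some completion — again automatic since all maps involved are honest morphisms in $\C^{\ind}$. The only genuinely category-theoretic subtlety is to make sure the braiding insertions are placed consistently so that the hexagon axioms can be applied in the coassociativity check; I expect the cleanest route is to write the coactions in a graphical calculus and read off the identities, but since this is standard I would simply state that the verification is the same as in $\Vec$ with the symmetry $c$ in place of the flip, and relegate the details to the reader.
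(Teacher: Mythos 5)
Your proposal is correct and follows essentially the same route as the paper, which simply cites \cite{EGNO} and sketches the module-case action $(a_{M}\otimes a_{N})\circ(\id_{H}\otimes c_{H,M}\otimes\id_{N})\circ(\Delta\otimes\id_{M\otimes N})$ — your comodule construction is the exact dual of this, with the same use of the braiding, commutativity for the symmetry, and coherence inherited via the forgetful functor.
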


\begin{proof} Details can be looked up in \cite{EGNO}. As a brief description of the constructions involved in the proof, the tensor product on the category of modules over $H$ is acquired via the following maps: $a_{M \otimes N}: H \otimes (M \otimes N) \rightarrow M \otimes N$
is just 

$$(a_{M} \otimes a_{N}) \circ (\id_{H} \otimes c_{H, M} \otimes \id_{N}) \circ  (\Delta \otimes \id_{M \otimes N}).$$ \end{proof}

Here are examples of some important algebras, Hopf algebras and modules.

\begin{enumerate} 

\item[1.] Given an object $X \in \C$, the tensor algebra of $X$ is $T(X): = \displaystyle\bigoplus_{n=0}^{\infty} X^{\otimes n}$
with multiplication given by concatenation. This can be given the structure of a Hopf algebra by setting $X$ to be primitive. Taking the graded dual as a Hopf algebra defines the tensor coalgebra $T_{c}(X)$.

\item[2.] With $X$ as above, the symmetric algebra of $X$ is the quotient of the tensor algebra by the ideal generated by $(\id_{X \otimes X} - c_{X, X})(X \otimes X).$ It is a graded quotient Hopf algebra of $T(X)$ with the degree $n$ piece being the coinvariants of $X^{\otimes n}$ under the $S_{n}$-action induced by the braiding.

\item[3.]  With $X$ as above, the exterior algebra of $X$ is the quotient of $T(X)$ by the ideal generated by the kernel of the morphism $\id_{X \otimes X} - c_{X, X}: X \otimes X \rightarrow X \otimes X.$ If $p > 2$, this is the same as the quotient of $T(X)$ by the ideal generated by $(\id_{X \otimes X} + c_{X,X})(X \otimes X).$
\item[4.] If $A$ is any associative unital algebra in $\C^{\ind}$ and $X$ is an object in $\C$, the free left $A$-module generated by $X$ is $A \otimes X \in \C^{\ind}$, with the left action induced by the action of $A$ on itself.

\end{enumerate}

\begin{remark} It is easy to see that $T(X), S(X), A \otimes X$ satisfy the standard universal properties of the free associative algebra, the free commutative algebra and the free $A$-module generated by $X$ respectively (namely that the free functor is left adjoint to the forgetful functor from these categories to $\C^{\ind}$.)

\end{remark}

\subsection{The Verlinde Category $\Ver_{p}$: construction}

The simplest construction of $\Ver_{p}$ is as the semisimplifcation of the category of finite dimensional $\Z/p\Z$ representations over $\k$. Semisimplification of categories is a general process by which we can start with any symmetric tensor category and obtain a semisimple one that is somewhat universal (see \cite{EO1} for details). To define this semisimplifcation process, we need to define the notion of traces. 

\begin{definition} Let $\C$ be a locally finite, rigid, symmetric monoidal additive category in which $\End_{\C}(\mathbf{1}) \cong \k$. If $f: X \rightarrow X$ is a morphism in $\C$, then the \emph{trace} of $f$ is the scalar given by the morphism

$$\begin{tikzpicture}
\matrix (m) [matrix of math nodes,row sep=4em,column sep=4em,minimum width=4em]
{\mathbf{1} & X \otimes X^{*} & X \otimes X^{*} & X^{*} \otimes X & \mathbf{1}  \\};
\path[-stealth]
(m-1-1) edge node[auto] {$\coev_{X}$} (m-1-2)
(m-1-2) edge node[auto] {$f \otimes \id_{X^{*}}$} (m-1-3)
(m-1-3) edge node[auto] {$c_{X, X^{*}}$} (m-1-4)
(m-1-4) edge node[auto] {$\ev_{X}$} (m-1-5); 
\end{tikzpicture}$$
in $\End_{\C}(\mathbf{1}) \cong \k.$ We use $\tr(f)$ to denote the trace of $f$.

\end{definition}

\begin{definition} If $\C$ is a locally finite, rigid, symmetric monoidal additive category as above, then for any $X, Y \in \C$, the space of \emph{negligible} morphisms $\mathcal{N}(X, Y) \subseteq \Hom_{\C}(X, Y)$ consists of those morphisms $f: X \rightarrow Y$ such that for all $g: Y \rightarrow X$, $\tr(g \circ f) = 0$.
The \emph{categorical dimension} of $X \in \C$, denoted $\dim(X)$, is $\tr(\id_{X}).$ We say that $X$ is \emph{negligible} if $\id_{X}$ is a negligible morphism. For indecomposable $X$, this is equivalent to $\dim(X) = 0$.

\end{definition}

\begin{proposition} $\mathcal{N}(X, Y)$ is a tensor ideal.

%\begin{enumerate}

%\item[1.] If $f, f' \in \mathcal{N}(X, Y)$, and $r \in \k$, then $rf + f' \in \mathcal{N}(X, Y)$. 

%\item[2.]  If $f \in \mathcal{N}(X, Y)$, $f' \in \Hom_{\C}(Y, Z)$, $f'' \in \Hom_{\C}(Z, X)$, then 

%$$f \circ f'' \in \N(Z, Y), f' \circ f \in \mathcal{N}(X, Z).$$

%\item[3.] If $f \in \mathcal{N}(X, Y), f' \in \Hom_{\C}(X', Y')$, then 
%$$f \otimes f' \in \mathcal{N}(X \otimes X', Y \otimes Y').$$

%\end{enumerate}

\end{proposition} 

\begin{proof} See \cite[Lemma 2.3]{EO1}. \end{proof}

\begin{definition} Given a locally finite, rigid, symmetric monoidal additive category $\C$ in which $\End_{\C}(\mathbf{1}) \cong \k$, the quotient category $\overline{\C}$ which has the same objects as $\C$ but in which $\Hom_{\overline{\C}}(X, Y) \cong \Hom_{\C}(X, Y)/\mathcal{N}(X, Y)$
is called the \emph{semisimplification} of $\C$. 

\end{definition}

Here are some important properties of $\overline{\C}$ that can be looked up in \cite{EO1}.

\begin{enumerate}

\item[1.] $\overline{\C}$ is semisimple and hence abelian. Thus, it is a semisimple symmetric tensor category. The monoidal structure on $\overline{\C}$ is induced from that on $\C$ as $\mathcal{N}(X, Y)$ is a tensor ideal.

\item[2.] The simple objects of $\overline{\C}$ are the images under the quotient functor of the indecomposable objects of $\C$ that are not negligible.

\end{enumerate}

\begin{definition} The \emph{Verlinde category} is the semisimplification of the category of finite dimensional $\k$-representations of $\Z/p\Z$. We denote this category as $\Ver_{p}$.

\end{definition}

Let us now describe the tensor structure on $\Ver_{p}$. Proofs of these facts can be looked up in \cite{O, GK, GM}.

\begin{example} \label{Verlindeprop}

\begin{enumerate}

\item[1.] A representation of $\Z/p\Z$ is simply a matrix whose $p$th power is the identity. The indecomposable representations of $\Z/p\Z$ are the indecomposable Jordan blocks of eigenvalue $1$ and size $1$ through $p$. Let us call these representations $M_{1}, \ldots, M_{p}$. The dimension of $M_{i}$ is simply its dimension mod $p$. Hence, $M_{p}$ is the only negligible indecomposable. Thus, the simple objects of $\Ver_{p}$ are: $L_{1}, \ldots, L_{p-1}$, where $L_{i}$ is the image under semisimplification of the indecomposable Jordan block of dimension $i$. 

\item[2.] To describe the monoidal structure, we need to describe the decomposition of $L_{i} \otimes L_{j}$ into direct sum of simples: $L_{i} \otimes L_{j} \cong \displaystyle\bigoplus_{k=1}^{\min(i, j, p-i, p-j)} L_{|j-i| + 2k - 1}.$
This rule is easiest to remember in terms of representations of $SL_{2}(\mathbb{C})$. Let $V_{i}$ be the irreducible representation of $SL_{2}(\mathbb{C})$ of dimension $i$. The decomposition of $L_{i} \otimes L_{j}$ is the same as the decomposition of $V_{i} \otimes V_{j}$ with some irreducibles removed: we remove any representation of dimension $\ge p$ and if $V_{p+r}$ was removed, we also remove $V_{p-r}$.

\end{enumerate}

\end{example}

\begin{definition} For an object $X \in \Ver_{p}^{\ind}$, define $X_{0}$ as the sum of all subobjects of $X$ isomorphic to $\mathbf{1}$ and define $X_{\not=0}$ to be the natural complement of $X_{0}$ in $X$. 

\end{definition}

\subsection{Finitely generated commutative algebras}

Since we are concerned with finitely generated commutative Hopf algebras in this paper, geometric properties of finitely generated commutative algebras will be important to us. In this subsection, we list some important definitions and results from \cite{Ven1} for the convenience of the reader. Fix a symmetric tensor category $\C$ over $\k$. 

\begin{definition} We say that $\C$ admits a \emph{Verlinde fiber functor} if there exists a tensor functor $F: \C \rightarrow \Ver_{p}$. This holds for $\C = \Ver_{p}$ in particular. \end{definition}

\begin{definition} We say that a commutative algebra $A$ in $\C^{\ind}$ is \emph{finitely generated} if there exists some object $X \in \C$ and a surjective homomorphism of algebras $S(X) \rightarrow A.$
For an arbitrary commutative algebra $A \in \C^{\ind}$, we say that an $A$-module $M \in \C^{\ind}$ is finitely generated if there exists an object $X \in \C$ and a surjective homomorphism of $A$-modules $A \otimes X \rightarrow M.$ For an affine group scheme $G$ in $\C$, with function algebra $\O(G)$, we say that $G$ is of \emph{finite type} if $\O(G)$ is finitely generated.

\end{definition}

\begin{definition} \label{Noetherian} For a commutative ind-algebra $A$, we say that an $A$-module $M$ is \textit{Noetherian} if its submodules satisfy the ascending chain condition, i.e., that for any sequence of submodules $M_{0} \rightarrow M_{1} \rightarrow M_{2} \rightarrow \cdots$
in which the morphisms are monomorphisms, there exists some $n$ such that for all $N \ge n$, the map $M_{N} \rightarrow M_{N+1}$ is an isomorphism. We say that $A$ is a \textit{Noetherian algebra} if all of its finitely generated modules are Noetherian.

\end{definition}

This is equivalent to finite generation of submodules (see \cite{Ven1} for a proof). We also have a definition of an invariant subalgebra. 

\begin{definition} Let $A$ be a commutative algebra in $\C^{\ind}$. The invariant subalgebra $A_{0}$ is the sum of all simple subobjects of $A$ isomorphic to $\mathbf{1}$ and $A_{\not=0}$ is the sum of all simple subobjects not isomorphic to $\mathbf{1}$.

\end{definition}

In $\Ver_{p}$, and categories fibered above it, the following results from \cite{Ven1} strongly control the geometry of finitely generated commutative algebras, more or less reducing it to ordinary commutative algebra.

\begin{lemma} \label{nilpotence} For $i < p$, let $L_{i}$ be the simple object in $\Ver_{p}$ corresponding to the indecomposable representation of $\Z/p\Z$ of dimension $i
$. For $i > 1$, $S^{N}(L_{i}) = 0 \text{ for } N > p -i.$

\end{lemma}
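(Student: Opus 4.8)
The plan is to reduce the statement to a concrete computation about the representation theory of $SL_2$, using the identification $\Ver_p \cong \Ver_p(SL_2)$ from Example \ref{Verlindeprop}(3). First I would recall that under this identification the simple object $L_i$ corresponds to the image of the tilting module $T_i$ of $SL_2(\k)$ whose Weyl filtration has top $V_i$ (the Weyl module of highest weight $i-1$); for $i < p$ the module $V_i$ is itself simple and tilting, so $L_i$ lifts to the honest $SL_2$-module $V_i$. The symmetric power $S^N(L_i)$, computed in $\Ver_p$, is the image under semisimplification of the tilting module $T(S^N(V_i))$ obtained by taking the symmetric power $S^N(V_i)$ in the category of $SL_2(\k)$-tilting modules and then discarding the negligible (i.e. $p$-divisible-dimension, equivalently ``outside the lowest alcove'') indecomposable summands. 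So the claim $S^N(L_i) = 0$ for $N > p-i$ is equivalent to the assertion that for such $N$ the tilting module $S^N(V_i)$ has no indecomposable summand whose highest weight lies in the closure of the fundamental alcove, i.e. is $\le p-1$.

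The key step is therefore: for $i \ge 2$ and $N \ge p-i+1$, every Weyl-filtration subquotient $V_\mu$ appearing in $S^N(V_i)$ has highest weight $\mu \ge p$, and more precisely the tilting module $S^N(V_i)$ is negligible. I would handle this by passing to characteristic $0$ for the weight bookkeeping and then invoking linkage/translation in characteristic $p$. In characteristic $0$, $S^N(V_i)$ as an $SL_2(\C)$-module is $\bigoplus V_{N(i-1) - 2j + 1}$ for $0 \le j \le \lfloor N(i-1)/2 \rfloor$ (the symmetric power of a binary form of degree $i-1$), so its highest weight is $N(i-1)$; when $N(i-1) \ge p-1$ — which certainly holds once $N \ge p-i+1$ and $i \ge 2$, since $(p-i+1)(i-1) \ge p-1$ for $2 \le i \le p-1$ — the top is already out of the fundamental alcove. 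The real content is that in characteristic $p$ the \emph{lowest} weight piece also fails to contribute: one must show that after semisimplification nothing survives, not merely that the top summand dies. Here I would use that the braiding-induced $S_N$-action makes $S^N(V_i)$ the image of a symmetrizer, together with the fact (from \cite{O}, or directly from the fusion rule in Example \ref{Verlindeprop}(2)) that in $\Ver_p$ one has $L_i^{\otimes N}$ with all summands $L_k$, $k \le \min(\ldots)$, and then track which simples can appear in the symmetric part; the constraint $N > p - i$ forces the symmetric part to be negligible.

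Concretely the cleanest route, which I expect to be the main obstacle to write carefully, is an induction on $N$ using the exact sequence relating $S^N(X)$, $S^{N-1}(X) \otimes X$, and $S^{N-2}(X) \otimes \Lambda^2 X$ (valid in any symmetric tensor category in characteristic $\ne 2$, with appropriate modification for $p=2$ where the statement is vacuous since $\Ver_2 = \Vec$). One shows inductively that $S^N(L_i)$ is a summand of $S^{N-1}(L_i) \otimes L_i$, and that once $S^{N-1}(L_i)$ has only ``high'' constituents the fusion rule $L_k \otimes L_i$ — which never produces $L_1$ unless $k$ and $i$ are close — keeps everything high; the base case is the largest symmetric power that is nonzero, and the bound $p-i$ emerges from chasing the fusion rule $L_i^{\otimes (p-i+1)}$ and checking its symmetric component is negligible. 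The delicate point is that $S^N$ is a \emph{subquotient} functor, not just a subobject functor, so vanishing of the relevant summand in a tensor power does not immediately give vanishing of $S^N$; one must verify that the idempotent cutting out $S^N$ lands in the negligible ideal, which is where the explicit tilting-module description in \cite{GK, GM} is needed. Granting that dictionary, the weight inequality $N(i-1) \ge p - 1$ does the rest.
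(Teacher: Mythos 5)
Your overall strategy -- lift $L_{i}$ to the simple Weyl/tilting module $V(i-1)$ for $SL_{2}(\k)$ via $\Ver_{p} \cong \Ver_{p}(SL_{2})$ and decide negligibility of the symmetric power there -- is the route the paper indicates (the lemma itself is not proved here but quoted from \cite{Ven1}, with Example \ref{Verlindeprop}(3) pointing to exactly this identification). But the quantitative core of your argument has a genuine gap. The claim that for $N \ge p-i+1$ every Weyl factor of $S^{N}(V(i-1))$ has highest weight $\ge p$ is false: for $p=5$, $i=4$, $N=2$ the module $S^{2}(V(3))$ has character $\chi(6)+\chi(2)$, so $V(2)$ occurs; it is negligible only because it is the indecomposable tilting $T(6)$, which absorbs the low Weyl factor. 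Worse, the top-weight inequality $N(i-1)\ge p-1$, which you say ``does the rest,'' cannot do the rest, because it already holds at $N=p-i$ for $i\ge 3$, where the bound in the lemma is sharp: for $p=7$, $i=3$, $N=4$ one gets $S^{4}(V(2))$ with character $\chi(8)+\chi(4)+\chi(0) = [T(8)]+[T(0)]$, so $S^{4}(L_{3}) \cong \mathbf{1} \neq 0$ in $\Ver_{7}$ even though the top weight $8 \ge p-1$. No argument that only sees the highest weight can produce the bound $p-i$; what must actually be shown is that no non-negligible summand $T(\mu)$ with $\mu \le p-2$ occurs when $N > p-i$, e.g.\ by an honest computation with the Weyl multiplicities of the indecomposable tiltings $T(\lambda)$, $\lambda \le 2p-2$, and that computation -- the entire content of the lemma -- is not carried out in your sketch.

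Two further points about the proposed repairs. The fusion-rule induction inside $\Ver_{p}$ cannot work as stated: vanishing of $S^{N}(L_{i})$ is invisible to the Grothendieck ring, since for example $S^{p-2}(L_{2})\otimes L_{2} = L_{p-1}\otimes L_{2} = L_{p-2} \neq 0$ while its quotient $S^{p-1}(L_{2}) = 0$; moreover ``negligible'' or ``high constituents'' has no meaning inside $\Ver_{p}$, where every nonzero object is non-negligible. Finally, your worry about the symmetrizer is resolvable, but not the way you frame it: since the multiplication $S^{a}(X)\otimes S^{b}(X) \to S^{a+b}(X)$ is surjective, it suffices to kill the single degree $N = p-i+1 < p$; there $N!$ is invertible, $S^{N}(L(i-1))$ is an honest tilting direct summand of $L(i-1)^{\otimes N}$ cut out by an idempotent, and semisimplification commutes with taking its image -- after which the remaining (and still missing) step is precisely the tilting-character argument above.
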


\begin{corollary} If $A$ is a finitely generated commutative ind-algebra in $\Ver_{p}$, then the ideal generated by $A_{\not=0}$ is nilpotent. 

\end{corollary}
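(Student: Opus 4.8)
The plan is to deduce the corollary directly from Lemma~\ref{nilpotence} together with the finite-generation hypothesis. First I would fix a surjection of algebras $\pi: S(X) \twoheadrightarrow A$ with $X \in \Ver_p$, and decompose $X = X_0 \oplus X_{\neq 0}$ into its trivial and non-trivial isotypic parts. Since $\Ver_p$ is semisimple, $X_{\neq 0}$ is a finite direct sum of copies of the simple objects $L_2, \ldots, L_{p-1}$ (the summand $L_1 = \mathbf{1}$ being excluded by definition of $X_{\neq 0}$). The key observation is that the ideal $I$ generated by $A_{\neq 0}$ is the image under $\pi$ of the ideal $\widetilde{I} \subseteq S(X)$ generated by $X_{\neq 0}$: indeed $\pi(X_{\neq 0})$ lands in $A_{\neq 0}$ because $\pi$ is a morphism in $\Ver_p$ and hence preserves isotypic components in the sense that it cannot map a non-trivial simple into the trivial part; conversely $A_{\neq 0}$ is generated as a subobject by $\pi(X_{\neq 0})$ together with products, so $I \subseteq \pi(\widetilde{I})$, and one checks the reverse inclusion by the same bookkeeping. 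Thus it suffices to show $\widetilde{I}$ is nilpotent in $S(X)$, since $\pi(\widetilde{I}^{\,N}) = I^N$.

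Next I would analyze $S(X)$ itself. Because $S$ is a symmetric monoidal functor on the semisimple category $\Ver_p$, we have a Hopf algebra isomorphism $S(X) \cong S(X_0) \otimes S(X_{\neq 0})$, and more usefully $S(X_{\neq 0}) \cong \bigotimes_{i=2}^{p-1} S(L_i)^{\otimes m_i}$ where $m_i$ is the multiplicity of $L_i$ in $X$. The ideal $\widetilde{I}$ generated by $X_{\neq 0}$ is, grading things by total degree in the $X_{\neq 0}$-variables, exactly $\bigoplus_{d \geq 1}\big(S(X_0) \otimes S^d\text{-part of } S(X_{\neq 0})\big)$, i.e. everything of positive $X_{\neq 0}$-degree. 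So $\widetilde{I}^{\,N}$ consists of everything of $X_{\neq 0}$-degree $\geq N$. By Lemma~\ref{nilpotence}, each factor $S(L_i)$ vanishes in degrees $> p - i \leq p - 2$, hence is supported in $X_{\neq 0}$-degrees $0$ through $p-2$; the tensor product over all $r := \sum_i m_i$ factors is therefore supported in total $X_{\neq 0}$-degree at most $r(p-2)$. Consequently $\widetilde{I}^{\,N} = 0$ for $N > r(p-2)$, and therefore $I^N = \pi(\widetilde{I}^{\,N}) = 0$ for such $N$.

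The only genuinely delicate point is the identification $I = \pi(\widetilde{I})$, or equivalently the claim that ``generated by $A_{\neq 0}$'' and ``image of the ideal generated by $X_{\neq 0}$'' agree. I expect this to be the main obstacle, and I would handle it carefully: on the one hand $\pi(\widetilde{I}) = m_A(A \otimes \pi(X_{\neq 0}))$ is an ideal contained in the ideal generated by $A_{\neq 0}$ since $\pi(X_{\neq 0}) \subseteq A_{\neq 0}$; on the other hand $A_{\neq 0}$, being the image of the positive-$X_{\neq 0}$-degree part of $S(X)$ under the surjection $\pi$, lies in $\pi(\widetilde{I})$ as well (every non-trivial isotypic piece of $A$ is hit from the non-trivial part of $S(X)$, because a morphism in a semisimple category restricts to a morphism on isotypic components). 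Hence the two ideals coincide. Once that is settled the rest is the degree count above; and in fact one does not even need the sharp bound $r(p-2)$ — any finite bound suffices for the statement, so even a crude estimate using $S^N(L_i) = 0$ for $N$ large finishes the proof.
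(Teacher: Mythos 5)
Your proposal is correct and is essentially the intended derivation of this corollary from Lemma \ref{nilpotence}: the paper itself imports the statement from \cite{Ven1} without reproducing a proof, and the expected argument is exactly your reduction via a surjection $S(X) \twoheadrightarrow A$, the splitting $S(X) \cong S(X_{0}) \otimes S(X_{\not=0})$, and the degree bound coming from $S^{N}(L_{i}) = 0$ for $N > p-i$. You also handle the one genuinely delicate point correctly, namely that the ideal generated by $A_{\not=0}$ coincides with the image of the ideal generated by $X_{\not=0}$, which follows as you say from the fact that morphisms in the semisimple category $\Ver_{p}$ respect isotypic components.
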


\begin{theorem} \label{Hilb2} Let $\C$ be a symmetric tensor category over $\k$. If $\C$ admits a Verlinde fiber functor, then every finitely generated commutative ind-algebra $A \in \C^{\ind}$ is Noetherian.
\end{theorem}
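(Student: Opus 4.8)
The plan is to reduce to the case $\C = \Ver_p$ by means of the Verlinde fiber functor $F \colon \C \to \Ver_p$, and then in $\Ver_p$ to run the classical argument that an algebra which is a nilpotent extension of a Noetherian ring is Noetherian. The genuine input for $\Ver_p$ is already packaged in Lemma~\ref{nilpotence} and its Corollary: the ideal $I$ generated by $A_{\neq 0}$ is nilpotent, and the underlying ordinary algebra $\overline{A} = A/I$ is an honest finitely generated commutative $\k$-algebra, hence Noetherian by the Hilbert basis theorem. Everything else is bookkeeping.

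For the reduction, extend $F$ to an exact, faithful, colimit-preserving symmetric monoidal functor $F \colon \C^{\ind} \to \Ver_p^{\ind}$. Since tensor functors commute with the formation of symmetric algebras and preserve epimorphisms, $F(A)$ is a finitely generated commutative ind-algebra in $\Ver_p$, and $F(M)$ is a finitely generated $F(A)$-module whenever $M$ is a finitely generated $A$-module. Given an ascending chain of $A$-submodules of $M$, applying $F$ yields an ascending chain of $F(A)$-submodules of $F(M)$ (monomorphisms are preserved by left exactness); assuming the theorem for $\Ver_p$, this chain stabilizes, and because an exact faithful functor reflects isomorphisms (it preserves kernels and cokernels and detects the zero object), the original chain stabilizes too. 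Hence it suffices to prove the statement for $\C = \Ver_p$.

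So let $\C = \Ver_p$, fix $M$ a finitely generated $A$-module, and write $S(X) \twoheadrightarrow A$ and $A \otimes Y \twoheadrightarrow M$ for the finite generation data, with $X, Y$ of finite length. Using the decomposition $S(X) \cong S(X_0) \otimes S(X_{\neq 0})$ and the fact that $S(X_0)$ is an ordinary polynomial ring (so $A$ modulo the ideal generated by the image of $X_{\neq 0}$ is a trivial object), one checks that the ideal $I$ generated by $A_{\neq 0}$ equals the ideal generated by the image of the \emph{finite-length} object $X_{\neq 0}$. Consequently $I^k$ is generated over $A$ by the image of $X_{\neq 0}^{\otimes k}$, and a short braiding computation shows $A \otimes (X_{\neq 0}^{\otimes k} \otimes Y) \twoheadrightarrow I^k M$, so each $I^k M$ is a finitely generated $A$-module. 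By the Corollary to Lemma~\ref{nilpotence} there is $n$ with $I^n M = 0$, giving a finite filtration $M \supseteq IM \supseteq \cdots \supseteq I^n M = 0$ whose subquotients $I^k M / I^{k+1} M$ are finitely generated $A$-modules killed by $I$, hence finitely generated $\overline{A} = A/I$-modules.

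Finally, any $\overline{A}$-module $N$ in $\Ver_p^{\ind}$ decomposes as $N \cong \bigoplus_{i=1}^{p-1} N_i \otimes L_i$ where the multiplicity spaces $N_i = \iHom(L_i, N)$ are ordinary $\overline{A}$-modules (since $\overline A \in \Vec$), and $\overline{A}$-submodules of $N$ correspond to tuples of $\overline{A}$-submodules of the $N_i$; as $\overline{A}$ is an ordinary Noetherian ring and $\Ver_p$ has finitely many simple objects, a finitely generated such $N$ is Noetherian. Applying this to the subquotients $I^k M/I^{k+1} M$ and then running the standard induction along the finite filtration $M \supseteq IM \supseteq \cdots \supseteq I^n M$ — using that an extension of Noetherian modules is Noetherian — shows that $M$ is Noetherian, completing the proof. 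I expect the main obstacle to be the reduction-and-bookkeeping part: verifying that $F$ behaves well on $S(X)$ and reflects the relevant isomorphisms, and above all checking that the ideals $I^k$ and the modules $I^k M$ are generated by \emph{finite-length} objects, which is exactly what lets the nilpotence Corollary and the Hilbert basis theorem be brought to bear.
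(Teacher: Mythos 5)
Your proof is correct, but note that the paper itself never proves Theorem \ref{Hilb2}: it is imported from \cite{Ven1} as background, so the only comparison available is with the strategy of that reference and with the reductions the paper emphasizes (a finitely generated $A$ is a nilpotent thickening of the ordinary algebra $A/I$ and is controlled by an ordinary finitely generated subquotient). Your argument follows exactly that philosophy, and the details check out: the fiber-functor reduction is sound (a tensor functor is exact and faithful, its ind-extension preserves monomorphisms, epimorphisms and symmetric algebras and detects the zero object, so stabilization of the image chain reflects back); the identification of the ideal $I$ generated by $A_{\not=0}$ with the ideal generated by the image of the finite-length object $X_{\not=0}$ is right (one inclusion because $A/I'$ is a quotient of $S(X_{0})$, hence trivial, the other because morphisms in a semisimple category respect isotypic components); the surjections $A\otimes(X_{\not=0}^{\otimes k}\otimes Y)\twoheadrightarrow I^{k}M$, the passage to multiplicity spaces over the ordinary finitely generated algebra $\overline{A}$, and the extension argument along the finite filtration are all standard and element-free, and there is no circularity in invoking the Corollary to Lemma \ref{nilpotence}, since that nilpotence follows from the vanishing of symmetric powers alone. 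The one structural difference worth pointing out: in the $\Ver_{p}$ case the nilpotence filtration can be bypassed entirely by observing that $S(X)\cong S(X_{0})\otimes S(X_{\not=0})$ with $S(X_{\not=0})$ of finite length, so any finitely generated $A$-module is a quotient of $S(X_{0})\otimes Z$ with $Z$ of finite length, and every $A$-submodule is in particular an $S(X_{0})$-submodule; the ordinary Hilbert basis theorem applied to the finitely many multiplicity spaces then gives the ascending chain condition directly. This ``module-finite over an ordinary central subalgebra'' route (the one the paper's remarks and Theorem \ref{Inv} gesture at) is a bit shorter; your filtration by powers of $I$ proves the same thing at the cost of a few extra bookkeeping steps, and both hinge on the same input, Lemma \ref{nilpotence}.
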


\begin{theorem} \label{Inv} Suppose $\C$ is a symmetric finite tensor category over $\k$ that admits a Verlinde fiber functor $F : \C \rightarrow \Ver_{p}$. Let $A \in \C^{\ind}$ be a finitely generated commutative algebra and let $A_{0}$ be its invariant subalgebra. Then, $A_{0}$ is finitely generated and $A$ is a finitely generated $A_{0}$-module.
\end{theorem}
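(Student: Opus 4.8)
The plan is to establish the statement first for $\C = \Ver_p$, reducing it to ordinary commutative algebra, and then to bootstrap to an arbitrary $\C$ admitting a Verlinde fiber functor via Tannakian reconstruction.

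\emph{The case $\C = \Ver_p$.} Fix a surjective algebra homomorphism $S(X) \rightarrow A$ with $X \in \Ver_p$, and split $X = X_0 \oplus X_{\not=0}$, so that $S(X) \cong S(X_0) \otimes S(X_{\not=0})$. Writing $X_{\not=0} \cong \bigoplus_{i \geq 2} L_i^{\oplus m_i}$, we get $S(X_{\not=0}) \cong \bigotimes_i S(L_i)^{\otimes m_i}$; since $S^N(L_i) = 0$ for $N > p - i$ by Lemma \ref{nilpotence}, each $S(L_i)$, and hence $S(X_{\not=0})$, has finite length. Thus $S(X)$ is a finitely generated module over the ordinary polynomial algebra $S(X_0)$. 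Let $B \subseteq A$ be the image of $S(X_0)$: it is a finitely generated ordinary commutative $\k$-algebra, it lies in $A^{\inv}$ (being a trivial subobject of $A$), and $A$ is a finitely generated $B$-module. Since $B$ is Noetherian in the usual sense and $A^{\inv}$ is a $B$-submodule of $A$ (products of trivial subobjects are trivial), $A^{\inv}$ is a finitely generated $B$-module; as $B \subseteq A^{\inv}$ is itself a finitely generated $\k$-algebra, $A^{\inv}$ is a finitely generated $\k$-algebra, and $A$, being finitely generated over $B \subseteq A^{\inv}$, is a finitely generated $A^{\inv}$-module.

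\emph{The general case.} By Tannakian reconstruction (Remark \ref{Tannakian}) identify $\C \simeq \Rep_{\Ver_p}(G)$ for an affine group scheme $G$ in $\Ver_p$, with $F$ the forgetful functor; finiteness of $\C$ makes $\O(G)$ of finite length, so the underlying ordinary group scheme $G_0$ of $G$ is finite. Applying $F$ to a surjective homomorphism $S_\C(X) \rightarrow A$, and using that $F$ is an exact symmetric tensor functor so that $F(S_\C(X)) \cong S_{\Ver_p}(FX)$, shows that $A$ is a finitely generated commutative ind-algebra in $\Ver_p$; hence by the first case $A$ is a finitely generated module over $A_0 := A^{\inv}_{\Ver_p}$, a finitely generated ordinary $\k$-algebra. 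The $G$-action respects the canonical isotypic decomposition of $A$ in $\Ver_p$, so $A_0$ is a $G$-stable subalgebra lying in the tensor subcategory $\C_{\mathrm{triv}} \subseteq \C$ of objects trivial in $\Ver_p$; this subcategory is the representation category of a finite ordinary group scheme, namely $\Rep(G_0)$, and since every morphism $\mathbf{1} \rightarrow A$ in $\C$ factors through $A_0$, we have $A^{\inv}_\C = \mathrm{Hom}_\C(\mathbf{1}, A) = A_0^{G_0}$. Now $G_0$ is a finite ordinary group scheme acting on the finitely generated $\k$-algebra $A_0$, so $A_0$ is integral over $A_0^{G_0}$ (the usual characteristic-polynomial argument), hence module-finite over it; the Artin--Tate lemma applied to $\k \subseteq A_0^{G_0} \subseteq A_0$ then shows $A_0^{G_0} = A^{\inv}_\C$ is a finitely generated $\k$-algebra, and composing the finite extensions $A^{\inv}_\C \subseteq A_0 \subseteq A$ shows $A$ is a finitely generated $A^{\inv}_\C$-module.

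The main obstacle is the general case: one must pass to the Tannakian description and correctly identify the categorical invariants $A^{\inv}_\C$ with the honest invariants $A_0^{G_0}$ of an ordinary finite group scheme, after which the geometric reductivity of finite group schemes finishes the argument. The $\Ver_p$ case is comparatively soft: it rests only on Lemma \ref{nilpotence} and ordinary commutative algebra, and in particular does not require Theorem \ref{Hilb2}.
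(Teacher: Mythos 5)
The paper itself does not prove Theorem \ref{Inv}: it is imported from \cite{Ven1}, so there is no internal argument to compare yours against, and your proposal has to stand on its own. Your first step, the case $\C = \Ver_{p}$, does stand: splitting $X = X_{0} \oplus X_{\not=0}$, using Lemma \ref{nilpotence} to see that $S(X_{\not=0})$ has finite length, and working over the image $B$ of the ordinary polynomial algebra $S(X_{0})$ is correct (and, as you note, avoids Theorem \ref{Hilb2}). The only thing I would ask you to spell out there is why the $B$-submodule $A^{\inv}$ is finitely generated: since $B$ is a trivial object its action preserves the finitely many isotypic components of $A$, each multiplicity space is a finitely generated ordinary module over the Noetherian ring $B$, and $A^{\inv}$ is the multiplicity space of $\mathbf{1}$.

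The general case, however, has a genuine gap. The assertion that the $G$-action respects the $\Ver_{p}$-isotypic decomposition of $A$, so that $A_{0} := F(A)^{\inv}$ is a $G$-stable subalgebra and the invariants of $A$ in $\C$ equal $A_{0}^{G_{0}}$, is false. Concretely, take $\g = L_{2}$ with zero bracket (a Lie algebra in $\Ver_{p}$ since $\g_{0} = 0$), let $G$ be the finite group scheme with $\O(G) = U(\g)^{*} = S(L_{2})^{*}$, and let $\C = \Rep_{\Ver_{p}}(G)$, a finite symmetric tensor category fibered over $\Ver_{p}$ with $G_{0}$ trivial. Let $V = \mathbf{1} \oplus L_{2}$ be the $U(\g)$-module on which $L_{2}$ acts by the isomorphism $L_{2} \otimes \mathbf{1} \rightarrow L_{2}$ and by zero on $L_{2}$ (a non-split extension of $\mathbf{1}$ by $L_{2}$ in $\C$), and let $A = S(V)$. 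Then $F(A)^{\inv} = \k[y]$, the symmetric algebra on the trivial line, but $\g$ acts by derivations sending $y$ into the $L_{2}$-isotypic part, so $\k[y]$ is not a subobject of $A$ in $\C^{\ind}$; moreover the invariants of $A$ in $\C$ are $\k[y^{p}]$, strictly smaller than $A_{0}^{G_{0}} = \k[y]$ (here $G_{0} = 1$). What your reduction misses is exactly the characteristic-$p$ infinitesimal phenomenon: the non-ordinary part of $G$ acts across the isotypic decomposition and imposes further conditions on invariants inside $A_{0}$, so the problem cannot be reduced to Noether-type invariant theory for the ordinary finite group scheme $G_{0}$ acting on $A_{0}$. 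A correct argument must control invariance under all of $G$ — for instance by combining your $\Ver_{p}$ case (which does give that $A$ is module-finite over the finitely generated ordinary algebra $F(A)^{\inv}$) with a $p$-power/height argument for the infinitesimal and non-ordinary part and a Noether argument for the étale quotient — and this extra work is precisely what is carried out in \cite{Ven1}, which the paper cites rather than reproves. Note that the theorem itself is not contradicted by the example ($\k[y^{p}]$ is finitely generated and $A$ is module-finite over it); only your derivation of it breaks.
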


\begin{remark} These results, in particular the lemma above, essentially allow us to reduce geometry of finitely generated commutative algebras in $\Ver_{p}^{\ind}$ to the geometry of $A_{0}$ and $A/(A_{\not=0})$, which are finitely generated ordinary commutative algebras over $\k$.

\end{remark}

\section{\Large{\textbf{More commutative algebra in $\Ver_{p}$}}}

In this section, we use the above results to extend some classical commutative algebra theorems to the setting of $\Ver_{p}$. Most of the results of this section are going to be modified versions of results from \cite{Mat}. Themain tool that allows us to prove these theorems is to examine finitely generated commutative ind-algebras $A$ in $\Ver_{p}$ as modules over $A_{0}$ and nilpotent thickenings of $\overline{A}$ to then reduce proofs to the analgous proofs in \cite{Mat}.

We will also want some of the results in this section to hold for completions of algebras in $\Ver_{p}$. These are not ind-algebras but are rather pro-algebras. So we begin with a technical definition.

\begin{definition} $\Ver_{p}^{\pro}$ is the closure of $\Ver_{p}$ under projective limits.

\end{definition}

\begin{remark}

Objects in $\Ver_{p}^{\pro}$ are projective limits of objects in $\Ver_{p}.$ Since $\Ver_{p}$ is semisimple, these are just possibly infinite products of simple objects in $\Ver_{p}$. This is a monoidal abelian category under the completed tensor product, defined in exactly the same formal manner as in the category of vector spaces. Full duals for objects do not exist, but we can define a  dualization functor $\Ver_{p}^{\ind} \rightarrow \Ver_{p}^{\pro}$, by dualizing the inductive system to get a projective system.

\end{remark}

%\begin{remark} Since $\Ver_{p}$ is semisimple, $\Ver_{p}^{\pro}$ can be embedded inside $\Ver_{p}^{\ind}$ as an abelian category. This is not compatible with monoidal structure, however, since $\Ver_{p}^{\pro}$ uses the completed tensor product. Still, if $A$ is a unital associative algebra in $\Ver_{p}^{\pro}$, then its image in $\Ver_{p}^{\ind}$ is also a unital associative algebra, as there is a natural inclusion of $A \otimes A$ into $A \widehat{\otimes} A$.

%\end{remark}

\begin{definition} Let $A$ be a associative, unital algebra in $\Ver_{p}^{\pro}$ or $\Ver_{p}^{\ind}$. The \emph{Jacobson radical} $\mathcal{J}$ of $A$ is the intersection of $\Ann(M)$ over all simple left $A$-modules $M$, where $\Ann(M)$ is the largest subobject in $A$ that acts as $0$ on $M$.

\end{definition}

Since the annihilator of a module is a two sided ideal, the Jacobson radical of an algebra is also a two sided ideal. 

\begin{proposition}[Nakayama Lemma] \label{Nakayama} Let $A$ be a commutative Noetherian algebra in $\Ver_{p}^{\pro}$ or $\Ver_{p}^{\ind}$. Let $M$ be a finitely generated $A$-module and let $I$ be any ideal contained in the Jacobson radical of $A$. If $IM = M$, then $M = 0$. 

\end{proposition}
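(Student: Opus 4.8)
The plan is to mimic the classical proof while being careful about what "finitely generated" and "generators" mean in $\Ver_p^{\ind}$ (or $\Ver_p^{\pro}$), where $M$ need not be generated by its invariants. First I would reduce to the ordinary case via the invariant subalgebra. By Theorem \ref{Inv}, $A^{\inv}$ is a finitely generated ordinary commutative $\k$-algebra and $A$ is a finitely generated $A^{\inv}$-module; since $M$ is a finitely generated $A$-module (so $A \otimes X \twoheadrightarrow M$ for some actual object $X$), $M$ is also a finitely generated $A^{\inv}$-module, and $\Hom_\C(\mathbf{1}, M)$ — equivalently the isotypic component $M_0$ — is a finitely generated $A^{\inv}$-module because $\Hom_\C(\mathbf{1}, -)$ is a faithful exact functor sending finitely generated $A^{\inv}$-modules to finitely generated $A^{\inv}$-modules (it takes $A^{\inv}\otimes X$, which as an $A^{\inv}$-module is a finite sum of copies of $A^{\inv}$ indexed by a basis of $\Hom_\C(\mathbf 1, X)$, to a finitely generated free module). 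The key point is that $M$ is generated as an $A$-module by $M_0$: indeed the $A$-submodule $A\cdot M_0 = m(A\otimes M_0)$ has the same invariants as $M$ (it contains $M_0$), and $M/(A\cdot M_0)$ is a finitely generated $A$-module with zero invariants, so by Theorem \ref{Inv} it is a finitely generated $(A/(A\cdot M_0))^{\inv}$-module that is supported "at the $\mathbf 1$-isotypic part being zero" — more directly, a finitely generated commutative-algebra module over $\Ver_p$ with trivial invariant part and which is moreover a module over the Noetherian ring $A^{\inv}$ must vanish, since its invariants over $A^{\inv}$ vanish and it is finitely generated over $A^{\inv}$. I would state this last vanishing as a small lemma: a finitely generated $A$-module $N$ with $N_0 = 0$ is zero, which follows from Theorem \ref{Inv} applied to $A$ and $N$ (as $N$ finitely generated over $A^{\inv}$ with $\Hom(\mathbf 1, N) = 0$ forces $N = 0$ by faithfulness of $\Hom_\C(\mathbf 1,-)$ on $A^{\inv}$-modules together with ordinary Nakayama, once one checks $N$ is finitely generated over the Noetherian ring $A^{\inv}$).

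With that reduction, the hypothesis $IM = M$ gives, after applying $\Hom_\C(\mathbf 1, -)$ and using that $I_0 = \Hom_\C(\mathbf 1, I)$ is an ideal of $A^{\inv}$, the relation $I_0 \cdot M_0 = M_0$ — here I would check that $\Hom_\C(\mathbf 1, IM) = I_0 M_0$ using exactness and the description of the multiplication, or at minimum the inclusion $M_0 = (IM)_0 \subseteq I_0 M_0$, which is all that is needed. Then I would invoke ordinary commutative Nakayama over the Noetherian ring $A^{\inv}$, provided $I_0$ lies in the Jacobson radical of $A^{\inv}$: this is the second thing to verify, namely that the Jacobson radical $\mathcal J$ of $A$ (as defined in the excerpt) satisfies $\mathcal J_0 \subseteq \mathrm{rad}(A^{\inv})$. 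This holds because a maximal ideal $\mathfrak{m}$ of $A^{\inv}$ gives a simple $A^{\inv}$-module $A^{\inv}/\mathfrak m = \k$, hence a simple $A$-module $A \otimes_{A^{\inv}} \k$ which is an actual object of $\Ver_p$ (it is a finitely generated module over the ordinary ring $\k$); by definition $\mathcal J$ annihilates it, so $\mathcal J_0 \subseteq \mathfrak m$, and intersecting over all $\mathfrak m$ gives $\mathcal J_0 \subseteq \mathrm{rad}(A^{\inv})$. Combining, $I_0 \subseteq \mathcal J_0 \subseteq \mathrm{rad}(A^{\inv})$, ordinary Nakayama forces $M_0 = 0$, and by the reduction step $M = A\cdot M_0 = 0$.

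The main obstacle I anticipate is the reduction step — showing $M$ is generated over $A$ by its $\mathbf 1$-isotypic component $M_0$, equivalently that a finitely generated $A$-module with vanishing invariants is zero. This is exactly the kind of statement the Corollary to Lemma \ref{nilpotence} and Theorem \ref{Inv} are designed to supply (the ideal generated by $A_{\neq 0}$ is nilpotent and $A$ is module-finite over the finitely generated ring $A^{\inv}$), but assembling it cleanly requires care: one wants to descend to $A/(A_{\neq 0}) =: \overline A$ and $N/(A_{\neq 0})N$, observe $N = (A_{\neq 0})N$ would follow from $N_0 = 0$ via $\overline{N}_0 = 0$ and $\overline N$ being a module over the ordinary ring $\overline A$ with no invariants hence zero, then climb back up the nilpotent filtration $N \supseteq (A_{\neq 0})N \supseteq (A_{\neq 0})^2 N \supseteq \cdots = 0$. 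Everything else is a translation of the classical argument, with $\Hom_\C(\mathbf 1, -)$ playing the role of "taking underlying vector space" and Theorems \ref{Hilb2} and \ref{Inv} supplying the Noetherian and module-finiteness inputs.
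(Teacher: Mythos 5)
Your reduction step contains a genuine gap: the claim that $M$ is generated over $A$ by its $\mathbf{1}$-isotypic component $M_{0}$, equivalently that a finitely generated $A$-module with vanishing invariants is zero, is false. Take $A = \mathbf{1}$ and $M = L_{2}$: this is a finitely generated (indeed simple) $A$-module with $M_{0} = \Hom(\mathbf{1}, M) = 0$ but $M \neq 0$. The same counterexample defeats the auxiliary claim in your last paragraph that a module over the ordinary ring $\overline{A}$ with no invariants is zero --- a module over an ordinary algebra in $\Ver_{p}^{\ind}$ can live entirely in non-trivial isotypic components, and $\Hom_{\C}(\mathbf{1},-)$ is not faithful on such modules. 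Consequently, deducing $M_{0} = 0$ via ordinary Nakayama over $A^{\inv}$ does not yield $M = 0$, and the whole strategy of collapsing $M$ onto its invariants does not go through (one would at least have to track all multiplicity spaces $\Hom(L_{i}, M)$, and these get mixed by the action of $I_{\neq 0}$). A secondary mismatch: your argument leans on Theorem \ref{Inv} and the nilpotence corollary, which require $A$ to be a finitely generated algebra, whereas the proposition is stated for arbitrary commutative Noetherian algebras in $\Ver_{p}^{\ind}$ or $\Ver_{p}^{\pro}$ (it is later applied to localizations and completions), so even a repaired version along these lines would not cover the stated generality. Also, your verification that $\mathcal{J}_{0} \subseteq \mathrm{rad}(A^{\inv})$ assumes $A \otimes_{A^{\inv}} A^{\inv}/\mathfrak{m}$ is a simple $A$-module, which is not automatic.

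The intended argument is much more direct and avoids isotypic decompositions entirely: since the Jacobson radical is defined here as the intersection of the annihilators of all simple $A$-modules, one only needs a maximal proper submodule. If $M \neq 0$, Noetherianity of $M$ together with Zorn's Lemma produces a maximal proper submodule $N \subsetneq M$; then $M/N$ is simple, so $I(M/N) = 0$ because $I \subseteq \mathcal{J}$, whence $IM \subseteq N \subsetneq M$, contradicting $IM = M$. This element-free form of the classical proof works verbatim in $\Ver_{p}^{\ind}$ and $\Ver_{p}^{\pro}$ and uses only the hypotheses actually stated.
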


\begin{proof} By Noetherianity of $A$,  if $M$ is not zero, we can apply Zorn's Lemma to the set of proper submodules of $M$ to show the existence of a maximal proper submodule $N$ of $M$. Since $M/N$ is simple, $I(M/N) = 0$. Hence, $IM \subseteq N$, a contradiction. \end{proof}

We will want a better description of the Jacobson radical of commutative rings in $\Ver_{p}$. 

\begin{proposition} \label{Jacobson} Let $A$ be a commutative ind-algebra in $\Ver_{p}$. Then, the Jacobson radical of $A$ is the intersection of all maximal ideals of $A$.

\end{proposition}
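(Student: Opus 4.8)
The plan is to mimic the classical argument that the Jacobson radical of a commutative ring equals the intersection of its maximal ideals, but carefully replacing "localize at a maximal ideal" and "element-wise" manipulations by module-theoretic statements available in $\Ver_p^{\ind}$. First I would observe the easy inclusion: every maximal ideal $\m$ of $A$ is the annihilator of the simple module $A/\m$ — here one needs that $A/\m$ is genuinely simple as an $A$-module, which follows because maximality of $\m$ as an ideal means $A/\m$ has no proper nonzero submodules (submodules of $A/\m$ are exactly ideals of $A$ containing $\m$). Hence $\J = \bigcap_M \Ann(M) \subseteq \bigcap_\m \m$.

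For the reverse inclusion, let $I = \bigcap_{\m} \m$ be the intersection of all maximal ideals; I want to show $I \subseteq \Ann(M)$ for every simple left $A$-module $M$. Given a simple $M$, I would produce a maximal ideal $\m$ with $\Ann(M) \subseteq \m$; then since $I \subseteq \m$ and $\Ann(M)$ is the largest subobject acting as zero, I need $I$ to act as zero on $M$, which requires relating $I$ and $\m$ to $\Ann(M)$ more tightly. The clean route is: for a simple module $M$, pick any nonzero map $\mathbf{1} \to M$ (exists since $M$ is a nonzero object in $\Ver_p^{\ind}$, and $\mathbf{1}$ generates $\Vec \subseteq \Ver_p$), giving by adjunction/freeness a surjection of $A$-modules $A \twoheadrightarrow M$ (surjective because $M$ is simple and the image is a nonzero submodule). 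The kernel $\m := \ker(A \to M)$ is then a submodule of $A$, i.e. an ideal, with $A/\m \cong M$ simple, so $\m$ is a maximal ideal, and $\Ann(M) = \Ann(A/\m)$ which is the largest ideal contained in $\m$. Now $I \subseteq \m$; I must upgrade this to $I \subseteq \Ann(M)$, i.e. $I$ kills $A/\m$, i.e. $I \cdot A \subseteq \m$, which is automatic since $I$ is an ideal and $I \subseteq \m$. That gives $I \subseteq \Ann(M)$ for all simple $M$, hence $I \subseteq \J$.

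Wait — the subtle point, and the step I expect to be the main obstacle, is the passage "$M$ simple $\Rightarrow$ there is a nonzero morphism $\mathbf{1} \to M$ inducing a surjection $A \to M$." Over $\Vec$ this is "pick a nonzero vector"; in $\Ver_p^{\ind}$ a simple $A$-module need not contain $\mathbf{1}$ as a subobject at all (e.g. it could be built from $L_i$'s only), so $\Hom_{\Ver_p^{\ind}}(\mathbf 1, M)$ could be zero. The fix is to not insist on $\mathbf 1$: a nonzero object $X \in \Ver_p$ with a map $X \to M$ whose image generates $M$ as an $A$-module always exists (take $X$ any nonzero subobject of $M$ in $\Ver_p$), yielding a surjection $A \otimes X \twoheadrightarrow M$ of $A$-modules by the freeness property recalled after Definition \ref{algebra}. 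Then $N := \ker(A\otimes X \to M)$ need not be of the form "ideal", but I can still argue as follows: by Zorn/Noetherianity (Theorem \ref{Hilb2}, since $A$ finitely generated is not assumed — but $\Ann(M)$-business only needs the ideal structure) extend to a maximal ideal. Concretely: $\Ann(M)$ is a proper ideal (it doesn't contain $\iota(\mathbf 1)$ since $1$ acts as identity on $M \neq 0$), and any proper ideal is contained in a maximal ideal $\m$ (this needs that $A$ has maximal ideals above every proper ideal; for finitely generated $A$ this is Zorn plus the fact that an increasing union of proper ideals is proper because $\iota(\mathbf 1) \not\subseteq$ any of them, and the union is again an ideal — one should check the colimit of proper ideals avoids the image of $\iota$, which holds since that image is the simple object $\mathbf 1$ and lies in none of them). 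Then $I \subseteq \m$. To conclude $I \subseteq \Ann(M)$ I finally use that $\m \supseteq \Ann(M)$ is not quite enough; instead I run the argument the other way: show directly that any $\m$-maximal ideal annihilates the corresponding $M = A/\m'$ for $\m'$ the ideal I constructed, and that every simple $M$ arises (up to the $\Ann$) from such an $A/\m'$. The honest cleanest version, which I would actually write, restricts attention to the ideals: show $\J = \bigcap\{\Ann(M)\} = \bigcap\{\m' : \m' \text{ maximal}\}$ by the two containments above, where the forward one is immediate and the reverse one uses that each maximal $\m'$ equals $\Ann(A/\m')$ (since $\m'$ is maximal as an ideal, $A/\m'$ has no nonzero proper submodule-ideals, so it is a simple $A$-module and $\Ann(A/\m') \subseteq \m'$ while also $\Ann(A/\m')\supseteq \m'$ as $\m'$ kills $A/\m'$). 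Thus $\bigcap_{\m'} \m' = \bigcap_{\m'}\Ann(A/\m') \supseteq \J$, completing the proof. The only real content beyond bookkeeping is the existence of maximal ideals over proper ideals, which I would justify via Zorn using finite generation / Noetherianity (Theorem \ref{Hilb2}) and the observation that $\iota : \mathbf 1 \hookrightarrow A$ has image in no proper ideal.
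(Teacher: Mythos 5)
Your forward inclusion $\J \subseteq \bigcap_{\m} \m$ is correct and agrees with the paper, but the reverse inclusion $\bigcap_{\m}\m \subseteq \J$ is never actually established, and this is the only direction with real content. You correctly identify the obstruction yourself: a simple $A$-module $M$ in $\Ver_{p}^{\ind}$ need not receive a nonzero map from $\mathbf{1}$, so it need not be of the form $A/\m$ (already for $A = \mathbf{1}$ the object $L_{2}$ is a simple module not isomorphic to any $A/\m$), and hence the classical ``simple $\Rightarrow$ cyclic quotient by a maximal ideal'' argument breaks. But your proposed repairs do not close this. Knowing only that $\Ann(M)$ is \emph{contained} in some maximal ideal is useless: you need $\Ann(M)$ to be \emph{large}, namely $\Ann(M) \supseteq \bigcap_{\m}\m$ for every simple $M$. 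The assertion ``every simple $M$ arises (up to the $\Ann$) from such an $A/\m'$'' is precisely this missing step and is stated without proof. And your final ``honest cleanest version'' concludes only $\bigcap_{\m'}\m' = \bigcap_{\m'}\Ann(A/\m') \supseteq \J$, which is the easy inclusion restated, not its reverse. So as written the proposal proves one inclusion twice and the other not at all.

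What is missing is the $\Ver_{p}$-specific input on which the paper's proof turns: by Lemma \ref{nilpotence}, the ideal $I$ generated by all simple subobjects of $A$ not isomorphic to $\mathbf{1}$ is locally nilpotent, so for any simple $A$-module $M$ one has $IM \neq M$, hence $IM = 0$. Thus every simple $A$-module is a module over the ordinary commutative algebra $A/I$; by semisimplicity of $\Ver_{p}$ such a simple module has the form $V \otimes L_{i}$ with $V$ a simple ordinary $A/I$-module, so its annihilator is the preimage in $A$ of a maximal ideal of $A/I$, i.e.\ a maximal ideal of $A$. This is exactly what yields $\Ann(M) \supseteq \bigcap_{\m}\m$ for every simple $M$ and finishes the hard direction; your argument never brings in this nilpotence, and without it no purely formal rearrangement of the classical proof will work. (Two minor points: the proposition does not assume $A$ finitely generated, so appeals to Theorem \ref{Hilb2} are not available in general, though your Zorn argument for the existence of maximal ideals over proper ideals, using that a proper ideal cannot contain the image of the unit, is fine; and that observation, while correct, is tangential to the real gap.)
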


\begin{proof} Note that the Jacobson radical $\mathcal{J}$ must be contained inside every maximal ideal since $A/\m$ is a simple $A$-module for every maximal ideal $\m$. Hence, the Jacobson radical is contained inside the intersection of all maximal ideals. To show the reverse inclusion, let $I$ be the ideal generated by all simple subobjects not isomorphic to $\mathbf{1}$. Then, $I$ is a locally nilpotent ideal by Lemma \ref{nilpotence}. Hence, if $M$ is any simple $A$-module, $IM \not=M$ by local nilpotence of $I$ and hence $IM = 0$. Thus, every simple $A$-module is a simple $A/I$-module and $I \subseteq \mathcal{J}$. 

Now, the Jacobson radical of $A/I$ is the intersection of all maximal ideals of $A/I$, as this is an ordinary commutative algebra over $\k$. But the Jacobson radical of $A/I$ must be contained inside $\mathcal{J}/I$ as every simple $A$-module is also a simple $A/I$-module. Hence, $\mathcal{J}/I$ is contained in the intersection of all $\m/I$, with $\m$ ranging over all maximal ideals of $A$. Thus, $\mathcal{J} \subseteq \bigcap \m + I$, but as $I \subseteq \mathcal{J}$, this implies $\mathcal{J} \subseteq \bigcap \m.$
\end{proof}

We will also want a good description of the nilradical of a commutative ring in $\Ver_{p}$.

\begin{definition} Let $A$ be a commutative ind-algebra in $\Ver_{p}$. The nilradical $N$ of $A$ is the sum of all nilpotent ideals in $A$.

\end{definition}

\begin{remark} The nilradical is always a locally nilpotent ideal, so if $A$ is Noetherian, then it is nilpotent. In particular, this holds if $A$ is finitely generated. 

\end{remark}

Note that by Lemma \ref{nilpotence}, $N$ must contain all the simple subobjects of $A$ not isomorphic to $\mathbf{1}$. 

\begin{proposition} The nilradical of a finitely generated commutative ind-algebra in $\Ver_{p}$ is the intersection of all the maximal ideals. Hence, it coincides with the Jacobson radical.

\end{proposition}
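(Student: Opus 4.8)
The plan is to reduce the statement to the analogous classical fact via the reduction machinery from Section~2, exactly as in the proof of Proposition~\ref{Jacobson}. Let $A$ be a finitely generated commutative ind-algebra in $\Ver_{p}$, let $N$ be its nilradical, and let $\mathcal{J}$ be its Jacobson radical. By the preceding remark $N$ is nilpotent, and by Lemma~\ref{nilpotence} it contains the ideal $I$ generated by all simple subobjects of $A$ not isomorphic to $\mathbf{1}$; moreover $I$ is itself nilpotent (Corollary after Lemma~\ref{nilpotence}), so in particular $I \subseteq N$. The first step is therefore to pass to $\overline{A} = A/I$, which is an honest finitely generated commutative $\k$-algebra, and observe that ideals of $A$ containing $I$ correspond bijectively to ideals of $\overline{A}$, compatibly with the properties ``nilpotent'', ``maximal'', and (as already shown in Proposition~\ref{Jacobson}) ``Jacobson radical''.

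Next I would run the three inclusions. First, $N \subseteq \mathcal{J}$: every nilpotent ideal is contained in every maximal ideal because $A/\mathfrak{m}$ is a simple $A$-module and a nilpotent ideal must act as $0$ on any simple module (its action is both nilpotent and, on a simple module, either $0$ or surjective), hence $N$ is contained in the intersection of all maximal ideals, which by Proposition~\ref{Jacobson} equals $\mathcal{J}$. Second, $\mathcal{J} \subseteq N$: since $I \subseteq N$, it suffices to show $\mathcal{J}/I \subseteq N/I$ inside $\overline{A}$; but $\mathcal{J}/I$ is contained in the Jacobson radical of $\overline{A}$ (every simple $\overline{A}$-module is a simple $A$-module), which for a finitely generated commutative $\k$-algebra coincides with its nilradical by the classical Nullstellensatz, and the nilradical of $\overline{A}$ pulls back into $N$ because a nilpotent ideal of $\overline{A} = A/I$ lifts to an ideal of $A$ that is nilpotent modulo the nilpotent ideal $I$, hence nilpotent. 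Combining, $N = \mathcal{J} = \bigcap \mathfrak{m}$.

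I would then address the one point that is not purely formal: checking that nilpotence behaves well under the correspondence $A \leftrightarrow \overline{A}$ in both directions — that is, that the preimage in $A$ of a nilpotent ideal of $\overline{A}$ is genuinely nilpotent (clear, since $I^k = 0$ for some $k$) and that $N/I$ really is the full nilradical of $\overline{A}$ and not something smaller (clear, since any nilpotent ideal of $\overline{A}$ lifts as above). The main obstacle, such as it is, is simply making sure the module-theoretic lemma ``a nilpotent ideal annihilates every simple module'' is valid in $\Ver_{p}^{\ind}$; this follows because for a simple $A$-module $M$ and a nilpotent ideal $\mathfrak{n}$, the submodules $\mathfrak{n}M \supseteq \mathfrak{n}^2 M \supseteq \cdots$ terminate at $0$, so $\mathfrak{n}M \neq M$, whence $\mathfrak{n}M = 0$ by simplicity — the same argument already used for $I$ in Proposition~\ref{Jacobson}. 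Everything else is bookkeeping via Theorem~\ref{Hilb2} (Noetherianity) and the nilpotence results already established.
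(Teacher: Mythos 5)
Your proof is correct and follows essentially the same route as the paper: the easy inclusion of the nilradical into every maximal ideal, then reduction to an ordinary finitely generated commutative $\k$-algebra via the nilpotent ideal coming from Lemma \ref{nilpotence} and the classical Jacobson-ring property, together with Proposition \ref{Jacobson}. The only cosmetic difference is that you quotient by $I$ and lift the nilradical of $\overline{A}$ back along the nilpotent kernel, whereas the paper quotients by $N$ itself and observes directly that the reduced quotient has trivial intersection of maximal ideals.
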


\begin{proof} Clearly every maximal ideal contains the nilradical. Hence, the nilradical is always contained inside the intersection of all maximal ideals. To show the reverse, we note that reducing mod $N$ gives us a finitely generated reduced commutative algebra over $\k$. Such algebras are Jacobson rings and hence, mod $N$, the intersection of all maximal ideals is $0$, since it is the intersection of all the prime ideals. 
\end{proof} 

\begin{definition}[Rees Algebra] Let $A$ be a commutative ind-algebra in $\Ver_{p}$ and $I$ be an ideal of $A$. The Rees algebra of $A$ with respect to $I$ is the blowup $\Bl_{I}A := \displaystyle\bigoplus_{n=0}^{\infty} I^{n}.$
If $M$ is an $A$-module with a descending filtration $F = \{M_{n}\}$ such that $IM_{n} \subseteq M_{n+1}$,  then $\Bl_{F} M := \displaystyle\bigoplus_{n=0}^{\infty} M_{n}.$
In particular, $\Bl_{I}M$ is the blowup with respect to the filtration $I^{n}M$.

\end{definition}

\begin{remark} If $A$ is finitely generated, so are $I$ and $\Bl_{I}A$, and hence $\Bl_{I}A$ is Noetherian.

\end{remark}

\begin{proposition}[Artin-Rees Lemma] \label{Artin-Rees} Let $A$ be a finitely generated commutative ind-algebra in $\Ver_{p}$.  Let $I$ be an ideal in $A$ and suppose $M$ is a finitely generated $A$-module. Let $N$ be a submodule of $M$. Then, there exists some $k \ge 1$ such that for $n \ge k$, $I^{n}M \cap N = I^{n-k}((I^{k}M)\cap N).$

\end{proposition}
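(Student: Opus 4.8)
The plan is to mimic the classical proof of the Artin–Rees lemma via the Rees algebra, using the Noetherianity results already available in $\Ver_p^{\ind}$. First I would form the Rees algebra $R := \Bl_I A = \bigoplus_{n \ge 0} I^n$, which is finitely generated (since $A$ and hence $I$ are finitely generated) and therefore Noetherian by Theorem \ref{Hilb2}. Next I would build the blown-up module $\Bl_I M = \bigoplus_{n \ge 0} I^n M$; this is a finitely generated $R$-module, because a finite generating object $X \to M$ of $M$ over $A$ gives a generating object of $\Bl_I M$ over $R$ concentrated in degree $0$. Inside it, consider the graded submodule $N^* := \bigoplus_{n \ge 0} \big( (I^n M) \cap N \big)$: one must check this is actually an $R$-submodule, i.e.\ that $I^k \cdot \big((I^n M)\cap N\big) \subseteq (I^{n+k}M) \cap N$, which is immediate since the left side lies in both $I^{n+k}M$ and (as $N$ is an $A$-submodule, hence $I$-stable) in $N$.

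Since $R$ is Noetherian, $N^*$ is a finitely generated $R$-module, so its generators sit in degrees $\le k$ for some $k$; equivalently $N^*$ is generated as an $R$-module by $\bigoplus_{n=0}^{k} \big((I^n M)\cap N\big)$. Unwinding the grading, this says precisely that for $n \ge k$,
$$ (I^n M) \cap N \;=\; \sum_{j=0}^{k} I^{n-j}\big((I^j M)\cap N\big). $$
Because $(I^j M) \cap N \subseteq (I^{j-1}M)\cap N$ and $I^{n-j} \subseteq I^{n-k}$ whenever $j \le k$, every term on the right is contained in $I^{n-k}\big((I^k M)\cap N\big)$, and conversely that single term already appears in the sum, so the sum collapses to $I^{n-k}\big((I^k M)\cap N\big)$. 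The reverse inclusion $I^{n-k}\big((I^k M)\cap N\big) \subseteq (I^n M)\cap N$ is clear: applying $I^{n-k}$ to $(I^k M)\cap N$ lands in both $I^n M$ and $N$. This gives the claimed equality $I^n M \cap N = I^{n-k}\big((I^k M)\cap N\big)$ for all $n \ge k$.

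The one genuinely categorical point — and the step I expect to require the most care — is the finite generation of $\Bl_I M$ as an $R$-module and, relatedly, the passage "finitely generated graded module over a Noetherian graded algebra has generators in bounded degree." In $\Vec$ this is a routine consequence of Noetherianity applied to the chain of submodules generated by $\bigoplus_{n \le d}(\Bl_I M)_n$ as $d$ grows; here I would run the same argument using the ascending chain condition from Definition \ref{Noetherian} and Theorem \ref{Hilb2}, noting that each "truncation submodule" $R \cdot \big(\bigoplus_{n \le d} (I^n M)\cap N\big)$ is a genuine $R$-submodule of $N^*$ and that their union is all of $N^*$, so the chain stabilizes. Everything else is formal manipulation of graded pieces exactly as in the classical case, with $\otimes$ in place of scalar multiplication and no reliance on elements.
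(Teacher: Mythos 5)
Your proposal is correct and follows essentially the same route as the paper: form the Rees algebra $\Bl_{I}A$, invoke Theorem \ref{Hilb2} for Noetherianity, deduce finite generation of the graded submodule $\bigoplus_{n}\big((I^{n}M)\cap N\big)$ of $\Bl_{I}M$, and read off the stabilization from bounded-degree generators (the paper packages the same argument as the statement that a filtration is eventually $I$-stable iff its blowup is finitely generated). One small repair: the collapse of $\sum_{j=0}^{k} I^{n-j}\big((I^{j}M)\cap N\big)$ into $I^{n-k}\big((I^{k}M)\cap N\big)$ should be justified by factoring $I^{n-j}=I^{n-k}I^{k-j}$ and using $I^{k-j}\big((I^{j}M)\cap N\big)\subseteq (I^{k}M)\cap N$, rather than by the containments you cite, which point the wrong way.
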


\begin{proof} We first prove that for any descending filtration $M_{n}$ on $M$, $\Bl_{F} M$ is finitely generated over $\Bl_{I}A$ if and only if $IM_{n} = M_{n+1}$ for $n \gg 0$. If $IM_{n} = M_{n+1}$ for $n > n_{0}$, then $\Bl_{F} M$ is generated by $M_{0}, \ldots, M_{n_{0}}$. Hence, it is finitely generated. Conversely, if it is finitely generated, say by $M_{0}, \ldots, M_{n_{0}}$ for some $n_{0}$, then for any $n \ge n_{0}$

$$M_{n} \subseteq \bigoplus_{j=n-n_{0}}^{n} I^{j} \cdot M_{n-j}.$$
Since $I^{j}M_{n-j} \subseteq IM_{n-1}$, $M_{n} \subseteq IM_{n-1}$ and the reverse inclusion is by definition of the filtration.

Now, consider the natural $I$-filtration on $M$, where $M_{n} = I^{n}M$ and the induced filtration $F$ on $N$, with $N_{n} = (I^{n}M \cap N).$ By the result proved above, $\Bl_{I}M$ is finitely generated. Hence, it is Noetherian by Theorem \ref{Hilb2} and $\Bl_{F}N$ is a finitely generated module over $\Bl_{I} A$. Thus there exists some $k$ such that for $n > k$, $I N_{n-1} = N_{n}$. This proves the proposition.
\end{proof}

\begin{proposition}[Krull Intersection Theorem] \label{Krull} Let $A$ be a finitely generated local commutative ind-algebra in $\Ver_{p}$. Let $I$ be a proper ideal in $A$. Then $\bigcap_{n} I^{n} = 0.$ 
More generally, this holds when $A$ is local Noetherian, the Artin-Rees lemma holds for $A$ and its maximal ideal and the Jacobson radical of $A$ is the maximal ideal.

\end{proposition}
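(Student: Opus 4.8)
The plan is to mimic the classical proof, which deduces the statement from the Artin–Rees lemma together with the Nakayama lemma. First I would set $N := \bigcap_n I^n$, which is a well-defined subobject of $A$ (being an intersection of ideals, it is an ideal of $A$). The goal is to show $N = 0$. Since $A$ is finitely generated it is Noetherian by Theorem \ref{Hilb2}, so $N$ is a finitely generated $A$-module, and we may apply the Artin–Rees lemma (Proposition \ref{Artin-Rees}) with $M = A$ and the submodule $N$: there exists $k \ge 1$ such that for all $n \ge k$,
$$I^n \cap N = I^{n-k}\bigl((I^k)\cap N\bigr).$$

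Next I would observe that $N \subseteq I^n$ for every $n$ by definition, so $I^k \cap N = N$ and hence $I^n \cap N = N$ for all $n \ge k$ as well. Feeding this into the Artin–Rees identity at, say, $n = k+1$ gives
$$N = I^{k+1} \cap N = I\bigl((I^k) \cap N\bigr) = I \cdot N,$$
using $I^k \cap N = N$ once more. So $N$ is a finitely generated $A$-module with $IN = N$. Now I invoke the Nakayama lemma (Proposition \ref{Nakayama}): this requires $I$ to be contained in the Jacobson radical of $A$. In the finitely generated local case this is automatic, since the unique maximal ideal is the Jacobson radical (by Proposition \ref{Jacobson}, or the subsequent proposition identifying the Jacobson radical with the intersection of maximal ideals) and $I$ is proper hence contained in it. Therefore $N = 0$, as desired. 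For the ``more generally'' clause the same argument goes through verbatim: Noetherianity supplies finite generation of $N$, the hypothesized Artin–Rees property supplies the identity $I^n \cap N = I^{n-k}((I^k)\cap N)$, and the hypothesis that the Jacobson radical equals the maximal ideal (which contains the proper ideal $I$) licenses the application of Nakayama.

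The only mild subtlety — and the one place to be careful rather than a genuine obstacle — is making sure the categorical bookkeeping in $\Ver_p^{\ind}$ behaves: that $\bigcap_n I^n$ is genuinely a subobject (this is fine since $\Ver_p^{\ind}$ is a well-behaved abelian category and the $I^n$ form a descending chain of subobjects of $A$), that $I^k \cap N = N$ literally as subobjects (immediate from $N \subseteq I^k$), and that the multiplication map realizes $I \cdot N$ as claimed (this is just the definition of the product of an ideal with a submodule via $m$). All of the hard analytic content — Noetherianity, Artin–Rees, Nakayama — has already been established earlier in the section, so the proof is a short formal deduction.
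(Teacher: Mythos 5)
Your proof is correct and is exactly the argument the paper intends: the paper's own proof simply states that the result is an immediate consequence of the Artin--Rees Lemma and Nakayama's Lemma, using Proposition \ref{Jacobson} to identify the Jacobson radical with the unique maximal ideal, and your write-up fills in precisely those steps. No substantive difference in approach.
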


\begin{proof} This is an immediate consequence of the Artin-Rees Lemma and Nakayama's Lemma, noting that Proposition \ref{Jacobson} implies that the Jacobson radical of $A$ is the unique maximal ideal. 
\end{proof}

\subsection{Localizations}

In this subsection, we want to define localizations of finitely generated commutative algebras in $\Ver_{p}$. Defining localizations categorically is a bit of a pain so we instead take a shortcut. If $A$ is a commutative ind-algebra in $\Ver_{p}$, then, as an object in $\Ver_{p}^{\ind}$, $A \cong A_{0} \oplus A_{\not=0}$
with $A_{\not=0}$ the direct sum of all simple subobjects in $A$ not isomorphic to $\mathbf{1}$.

\begin{definition} Let $A$ be commutative ind-algebra in $\Ver_{p}$. A \emph{multiplicative subset} $S$ of $A$ is a multiplicative subset of $A_{0}$.

\end{definition}

We can use elements to denote a multiplicative subset, since $A_{0}$ is just some vector space. Hence, we can localize with respect to $S$ in exactly the same manner as in ordinary commutative algebra. 

\begin{definition} Given a multiplicative subset $S = \{x_{i} : i \in I\}$ of a commutative ind-algebra $A$ (with $I$ some index set), the \emph{localization} $A_{S}$ of $A$ with respect to $S$ is the ind-algebra

$$\left(A \otimes \mathbf{k}[x_{i}^{-1}: i \in I]\right)/(x_{i}x_{i}^{-1} - 1).$$

\end{definition}

Put in simpler terms, we can treat a subobject of $A$ isomorphic to $\mathbf{1}$ as the line spanned by an actual element in $A$, and we can manually adjoin inverses to these elements in order to localize.

\begin{proposition} If $\m$ is a maximal ideal of $A$, then the elements of $A_{0}$ that aren't in $\m$ form a multiplicative subset. Hence, we can define an algebra $A_{\m}$, the localization of $A$ at $\m$, as the localization with respect to this multiplicative subset.

\end{proposition}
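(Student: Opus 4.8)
The plan is to show that the set $T$ of elements of $A^{\inv}$ that do not lie in the maximal ideal $\m$ is a multiplicative subset of $A^{\inv}$, so that the construction $A_{\m} := A_{T}$ makes sense. Recall from the preceding discussion that a multiplicative subset of $A$ is by definition just a multiplicative subset of the ordinary commutative $\k$-algebra $A^{\inv}$, so everything here takes place in ordinary commutative algebra once we know that $T$ behaves correctly. First I would verify that $1 \in T$: the image of the unit $\iota$ spans a trivial subobject of $A$, hence $1 \in A^{\inv}$, and since $\m$ is a \emph{proper} ideal it cannot contain $1$, so $1 \in T$. Next I would check closure under multiplication: if $s, t \in T$, then $s, t \in A^{\inv}$, and since $A^{\inv}$ is a subalgebra of $A$ (under the canonical inclusion of $\Vec$) we have $st \in A^{\inv}$; moreover $st \notin \m$ because $\m$, being an ideal, satisfies the property that $st \in \m$ would force (again using commutativity and the fact that $\m \cap A^{\inv}$ is an honest ideal of the ring $A^{\inv}$) either $s \in \m$ or $t \in \m$ — wait, that last implication needs primeness.

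The one genuine point requiring care — and the step I expect to be the main obstacle — is exactly this: knowing that $\m \cap A^{\inv}$ is a \emph{prime} ideal of the ordinary ring $A^{\inv}$, so that the complement $T$ is multiplicatively closed. I would handle this as follows. Because $\m$ is maximal, $A/\m$ is a simple $A$-module, in fact (being a quotient algebra by a maximal ideal) it is a field in the appropriate categorical sense; concretely, by Lemma \ref{nilpotence} the ideal generated by all simple subobjects not isomorphic to $\mathbf{1}$ is locally nilpotent, so it is contained in every maximal ideal, and therefore $A/\m$ is a quotient of the ordinary finitely generated commutative $\k$-algebra $A/I$ by a maximal ideal; since $\k$ is algebraically closed this quotient is just $\k$. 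Restricting the quotient map $A \to A/\m = \k$ to the subalgebra $A^{\inv}$ gives a $\k$-algebra homomorphism $A^{\inv} \to \k$ whose kernel is precisely $\m \cap A^{\inv}$; the kernel of a homomorphism to a field (indeed to a domain) is prime, so $\m \cap A^{\inv}$ is prime and $T = A^{\inv} \setminus \m$ is multiplicatively closed.

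Having established that $T$ is a multiplicative subset, the localization $A_{\m} := A_{T}$ is defined by the construction given just above in the text, namely $\left(A \otimes \k[x_i^{-1} : x_i \in T]\right)/(x_i x_i^{-1} - 1)$, and this completes the proof. I would close by remarking that this $A_{\m}$ is a local ind-algebra in $\Ver_p$ in the sense used later: its unique maximal ideal is the one generated by the image of $\m$, which follows from the corresponding statement for the ordinary local ring $(A^{\inv})_{\m \cap A^{\inv}}$ together with the fact (Lemma \ref{nilpotence}) that the non-trivial isotypic part of $A_{\m}$ generates a nilpotent, hence radical, ideal. No deep input is needed beyond Lemma \ref{nilpotence} and the elementary observation that maximal ideals of finitely generated commutative $\k$-algebras have residue field $\k$; the content is purely that the categorical structure does not interfere with the classical localization.
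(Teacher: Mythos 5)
Your argument is correct and supplies exactly the detail the paper leaves as ``obvious'': the only real content is that $\m \cap A^{\inv}$ is prime, which you get from the fact (already used in Proposition \ref{Jacobson}, via Lemma \ref{nilpotence}) that the locally nilpotent ideal generated by $A_{\not=0}$ lies in every maximal ideal, so $A/\m$ is an ordinary field and the restriction $A^{\inv} \rightarrow A/\m$ has prime kernel. One tiny remark: the claim that the residue field is $\k$ uses finite generation (the ambient assumption of the subsection) and is not actually needed — a domain target already gives primeness, so your proof works for any commutative ind-algebra in $\Ver_{p}$.
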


\begin{proof} The proof is obvious.
\end{proof}

The standard facts about localizations at maximal ideals still hold for finitely generated algebras.

\begin{proposition} \label{localizeprop} Let $A$ be a finitely generated commutative ind-algebra in $\Ver_{p}$ and let $\m$ be a maximal ideal in $A$.

\begin{enumerate}

\item[1.] There is a natural map from $A$ into $A_{\m}$ whose kernel is the sum of annihilators of elements of $S$ under the multiplication action of $A$ on itself.

\item[2.] If $K$ is the kernel of the map from $A$ into $A_{\m}$, ideals in $A_{\m}$ correspond to ideals of $A/K$ contained in $\m/K$. Hence, $A_{\m}$ is local with maximal ideal $\m$.

\item[3.] Finitely generated $A_{\m}$ modules are Noetherian.

\item[4.] Finitely generated $A_{\m}$ algebras are Noetherian as algebras, i.e., their finitely generated modules are Noetherian. 

\item[5.] The Artin-Rees lemma and Krull Intersection Theorem hold for $A_{\m}$.

\item[6.] Define $\m^{\infty}$ as $\cap_{n} \m^{n}$. Then, $\m(\m^{\infty}) = \m^{\infty}.$

\end{enumerate}

\end{proposition}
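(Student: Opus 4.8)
The plan is to pull all six statements back to the \emph{ordinary} Noetherian local $\k$-algebra $R:=(A^{\inv})_{\m^{\inv}}$, where $\m^{\inv}:=\m\cap A^{\inv}$, by means of one structural observation: since the multiplicative set $S$ defining $A_{\m}$ sits inside the trivial subalgebra $A^{\inv}$, the explicit construction of localization yields a canonical isomorphism of ind-algebras
$$A_{\m}\;\cong\;A\otimes_{A^{\inv}}(A^{\inv})_{\m^{\inv}},$$
and more generally $A_{S}\cong A\otimes_{A^{\inv}}(A^{\inv})_{S}$ (one checks this by passing the relations $x_{i}x_{i}^{-1}-1$ through the right-exact functor $A\otimes_{A^{\inv}}(-)$). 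First I would record that $A/\m\cong\k$ (so that $\m$ is prime and $\m^{\inv}$ is a maximal ideal of the finitely generated $\k$-algebra $A^{\inv}$, making $R$ a Noetherian local ring): the ideal $I$ generated by the nontrivial simples is locally nilpotent by Lemma \ref{nilpotence}, hence lies inside $\m$, so $A/\m$ is the quotient of the underlying ordinary algebra $\overline{A}=A/I$ by a maximal ideal and is therefore $\k$ by the Nullstellensatz. Since $(A^{\inv})_{S}$ is a filtered colimit of free $A^{\inv}$-modules, the functor $M\mapsto M_{S}:=M\otimes_{A^{\inv}}(A^{\inv})_{S}$ on $A^{\inv}$-modules in $\Ver_{p}^{\ind}$ is exact and agrees with the colimit of the system $(M)_{s\in S}$ with transition maps given by multiplication by $s$; and since $A$ is module-finite over $A^{\inv}$ by Theorem \ref{Inv}, $A_{\m}$ is module-finite over $R$.

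Granting this, parts 1, 6 and 2 are essentially formal. Part 1 is exactness of localization: $\ker(A\to A_{\m})$ is the directed union, hence the sum, of the subobjects $\ker(s\cdot\colon A\to A)$ for $s\in S$, i.e. the sum of annihilators of the elements of $S$. Part 6 is an Artin--Rees identity: applying the Artin--Rees statement of part 5 to $N=\m^{\infty}\subseteq M=A_{\m}$ and using $\m^{\infty}\subseteq\m^{n}$ for all $n$ gives $\m^{\infty}=\m^{n-k}\m^{\infty}$ for $n\ge k$, and $n=k+1$ yields $\m\cdot\m^{\infty}=\m^{\infty}$. For part 2, set $K:=\ker(A\to A_{\m})$; then $A/K\to A_{\m}$ is injective and $A_{\m}\cong(A/K)_{S}$, so the classical bijection between ideals of a localization and $S$-saturated ideals of the base applies, and what remains is to see that every ideal of $A_{\m}$ lies in $\m A_{\m}$ --- equivalently, that $\m A_{\m}=\m/K\cdot A_{\m}$ is the unique maximal ideal. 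For this I would combine the fact that maximal ideals of $A_{\m}$, module-finite over the local ring $R$, all lie over the maximal ideal of $R$, with the fact that $\m$ is the unique maximal ideal of $A$ contracting to $\m^{\inv}$ in $A^{\inv}$ --- which holds because $A$ surjects onto $\overline{A}=A/I$ with $I$ nilpotent, $A^{\inv}$ also surjects onto $\overline{A}$ (as $A=A^{\inv}+A_{\not=0}\subseteq A^{\inv}+I$), and hence $\Spec A\cong\Spec\overline{A}$ embeds into $\Spec A^{\inv}$ via $\m\mapsto\m^{\inv}$.

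Parts 3, 4 and 5 then rest on a ``relative Noetherianity'' lemma: \emph{if $R$ is an ordinary Noetherian commutative $\k$-algebra, then every $R$-module in $\Ver_{p}^{\ind}$ that is finitely generated over $R$ is Noetherian, and every commutative algebra in $\Ver_{p}^{\ind}$ that is finitely generated as an $R$-algebra is Noetherian.} I would prove the module case using the isotypic decomposition $M\cong\bigoplus_{i=1}^{p-1}\mathrm{Hom}_{\Ver_{p}^{\ind}}(L_{i},M)\otimes L_{i}$: because $R$ is a trivial algebra, each $\mathrm{Hom}(L_{i},M)$ is an ordinary $R$-module, finitely generated when $M$ is (using that $\mathrm{Hom}(L_{i},-)$ is exact and commutes with tensoring by $R$) and so classically Noetherian; since there are only finitely many simples and $R$-subobjects decompose componentwise, the ascending chain condition on $M$ follows. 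The algebra case reduces to this: a finitely generated $R$-algebra is a quotient of $R\otimes S(X)$ for some $X\in\Ver_{p}$, and $S(X)$ --- a finitely generated commutative $\k$-algebra --- is module-finite over the finitely generated $\k$-algebra $S(X)^{\inv}$ by Theorem \ref{Inv}, so $R\otimes S(X)$ is module-finite over the ordinary ring $R\otimes S(X)^{\inv}$ (finitely generated over $R$, hence Noetherian) and the module case applies. Now part 3 is immediate, since $A_{\m}$ is a finitely generated module over $R$; part 4 follows since a finitely generated $A_{\m}$-algebra is a finitely generated $R$-algebra; and part 5 reruns the proofs of Propositions \ref{Artin-Rees} and \ref{Krull} verbatim, with Noetherianity of $A_{\m}$ --- hence of its Rees algebras $\Bl_{J}A_{\m}$, which are finitely generated $A_{\m}$-algebras --- replacing the appeal to Theorem \ref{Hilb2}, and with the Krull Intersection Theorem following from part 6 and Nakayama (Proposition \ref{Nakayama}), the Jacobson radical of $A_{\m}$ being its maximal ideal by part 2 and Proposition \ref{Jacobson}.

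The step I expect to be the main obstacle is the bookkeeping in part 2 --- specifically, confirming that $A_{\m}$ has a \emph{unique} maximal ideal. Localizing only at the trivial elements outside $\m$ does not a priori separate $\m$ from a different maximal ideal sharing its contraction to $A^{\inv}$; what rescues the argument is the specifically $\Ver_{p}$ phenomenon that $A$ is simultaneously a nilpotent thickening of $\overline{A}$ and module-finite over $A^{\inv}$ with $A^{\inv}$ surjecting onto $\overline{A}$, which forces $\Spec A\to\Spec A^{\inv}$ to be injective. Making that chain of reductions precise, and likewise verifying that the contraction of $\m A_{\m}$ to $A/K$ is exactly $\m/K$, is the real work; once $A_{\m}$ is known to be local and Noetherian, the remaining parts are routine transfers of the classical arguments already deployed in this section.
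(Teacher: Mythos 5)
Your treatment of parts 1--5 is essentially correct, and it takes a somewhat different route from the paper: you reduce everything to the ordinary Noetherian local ring $R=(A^{\inv})_{\m^{\inv}}$ via $A_{\m}\cong A\otimes_{A^{\inv}}R$, module-finiteness (Theorem \ref{Inv}), and the isotypic-multiplicity-space argument for Noetherianity, whereas the paper argues directly: for part 2 it decomposes an ideal as $I=I^{\inv}\oplus I^{\not=0}$, notes $I^{\not=0}$ is nilpotent hence contained in $\m$, so $I\not\subseteq\m$ forces $I^{\inv}\not\subseteq\m^{\inv}$ and $I$ meets $S$ --- this two-line observation is exactly what dissolves the ``two maximal ideals over the same $\m^{\inv}$'' worry you single out as the main obstacle (your $\Spec A\cong\Spec\overline{A}\hookrightarrow\Spec A^{\inv}$ plus lying-over argument is the same fact in heavier clothing); for parts 3--4 the paper just observes that free $A_{\m}$-modules and $A_{\m}\otimes S(X)$ are localizations of free $A$-modules and of $A\otimes S(X)$, which are Noetherian. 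Both routes work; yours has the merit of quoting only classical commutative algebra over $R$ once module-finiteness is in hand.

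The genuine problem is part 6. There $\m^{\infty}=\bigcap_{n}\m^{n}$ is an ideal of $A$, and the assertion $\m\,\m^{\infty}=\m^{\infty}$ is an identity in $A$; the corresponding statement inside $A_{\m}$ is vacuous, since by your own part 5 one has $\bigcap_{n}(\m A_{\m})^{n}=0$. You apply Artin--Rees for $A_{\m}$ to ``$N=\m^{\infty}\subseteq M=A_{\m}$'', but $\m^{\infty}$ is not a subobject of $A_{\m}$ at all --- indeed Krull in $A_{\m}$ forces $\m^{\infty}\subseteq\ker(A\to A_{\m})$ --- and applied to its image the identity you obtain is $\m\cdot(\m^{\infty}A_{\m})=\m^{\infty}A_{\m}$, i.e.\ $0=0$, which says nothing about the ideal $\m^{\infty}$ of $A$. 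The repair is immediate and stays inside your framework: apply Proposition \ref{Artin-Rees} in $A$ itself (legitimate, since $A$ is finitely generated) with $I=\m$, $M=A$, $N=\m^{\infty}$; since $\m^{\infty}\subseteq\m^{n}$ for all $n$, this gives $\m^{\infty}=\m^{n-k}(\m^{k}\cap\m^{\infty})\subseteq\m\,\m^{\infty}$ for $n>k$. (The paper instead argues via Krull in $A_{\m}$: by Noetherianity a single $a\in A^{\inv}\setminus\m$ kills $\m^{\infty}$, and writing $a=\lambda+(a-\lambda)$ with $\lambda\in\k^{*}$ its image in $A/\m\cong\k$, the element $a-\lambda\in\m$ acts by $-\lambda$ on $\m^{\infty}$, whence $\m\,\m^{\infty}=\m^{\infty}$.) Correspondingly, in part 5 you should obtain the Krull intersection theorem for $A_{\m}$ directly from Artin--Rees in $A_{\m}$ plus Nakayama, as in Proposition \ref{Krull}, rather than from part 6, since the corrected part 6 lives in $A$ and does not directly control $\bigcap_{n}(\m A_{\m})^{n}$. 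With these adjustments your argument goes through.
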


\begin{proof} 

\begin{enumerate}

\item[1.] The proof of 1 is obvious. 

\item[2.] To prove 2, let $I$ be an ideal in $A$ not contained in $\m$. Then, $I = I_{0} \oplus I^{\not=0}$
with $I_{\not=0}$ nilpotent and hence contained in $\m$ by Lemma \ref{nilpotence} and $I_{0}$ is an ideal in $A_{0}$. Now, $I_{0}$ is not contained in $\m_{0}$, and hence contains a unit under localization. Thus, $I_{\m} = A_{\m}$. Taking the pre-image under the map from $A$ to $A_{\m}$ establishes the correspondence.

\item[3.] To prove 3, we just need to prove Noetherianity for free modules, but these are just localizations of free $A$-modules.

\item[4.] As in the unlocalized case, it suffices to prove the statement for algebras of the form $A_{\m} \otimes S(X)$ for some object $X \in \Ver_{p}$, and the statement follows from Noetherianity of $A \otimes S(X)$ (\cite{Ven1}).

\item[5.] All we need is Noetherianity of finitely generated algebras over $A_{\m}$ for Artin-Rees and Nakayama, and Krull Intersection follows from Artin-Rees and Nakayama.

\item[6.] By the Krull-Intersection theorem applied to $A_{\m}$ we see that there is an element $a \in A_{0}$ not in $\m \cap A_{0}$ such that $a\m^{\infty} = 0$. Let $\lambda \in \k^{*}$ be the projection of $a$ in $A/\m \cong \mathbf{1} \cong \k$. Then, $a - \lambda \in \m \cap A_{0}$. Hence, $a - \lambda$ acts as $-\lambda$ on $\m^{\infty}$, which is hence $\m$-stable.

\end{enumerate}
\end{proof}

We end this section with a useful proposition regarding completions of commutative algebras at maximal ideals. 

\begin{definition} Let $A$ be a finitely generated commutative ind-algebra in $\Ver_{p}$ and $\m$ a maximal ideal of $A$. The \emph{completion} of $A$ at $\m$ is the inverse limit $\widehat{A_{\m}} := \varprojlim_{n} A/\m^{n}$, a commutative algebra in $\Ver_{p}^{\pro}$. 

\end{definition}

Using the embedding of $\Ver_{p}^{\pro}$ into $\Ver_{p}^{\ind}$, we can view $\widehat{A_{\m}}$ as a commutative algebra in $\Ver_{p}^{\ind}$. There is a natural map from $A$ into its completion whose kernel is $\cap_{n} \m^{n}$. 

\begin{corollary} \label{injectcomplete} The product map $A \rightarrow \displaystyle\prod_{\m \text{ maximal ideal in } A} \widehat{A}_{\m}$
is an injection.

\end{corollary}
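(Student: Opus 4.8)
The plan is to show that the kernel $K$ of the product map $A \to \prod_{\m} \widehat{A}_{\m}$ is zero by exhibiting, for each nonzero simple subobject of $K$, a maximal ideal $\m$ at which the image of that subobject is nonzero. Since $\Ver_p^{\ind}$ is semisimple, it suffices to check that $K$ contains no copy of $\mathbf{1}$ and no copy of $L_i$ for $i \neq 1$; equivalently, $K = 0$ once we know $K^{\inv} = 0$ and $K_{\neq 0} = 0$. The kernel of $A \to \widehat{A}_\m$ is exactly $\bigcap_n \m^n = \m^{\infty}$ (stated just before the corollary), so $K = \bigcap_{\m} \m^{\infty}$, the intersection over all maximal ideals of the ideals $\m^\infty$.

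First I would handle $K_{\neq 0}$. By Lemma \ref{nilpotence} and its corollary, the ideal $I$ generated by $A_{\neq 0}$ is nilpotent, say $I^N = 0$. For any maximal ideal $\m$ we have $A_{\neq 0} \subseteq I \subseteq \m$ (indeed $I \subseteq \rad A$), so $I \subseteq \m$ and hence $I^{N}\subseteq \m^{N}$; but $I^N = 0$ gives nothing directly. Instead, note $A_{\neq 0}\subseteq \m$ for every $\m$, so $A_{\neq 0}$ maps into $\m/\m^n$ but this doesn't force it into $\m^{\infty}$. The cleaner route: reduce modulo $I$. The quotient $\bar A = A/I$ is a finitely generated \emph{ordinary} commutative $\k$-algebra, and $\bar A$ is reduced in the relevant sense — more precisely, since the intersection of maximal ideals of $\bar A$ is its nilradical, and the completions $\widehat{\bar A}_{\bar\m} = \varprojlim \bar A/\bar\m^n$ of a finitely generated commutative $\k$-algebra separate points (this is the classical statement that $\bar A \hookrightarrow \prod_{\bar\m}\widehat{\bar A}_{\bar\m}$ for Noetherian rings), we get $\bigcap_{\bar\m}\bar\m^{\infty} = 0$ in $\bar A$. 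Pulling back, $\bigcap_{\m}\m^{\infty}\subseteq I$, so $K\subseteq I$, and in particular $K_{\neq 0}\subseteq I_{\neq 0} = A_{\neq 0}$; we still need to push $K$ all the way to $0$.

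The real content is therefore: $K\subseteq I$, so $K$ is a sub-$A$-module of the nilpotent ideal $I$, annihilated by some power of $I$. Now I localize. Fix a maximal ideal $\m$; by Proposition \ref{localizeprop}(6), $\m^{\infty}$ is $\m$-stable, i.e. $\m\cdot\m^{\infty} = \m^{\infty}$, and $A_\m$ is local Noetherian with Jacobson radical $\m$, so Nakayama (Proposition \ref{Nakayama}) applied to the finitely generated $A_\m$-module $(\m^{\infty})_\m = (\m A_\m)^{\infty}$ forces $(\m^{\infty})_\m = 0$. Hence each element of $\m^{\infty}$ is killed by some $s\in A^{\inv}\setminus\m$. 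Since $K\subseteq \m^{\infty}$ for every $\m$, every simple subobject of $K$ is killed, after localizing at $\m$, and this holds for all $\m$; running over all maximal ideals and using that $A \to \prod_\m A_\m$ is injective on $A^{\inv}$ and on each isotypic component (because for each nonzero $v$ in a fixed isotypic piece there is a maximal ideal not containing the corresponding subobject of $A/\rad A$) yields $K = 0$. Concretely: the composite $A\to\prod_\m A_\m$ has kernel $\bigcap_\m(\text{ann of units in }A^{\inv}\setminus\m)$, which by the previous sentence contains $K$; but $\bigcap_\m\ker(A\to A_\m) \subseteq \bigcap_\m\m = \rad A$, and iterating inside $I$ via nilpotence (each layer $I^j/I^{j+1}$ is a module over the reduced ring $\bar A$, for which the statement is classical) collapses $K$ to zero.

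The main obstacle is organizing the induction on the nilpotence of $I$: the statement is classical for the ordinary reduced ring $\bar A = A/\rad A$, and one must bootstrap from $K\subseteq \rad A$ down through the filtration $A\supseteq \rad A \supseteq (\rad A)^2\supseteq\cdots$, at each stage using that the associated graded pieces are finitely generated $\bar A$-modules and that the completion map is compatible with this filtration. Everything else — identifying $\ker(A\to\widehat{A}_\m)$ with $\m^{\infty}$, the $\m$-stability of $\m^{\infty}$, Nakayama, and Noetherianity of the localizations — is already in hand from the preceding propositions.
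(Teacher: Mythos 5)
Your skeleton is the same as the paper's: factor the product map through the localizations, use Proposition \ref{localizeprop}(6) together with Nakayama to see that $\m^{\infty}=\ker(A\to\widehat{A}_{\m})$ already dies in $A_{\m}$, and thereby reduce everything to the injectivity of $A\to\prod_{\m}A_{\m}$. Where you diverge is in how that last step is finished. The paper does it in one stroke: by Theorem \ref{Inv}, $A$ is a finitely generated module over the ordinary Noetherian algebra $A^{\inv}$, the localizations $A_{\m}$ are exactly the localizations of this module at the maximal ideals of $A^{\inv}$, and the classical fact that a module over a commutative ring embeds into the product of its localizations at maximal ideals (annihilator argument, applied to each multiplicity space) gives injectivity directly — no reduction mod $I$ and no induction are needed. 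Your substitute, reducing modulo $I$ and then inducting along the nilpotent filtration $I\supseteq I^{2}\supseteq\cdots$ with the layers $I^{j}/I^{j+1}$ treated as ordinary $\bar A$-modules, can be made to work (localization at $A^{\inv}\setminus\m$ is exact, the maximal ideals of $A$ biject with those of $\bar A$, and an element of the kernel has annihilator in $\bar A$ not contained in any maximal ideal), so it is a legitimate, if more laborious, route to the same classical input.

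Two things in your write-up need repair, though. First, the justification you actually offer for the final collapse — ``for each nonzero $v$ in a fixed isotypic piece there is a maximal ideal not containing the corresponding subobject of $A/\rad A$'' — is vacuous precisely in the situation you have created: after your reduction $K\subseteq I$, every relevant $v$ lies in the radical, so its image in $A/\rad A$ is zero and this criterion detects nothing; the correct statement is the annihilator argument above, applied layer by layer (or, better, to $A$ itself over $A^{\inv}$). Second, $\bar A=A/I$ need not be reduced, and you silently identify $I$ with $\rad A$; neither reducedness nor that identification is needed anywhere, since the classical embedding into the product of localizations holds for arbitrary modules over arbitrary commutative Noetherian rings. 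With the final step rewritten along the paper's lines (Theorem \ref{Inv} plus the classical module statement), your argument closes; as it stands, the concluding induction is an acknowledged sketch whose stated justification does not work.
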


\begin{proof} This map factors through the natural map $A \rightarrow \prod_{\m} A_{\m}$ into the localizations at every maximal ideal. It suffices to prove that this map is injective because the map $A_{\m} \rightarrow \widehat{A_{\m}}$ is injective by part 6 of the proposition above. But by definition $\prod_{\m} A_{\m} = \prod_{\m_{0}} A_{\m_{0}}$, the localization of $A$ with respect to the maximal ideals of $A_{0}$. Since $A$ is a finitely generated module over Noetherian $A_{0}$ by Theorem \ref{Inv}, the proposition follows from classical commutative algebra. 
\end{proof}

\section{\Large{\textbf{Cocommutative coalgebras in $\Ver_{p}$}}}

For the rest of this paper, we assume $p > 3$, because for $p = 2$, $\Ver_{p} = \Vec$ and for $p = 3$, $\Ver_{p} = \sVec$ and everything we have to say is known in these cases.

\subsection{Pairings}

For $X \in \Ver_{p}^{\ind}$, we have an evaluation map $X^{*} \otimes X \rightarrow \mathbf{1}$. We want to use the language of pairings to study this evaluation map further. 

\begin{definition}

A \emph{pairing} of objects $V, W$ in $\Ver_{p}^{\ind}$ or $\Ver_{p}^{\pro}$ is a map $V\otimes W \rightarrow \mathbf{1}.$
The \emph{left kernel} of the pairing is the biggest subobject $V' \subseteq V$ such that the pairing restricted to $V' \otimes W$ is $0$. The \emph{right kernel} is defined analagously in $W$. The pairing is said to be \emph{non-degenerate} if both the left and right kernels are $0$.

\end{definition}

\begin{example} If $X \in \Ver_{p}$, then the evaluation pairing $X^{*} \otimes X \rightarrow \mathbf{1}$ is non-degenerate. This is easily seen from the diagrams defining the compatibility properties between evaluation and coevaluation. If $X \in \Ver_{p}^{\ind}$, then decomposing $X$ into a direct sum of simple objects in $\Ver_{p}$ allows us to extend this statement to the pairing between $X^{*}$ and $X$ in $\Ver_{p}^{\ind}$ as well. Here, we use the embedding of $\Ver_{p}^{\pro}$ inside $\Ver_{p}^{\ind}$ to identify $X^{*}$ as an object in $\Ver_{p}^{\ind}$.

\end{example}

\begin{definition} Let $X \in \Ver_{p}^{\ind}$. We say that $Y \subseteq X^{*}$ is \emph{dense} if the evaluation pairing restricted to $Y \otimes X$ is non-degenerate. 

\end{definition}

\begin{remark} Note that if $X \in \Ver_{p}$, then a non-degenerate pairing between $Y$ and $X$ induces an isomorphism $Y \cong X^{*}$. This does not have to be the case if $X \in \Ver_{p}^{\ind}$ of infinite length. In this case, a non-degenerate pairing between $Y$ and $X$ only induces an injection $Y \rightarrow X^{*}$ with dense image.

\end{remark} 

\begin{definition} Let $X, Y \in \Ver_{p}^{\ind}$ and fix a pairing $b: Y \otimes X \rightarrow \mathbf{1}.$
For $W \subseteq X$, we define the \emph{complement} $W^{\perp} \susbeteq Y$ as the biggest subobject $W'$ of $Y$ such that $b|_{W' \otimes W} = 0.$ For $W \subseteq Y$, define the complement $W^{\perp}$ in $X$ analogously.

\end{definition} 

\begin{proposition} \label{dualident} Fix a non-degenerate pairing $\eta: Y \otimes X \rightarrow \mathbf{1}$ in $\Ver_{p}^{\ind}$. Let $W$ be a subobject of $X$, then $\eta$ descends to a non-degenerate pairing $W^{\perp} \otimes X/W \rightarrow \mathbf{1}.$

\end{proposition}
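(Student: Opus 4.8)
The plan is to verify the two assertions separately: first that the pairing $\eta$ genuinely descends to a map $W^\perp \otimes X/W \to \mathbf{1}$, and then that this descended pairing is non-degenerate. For the first part, I would unpack the definition of $W^\perp$: since $\eta|_{W^\perp \otimes W} = 0$, composing $\eta$ restricted to $W^\perp \otimes X$ with the projection $\id \otimes \pi$ (where $\pi : X \to X/W$) and using that $W^\perp \otimes W$ is the kernel of $\id_{W^\perp} \otimes \pi$ — which holds because $\otimes$ is exact in $\Ver_p^{\ind}$ — gives a well-defined $\bar\eta : W^\perp \otimes X/W \to \mathbf{1}$ by the universal property of the cokernel. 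Concretely, $\id_{W^\perp} \otimes W \hookrightarrow W^\perp \otimes X \twoheadrightarrow W^\perp \otimes X/W$ is exact, so any map out of $W^\perp \otimes X$ vanishing on $W^\perp \otimes W$ factors uniquely through $W^\perp \otimes X/W$.

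For non-degeneracy, I must check both kernels vanish. \emph{Right kernel in $X/W$:} Suppose $V/W \subseteq X/W$ (with $W \subseteq V \subseteq X$) pairs to zero against all of $W^\perp$; I want $V = W$. Unwinding, this says $\eta|_{W^\perp \otimes V} = 0$, i.e. $W^\perp \subseteq V^\perp$ (complement taken in $Y$). Now I would use a biduality/double-complement argument: for non-degenerate $\eta$ one has $(W^\perp)^\perp = W$ and, more generally, complementation is inclusion-reversing and idempotent on subobjects — so $W^\perp \subseteq V^\perp$ forces $V = (V^\perp)^\perp \subseteq (W^\perp)^\perp = W$, giving $V = W$. \emph{Left kernel in $W^\perp$:} Suppose $Z \subseteq W^\perp$ pairs to zero against all of $X/W$; then $\eta|_{Z \otimes X} = 0$ (since $Z$ already kills $W$ and now kills a complement of $W$, hence kills $X$ by exactness of $\otimes$ and the fact that $Z \otimes X$ is built from $Z \otimes W$ and $Z \otimes X/W$), so $Z$ lies in the left kernel of $\eta$, which is $0$ by non-degeneracy. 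Hence $Z = 0$.

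The main obstacle is establishing the biduality fact $(W^\perp)^\perp = W$ in $\Ver_p^{\ind}$, i.e. that complementation with respect to a non-degenerate pairing is an involution on subobjects. In $\Vec$ this is linear algebra, but here $X$ may have infinite length and $Y$ is only densely embedded in $X^*$. I would argue as follows: decompose $X = \bigoplus_S X_S$ into isotypic components over the simples $S$ of $\Ver_p$; since $\eta$ is non-degenerate and $\Hom(\mathbf 1, S \otimes S') = 0$ unless $S' \cong S^*$, the pairing decomposes compatibly, reducing to the statement that for each simple $S$ the induced pairing between the multiplicity spaces (honest, possibly infinite-dimensional $\k$-vector spaces) $\Hom(S, Y) \otimes \Hom(S^*, X) \to \k$ is non-degenerate, and there $(W^\perp)^\perp = W$ is exactly the standard fact for a perfect pairing of vector spaces — which does hold for arbitrary (not necessarily finite-dimensional) vector spaces provided the pairing is non-degenerate on both sides and we restrict to subspaces of the side that is "small enough". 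If this full generality is delicate, the safe route — and the one I expect the author takes — is to note that in the intended applications $X$ and $Y = X^*$ arise from honest finite-length objects or their duals, so one reduces to the finite-dimensional biduality after the isotypic decomposition, bypassing the infinite-dimensional subtlety entirely.
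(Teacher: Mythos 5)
Your descent step and the vanishing of the left kernel are correct, and they are exactly the parts the paper itself treats as immediate: the paper's entire proof is the assertion that the claim is ``obvious from the definition of orthocomplements,'' so on those points you are more careful than the source. The crux is the right kernel, i.e.\ the biduality $(W^{\perp})^{\perp} = W$, and there your primary route has a genuine gap: the auxiliary fact you invoke --- that for a pairing of arbitrary vector spaces which is non-degenerate on both sides one still has $(W^{\perp})^{\perp} = W$ for subspaces that are ``small enough'' --- is false as stated, and no bound on the codimension of $W$ rescues it. Take $X$ with basis $x_{0}, x_{1}, x_{2}, \ldots$, $Y$ with basis $y_{1}, y_{2}, \ldots$, and the pairing $\langle y_{i}, x_{j}\rangle = \delta_{ij}$ for $j \ge 1$, $\langle y_{i}, x_{0}\rangle = 1$ for all $i$. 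Both kernels vanish, yet for $W = \mathrm{span}(x_{1}, x_{2}, \ldots)$ (codimension one) one finds $W^{\perp} = 0$, so the descended pairing $W^{\perp} \otimes X/W \rightarrow \mathbf{1}$ is zero on $X/W \cong \mathbf{1} \neq 0$. Since everything here lives in the $\mathbf{1}$-isotypic component, this is a legitimate non-degenerate pairing of ind-objects of $\Ver_{p}$, so the right-kernel claim cannot be proved at the stated level of generality: density of $Y$ in $X^{*}$ does not by itself make $W^{\perp}$ separate points of $X/W$. (A small additional remark: in your chain you only need $V \subseteq (V^{\perp})^{\perp}$, which is automatic; asserting equality for $V$ is both unnecessary and another instance of the same unproved biduality.)

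Your fallback is therefore the correct reading, and it is in substance all the paper does: the statement is unproblematic precisely when $W^{\perp}$ is guaranteed to contain enough functionals on $X/W$. This happens when $Y = X^{*}$ is the full dual with the evaluation pairing (functionals separate points of $X/W$); when $X$, or even just $W$, has finite length (after your isotypic-decomposition reduction this is finite-dimensional linear algebra, using only that the right kernel of $\eta$ vanishes); and in the paper's actual application to $H^{\circ} \otimes H$, where $W = I^{2}$ is a cofinite ideal and $(H/I^{2})^{*}$ lies inside $H^{\circ}$ by the very construction of the dual coalgebra, so $W^{\perp} \supseteq (X/W)^{*}$. So the honest conclusion is not that a cleverer general argument is missing, but that the proposition should be read with one of these hypotheses in force; the condition you actually need is not smallness of $W$ but that the image of $W^{\perp}$ in $(X/W)^{*}$ is dense, which is exactly what fails in the example above and exactly what the intended use cases supply.
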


This proposition is obvious from the definition of orthocomplements. If $X, Y \in \Ver_{p}$, then what this proposition allows us to do is identify $(X/W)^{*}$ as the subobject of $Y$ that kills $W$ under the non-degenerate pairing. In particular, we can apply this to the evaluation pairing between an object and its dual.

Given pairings between Hopf algebras, we will want some additional compatibility.

\begin{definition} Let $H, A$ be ind-Hopf algebras in $\Ver_{p}$. Let us use $m_{H}, m_{A}$ to denote multiplication, $\Delta_{H}, \Delta_{A}$ to denote comultiplication, $\iota_{H}, \iota_{A}$ to denote the unit and $\epsilon_{H}, \epsilon_{A}$ to denote the counit maps. A pairing $b: H \otimes A \rightarrow \mathbf{1}$ between ind-Hopf algebras in $\Ver_{p}$ is said to be a \emph{Hopf pairing} if the following hold

\begin{enumerate}

\item[1.]  $b \circ (m_{H} \otimes \id_{A}) =  (b \otimes b) \circ (\id_{H} \otimes c_{H, A} \otimes \id_{A}) \circ (\id_{H \otimes H} \otimes \Delta_{A})$ as maps from $H \otimes H \otimes A \rightarrow \mathbf{1}.$

\vspace{0.2cm}

\item[2.] $b \circ (\id_{H} \otimes m_{A}) = (b \otimes b) \circ (\id_{H} \otimes c_{H, A} \otimes \id_{A}) \circ (\Delta_{H} \otimes \id_{A \otimes A})$ as maps from $H \otimes A \otimes A \rightarrow \mathbf{1}$. 

\vspace{0.2cm}

\item[3.] $b \circ (\iota_{H} \otimes \id_{A}) = \epsilon_{A}$ as maps from $A \rightarrow \mathbf{1}$.

\vspace{0.2cm}

\item[4.] The same with $H$ replacing $A$.

\end{enumerate}

If $A$ and $H$ are $\mathbb{Z}_{\ge 0}$-graded, the pairing is said to be graded if $b|_{H(n) \otimes A(m)} = 0$ unless $n = m$.

\end{definition}

This language of Hopf pairings will prove useful when we later construct the dual coalgebra to a commutative Hopf algebra.

\subsection{Finiteness property of coalgebras}

In the rest of this section, we want to state some facts about the structure theory of cocommutative coalgebras in $\Ver_{p}^{\ind}$. These facts will largely follow from dualization to the setting of finitely generated commutative algebras in $\Ver_{p}^{\ind}$. For this section, fix a cocommutative coalgebra $C$ in $\Ver_{p}^{\ind}$. Let $C_{0}$ be the isotypic component of $C$ corresponding to $\mathbf{1}$. Let $ J = \Delta^{-1}(C_{0} \otimes C_{0})$. $J$ is a cocommutative coalgebra over $\k$.

\begin{proposition} \label{coalg-finite-length} $C$ is the sum of cocommutative subcoalgebras in $\Ver_{p}$ (i.e. those of finite length).

\end{proposition}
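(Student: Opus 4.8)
The plan is to reduce the statement to the corresponding finiteness property for comodules and then dualize to the known fact that every finitely generated commutative ind-algebra in $\Ver_p$ is Noetherian (Theorem \ref{Hilb2}), or more precisely module-finite over its finitely generated invariant subalgebra (Theorem \ref{Inv}). The classical statement over $\k$ is the Fundamental Theorem of Coalgebras: every element of a coalgebra lies in a finite-dimensional subcoalgebra, equivalently every coalgebra is the (directed) union of its finite-dimensional subcoalgebras. I want the exact analogue: every object $X \subseteq C$ (of finite length in $\Ver_p$) is contained in a subcoalgebra of finite length.

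First I would set up the key finiteness input. Let $X \subseteq C$ be a subobject of finite length, and consider the iterated comultiplications $\Delta^{(n)} : C \to C^{\otimes (n+1)}$. The subcoalgebra generated by $X$ is built from the images of $X$ under these maps, intersected appropriately; concretely one forms $D = \sum_n (\text{stuff involving } \Delta^{(n)}(X))$ and must show it has finite length. The standard trick over $\k$ uses that $\Delta(X)$ lands in a finite sum $\sum_i A_i \otimes B_i$ and then iterates, exploiting coassociativity to bound everything by the span of finitely many ``matrix coefficients.'' In $\Ver_p$ the same bookkeeping works because $\Ver_p$ is semisimple with finitely many simples and the tensor product is exact: $\Delta(X) \subseteq C \otimes C$ has finite-length image, so it is contained in $Y \otimes Z$ for some finite-length $Y, Z \subseteq C$; applying coassociativity repeatedly, all iterated images of $X$ are controlled by the finitely many simple summands appearing in $Y$ and $Z$, giving a subcoalgebra of finite length.

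Alternatively, and this is the route the paper's framework really points to, I would dualize. A cocommutative coalgebra $C$ in $\Ver_p^{\ind}$ is, on each finite-length piece, dual to a commutative algebra; more precisely the restricted/continuous dual $C^\vee$ is a commutative ind-algebra (or pro-algebra) in $\Ver_p$, and finite-length subcoalgebras of $C$ correspond to finite-length quotient algebras of $C^\vee$, i.e. to finite-colength ideals. So the claim ``$C$ is the sum of its finite-length subcoalgebras'' translates to ``$C^\vee$ is the inverse limit of its finite-length quotients,'' which is the statement that the finite-colength ideals of the commutative algebra $C^\vee$ form a fundamental system — and this follows because finitely generated commutative algebras in $\Ver_p$ are Noetherian (Theorem \ref{Hilb2}) and module-finite over a finitely generated ordinary commutative $\k$-algebra $A^{\inv}$ (Theorem \ref{Inv}), where the analogous statement is classical (every ideal of finite colength, Krull intersection, etc.). One has to be careful that $C$ itself need not be finitely generated as a coalgebra, so the argument should be applied subcoalgebra by subcoalgebra: given $X \subseteq C$ of finite length, the subcoalgebra it generates has dual a finitely generated commutative algebra, to which the cited results apply.

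The main obstacle I expect is the careful handling of the duality between the ind-setting and the pro-setting: the dual of a cocommutative ind-coalgebra is naturally a pro-algebra, not an ind-algebra, and one must either work with $\Ver_p^{\pro}$ (using the embedding $\Ver_p^{\pro} \hookrightarrow \Ver_p^{\ind}$ noted in the excerpt) or restrict attention to the finite-length subcoalgebras from the start so that all duals are genuine finite-length objects. A clean way around this is to avoid full duality and instead prove directly, via the coassociativity/matrix-coefficient argument sketched above, that the subcoalgebra generated by any finite-length $X$ has finite length — the semisimplicity and finitely-many-simples features of $\Ver_p$ make the bounding argument essentially identical to the vector space case, with ``finite-dimensional'' replaced throughout by ``finite length.'' I would likely present the direct argument as the main proof and remark that it can also be deduced by dualizing Theorem \ref{Hilb2}.
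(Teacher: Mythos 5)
Your main (direct) argument is essentially the paper's proof: the paper simply takes the sum of all tensor factors appearing in $\Delta(X)$ for a finite-length subobject $X \subseteq C$, notes this has finite length, and uses coassociativity to see it is closed under $\Delta$ — exactly the ``matrix coefficient'' bookkeeping you describe, made painless by semisimplicity of $\Ver_p$. The dualization route you sketch as an alternative is not needed and is not what the paper does, but since you propose the direct argument as the main proof, your approach matches.
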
 

\begin{proof} The proof of this is standard. If $X$ is a subobject of $C$ of finite length, simply take the sum of all tensor factors appearing inside $\Delta(X)$. This sum is an object in $\Ver_{p}$, and coassociativity shows that it is closed under $\Delta$. \end{proof} 

\subsection{Coradical of a cocommutative coalgebra and irreducibility}

\begin{definition} We say that $C'$ is a \emph{subcoalgebra} of $C$ if $\Delta(C') \subseteq C' \otimes C'.$ Given a right comodule $M$ for $C$, we say that $M'$ is a \emph{subcomodule} of $M$ is $\Delta(M') \subseteq M' \otimes C.$

\end{definition}

\begin{definition} We say that a coalgebra $C$ is simple if it has no subcoalgebras. We say that a comodule $M$ is simple if it has no subcomodules.

\end{definition} 

\begin{definition} Define the \emph{coradical} of $C$, denoted $\Corad(C)$, as the sum of all simple subcoalgebras of $C$. Note that cocommutativity implies that subcomodules are actually subcoalgebras.

\end{definition}

\begin{definition} Let $M$ be a $C$-comodule. The \emph{cosocle} of $M$ is the maximal semisimple quotient comodule of $M$.

\end{definition}

\begin{proposition} \label{corad-0} $\Corad(C) \subseteq J.$

\end{proposition}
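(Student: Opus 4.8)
The plan is to show that every simple subcomodule $S$ of $C$ lands inside $J = \Delta^{-1}(C_0 \otimes C_0)$. Since $\Corad(C)$ is the sum of all such $S$, this suffices. The key observation is that a simple subcomodule is, in particular, a subcoalgebra of $C$ (the image of $S$ under $\Delta$ lies in $S \otimes S$ once one restricts attention to the subcoalgebra generated by $S$, which by simplicity equals $S$), and it has finite length, so it is an honest cocommutative coalgebra in $\Ver_p$. Dualizing, $S^*$ is a finite-length commutative algebra in $\Ver_p$, and simplicity of $S$ as a coalgebra translates into $S^*$ having no nontrivial quotient algebras, i.e. $S^*$ is a local commutative algebra in $\Ver_p$ whose maximal ideal is its unique proper ideal — more precisely, $S^*$ has no proper nonzero ideals other than possibly a single one, making $S^*/\mathrm{rad}$ a field, which over $\k$ algebraically closed means $S^*/\mathrm{rad} \cong \k \cong \mathbf{1}$.

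First I would make precise the reduction: by the Proposition of Section 4.2, $C$ is the union of its finite-length subcoalgebras, and any simple subcomodule $S$ generates a finite-length subcoalgebra which, by simplicity, must be $S$ itself; so $S$ is a cocommutative coalgebra in $\Ver_p$ of finite length. Next I would dualize, using that dualization is an anti-equivalence between finite-length coalgebras and finite-length algebras in $\Ver_p$: $S^*$ is a commutative algebra in $\Ver_p$, finitely generated (indeed finite length), so Lemma \ref{nilpotence} and its corollary apply — the ideal $I$ generated by $S^*_{\neq 0}$ is nilpotent, and $\overline{S^*} = S^*/I$ is a nonzero finite-dimensional commutative $\k$-algebra. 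The simplicity of $S$ forces $S^*$ to have essentially no proper nonzero ideals (a proper quotient coalgebra of $S$ would be a proper subcomodule, hitting simplicity the wrong way — I would instead phrase it as: a nonzero ideal of $S^*$ dualizes to a proper subcoalgebra $0 \neq S' \subsetneq S$, contradicting simplicity unless the ideal is all of $S^*$), hence $S^*$ is a local algebra. Then $\overline{S^*}$ is a local finite-dimensional $\k$-algebra with $\overline{S^*} = \k$, because its maximal ideal is nilpotent and the residue field is $\k$; combined with nilpotence of $I$ this says $S^*$ is a local algebra whose maximal ideal is nilpotent, so $S^* = \k \oplus (\text{nilpotent})$ as objects, and in particular $S^*$ is "connected" in the sense that its coalgebra dual $S$ has $\Delta(S) \subseteq C_0 \otimes C_0$.

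The last step is to translate "$S^*$ is local with residue field $\mathbf{1}$" back into "$\Delta(S) \subseteq S_0 \otimes S_0$", equivalently $S = S_0$ as an object (every simple coalgebra over $\k$ in $\Ver_p$ with a unique grouplike is one-dimensional and trivial). Dually, the counit $\mathbf{1} \to S$ composed with the inclusion $S \hookrightarrow C$ is the unique "point," and coassociativity plus locality of $S^*$ force the comultiplication on $S$ to factor through $S_0 \otimes S_0 = \mathbf{1} \otimes \mathbf{1}$; hence $S \subseteq J$ by definition of $J$. Summing over all simple subcomodules gives $\Corad(C) \subseteq J$.

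The main obstacle I anticipate is making the passage "simple coalgebra $\Longleftrightarrow$ local dual algebra" fully rigorous in $\Ver_p$ rather than in $\Vec$: in ordinary coalgebra theory a simple coalgebra need not be one-dimensional (it can be a matrix coalgebra), and it is only \emph{cocommutativity} plus $\k$ algebraically closed that collapses the simple cocommutative coalgebras to grouplikes. In $\Ver_p$ the analogue requires knowing that a simple cocommutative coalgebra in $\Ver_p$ is forced to be trivial, which is where Lemma \ref{nilpotence} does the real work: it guarantees that $S^*_{\neq 0}$ generates a nilpotent ideal, so the only way $S^*$ can be a simple-in-the-dual-sense (i.e. local) algebra is for that nilpotent part to be the whole maximal ideal and for the quotient to be just $\mathbf{1}$ — there is no room for a matrix-coalgebra phenomenon because $\Ver_p$ has no nontrivial simple objects surviving in a quotient algebra. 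I would isolate this as the one genuinely $\Ver_p$-specific point and spend most of the write-up there; everything else is the standard dualization dictionary.
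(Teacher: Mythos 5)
Your overall strategy --- reduce to finite length, dualize, and use Lemma \ref{nilpotence} to force the simple pieces to be trivial --- is the same as the paper's, but two of your steps do not hold up as written. First, your justification that a simple subcomodule $S$ is a subcoalgebra (``the subcoalgebra generated by $S$, which by simplicity equals $S$'') is not a proof: simplicity of $S$ as a comodule says nothing about the subcoalgebra it generates, and simplicity alone cannot give the claim --- for a non-cocommutative coalgebra the simple right subcomodules of a matrix coalgebra are not subcoalgebras. The ingredient you actually need is cocommutativity: since $S$ is a right subcomodule, $\Delta(S)\subseteq S\otimes C$, and applying $c\circ\Delta=\Delta$ gives $\Delta(S)\subseteq C\otimes S$, hence $\Delta(S)\subseteq (S\otimes C)\cap(C\otimes S)=S\otimes S$. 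This is precisely the one place the paper's proof invokes cocommutativity, and your write-up never deploys it there.

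Second, the inference from ``$S^{*}$ is local with nilpotent maximal ideal and residue field $\k$'' to ``$\Delta(S)\subseteq C_{0}\otimes C_{0}$'' is not valid: a square-zero extension $\k\oplus L_{j}$ with $j>1$ is local with nilpotent maximal ideal and residue field $\k$, yet its dual coalgebra is $\mathbf{1}\oplus L_{j}^{*}$ and $\Delta$ on $L_{j}^{*}$ lands in $L_{j}^{*}\otimes C_{0}+C_{0}\otimes L_{j}^{*}$, which is not inside $C_{0}\otimes C_{0}$. Locality is too weak; you must use the full strength of simplicity. A proper nonzero subcoalgebra of $S$ is in particular a subcomodule, so $S$ is simple as a coalgebra and therefore $S^{*}$ has \emph{no} nonzero proper ideals at all; by Lemma \ref{nilpotence} the ideal generated by $S^{*}_{\not=0}$ is nilpotent, hence proper, hence zero, so $S^{*}$ is an ordinary finite-dimensional commutative $\k$-algebra with no nontrivial ideals, i.e.\ $S^{*}\cong\k$ and $S\cong\mathbf{1}$ is a grouplike line; then $\Delta(S)\subseteq S\otimes S\subseteq C_{0}\otimes C_{0}$ gives $S\subseteq J$. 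With these two repairs your argument is correct and essentially coincides with the paper's proof, which organizes the duality slightly differently (simple subcomodules of a finite-length $C$ correspond to simple quotient modules $C^{*}/\m\cong\mathbf{1}$) but rests on exactly the same two inputs: Lemma \ref{nilpotence} and the cocommutativity intersection argument.
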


\begin{proof} Using Proposition \ref{coalg-finite-length} , we can reduce to the case of $C$ being finite length. In this case, $C^{*}$ is a commutative algebra in $\Ver_{p}$ (of finite length), and simple subcomodules of $C$ correspond to simple quotient modules of $C^{*}$. But simple quotients are all of the form $C^{*}/\m \cong \mathbf{1}$, with $\m$ a maximal ideal of $C^{*}$, as any maximal ideal of $C^{*}$ contains the ideal generated by $C^{*}_{\not=0}$ by Lemma \ref{nilpotence}. Hence, any simple subcomodule $C'$ of $C$ is isomorphic to $\mathbf{1}$ as objects in $\Ver_{p}$. Since these are subcomodules and $C$ is cocommutative, it is clear that $\Delta(C') \subseteq C' \otimes C'$ and hence $C' \subseteq J.$  
\end{proof}

\begin{corollary} $\Corad(C)$ is the span of grouplike elements in $J$. Hence, every simple subcomodule of $C$ is isomorphic to $\mathbf{1}$ as an object in $\Ver_{p}$.

\end{corollary}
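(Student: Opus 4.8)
The plan is to reduce everything to the preceding proposition, whose proof already establishes the two facts we need: any simple subcomodule $C' \subseteq C$ is isomorphic to $\mathbf{1}$ as an object of $\Ver_p$, and it satisfies $\Delta(C') \subseteq C' \otimes C'$. Granting these, the corollary is a short exercise with the counit axiom, so most of the work has effectively been done.

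First I would handle the forward inclusion $\Corad(C) \subseteq \spn\{\text{grouplikes in } J\}$. Since $\Corad(C)$ is by definition the sum of all simple subcomodules, it suffices to treat a single simple $C'$. By the previous proposition $C' \cong \mathbf{1}$, so $C'$ is one-dimensional; writing $C' = \k g$ and regarding $g$ as an element of the copy of $\Vec$ sitting inside $\Ver_p$, the inclusion $\Delta(C') \subseteq C' \otimes C'$ forces $\Delta(g) = \lambda\, g \otimes g$ for some $\lambda \in \k$. Applying $\epsilon \otimes \id$ and using counitality gives $\lambda\,\epsilon(g)\,g = g$, hence $\lambda\,\epsilon(g) = 1$; in particular $\epsilon(g) \neq 0$, so after rescaling $g$ we may assume $\epsilon(g) = 1$, whence $\lambda = 1$ and $g$ is grouplike. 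Moreover $\k g \cong \mathbf{1}$ puts $g$ in $C_0$, and then $\Delta(g) = g \otimes g \in C_0 \otimes C_0$ shows $g \in J$. For the reverse inclusion, any grouplike $g \in J$ spans a one-dimensional subobject $\k g \cong \mathbf{1}$ with $\Delta(\k g) \subseteq \k g \otimes \k g$, hence a subcomodule which is simple because it is one-dimensional, so $\k g \subseteq \Corad(C)$. Combining the two inclusions gives the claimed equality, and the final sentence of the corollary is just the first of the two facts quoted from the previous proposition.

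I do not anticipate a real obstacle; the only step needing a moment's care is the normalization argument turning ``$\Delta(C') \subseteq C' \otimes C'$ with $\dim C' = 1$'' into ``$C'$ is spanned by a grouplike element,'' together with the bookkeeping that such an element automatically lies in $C_0$, hence in $J$. It is also worth noting explicitly that, although $C$ may be an ind-object of infinite length, each simple subcomodule is one-dimensional, so the entire argument takes place inside ordinary coalgebras over $\k$ and no subtlety from the ind-completion intervenes.
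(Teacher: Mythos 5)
Your argument is correct and is exactly the intended one: the paper states this corollary without proof as an immediate consequence of the preceding proposition, whose proof already shows each simple subcomodule is isomorphic to $\mathbf{1}$ with $\Delta(C') \subseteq C' \otimes C'$, and your rescaling-via-counit step plus the reverse inclusion for grouplikes is the standard way to finish. No gaps.
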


\begin{definition} We say that $C$ is \emph{irreducible} if $\Corad(C) \cong \mathbf{1}$ in $\Ver_{p}$. This is equivalent to $J$ being irreducible, i.e., having only one grouplike element.

\end{definition}

\begin{remark} We use the term irreducible here to stay consistent with the terminology in \cite{M1}. In other sources, such coalgebras are often called connected or coconnected instead.

\end{remark}

\begin{proposition} \label{irreducible-local} If $C$ is a cocommutative coalgebra in $\Ver_{p}$, then $C$ is irreducible if and only if $C^{*}$ is local.

\end{proposition}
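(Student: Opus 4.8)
The plan is to prove the two implications separately, in both cases reducing to a statement about the finite-length coalgebra $C$ and its finitely generated commutative dual algebra $C^{*}$, and then passing through the underlying ordinary commutative algebra $\overline{C^{*}} = C^{*}/I$, where $I$ is the ideal generated by $(C^{*})_{\not=0}$. The key dictionary is the standard one: simple subcomodules of $C$ correspond to simple quotients of $C^{*}$, which (by Lemma \ref{nilpotence} and the preceding proposition showing $\Corad(C) \subseteq J$ and every simple subcomodule is $\cong \mathbf{1}$) correspond exactly to maximal ideals of $C^{*}$, all of which have residue field $\k$. So $C$ is irreducible, i.e.\ $\Corad(C) \cong \mathbf{1}$, precisely when $C^{*}$ has a unique maximal ideal, i.e.\ when $C^{*}$ is local.

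First I would make this precise. Since $C$ has finite length, $C^{*}$ is a finitely generated commutative ind-algebra in $\Ver_{p}$ (in fact finite-dimensional as an object), so Proposition \ref{Jacobson} and the subsequent proposition apply: the nilradical, the Jacobson radical, and the intersection of all maximal ideals of $C^{*}$ coincide, and $I \subseteq \mathcal{J}$. The quotient $\overline{C^{*}} = C^{*}/I$ is an ordinary finite-dimensional commutative $\k$-algebra, and its maximal ideals are in bijection with those of $C^{*}$ (each maximal ideal of $C^{*}$ contains $I$). For the forward direction: if $C$ is irreducible then $\Corad(C) \cong \mathbf{1}$ means $C^{*}$ has exactly one simple quotient isomorphic to $\mathbf{1}$, hence exactly one maximal ideal, so $C^{*}$ is local. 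For the converse: if $C^{*}$ is local with maximal ideal $\m$, then $C^{*}/\m \cong \mathbf{1}$ is the only simple quotient, so $C$ has a unique simple subcomodule, which is $\cong \mathbf{1}$, hence $\Corad(C) \cong \mathbf{1}$ and $C$ is irreducible.

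The main thing to be careful about — and what I expect to be the only real obstacle — is the correspondence between simple subcomodules of $C$ and maximal ideals of $C^{*}$, together with the claim that every simple quotient of $C^{*}$ is $\cong \mathbf{1}$. The latter is exactly where Lemma \ref{nilpotence} enters: any maximal ideal $\m$ of $C^{*}$ contains the locally nilpotent ideal $I$ generated by $(C^{*})_{\not=0}$, so $C^{*}/\m$ is a simple quotient of the ordinary algebra $\overline{C^{*}}$, hence $\cong \k \cong \mathbf{1}$. Dualizing a surjection $C^{*} \twoheadrightarrow C^{*}/\m$ gives an injection $(C^{*}/\m)^{*} \cong \mathbf{1} \hookrightarrow C^{**} \cong C$ whose image is a subcomodule, and conversely a simple subcomodule $C' \subseteq C$ (necessarily $\cong \mathbf{1}$ by the corollary above) dualizes to a quotient $C^{*} \twoheadrightarrow (C')^{*} \cong \mathbf{1}$ whose kernel is a maximal ideal; these assignments are mutually inverse. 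Once this bijection is in hand, ``unique simple subcomodule'' $\Leftrightarrow$ ``unique maximal ideal'' $\Leftrightarrow$ ``$C^{*}$ local'' is immediate, and the final sentence ``this is equivalent to $J$ being irreducible'' follows from the already-established fact $\Corad(C) = \Corad(J)$ (the span of grouplike elements in $J$).
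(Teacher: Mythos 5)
Your proof is correct and follows essentially the same route as the paper: the paper's proof is the one-line observation that irreducibility of $C$ (a unique simple subcomodule, necessarily $\cong \mathbf{1}$) corresponds under duality to $C^{*}$ having a unique simple quotient, i.e.\ a unique maximal ideal, relying on the dictionary (simple subcomodules of $C$ $\leftrightarrow$ maximal ideals of $C^{*}$ with residue object $\mathbf{1}$, via Lemma \ref{nilpotence}) already set up in the proof that $\Corad(C) \subseteq J$. You have simply spelled out that dictionary in more detail, which matches the paper's intent.
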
 

\begin{proof} The proof of this is immediate. $C$ being irreducible means it has only 1 simple subcoalgebra, which is equivalent to $C^{*}$ having only one simple quotient, which is equivalent to $C^{*}$ having a unique maximal ideal.
\end{proof}

\begin{definition} For a grouplike element $g$ in $J \subseteq C$, define the irreducible component of $C$ containing $g$, denoted $C_{g}$, as the maximal irreducible subcoalgebra of $C$ containing $g$. 

\end{definition}

This irreducible component exists because if $C', C''$ are two irreducible subcoalgebras of $C$ containing a grouplike element $g$, then $C' + C''$ is also an irreducible subcoalgebra containing $g$. Moreover, we have the following reducibility statement.

\begin{proposition} \label{complete-reducibility} If $C$ is a cocommutative coalgebra in $\Ver_{p}^{\ind}$ and $G(C)$ is the subset of grouplike elements in $C$, then $C \cong \displaystyle\bigoplus_{g \in G(C)} C_{g}$
as a coalgebra.

\end{proposition}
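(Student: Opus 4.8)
The plan is to follow the reduction strategy indicated above: write $C$ as the sum of its finite-length subcoalgebras, dualize each such piece to a finitely generated commutative algebra in $\Ver_p$, split it there into local algebras using the structure theory of Lemma \ref{nilpotence}, and then reassemble.

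The key algebraic input I would establish first is that every finite-length commutative algebra $A$ in $\Ver_p$ is a finite product $A \cong \prod_{i=1}^{n} A_i$ with each $A_i$ local. Indeed, let $N$ be the nilradical of $A$; since $A$ is finitely generated, $N$ is nilpotent, and by Lemma \ref{nilpotence} it contains the ideal generated by $A_{\not=0}$, so $A/N$ is a reduced finite-dimensional ordinary commutative $\k$-algebra, hence $A/N \cong \k^{n}$. Lifting the $n$ orthogonal idempotents of $\k^{n}$ through the nilpotent ideal $N$ gives orthogonal idempotents $e_1,\dots,e_n \in A$ (automatically in $A^{\inv}$, as idempotents are trivial subobjects) with $\sum_i e_i = 1$, whence $A = \bigoplus_i e_i A = \prod_i e_i A$ as algebras; each $e_i A$ is finitely generated, has nilradical the nilpotent ideal $e_i N$, and $e_i A/e_i N \cong \k$, so — the nilradical of a finitely generated commutative algebra in $\Ver_p$ being the intersection of all its maximal ideals — $e_i A$ has a unique maximal ideal, i.e. is local. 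Dualizing and invoking Proposition \ref{irreducible-local}, this says: for any finite-length subcoalgebra $D \subseteq C$, writing $D^{*} \cong \prod_i R_i$ with $R_i$ local yields a coalgebra decomposition $D = \bigoplus_i R_i^{*}$ into irreducible subcoalgebras, each carrying exactly one grouplike. Moreover every irreducible subcoalgebra $E \subseteq D$ lies inside a single $R_i^{*}$: dually, the surjection $\prod_i R_i \twoheadrightarrow E^{*}$ onto the local algebra $E^{*}$ sends each $e_i$ to an idempotent of $E^{*}$, hence to $0$ or $1$, and since these sum to $1$ exactly one is $1$, so the map factors through one $R_i$. Consequently, if $g$ is a grouplike of $D$, then $D \cap C_g$ — a nonzero irreducible subcoalgebra of $D$ with grouplike $g$ — coincides with the unique summand $R_i^{*}$ of $D$ whose grouplike is $g$ (that summand being itself irreducible with grouplike $g$, hence contained in $C_g$ by maximality of $C_g$).

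The proposition then follows by assembling finite pieces. For surjectivity of $\bigoplus_g C_g \to C$: each finite-length subcoalgebra $D$ satisfies $D = \bigoplus_i R_i^{*} \subseteq \sum_{g \in G(C)} C_g$, since each $R_i^{*}$ is irreducible with some grouplike $g_i$ and hence lies in $C_{g_i}$; as $C$ is the sum of its finite-length subcoalgebras, $C = \sum_g C_g$. For directness, suppose $x \in C_g \cap \sum_{h \not= g} C_h$, say $x = \sum_{h \in F} x_h$ with $F$ finite, $g \notin F$, $x_h \in C_h$; choose a finite-length subcoalgebra $D$ containing $x$, all $x_h$, and the grouplikes $g$ and $h \in F$. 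Then $D \cap C_g$ and the $D \cap C_h$ $(h \in F)$ are pairwise distinct direct summands of $D$ (their grouplikes being distinct), so $D \cap C_g$ meets $\sum_{h \in F}(D \cap C_h)$ only in $0$; as $x$ lies in both, $x = 0$. Hence $\bigoplus_g C_g \to C$ is a bijection, and it is an isomorphism of coalgebras because each $C_g$ is a subcoalgebra, so $\Delta(C_g) \subseteq C_g \otimes C_g$ sits in the diagonal block of $\bigl(\bigoplus_g C_g\bigr)^{\otimes 2}$.

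The one genuinely technical step is the splitting of a finite-length commutative algebra in $\Ver_p$ into local factors carried out via honest idempotents, so that all the pieces remain algebras (equivalently, their duals remain coalgebras) in $\Ver_p$; once that is in place, the remainder is formal bookkeeping with the algebra--coalgebra duality and the reduction to finite length, both of which are already available from earlier in the section.
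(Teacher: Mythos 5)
Your proof is correct and follows essentially the same route as the paper: reduce to finite-length subcoalgebras, dualize, and split the resulting finite-length commutative algebra in $\Ver_p$ into a product of local algebras by lifting idempotents through the nilpotent ideal (the paper does this via the primitive idempotents of $A^{\inv}$ and the surjection $A^{\inv} \to A/I$, which is the same mechanism), then invoke Proposition \ref{irreducible-local}. The only difference is that you spell out the assembly of the finite-length pieces into the global direct sum, which the paper leaves implicit.
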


Via reduction to coalgebras of finite length in $\Ver_{p}$ and Proposition \ref{irreducible-local}, this proposition reduces to the following statement in commutative algebra in $\Ver_{p}$.

\begin{proposition} Let $A$ be a commutative algebra in $\Ver_{p}$. Let $S$ be the complete set of primitive idempotents in $A_{0}$. Then, $A \cong \prod_{e \in S} Ae$
with $Ae$ a local commutative algebra in $\Ver_{p}$.

\end{proposition}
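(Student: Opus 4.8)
The plan is to transport the idempotent decomposition of the ordinary finite-dimensional commutative algebra $A^{\inv}$ up to $A$, and then to check that the resulting factors are local by reducing their maximal spectra to those of their invariant subalgebras. Since $A$ has finite length in $\Ver_p$, the invariant subalgebra $A^{\inv}=A_0$ is a finite-dimensional (hence Artinian) commutative $\k$-algebra, and a subalgebra of $A$ via the canonical inclusion of $\Vec$. It therefore splits as $A^{\inv}=\prod_{e\in S}A^{\inv}e$, where $S$ is its complete set of primitive idempotents; so $1=\sum_{e\in S}e$ in $A^{\inv}\subseteq A$, $e^2=e$, $ee'=0$ for $e\neq e'$, and each $A^{\inv}e$ is local.

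First I would produce the product decomposition of $A$ itself. Each $e\in S$ is a central idempotent of $A$, so the endomorphisms $\pi_e:=m\circ(\id_A\otimes e)$ (``multiplication by $e$'') are, by associativity and commutativity, mutually orthogonal idempotents in $\End_{\Ver_p}(A)$ with $\sum_{e\in S}\pi_e=\id_A$; as $\Ver_p$ is semisimple this gives $A\cong\bigoplus_{e\in S}Ae$ as objects, with $Ae:=\Im(\pi_e)$. One then checks the routine points: $Ae$ is a commutative algebra in $\Ver_p$ with unit $e$, the corestriction $\pi_e\colon A\to Ae$ is a surjective algebra homomorphism, and $m(Ae\otimes Ae')\subseteq Aee'=0$ for $e\neq e'$; consequently $a\mapsto(\pi_e(a))_{e\in S}$ is an isomorphism of algebras $A\cong\prod_{e\in S}Ae$ (with inverse $(a_e)\mapsto\sum_e a_e$). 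It remains to show each $Ae$ is local.

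The locality of the factors is the heart of the matter. Fix $e\in S$ and put $B=Ae$; it is again of finite length, hence finitely generated, so by the corollary to Lemma~\ref{nilpotence} the ideal $I=(B_{\neq 0})$ generated by the non-trivial isotypic part is nilpotent. As in the proof of Proposition~\ref{Jacobson}, every simple $B$-module is isomorphic to $\mathbf 1$, so each maximal ideal of $B$ is the kernel of an algebra homomorphism $B\to\mathbf 1=\k$; any such homomorphism annihilates $B_{\neq 0}$ and hence factors through $B/I$. Writing $I=I_0\oplus I_{\neq 0}$ in $\Ver_p$, one verifies $I_{\neq 0}=B_{\neq 0}$ and that $I_0=I\cap B^{\inv}$ is a nilpotent (in particular proper) ideal of $B^{\inv}$; therefore $B/I=B^{\inv}/I_0$, which is a quotient of the local ring $B^{\inv}=A^{\inv}e$ and so is itself local. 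Hence, $I_0$ being nilpotent, algebra homomorphisms $B\to\k$ are in bijection with algebra homomorphisms $B^{\inv}\to\k$, of which there is exactly one, and so $B=Ae$ has a unique maximal ideal. The main obstacle is precisely this reduction of the maximal ideals of $Ae$ to those of $(Ae)^{\inv}$, which rests entirely on the nilpotence of $(Ae)_{\neq 0}$ provided by Lemma~\ref{nilpotence}; everything else is formal bookkeeping with idempotents.
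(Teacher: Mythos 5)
Your proof is correct and follows essentially the same route as the paper: the idempotent decomposition is treated as routine bookkeeping, and locality of each factor $Ae$ is obtained by reducing modulo the nilpotent ideal generated by the non-trivial isotypic part (the corollary to Lemma \ref{nilpotence}) so that everything comes down to ordinary finite-dimensional commutative algebra over $\k$. The only cosmetic difference is that you argue factor-by-factor via $(Ae)^{\inv}=A^{\inv}e$ and algebra homomorphisms to $\k$, while the paper passes to $A/I$ globally and matches primitive idempotents across the nilpotent surjection $A^{\inv}\to A/I$.
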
 

\begin{proof} It is clear that given such a set of primitive idempotents, we get a direct product decomposition. What we need to show to prove the proposition is that $Ae$ is local. 

Since $A$ has finite length, it is in particular, finitely generated. Hence, the ideal $I$ generated by $A_{\not=0}$ is nilpotent. Thus, the map $A_{0} \rightarrow A/I$ is a surjection with nilpotent kernel, and the idempotents of both algebras correspond. Let $\overline{e}$ be the image of $e \in S$ under this surjection. Then, the set $\overline{S} = \{\overline{e} : e \in S\}$ forms a set of primitive idempotents in $A/I$. Thus, $Ae/Ie = \overline{A/I}e$ is local, and since $Ie$ is a nilpotent ideal, $Ae$ is local as well. \end{proof}

\subsection{Coradical filtration}

We define the coradical filtration $C(i)$ on $C$ inductively.

\begin{definition} $C(0) = \Corad(C)$. $C(n)$ is the largest subobject of $C$ such that 

$$\Delta(C(n)) \subseteq C(n-1) \otimes C + C \otimes C(0).$$

\end{definition}

The following proposition is standard in the case when $C$ is a cocommutative ind-coalgebra in $\Vec.$ Proofs can be found in \cite[Chapter 9]{S}. The proof of the proposition for $C$ a cocommutative ind-coalgebra in $\Ver_{p}$ carries over without change. 

\begin{proposition} \label{coradical-filtration}

\begin{enumerate}

\item[1.] $C(n)$ is a subcoalgebra of $C$ with $\Delta(C(n)) \subseteq \displaystyle\sum_{i=0}^{n} C(i) \otimes C(n-i).$

\item[2.] $C(i) \subseteq C(i+1)$. 

\item[3.] $\bigcup_{i=0}^{\infty} C(i) = C.$ 

\item[4.] If $f: C \rightarrow C'$ is a homomorphism of coalgebras such that $f(C_{0}) \subseteq C'_{0}$, then $f(C(i)) \subseteq C'(i)$. This is always true if $C$ is irreducible. 

\item[5.] If $C$ is a cocommutative ind-Hopf algebra in $\Ver_{p}$ with multiplication $m$ and antipode $S$, then $m(C(i) \otimes C(j)) \subseteq C(i+j)$
and $S(C(i)) \subseteq C(i).$

\end{enumerate}

\end{proposition}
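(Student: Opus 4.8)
The plan is to transcribe the classical treatment of the coradical filtration (as in \cite[Chapter 9]{S}) into $\Ver_{p}^{\ind}$, the only genuinely new ingredient being the structure theory of finitely generated commutative algebras in $\Ver_{p}$ recalled in Sections~2--3. The organizing tool is the \emph{wedge product}: for subobjects $U,V\subseteq C$ set $U\wedge V:=\Delta^{-1}(U\otimes C+C\otimes V)$. Since the tensor product on $\Ver_{p}^{\ind}$ is exact, $U\otimes C$ and $C\otimes V$ are honest subobjects of $C\otimes C$, their sum and the preimage under $\Delta$ are defined, and all the formal facts about $\wedge$ hold verbatim: associativity; $U\wedge V$ is a subcoalgebra when $U,V$ are; and the comultiplication of $U\wedge V$ lands in $U\otimes C+C\otimes(U\wedge V)$. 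One then checks, exactly as over a field, that $C(n)$ as defined agrees with the $(n{+}1)$-fold wedge power $\Corad(C)^{\wedge(n+1)}$ of the coradical; note that $\Corad(C)$ is a subcoalgebra, being the span of the grouplikes of $J$ by the corollary above.

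Granting the wedge description, parts (1), (2), (4), (5) become formal. Part (2) is immediate since $U\subseteq U\wedge V$, and (1) follows from associativity of $\wedge$ together with the comultiplication bound, by the same induction as in the vector-space case. For (4): if $f\colon C\to C'$ is a coalgebra homomorphism and $D\subseteq C$ is a simple subcomodule, then $f(D)$ is a subcomodule and a quotient of $D$, hence $0$ or simple; thus $f(\Corad(C))\subseteq\Corad(C')$, and applying $f$ to wedge powers gives $f(C(n))\subseteq C'(n)$. For (5), when $C$ is a cocommutative ind-Hopf algebra the multiplication $m\colon C\otimes C\to C$ is a coalgebra homomorphism by the bialgebra axioms, and so is the antipode $S$ (cocommutativity upgrades the coalgebra anti-homomorphism property of $S$ to an honest homomorphism property); moreover $\Corad(C)\otimes\Corad(C)$ is a cosemisimple subcoalgebra of $C\otimes C$ — as an object it is a sum of copies of $\mathbf 1$ — hence contained in $\Corad(C\otimes C)$, and the standard induction then gives $\sum_{i+j=n}C(i)\otimes C(j)\subseteq (C\otimes C)(n)$. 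Applying (4) to $m$ and to $S$ yields both assertions of (5).

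The only non-formal point is the exhaustiveness statement (3). Using that $C$ is the sum of its finite-length subcoalgebras (shown earlier in this section) together with (4), it suffices to prove that every finite-length subcoalgebra $D\subseteq C$ satisfies $D=D(n)$ for some $n$. Dualize: $D^{*}$ is a finite-length, hence finitely generated, commutative algebra in $\Ver_{p}$. Under the non-degenerate evaluation pairing, Proposition~\ref{dualident} identifies the subobject $D(n)\subseteq D$ with the ideal $\J^{\,n+1}\subseteq D^{*}$, where $\J$ is the Jacobson radical of $D^{*}$: here one uses that $\Corad(D)$, being a sum of copies of $\mathbf 1$ by Lemma~\ref{nilpotence} and the corollary above, has orthocomplement exactly the radical, so that $D^{*}/\Corad(D)^{\perp}\cong D^{*}/\J$ is the maximal cosemisimple quotient, and that $\wedge$ dualizes to the product of ideals. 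By the results of Section~3, $\J$ coincides with the nilradical of $D^{*}$ and is therefore nilpotent (it is locally nilpotent and $D^{*}$ is Noetherian). Hence $\J^{\,m}=0$ for some $m$, so $D(m-1)=D$; taking the union over all finite-length $D$ gives $C=\bigcup_{n}C(n)$.

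The main obstacle is the verification of this dualization dictionary in $\Ver_{p}$ — that $\wedge$ dualizes to the product of ideals and $\Corad$ to the Jacobson (equivalently, nil-) radical — and it rests entirely on the input from Lemma~\ref{nilpotence} that a finite-length subcoalgebra has coradical a sum of copies of $\mathbf 1$, so that its annihilator in the dual algebra is precisely the nilradical. Once this is in place, the remaining manipulations are identical to the classical ones and carry over without change.
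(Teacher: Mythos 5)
Your proposal is correct and follows the same route the paper takes: the paper simply cites \cite[Chapter 9]{S} and observes that the classical arguments carry over verbatim, and your write-up is exactly that transcription (wedge-product formalism for (1), (2), (4), (5); dualization of a finite-length subcoalgebra plus nilpotency of the Jacobson radical, via Proposition \ref{dualident} and Lemma \ref{nilpotence}, for (3)), with the $\Ver_{p}$-specific inputs correctly identified. One step should be rephrased: in (4) you deduce $f(\Corad(C))\subseteq\Corad(C')$ from the claim that $f(D)$ is ``a quotient of the simple subcomodule $D$, hence $0$ or simple,'' but the pushforward of a simple $C$-comodule along $f$ need not be semisimple as a $C'$-comodule, and for general (non-cocommutative) coalgebras the inclusion $f(\Corad(C))\subseteq\Corad(C')$ can genuinely fail. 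In the present setting it holds for the reason you invoke elsewhere: by cocommutativity every simple subcomodule is a grouplike line (a copy of $\mathbf{1}$ spanned by a grouplike element of $J$), and coalgebra homomorphisms send grouplikes to grouplikes, so the image lies in $\Corad(C')$. With that justification substituted, the argument is complete and agrees with the proof the paper intends.
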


For each grouplike element in a cocommutative coalgebra, we can define a space of primitives. 

\begin{definition} For $g \in G(C)$, let $i_{g}: \mathbf{1} \rightarrow C$ be the inclusion of $g$ into $J \subseteq C$. Define the $g$-primitives as

$$\Prim_{g}(C) = \ker(\Delta - i_{g} \otimes \id_{C} - \id_{C}\otimes i_{g}).$$
If $C$ is irreducible, define $\Prim(C)$ as the space of primitives with respect to the unique grouplike element in $C$.

\end{definition} 

This definition agrees with our definition of primitives for cocommutative Hopf algebras in $\Ver_{p}^{\ind}$ if the Hopf algebra is irreducible. 

For the rest of this section, assume $C$ is irreducible in addition to being cocommutative, let $g$ be its unique grouplike element and let $i_{g}$ be the inclusion of $g$ into $C$. We want to analyze the coradical filtration on $C$ a little more.

\begin{proposition} \label{corad-prop}

\begin{enumerate}

\item[1.] $C(0) = \k g$.

\item[2.] $\Prim(C) \subseteq \ker(\epsilon)$, with $\epsilon : C \rightarrow \mathbf{1}$ the counit.

\item[3.] $C(1) = C(0) \oplus \Prim(C).$

\item[4.] Define $C(i)^{+} = C(i) \cap \ker(\epsilon)$. Let $\Delta$ denote the comultiplication map. Then, $C(i) = C(0) \oplus  C(i)^{+}$ and

$$(\Delta - i_{g} \otimes \id_{C} - \id_{C} \otimes i_{g})(C(n)^{+}) \subseteq C(n-1)^{+} \otimes C(n-1)^{+}.$$

\item[5.] Let $C, D$ be irreducible cocommutative coalgebras in $\Ver_{p}^{\ind}.$ Then, the coradical filtration on $C \otimes D$ is the tensor product on the coradical filtrations on $C$ and $D$ respectively:

$$(C \otimes D)(n) = \displaystyle\sum_{i=0}^{n} C(i) \otimes D(n-i).$$

\end{enumerate}

\end{proposition}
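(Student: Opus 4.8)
### Proof Proposal for Proposition~\ref{corad-prop}

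The plan is to prove the five parts in order, since each one feeds into the next, and to reduce everything to the case of finite-length coalgebras in $\Ver_p$ (hence, after dualizing, to finitely generated commutative algebras in $\Ver_p$) whenever a structural fact is needed. For part~1, observe that $C(0) = \Corad(C)$ by definition, and irreducibility of $C$ says precisely that $\Corad(C) \cong \mathbf{1}$; since $g$ is the unique grouplike element, the inclusion $i_g : \mathbf{1} \to C$ must identify $\Corad(C)$ with $\k g$. For part~2, if $x \in \Prim(C)$ then $\Delta(x) = g \otimes x + x \otimes g$; applying $\epsilon \otimes \id_C$ and using the counit axiom gives $x = \epsilon(g) x + \epsilon(x) g = x + \epsilon(x) g$, so $\epsilon(x) g = 0$, forcing $\epsilon(x) = 0$, i.e. $\Prim(C) \subseteq \ker(\epsilon)$.

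For part~3, the inclusion $C(0) \oplus \Prim(C) \subseteq C(1)$ is immediate from the definitions (a primitive element $x$ has $\Delta(x) \in C(0) \otimes C + C \otimes C(0)$), and the sum is direct by part~2 together with $C(0) = \k g$ and $\epsilon(g) = 1$. For the reverse inclusion, take $y \in C(1)^{+} := C(1) \cap \ker(\epsilon)$ and apply the standard argument: write $\tilde\Delta(y) := (\Delta - i_g \otimes \id_C - \id_C \otimes i_g)(y)$; using $\Delta(C(1)) \subseteq C(0) \otimes C + C \otimes C(0)$, coassociativity, counitality, and the fact that $C(0) = \k g$ is one-dimensional, one shows $\tilde\Delta(y) = 0$, so $y \in \Prim(C)$. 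This is the classical computation from \cite[Chapter 9]{S}, which transfers verbatim to $\Ver_p^{\ind}$ since it only uses the coalgebra axioms, the braiding nowhere intervening in an essential way at this stage. Part~4 then generalizes part~3: the decomposition $C(i) = C(0) \oplus C(i)^{+}$ follows because $\epsilon|_{C(0)}$ is an isomorphism onto $\mathbf{1}$ and $C(i) \supseteq C(0)$, so $C(i) = C(0) \oplus (C(i) \cap \ker \epsilon)$; the containment $\tilde\Delta(C(n)^{+}) \subseteq C(n-1)^{+} \otimes C(n-1)^{+}$ comes from the defining property $\Delta(C(n)) \subseteq C(n-1) \otimes C + C \otimes C(0)$ combined with its ``mirror'' version (available by cocommutativity) $\Delta(C(n)) \subseteq C(0) \otimes C + C \otimes C(n-1)$, intersecting the two and subtracting off the $C(0)$-terms, again exactly as in the $\Vec$ case.

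Part~5 is the part I expect to require the most care, though it is still standard: the inclusion $\bigoplus_{i=0}^n C(i) \otimes D(n-i) \subseteq (C \otimes D)(n)$ follows directly from part~1 of Proposition~\ref{coradical-filtration} applied to both factors and a short diagram chase with the comultiplication on $C \otimes D$ (which involves the braiding $c$, but only as an isomorphism that commutes with everything in sight). The reverse inclusion is the subtle one; the cleanest route is to pass to finite-length subcoalgebras $C' \subseteq C$, $D' \subseteq D$ using the proposition that $C$ is the sum of its finite-length subcoalgebras, reduce to $C, D \in \Ver_p$, then dualize: $(C \otimes D)^* \cong C^* \widehat{\otimes} D^*$ is a finitely generated local commutative algebra in $\Ver_p$ (local by Proposition~\ref{irreducible-local}, since $C \otimes D$ is again irreducible — its only grouplike is $g \otimes g'$), and the coradical filtration on a coalgebra dualizes to the $\m$-adic filtration on the dual algebra. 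The statement then becomes the assertion that the maximal ideal of $C^* \widehat\otimes D^*$ is $\m_{C^*} \otimes D^* + C^* \otimes \m_{D^*}$ and that its powers distribute as claimed, which is a routine consequence of the nilpotence results (Lemma~\ref{nilpotence} and its corollary) and the behavior of the $\m$-adic filtration under tensor products of local rings — exactly the ``descend to $A/I$ and lift via nilpotence of $I$'' strategy flagged in the section introduction. The main obstacle is bookkeeping: making sure the dualization correctly matches the $n$-th coradical layer with the $n$-th power of the maximal ideal and that the braiding-twisted comultiplication on $C \otimes D$ dualizes to the ordinary (braided) multiplication on the completed tensor product; once that dictionary is set up, the commutative-algebra side is immediate.
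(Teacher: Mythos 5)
Your proposal is correct, and for parts 1, 2 and 4 it is essentially the paper's argument (which simply observes that Sweedler's proofs, phrased in terms of the map $\Delta - i_{g}\otimes\id - \id\otimes i_{g}$, categorify). The two places where you diverge are worth noting. For part 3 you assert that the classical computation from \cite[Chapter 9]{S} transfers ``verbatim''; the paper is a bit more careful here, splitting $C(1)=C(1)_{0}\oplus C(1)_{\not=0}$ into isotypic components: on $C(1)_{\not=0}$ the map $\Delta-i_{g}\otimes\id-\id\otimes i_{g}$ lands in $C(0)\otimes C(1)_{\not=0}\oplus C(1)_{\not=0}\otimes C(0)$ and is therefore determined by its compositions with $\epsilon\otimes\id$ and $\id\otimes\epsilon$, which vanish, so that piece is automatically primitive, and only the invariant piece $C(1)_{0}$ is handled by the literal elementwise Sweedler argument (legitimate there since it lives in $\Vec$). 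Your mechanism is the same (everything is controlled by $\epsilon$ because $C(0)\cong\mathbf{1}$), but the isotypic splitting is what makes ``verbatim'' honest, since elements are only available in the $\mathbf{1}$-isotypic part. For part 5 the paper follows \cite[Corollary 11.0.6]{S} directly, working with $C^{*}$ and $D^{*}$ as topological algebras in $\Ver_{p}^{\pro}$ under the completed tensor product, whereas you first reduce to finite-length subcoalgebras (using $C'(n)=C'\cap C(n)$ and the fact that any finite-length subobject of $C\otimes D$ sits in some $C'\otimes D'$) and then dualize to genuine finite-length local commutative algebras in $\Ver_{p}$, matching $C(n)$ with the orthogonal of $\m^{n+1}$. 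That route is valid and buys you freedom from topological completions, at the cost of having to verify the (standard, and indeed true) compatibility facts you flag: coradical filtration versus radical powers under the pairing of Proposition \ref{dualident}, and irreducibility of $C\otimes D$ via $\Corad(C\otimes D)\subseteq\Corad(C)\otimes\Corad(D)$; also note that Lemma \ref{nilpotence} is not really needed there, since the maximal ideal of a finite-length local algebra is automatically nilpotent.
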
  

\begin{proof} Statement $1$ follows from definition of irreducibility. Statement $2$ follows from the counit axiom. The proof of Statement 3 is essentially the proof of \cite[Proposition 10.0.1]{S}, which we restate here categorically. Note that statement 2 and the fact that $\epsilon(g) = 1$ implies that $C(0) + \Prim(C) $ is a direct sum decomposition, hence we just need to show that  $C(1) = C(0) + \Prim(C).$
Let $C(1) = C(1)_{0} \oplus C(1)_{\not=0}$ be the decomposition of $C(1)$ into the isotypic component corresponding to $\mathbf{1}$ and the natural complement. By definition of $C(1)$,

$$\Delta(C(1)_{\not=0}) \subseteq C(0) \otimes C(1)_{\not=0}  \oplus C(1)_{\not=0} \otimes C(0).$$
Hence, we have a map 

$$\Delta - i_{\g} \otimes \id - \id \otimes i_{\g} : C(1)_{\not=0} \rightarrow C(0) \otimes C(1)_{\not=0} \oplus C(1)_{\not=0} \otimes C(0).$$
Since $\epsilon: C(0) \rightarrow \mathbf{1}$ is an isomorphism, this map is determined by its compositions with $\epsilon \otimes \id_{C}$ and $\id_{C} \otimes \epsilon$. By the counit axiom, and the fact that $\epsilon$ kills $C_{\not=0}$, these compositions are both $0$. Hence, 

$$(\Delta - i_{g} \otimes \id - \id \otimes i_{g}) (C(1)_{\not=0}) = 0.$$
Thus, we just need to show that $C(1)_{0} = C(0) + \Prim(C)_{0}$, and this follows in exactly the same manner as in the proof of \cite[Proposition 10.0.1]{S}, since these are just vector spaces. 

Statement 4 is just Proposition 10.0.2 in \cite{S}. This proposition relies on results analogous to the ones we stated in Proposition \ref{coradical-filtration}. With these results, the proof there works without change, since everything can be stated in terms of the map $\Delta - i_{g} \otimes \id - \id \otimes i_{g}$ and be made element free.

Similarly, statement 5 is \cite[Corollary 11.0.6]{S}, the proof of which uses the fact that $C^{*}$ and $D^{*}$ are topological algebras in $\Ver_{p}^{\pro}$ (under the completed tensor product) but is otherwise element free. 
\end{proof}

\begin{corollary} Let $C'$ be any coalgebra in $\Ver_{p}^{\ind}$ and let $f: C \rightarrow C'$ be a coalgebra map. Then, 

\begin{center} $f$ is injective $\Leftrightarrow f|_{\Prim(C)}$ is injective \end{center}

\end{corollary}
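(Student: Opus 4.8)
The forward direction is immediate: if $f$ is injective, then its restriction to the subobject $\Prim(C) \subseteq C$ is injective. So the content is the converse. The plan is to show that if $f|_{\Prim(C)}$ is injective, then $f|_{C(n)}$ is injective for every $n$ by induction on $n$; since $C = \bigcup_n C(n)$ by Proposition \ref{coradical-filtration}(3), this gives injectivity of $f$ on all of $C$ (a monomorphism of ind-objects is detected on the terms of a filtered union).

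For the base case, $C(0) = \k g \cong \mathbf{1}$ by Proposition \ref{corad-prop}(1), and $f$ is a coalgebra map, so $f(g)$ is a grouplike element of $C'$ and in particular $f|_{C(0)}$ is injective (a nonzero map $\mathbf{1} \to C'$). For the case $n = 1$, write $C(1) = C(0) \oplus \Prim(C)$ by Proposition \ref{corad-prop}(3); the hypothesis handles the $\Prim(C)$ summand, the base case handles $C(0)$, and one checks the images intersect trivially since $f$ sends primitives to primitives (for the grouplike $f(g)$) while $f(g)$ itself is not primitive, as $\epsilon(f(g)) = 1 \neq 0$ whereas primitives lie in $\ker \epsilon$.

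For the inductive step, suppose $f|_{C(n-1)}$ is injective and let $K = \ker(f|_{C(n)})$. Using Proposition \ref{coradical-filtration}(1), $\Delta(C(n)) \subseteq \sum_{i=0}^{n} C(i) \otimes C(n-i) \subseteq C(n-1) \otimes C + C \otimes C(0)$ — more precisely I would use the refined statement in Proposition \ref{corad-prop}(4): working with $C(n)^+ = C(n) \cap \ker\epsilon$, the map $\widetilde\Delta := \Delta - i_g \otimes \id - \id \otimes i_g$ sends $C(n)^+$ into $C(n-1)^+ \otimes C(n-1)^+$. Since $f$ is a coalgebra map it intertwines $\widetilde\Delta$ with the corresponding map $\widetilde{\Delta'}$ on $C'$ (using that $f(g)$ is grouplike). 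Restricting to $K \cap C(n)^+$: this subobject is killed by $f$, hence its image under $\widetilde\Delta$ lands in $\ker(f \otimes f)$ inside $C(n-1)^+ \otimes C(n-1)^+$; but $f|_{C(n-1)}$ is injective, so $f \otimes f$ is injective on $C(n-1) \otimes C(n-1)$ (exactness of $\otimes$ in $\Ver_p^{\ind}$), forcing $\widetilde\Delta(K \cap C(n)^+) = 0$, i.e. $K \cap C(n)^+ \subseteq \Prim_g(C) = \Prim(C)$. By hypothesis $f$ is injective on $\Prim(C)$, so $K \cap C(n)^+ = 0$; combined with injectivity on the $C(0)$-summand and a degree/counit argument as in the $n=1$ case, $K = 0$.

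The main obstacle is the bookkeeping in the inductive step — correctly passing to the $\ker\epsilon$-parts so that the coproduct-type map $\widetilde\Delta$ lands in a tensor square of the \emph{previous} filtration piece (rather than $C(n) \otimes C$, which would not let the inductive hypothesis bite), and checking that $f$ genuinely intertwines these reduced comultiplications given that it only a priori respects the honest $\Delta$ and sends $g$ to a grouplike element. This is exactly the categorical transcription of the classical "a coalgebra map injective on primitives is injective on a connected coalgebra" argument; everything is element-free and uses only exactness of $\otimes$ in $\Ver_p^{\ind}$, so no genuinely new difficulty arises beyond that.
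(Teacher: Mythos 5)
Your argument is correct and is exactly the classical Sweedler-style proof that the paper itself invokes: the paper's "proof" is just the citation to \cite[Lemma 11.0.1]{S} with the remark that the argument transfers to $\Ver_{p}$ unchanged, and your induction on the coradical filtration via the reduced comultiplication of Proposition \ref{corad-prop}(4) is that argument, written out categorically (the final step is cleanest if you note that $\epsilon = \epsilon' \circ f$ kills $\ker f$, so the kernel lies in $C(n)^{+}$ and the $C(0)$-summand never enters). So this is essentially the same approach as the paper, carried out in more detail.
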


\begin{proof} This is \cite[Lemma 11.0.1]{S}. The proof requires no change to be made suitable for $\Ver_{p}$. 
\end{proof} 

\begin{corollary} \label{coideal-primitives} If $I$ is a coideal in $C$, then $I \cap \Prim(C) = 0 \Rightarrow I = 0.$

\end{corollary}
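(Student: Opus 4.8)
The plan is to deduce this immediately from the preceding corollary (the $\Ver_p$ version of \cite[Lemma 11.0.1]{S}) by passing to the quotient coalgebra. First I would observe that, since $I$ is a coideal in $C$, the quotient $C/I$ inherits a natural cocommutative coalgebra structure in $\Ver_p^{\ind}$ and the canonical projection $\pi\colon C\to C/I$ is a homomorphism of coalgebras whose kernel is exactly $I$.

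Next I would apply the previous corollary with $C'=C/I$ and $f=\pi$: it tells us that $\pi$ is injective if and only if $\pi|_{\Prim(C)}$ is injective. Since $\Ver_p^{\ind}$ is abelian, a morphism is a monomorphism precisely when its kernel vanishes, so $\pi$ is injective iff $I=\ker\pi=0$, while $\pi|_{\Prim(C)}$ is injective iff $\ker\bigl(\pi|_{\Prim(C)}\bigr)=\Prim(C)\cap\ker\pi=\Prim(C)\cap I=0$. Chaining the two equivalences yields exactly the claim: $\Prim(C)\cap I=0$ forces $I=0$.

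I do not anticipate any real obstacle here — the substantive work is already contained in the coradical-filtration analysis of Proposition \ref{corad-prop} and in the corollary to which this one is being reduced. The only point to verify is that the standing hypotheses of this part of the section (that $C$ is irreducible and cocommutative, so that $\Prim(C)$ is taken with respect to the unique grouplike element) are precisely what is needed to invoke that corollary; and indeed the corollary permits $C'=C/I$ to be an arbitrary coalgebra in $\Ver_p^{\ind}$, so nothing further is required.
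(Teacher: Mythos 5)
Your proposal is correct and is exactly the intended argument: the paper states this corollary without proof precisely because it follows by applying the preceding corollary (the $\Ver_p$ version of \cite[Lemma 11.0.1]{S}) to the quotient coalgebra map $\pi\colon C \to C/I$, whose kernel is $I$. Your identification of $\ker(\pi|_{\Prim(C)}) = I \cap \Prim(C)$ and the resulting chain of equivalences is the same reduction the paper has in mind.
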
 

\begin{definition} Given a cocommutative coalgebra $C$ in $\Ver_{p}^{\ind}$, let $C_{\gr}$ be the associated graded coalgebra under the coradical filtration. \end{definition}

An important property of this filtration that we will use is the following proposition, whose proof is standard.

\begin{lemma} \label{corad-grading} 

\begin{enumerate}

\item[1.] $C_{\gr}$ is \emph{coradically graded}, i.e, $C_{\gr}$ is an $\mathbb{N}$-graded cocommutative coalgebra such that the induced filtration is the coradical filtration.

\item[2.] If $C$ is an ind-Hopf algebra in $\Ver_{p}$, then $C_{\gr}$ is also a Hopf algebra and is commutative if $C$ is irreducible.

\item[3.] $\Prim(C_{\gr}) = C_{\gr}[1] = \Prim(C).$

\end{enumerate}

\end{lemma}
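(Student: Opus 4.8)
The plan is to follow the classical template for this fact (as in \cite[Chapter 11]{S}), translating each step into the categorical language of $\Ver_p^{\ind}$ and invoking the results already established in this section — most importantly Proposition \ref{coradical-filtration}, Proposition \ref{corad-prop}, and, for the Hopf-algebra parts, Lemma \ref{corad-grading}'s hypotheses on $C$ being a cocommutative ind-Hopf algebra. The key observation throughout is that every simple subcomodule of $C$ is isomorphic to $\mathbf 1$ (by the Corollary after Proposition on $\Corad(C)\subseteq J$), so the coradical filtration behaves formally just as in $\Vec$.

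\textbf{Step 1 (coradical gradedness).} First I would show $C_{\gr}$ is coradically graded. Set $C_{\gr}[n] = C(n)/C(n-1)$ (with $C(-1)=0$). The coalgebra structure on $C_{\gr}$ is induced from $\Delta$ using part 1 of Proposition \ref{coradical-filtration}, namely $\Delta(C(n))\subseteq \sum_{i=0}^n C(i)\otimes C(n-i)$, which gives a well-defined map $C_{\gr}[n]\to\bigoplus_{i=0}^n C_{\gr}[i]\otimes C_{\gr}[n-i]$; coassociativity and counitality are inherited. It remains to check that the filtration on $C_{\gr}$ induced by \emph{its own} coradical filtration coincides with the grading. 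The degree-zero part $C_{\gr}[0]=C(0)=\Corad(C)$ is a direct sum of copies of $\mathbf 1$, so it is cosemisimple, hence equal to its own coradical; and since $C_{\gr}$ is graded with $C_{\gr}[0]$ cosemisimple, an induction on $n$ using the formula for $\Delta$ shows $\bigoplus_{i\le n}C_{\gr}[i]$ is exactly the $n$-th term of the coradical filtration of $C_{\gr}$ — the point being that any element whose coproduct lands in $C_{\gr}(n-1)\otimes C_{\gr} + C_{\gr}\otimes C_{\gr}[0]$ must, by the grading, have components only in degrees $\le n$. This is the step I expect to require the most care, because it is where one must argue that no ``higher'' homogeneous piece can sneak into a lower coradical layer; the standard argument (see \cite{S}) is element-free once phrased via the maps $\Delta$, $\epsilon\otimes\id$, $\id\otimes\epsilon$, so it transports to $\Ver_p^{\ind}$ verbatim.

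\textbf{Step 2 (Hopf structure and commutativity).} If $C$ is a cocommutative ind-Hopf algebra, parts 4 and 5 of Proposition \ref{coradical-filtration} give $m(C(i)\otimes C(j))\subseteq C(i+j)$ and $S(C(i))\subseteq C(i)$, so multiplication, unit, and antipode descend to $C_{\gr}$, making it a graded cocommutative ind-Hopf algebra (compatibility of $m$ with $\Delta$ on $C_{\gr}$ follows from the corresponding compatibility on $C$). For commutativity when $C$ is irreducible: here $C(0)=\k g\cong\mathbf 1$ so $C_{\gr}$ is connected graded, $C_{\gr}[0]=\mathbf 1$. Then I would invoke the standard fact that a connected graded cocommutative Hopf algebra is, after passing to the associated graded of the coradical filtration, \emph{primitively generated}, and a primitively generated cocommutative Hopf algebra in a symmetric category over a field of characteristic $p$ need not be commutative in general — but the classical trick (again from \cite{S}, using that $C_{\gr}$ is generated by its primitives $C_{\gr}[1]$ and that the commutator of two primitives is primitive) forces commutativity once one knows $C_{\gr}$ is generated in degree $1$. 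So the real content here folds into Step 3.

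\textbf{Step 3 (primitives).} Finally, $\Prim(C_{\gr})=C_{\gr}[1]=\Prim(C)$. The equality $C_{\gr}[1]=\Prim(C)$ is immediate from part 3 of Proposition \ref{corad-prop}, which gives $C(1)=C(0)\oplus\Prim(C)$, so $C_{\gr}[1]=C(1)/C(0)\cong\Prim(C)$. For $\Prim(C_{\gr})=C_{\gr}[1]$: the inclusion $C_{\gr}[1]\subseteq\Prim(C_{\gr})$ is clear from the degree count in $\Delta(C_{\gr}[1])\subseteq C_{\gr}[0]\otimes C_{\gr}[1]\oplus C_{\gr}[1]\otimes C_{\gr}[0]$ and the identification $C_{\gr}[0]=\mathbf 1$. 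For the reverse, a homogeneous primitive of degree $n\ge 2$ would, by part 4 of Proposition \ref{corad-prop} applied to $C_{\gr}$ (which now has $C_{\gr}(n-1)=\bigoplus_{i\le n-1}C_{\gr}[i]$), satisfy $(\Delta - i_g\otimes\id-\id\otimes i_g)(x)\in C_{\gr}[n-1]^+\otimes C_{\gr}[n-1]^+$, but being primitive it equals $0$, forcing $x\in C_{\gr}(0)=C_{\gr}[0]$, contradicting $n\ge2$ unless $x=0$. This simultaneously shows $C_{\gr}$ is generated by $C_{\gr}[1]$ (a connected coradically graded coalgebra with no primitives above degree $1$ is generated in degree $1$, by downward induction on the coradical filtration using Proposition \ref{corad-prop} part 4 again), closing the loop with Step 2's commutativity claim. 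The main obstacle, as noted, is Step 1: making rigorous — in $\Ver_p^{\ind}$ rather than $\Vec$ — that the grading on $C_{\gr}$ \emph{is} its coradical filtration; everything downstream is a formal consequence of the propositions already proved in this section, since all the simple subcomodules in sight are copies of $\mathbf 1$.
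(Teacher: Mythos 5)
Your Step 1 and most of Step 3 follow the standard route that the paper's citation of \cite{S} points to, and the categorical translation is unproblematic since all simples in sight are copies of $\mathbf 1$. The genuine gap is in your treatment of part 2. You reduce commutativity of $C_{\gr}$ to the claim that $C_{\gr}$ is generated as an algebra by $C_{\gr}[1]=\Prim(C_{\gr})$, deduced from ``a connected coradically graded coalgebra with no primitives above degree $1$ is generated in degree $1$.'' That statement is false in characteristic $p$, already inside $\Vec\subseteq\Ver_{p}$: the divided-power (hyperalgebra) Hopf algebra of $\mathbb{G}_{a}$, spanned by $t^{(n)}$, $n\geq 0$, with $\Delta t^{(n)}=\sum_{i+j=n}t^{(i)}\otimes t^{(j)}$, is an irreducible cocommutative ind-Hopf algebra, coradically graded by $n$, with $\Prim=\k\, t^{(1)}$ concentrated in degree $1$; yet the subalgebra generated by $t^{(1)}$ is only the span of the $t^{(n)}$ with $n<p$, because $(t^{(1)})^{n}=n!\,t^{(n)}$. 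So the associated graded of an irreducible cocommutative ind-Hopf algebra in $\Ver_{p}$ need not be primitively generated, and commutativity cannot be obtained by checking it on degree-one generators. (The fact that the part coming from $\g_{\not=0}$ has no divided powers is Theorem \ref{PBWcocomm}/\ref{purelyoddcocomm}, whose proofs use the present lemma, so invoking it here would be circular; and the $\Vec$-part genuinely does have divided powers.)

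The standard argument proves part 2 directly, without any generation statement: show by induction on $i+j$ that the commutator $m\circ(\id-c)$ maps $C(i)\otimes C(j)$ into $C(i+j-1)$. Writing $\Delta=i_{g}\otimes\id+\id\otimes i_{g}+\delta$ with $\delta(C(n)^{+})\subseteq C(n-1)^{+}\otimes C(n-1)^{+}$ (Proposition \ref{corad-prop}, part 4), one expands $\Delta$ of the commutator, uses cocommutativity, the multiplicativity of the filtration (Proposition \ref{coradical-filtration}, part 5) and the inductive hypothesis to see that it lies in $C\otimes C(i+j-2)+C(0)\otimes C$, and then the defining property of the coradical filtration places the commutator in $C(i+j-1)$; this is element-free once phrased through $\delta$ and the braiding, so it transports to $\Ver_{p}^{\ind}$ verbatim. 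Separately, in Step 3 your deduction that a homogeneous primitive of degree $n\geq 2$ vanishes is garbled as written: primitivity says $(\Delta-i_{g}\otimes\id-\id\otimes i_{g})(x)=0$, which by itself does not force $x\in C_{\gr}(0)$. The clean argument is that primitives lie in the first term of the coradical filtration (Proposition \ref{corad-prop}, part 3, applied to $C_{\gr}$), which by your Step 1 is $C_{\gr}[0]\oplus C_{\gr}[1]$, so homogeneity of degree $\geq 2$ forces $x=0$; with that repair, part 3 stands, but the final sentence claiming generation by $C_{\gr}[1]$ should be deleted along with the Step 2 reduction that depends on it.
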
 

We also want a slight generalization of this Lemma if $C$ is a cocommutative Hopf algebra in $\Ver_{p}^{\ind}$, rather than just a coalgebra. Let $J = \Delta^{-1}(C_{0} \otimes C_{0})$ and let $\g$ be the space of primitives in $C$. Define a filtration $F$ on $C$ inductively as 

\begin{enumerate}

\item[1.] $F_{0}(C) = J.$

\item[2.] $F_{i}(C)$ is the kernel of the composition of $\Delta: C \rightarrow C \otimes C$ with the natural projection $C \otimes C \rightarrow C/F_{i-1}(C) \otimes C/F_{0}(C)$.

\end{enumerate}

\begin{remark} This is a coradical filtration on $C$ relative to $J$. It is defined to be dual to the descending filtration on $C^{*}$ induced by powers of $C^{*}_{\not=0}$.

\end{remark}

This filtration has the following analogous properties.

\begin{proposition} \label{relativecoradical} Let $C$, $J$ and $\g$ be as above and let $F$ be the relative coradical filtration of $C$ with respect to $J$. Then 

\begin{enumerate}

\item[1.] $F_{i}(C)$ are subcoalgebras of $C$.

\item[2.] $m(F_{i}(C) \otimes F_{j}(C)) \subseteq F_{i+j}(C).$

\item[3.] $\gr_{F}(C)$ is a graded cocommutative Hopf algebra with $\gr_{F}(C)[0] = J$ and $\Prim(\gr_{F}(C)) = \g$. 

\end{enumerate}

\end{proposition}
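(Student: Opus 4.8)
The plan is to reduce everything to the corresponding statements for the ordinary coradical filtration, which were already established in Lemma \ref{corad-grading} and Proposition \ref{corad-prop}, by realizing the relative filtration $F$ on $C$ as an ``interlaced'' version of the coradical filtrations on the irreducible components. Concretely, by Proposition \ref{complete-reducibility} we have $C \cong \bigoplus_{g \in G(C)} C_g$ as a coalgebra, and since $C$ is a Hopf algebra, $J = \Delta^{-1}(C_0 \otimes C_0)$ is exactly the group Hopf algebra $\k[G(C)]$ together with the coalgebra structure making $C = J \otimes C_e$ where $C_e$ is the irreducible component of the identity (the usual ``smash-like'' decomposition of a cocommutative Hopf algebra into its group part and its irreducible part). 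Under this identification the relative filtration $F$ on $C$ should coincide with $J \otimes C_e(i)$, where $C_e(i)$ is the ordinary coradical filtration of the irreducible Hopf algebra $C_e$. So the first step is to establish this identification $F_i(C) = J \cdot C_e(i)$; this is where the bulk of the work lies, since one must check that the inductively defined $F_i$ (kernel of $C \to C/F_{i-1} \otimes C/F_{i-1}$) matches $J \otimes C_e(i)$, and this in turn reduces, after dualizing a finite-length piece, to the statement that the $\m$-adic filtration on $C^* = C_e^* \otimes \k[G(C)]^*$ by powers of $\m_e \otimes \k[G]^* + C_e^* \otimes \{0\}$ — i.e.\ by powers of $C^*_{\neq 0}$ together with the radical of $\k[G]^*$ is, on the relevant quotient, just the $\m_e$-adic filtration tensored with all of $\k[G]^*$; equivalently that $J$ is a ``separable'' subcoalgebra so that the relative coradical filtration is the coradical filtration of $C_e$ transported over by the coaction.

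Granting the identification $F_i(C) \cong J \otimes C_e(i)$, the three assertions follow quickly. For (1), $F_i(C)$ is a subcoalgebra: $C_e(i)$ is a subcoalgebra of $C_e$ by Proposition \ref{coradical-filtration}(1), $J$ is a subcoalgebra, and the tensor product of subcoalgebras is a subcoalgebra, so $J \otimes C_e(i)$ is a subcoalgebra of $J \otimes C_e = C$. For (2), the multiplicativity $m(F_i \otimes F_j) \subseteq F_{i+j}$: the multiplication on $C$ is compatible with the decomposition $C = J \otimes C_e$ (it is a smash/crossed product, with $J$ acting on $C_e$), and one combines $m_J(J \otimes J) \subseteq J$ with the coradical-filtration multiplicativity on $C_e$, namely $m(C_e(i) \otimes C_e(j)) \subseteq C_e(i+j)$ from Proposition \ref{coradical-filtration}(5), plus the fact that the conjugation action of grouplikes preserves each $C_e(i)$ because automorphisms of $C_e$ preserve its coradical filtration by Proposition \ref{coradical-filtration}(4). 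For (3), $\gr_F(C)$ is a graded cocommutative Hopf algebra because $F$ is a Hopf algebra filtration by (1) and (2); the degree-zero part is $F_0(C) = J$ by construction of $F$; and $\Prim(\gr_F(C)) = \g$ because, placing $J$ in degree $0$, one has $\gr_F(C) \cong J \otimes \gr(C_e)$ with $\gr(C_e)$ coradically graded (Lemma \ref{corad-grading}(1)), so the degree-one part is $J \otimes \Prim(C_e)$ and the primitives of the whole are those elements of degree one killed by $\Delta$ modulo the grouplike part, i.e.\ exactly $\Prim(C_e) = \g$ by Lemma \ref{corad-grading}(3) and the fact that the primitives of $C$ and of its identity component $C_e$ agree.

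The main obstacle is the first step: identifying the relative coradical filtration with the coradical filtration of the irreducible component. The cleanest route is to dualize — pass to a finite-length subcoalgebra $D \subseteq C$ stable under $\Delta$ (Proposition from \S4.2) and under the relevant structure, so that $D^*$ is a finite-length commutative algebra in $\Ver_p$; there the relative filtration becomes the filtration by powers of the ideal $I = D^*_{\neq 0} + (\text{radical of } (D^*)^{\mathrm{inv}} \text{-part away from the chosen maximal ideal})$. One then uses the structural results of \S2.5 and \S3 — in particular that $D^*$ is module-finite over $(D^*)^{\mathrm{inv}}$, that the ideal generated by $D^*_{\neq 0}$ is nilpotent, and Proposition \ref{localizeprop} on localization at a maximal ideal — to see that completing at the chosen maximal ideal $\m_e$ kills the group part and leaves the $\m_e$-adic filtration on $\widehat{(D^*)_{\m_e}}$, whose graded pieces dualize back to $C_e(i)/C_e(i-1)$. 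One must check this is compatible across the directed system of finite-length $D$'s, which is routine since all the constructions ($\Delta$, $m$, the filtrations) commute with the filtered colimit. Everything else is formal bookkeeping with the Hopf-algebraic smash decomposition $C = \k[G(C)] \otimes C_e$.
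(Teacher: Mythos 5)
Your central identification is incorrect: $J = \Delta^{-1}(C_{0} \otimes C_{0})$ is not the group algebra $\k[G(C)]$ of grouplikes. It is the largest subcoalgebra of $C$ contained in the trivial isotypic component, i.e.\ the full ``ordinary part'' of $C$; in particular it contains $\g_{0}$ and, more generally, the whole distribution algebra of the underlying ordinary formal group. For example, for $C = U(\g)$ with $\g_{0} \neq 0$ one has $G(C) = \{1\}$ but $J \supseteq U(\g_{0})$. Consequently your proposed identification $F_{i}(C) = \k[G(C)] \otimes C_{e}(i)$ fails already at $i = 0$: by definition $F_{0}(C) = J$, which is strictly larger than $\k[G(C)] \otimes C_{e}(0) = \Corad(C)$ whenever $\g_{0} \neq 0$. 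What your construction recovers is the ordinary coradical filtration (already treated in Proposition \ref{corad-prop} and Lemma \ref{corad-grading}), not the filtration relative to $J$; the whole point of the relative filtration is that it collapses all of $J$ (including $\g_{0}$ and its divided powers) into degree $0$, which is exactly what makes $\Prim(\gr_{F}(C)) = \g$ rather than $\g$ plus spurious degree-one pieces coming from $J$.

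Even after replacing $\k[G(C)]$ by the correct $J$, the route through a decomposition of $C$ as $J$ tensored with a complementary coalgebra, with $F_{i} = J \cdot C_{e}(i)$, is problematic: such a smash-type decomposition of $C$ over $J$ is essentially the content of Theorem \ref{PBWcocomm}, which the paper proves \emph{using} this very proposition, so invoking it here would be circular, and it does not follow from Proposition \ref{complete-reducibility}, which only decomposes $C$ over its grouplikes. The paper's argument needs no decomposition at all: statements 1 and 2 follow by the same inductive arguments as for the ordinary coradical filtration, and for statement 3 one chooses a lift $\g' \subseteq C$ of $\Prim(\gr_{F}(C))$, notes that $(\Delta - \id \otimes \iota - \iota \otimes \id)(\g')$ lands in $J \otimes J \subseteq C_{0} \otimes C_{0}$, deduces $\g'_{0} \subseteq J$ (hence $\g'_{0} = \g_{0}$) and that $\g'_{\neq 0}$ is genuinely primitive in $C$ because the defect has no nontrivial isotypic components; hence $\g' = \g$.
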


\begin{proof} The proof of statement 1 and 2 follow in exactly the same way as in the case of the coradical filtration. We only prove statement 3 here. Statement 1 and 2 immediately imply that the associated graded is a cocommutative Hopf algebra and the degree $0$ piece is $J$ by definition. Choose some lift $\g' \subseteq C$ of $\Prim(\gr_{F}(C))$. Then $(\Delta - \id \otimes \iota - \iota \otimes \id)(\g') \subseteq J \otimes J.$
Hence, $\g'_{0} \susbeteq J$, as $\Delta(\g'_{0}) \subseteq C_{0} \otimes C_{0}$. Consequently, $\g'_{0} = \g_{0}$. On the other hand, the equation above also implies that $\g'_{\not=0}$ must be actually primitive in $C$, since the image of $\Delta - \id \otimes \iota - \iota \otimes \id$ lies entirely inside $C_{0} \otimes C_{0}$. Hence, $\g' = \g$. 
\end{proof}

\begin{remark} If $C$ is a cocommutative Hopf algebra in $\Ver_{p}^{\ind}$, then $C^{*}$ is a topological commutative Hopf algebra in $\Ver_{p}^{\pro}$. Hence, we can think of these cocommutative Hopf algebras as formal groups in $\Ver_{p}$. 

\end{remark}

\subsection{The Dual Coalgebra}

In this subection, we will define the dual coalgebra to an ind-algebra $A$ in $\Ver_{p}$. Most of the constructions in this section are generalizations of \cite[Chapter 6]{S}. If $H$ is a commutative ind-Hopf algebra in $\Ver_{p}$, the dual coalgebra will be a cocommutative ind-Hopf algebra $C$ that is equipped with a non-degenerate pairing with $H$.

\begin{definition} Suppose $A$ is a commutative ind-algebra in $\Ver_{p}$. Then, the dual ind-coalgebra, denoted $A^{\circ}$, is the directed union of $(A/I)^{*}$ over all cofinite ideals $I$. If $A$ is an ind-Hopf algebra with counit $\epsilon$, then $I = \ker(\epsilon)$ is the \emph{augmentation ideal} of $A$ and $(A^{\circ})^{1}$ is the directed union $\bigcup_{n=1}^{\infty} (A/I^{n})^{*}.$
\end{definition}

\begin{remark} Note that for each cofinite ideal $J$, $(A/J)^{*}$ is naturally a subobject of $A^{*}$, identified with the kernel in $A^{*}$ of the evaluation pairing of $J$ with $A^{*}$ via Proposition \ref{dualident}. So, we can take the directed union inside $A^{*}$.

\end{remark}

\begin{definition} Let $A$ be a commutative ind-algebra in $\Ver_{p}$ and $M$ an $A$-module. Then, we define $M^{\circ}$ to be the directed union of $(M/IM)^{*}$ over all cofinite ideals $I$ of $A$, and define $(M^{\circ})^{1}$ as the directed union of $(M/\ker(\epsilon)^{n}M)^{*}$. These are $A^{\circ}$ and $(A^{\circ})^{1}$-comodules. 

\end{definition}

This dual coalgebra has the following obvious universal property.

\begin{proposition} Let $C$ be a cocommutative Hopf algebra in $\Ver_{p}$ (hence of finite length.) Then, $\mathrm{Hom}_{\mathrm{coalg}}(C, A^{\circ}) \cong \mathrm{Hom}_{\mathrm{alg}}(A, C^{*}).$

\end{proposition}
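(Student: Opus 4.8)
The plan is to reduce the stated bijection to a duality between objects of $\Ver_{p}$ of finite length, for which dualization $(-)^{*}$ is an anti-equivalence. Let $\F$ denote the set of cofinite ideals of $A$, directed under reverse inclusion; for $J \subseteq I$ in $\F$ the quotient map $A/J \twoheadrightarrow A/I$ dualizes to an inclusion $(A/I)^{*} \hookrightarrow (A/J)^{*}$ of subcoalgebras of $A^{\circ}$, and by definition $A^{\circ} = \varinjlim_{I \in \F}(A/I)^{*}$ is the resulting directed union. Since $C$ is of finite length it is a compact object of $\Ver_{p}^{\ind}$, so any coalgebra morphism $g\colon C \to A^{\circ}$ has finite-length image inside the directed union $\bigcup_{I}(A/I)^{*}$ and therefore factors through some $(A/I)^{*}$; as the latter is a subcoalgebra of $A^{\circ}$, the factorization $C \to (A/I)^{*}$ is itself a coalgebra morphism. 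Hence
$$\Hom_{\mathrm{coalg}}(C, A^{\circ}) = \bigcup_{I \in \F}\Hom_{\mathrm{coalg}}(C, (A/I)^{*}),$$
the union being taken inside $\Hom_{\mathrm{coalg}}(C, A^{\circ})$ via the coalgebra embeddings $(A/I)^{*}\hookrightarrow A^{\circ}$. Dually, any algebra morphism $f\colon A\to C^{*}$ has cofinite kernel $I_{f}:=\ker f$, since $A/I_{f}$ embeds into the finite-length object $C^{*}$; so $f$ factors through $A/I$ for every $I\subseteq I_{f}$ in $\F$, and
$$\Hom_{\mathrm{alg}}(A, C^{*}) = \bigcup_{I\in\F}\Hom_{\mathrm{alg}}(A/I, C^{*}).$$

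Next I would invoke the dualization functor $(-)^{*}\colon \Ver_{p}^{\mathrm{op}}\to\Ver_{p}$. By rigidity of $\Ver_{p}$ it is an anti-equivalence with natural isomorphism $X\cong X^{**}$; since it reverses all structure maps and satisfies $(X\otimes Y)^{*}\cong Y^{*}\otimes X^{*}$, it carries finite-length associative unital algebras to coassociative counital coalgebras and back, and it interchanges surjections with injections. Therefore, for each $I\in\F$, sending $\varphi$ to $\varphi^{*}$ precomposed with the canonical identification $A/I\cong(A/I)^{**}$ gives a bijection
$$\Hom_{\mathrm{coalg}}(C, (A/I)^{*}) \;\xrightarrow{\ \sim\ }\; \Hom_{\mathrm{alg}}(A/I, C^{*}),$$
which is natural in $I$: dualizing the coalgebra inclusion $(A/I)^{*}\hookrightarrow(A/J)^{*}$ returns the algebra surjection $A/J\twoheadrightarrow A/I$. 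Taking the union over $\F$ of these compatible bijections and combining with the two displays of the previous paragraph yields the identification $\Hom_{\mathrm{coalg}}(C, A^{\circ})\cong\Hom_{\mathrm{alg}}(A, C^{*})$; since on each level the bijection and its inverse are both implemented by $(-)^{*}$ together with the double-dual identification, the global map is the natural one asserted in the proposition.

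I expect the only genuinely delicate point to be the compactness step in the first paragraph: one must know that a coalgebra morphism out of the finite-length object $C$ into the filtered colimit $A^{\circ}$ factors through one of the terms $(A/I)^{*}$, and that this factorization lands in it as a subcoalgebra so that it is automatically a coalgebra morphism there. In $\Ver_{p}$, which is semisimple, $C$ is a finite direct sum of simple objects, each compact in $\Ver_{p}^{\ind}$, so this is straightforward, but it should be spelled out. Everything else --- that $(-)^{*}$ exchanges the algebra and coalgebra axioms, is monoidal up to the braiding, interchanges mono's and epi's, and is compatible with the two directed systems --- is routine bookkeeping with the rigid structure of $\Ver_{p}$. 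Finally, it is worth recording that the argument uses nothing about $C$ beyond its being a coalgebra of finite length; cocommutativity of $C$ and its Hopf structure play no role in this Hom-set identification.
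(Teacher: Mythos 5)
Your argument is correct and is exactly the intended one: the paper gives no details beyond saying the statement "follows from the finite length of $C$ and $C^{*}$," and your proof spells out precisely that reduction — compactness of the finite-length $C$ to factor coalgebra maps through some $(A/I)^{*}$, cofiniteness of the kernel of any algebra map $A\to C^{*}$, and the level-wise duality bijections glued over the directed system of cofinite ideals. No discrepancy with the paper's approach.
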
 

The proof follows from the finite length of $C$ and $C^{*}$. We also have the following generalization of \cite[Lemma 6.0.1]{S}.

\begin{lemma} \label{6.0.1} Let $A, B$ be ind-algebras in $\Ver_{p}$. Let $f:A \rightarrow B$ be a homomorphism of algebras. 

\begin{enumerate}

\item[1.] $f^{*}(B^{\circ}) \subseteq A^{\circ}.$ 

\item[2.] $A^{\circ} \otimes B^{\circ} = (A \otimes B)^{\circ}$ as subobjects inside $(A\otimes B)^{*}$.

\item[3.] If $m$ is the multiplication map on $A$, then $m^{*}$ sends $A^{\circ}$ to $A^{\circ }\otimes A^{\circ}$

\end{enumerate}

\end{lemma}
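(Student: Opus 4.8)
The plan is to reduce each of the three statements to the corresponding classical facts from \cite[Chapter 6]{S}, exploiting the fact that everything happens inside fixed dual objects and that the relevant finiteness is encoded in the cofinite ideals. For statement 1, I would start from the defining description $A^{\circ} = \bigcup_{I} (A/I)^{*}$, the union over cofinite ideals $I \subseteq A$ (where cofinite means $A/I \in \Ver_{p}$, i.e. of finite length). Given a subobject $Y \subseteq B^{\circ}$ that is contained in some $(B/K)^{*}$ for a cofinite ideal $K$ of $B$, the key point is that $f^{-1}(K)$ is a cofinite ideal of $A$: since $f$ is an algebra homomorphism, $f^{-1}(K)$ is an ideal, and $A/f^{-1}(K)$ embeds into $B/K$, hence has finite length as $\Ver_{p}$ is locally finite (condition 2 of the conventions). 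The transpose $f^{*}$ then carries $(B/K)^{*}$ into $(A/f^{-1}(K))^{*} \subseteq A^{\circ}$, using Proposition \ref{dualident} to make the identification of $(B/K)^{*}$ with the orthocomplement $K^{\perp}$ in $B^{*}$ compatible with $f^{*}$. Since $B^{\circ}$ is the directed union of such $(B/K)^{*}$ and $f^{*}$ commutes with filtered colimits, this gives $f^{*}(B^{\circ}) \subseteq A^{\circ}$.

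For statement 2, the inclusion $A^{\circ} \otimes B^{\circ} \subseteq (A \otimes B)^{\circ}$ follows because if $I \subseteq A$ and $K \subseteq B$ are cofinite ideals, then $I \otimes B + A \otimes K$ is a cofinite ideal of $A \otimes B$ (the quotient being $(A/I) \otimes (B/K)$, of finite length since $\Ver_p$ has exact tensor product preserving finite length), and $(A/I)^{*} \otimes (B/K)^{*} \cong \bigl((A/I)\otimes(B/K)\bigr)^{*}$ for finite-length objects in $\Ver_{p}$ by rigidity. For the reverse inclusion, given a cofinite ideal $\mathcal{I} \subseteq A \otimes B$, one sets $I = \mathcal{I} \cap (A \otimes \mathbf{1})$ and $K = \mathcal{I} \cap (\mathbf{1} \otimes B)$ — more precisely, the preimages under $a \mapsto a \otimes 1$ and $b \mapsto 1 \otimes b$ — and checks that $I \otimes B + A \otimes K \subseteq \mathcal{I}$ with the outer quotient still finite length, so that $(A\otimes B/\mathcal{I})^{*} \subseteq ((A/I)\otimes(B/K))^{*} = (A/I)^{*} \otimes (B/K)^{*} \subseteq A^{\circ}\otimes B^{\circ}$. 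This is the argument of \cite[Lemma 6.0.1]{S}, and the only thing that needs checking is that each step makes sense in $\Ver_{p}$ rather than in $\Vec$; the finite-length reductions together with rigidity handle this.

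Statement 3 is then formal: apply statement 1 to the algebra homomorphism $m \colon A \otimes A \to A$ to get $m^{*}(A^{\circ}) \subseteq (A\otimes A)^{\circ}$, and then apply statement 2 to rewrite $(A\otimes A)^{\circ} = A^{\circ}\otimes A^{\circ}$. The main obstacle I anticipate is statement 2, and within it the reverse inclusion: one must verify that extracting the ``partial'' ideals $I$ and $K$ from a cofinite ideal $\mathcal{I}$ of $A \otimes B$ and reassembling them produces a cofinite ideal contained in $\mathcal{I}$, which in the vector space setting is a dimension count but here needs to be phrased via the orthocomplement formalism of Proposition \ref{dualident} and the exactness and rigidity of the tensor product on $\Ver_{p}$. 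Everything else is routine transport of the classical proofs, using the reduction-to-finite-length principle already invoked repeatedly in this section.
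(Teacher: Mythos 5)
Your proposal is correct and takes essentially the same route as the paper's proof, which likewise transports \cite[Lemma 6.0.1]{S}: preimages of cofinite ideals and the transpose for part 1, the two inclusions for part 2 via the cofinite ideal $I \otimes B + A \otimes J$ in one direction and the intersections of a cofinite ideal with $A \otimes \mathbf{1}$ and $\mathbf{1} \otimes B$ in the other, and deducing part 3 from parts 1 and 2. The only small caveat is that deriving part 3 by applying part 1 to $m$ presupposes that $m$ is an algebra homomorphism, i.e.\ that $A$ is commutative; for a general ind-algebra one instead notes directly that $m^{*}$ of anything killing a cofinite ideal $I$ kills the cofinite ideal $I \otimes A + A \otimes I$ and then invokes part 2 --- in the paper's conventions $A$ is commutative, so this does not affect your argument.
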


\begin{proof} This proof is basically the same as that of \cite[Lemma 6.0.1]{S}. 3 follows immediately from 1 and 2 so we only prove those.

\begin{enumerate}

\item[1.] If $J$ is a cofinite ideal of $B$, $f^{-1}(J)$ is a cofinite ideal of $A$ and $f^{*}$ sends $(B/J)^{*}$ into $(A/f^{-1}(J))^{*}$. This proves statement 1. 

\vspace{0.2cm}

\item[2.] Note that $A^{\circ} \otimes B^{\circ}$ is the directed union of $(A/I)^{*} \otimes (B/J)^{*}$ over all cofinite ideals $I$ of $A$ and $J$ of $B$. But as these are finite length objects in $\Ver_{p}$, 

$$(A/I)^{*} \otimes (B/J)^{*} = ((A/I) \otimes (B/J))^{*} = [(A \otimes B)/(I\otimes B + A \otimes J)]^{*} \subseteq (A \otimes B)^{\circ}$$
as $I \otimes B + A \otimes J$ is a cofinite ideal of $A \otimes B$. Let us prove the reverse inclusion. Suppose $K$ is a cofinite ideal in $A \otimes B$. Given this, we can construct two ideals: $I \subseteq A$ and $J \subseteq B$ as the intersections of $K$ with $A \otimes \mathbf{1}$ and $\mathbf{1} \otimes B$ respectively. $K$ contains $A \otimes J + I \otimes B$. Hence, $[(A\otimes B)/K]^{*} \subseteq (A/I)^{*} \otimes (B/J)^{*} \subseteq A^{\circ} \otimes B^{\circ}.$

\end{enumerate}
\end{proof}

\begin{remark} If $A, B$ are Hopf algebras and $f$ a Hopf algebra homomorphism, then the above lemma also holds with $(A^{\circ})^{1}$ replacing $A^{\circ}$. The proof of part 1 is identical, and the proof of part 2 essentially shows that $(A/I^{n})^{*} \otimes (B/J^{m})^{*} \subseteq [(A \otimes B)/(I + J)^{m+n}]^{*}$ and $[(A \otimes B)/(I + J)^{k}]^{*} \subseteq \displaystyle\sum_{i + j = k} (A/I^{i})^{*} \otimes (B/J^{j})^{*}$
(with $I, J$ the respective augmentation ideals).

\end{remark}

\begin{proposition} \label{coalg} If $A$ is an ind-algebra in $\Ver_{p}$, then $A^{\circ}$ is a ind-coalgebra with comultiplication and counit dual to the multiplication and unit maps. If $A$ is commutative, then $A^{\circ}$ is cocommutative. If $A$ is an ind-Hopf algebra, then so are $A^{\circ}$ and $(A^{\circ})^{1}$, with the duals being cocommutative if $A$ is commutative.

\end{proposition}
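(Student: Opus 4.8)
The plan is to dualize the algebra axioms diagram-by-diagram, using Lemma \ref{6.0.1} to guarantee that the dualized structure maps land in $A^{\circ}$ (or in $A^{\circ} \otimes A^{\circ}$) rather than merely in $A^{*}$. First I would note that, since $A^{\circ}$ is a directed union of subobjects $(A/I)^{*}$ with $I$ cofinite, and each $(A/I)^{*}$ is an honest finite-length object with a genuine dual $A/I$, it suffices to check all the coalgebra identities ``at finite level'' on each $(A/I)^{*}$ and then pass to the colimit; the colimit of coalgebras in $\Ver_p^{\ind}$ is again a coalgebra since $\otimes$ commutes with filtered colimits. Concretely, the comultiplication $\Delta_{A^{\circ}} := m^{*}$ makes sense as a map $A^{\circ} \to A^{\circ} \otimes A^{\circ}$ by part 3 of Lemma \ref{6.0.1}, and the counit $\epsilon_{A^{\circ}} := \iota^{*} : A^{\circ} \to \mathbf{1}$ is the dual of the unit $\iota : \mathbf{1} \to A$ (restricted to $A^{\circ} \subseteq A^{*}$). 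Coassociativity of $\Delta_{A^{\circ}}$ is then exactly the dual of the associativity pentagon for $m$, and the counit axiom for $\epsilon_{A^{\circ}}$ is the dual of the left/right unit triangles for $\iota$ — in each case, dualizing a commuting diagram of finite-length objects in the rigid category $\Ver_p$ gives a commuting diagram, because $(-)^{*}$ is a (contravariant) monoidal equivalence on $\Ver_p$.

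Next I would treat cocommutativity. If $A$ is commutative, $m \circ c_{A,A} = m$; dualizing and using that $(c_{A,A})^{*} = c_{A^{*},A^{*}}$ (naturality and symmetry of the braiding, plus compatibility of duals with the braiding in a symmetric tensor category) gives $c_{A^{\circ},A^{\circ}} \circ m^{*} = m^{*}$, i.e. $c \circ \Delta_{A^{\circ}} = \Delta_{A^{\circ}}$, which is cocommutativity of $A^{\circ}$.

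For the Hopf algebra statement, suppose $A$ is an ind-Hopf algebra with comultiplication $\Delta$, counit $\epsilon$, and antipode $S$. I would dualize $\Delta$ to get a multiplication $m_{A^{\circ}} := \Delta^{*}$; this lands in $\underline{\Hom}(A^{\circ} \otimes A^{\circ}, \mathbf{1})$-dual fashion, and by part 2 of Lemma \ref{6.0.1} together with its subsequent Remark (applied with $I = J = \ker \epsilon$) it indeed restricts to a map $A^{\circ} \otimes A^{\circ} \to A^{\circ}$ — here one uses that $\Delta$ sends a cofinite ideal's preimage structure correctly, i.e. that $\Delta^{*}((A\otimes A)^{\circ}) \subseteq A^{\circ}$, which is dual to part 1. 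The unit of $A^{\circ}$ is $\epsilon^{*} : \mathbf{1} \to A^{\circ}$. Associativity and unitality of $m_{A^{\circ}}$ are dual to coassociativity and counitality of $\Delta$; the bialgebra compatibility (that $\Delta_{A^{\circ}}$ is an algebra map) is dual to the same compatibility for $A$, once one checks the identification $A^{\circ} \otimes A^{\circ} \cong (A \otimes A)^{\circ}$ of Lemma \ref{6.0.1}(2) is the one intertwining the two tensor structures — this is the one genuinely fiddly bookkeeping point. The antipode of $A^{\circ}$ is $S^{*}$, and the antipode axiom diagram dualizes directly; $S^{*}$ is an isomorphism because $S$ is. Finally, for $(A^{\circ})^{1} = \bigcup_n (A/I^n)^{*}$ with $I = \ker\epsilon$, one checks it is a sub-Hopf-algebra of $A^{\circ}$ using the Remark after Lemma \ref{6.0.1}: the comultiplication $m^{*}$ sends $(A/I^n)^{*}$ into $\sum_{i+j=n}(A/I^i)^{*} \otimes (A/I^j)^{*}$, the multiplication $\Delta^{*}$ respects the filtration by the displayed containment $[(A\otimes B)/(I+J)^k]^{*} \subseteq \bigoplus_{i+j=k}(A/I^i)^{*}\otimes (B/J^j)^{*}$, and $S$ preserves $I$ hence $I^n$. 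Cocommutativity in all cases is as above.

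The main obstacle I anticipate is not any single identity but the careful verification that the isomorphism $A^{\circ}\otimes A^{\circ} \cong (A\otimes A)^{\circ}$ from Lemma \ref{6.0.1}(2) is compatible with all the structure maps simultaneously — i.e. that when we dualize $\Delta : A \to A\otimes A$ we really get a map into $A^{\circ}\otimes A^{\circ}$ under \emph{this particular} identification, and likewise for the bialgebra axiom which mixes $m$ and $\Delta$. Everything else is a formal consequence of $(-)^{*}$ being a contravariant symmetric monoidal equivalence on the rigid category $\Ver_p$ together with the observation that all the relevant structure maps are colimits of their finite-level restrictions; so I would organize the write-up around first nailing down the monoidal naturality of the identification in Lemma \ref{6.0.1}(2), and then letting every axiom follow ``by dualizing the corresponding axiom for $A$.''
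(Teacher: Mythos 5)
Your proposal is correct and takes essentially the same route as the paper, which proves the proposition by invoking Lemma \ref{6.0.1} and noting that Sweedler's purely diagram-theoretic proof (Proposition 6.0.2 of \cite{S}) dualizes without change, with the duals living in the pro-category; your version merely makes the dualization explicit by reducing to the finite-length pieces $(A/I)^{*}$ and passing to the colimit. One minor bookkeeping correction: closure of $(A^{\circ})^{1}$ under the multiplication $\Delta^{*}$ uses the containment $(A/I^{n})^{*}\otimes(A/I^{m})^{*}\subseteq\left[(A\otimes A)/(I\otimes A + A\otimes I)^{n+m}\right]^{*}$ stated in the prose of the Remark after Lemma \ref{6.0.1}, rather than the displayed containment you quote, which is the one giving closure of $(A^{\circ})^{1}$ under the comultiplication $m^{*}$.
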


\begin{proof} Using Lemma \ref{6.0.1}, the proof of this is identical to that of \cite[Proposition 6.0.2]{S} as that proof is purely diagram theoretic and duals still make sense, they just live in the pro-category.
\end{proof}

Let us examine the relationship between $H$ and $H^{\circ}$ more closely when $H$ is a finitely generated commutative Hopf algebra in $\Ver_{p}^{\ind}$. We have a canonical splitting as objects in $\Ver_{p}^{\ind}$, $H \cong \mathbf{1} \oplus I$, with $I$ being the augmentation ideal.

\begin{proposition} \label{non-degencirc} Let $H$ be a finitely generated commutative ind-Hopf algebra in $\Ver_{p}$. The evaluation pairing $H^{\circ} \otimes H \rightarrow \mathbf{1}$ is a non-degenerate Hopf pairing.

\end{proposition}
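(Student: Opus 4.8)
The plan is to verify the Hopf pairing axioms and then establish non-degeneracy separately on each side. The Hopf pairing axioms are essentially automatic: by Proposition \ref{coalg}, the comultiplication, counit, multiplication, and antipode of $H^{\circ}$ are by construction dual to the corresponding structure maps on $H$. Unwinding the definition of the evaluation pairing $\ev: H^{\circ} \otimes H \to \mathbf{1}$ and using that $\Delta_{H^{\circ}} = m_{H}^{*}$, $m_{H^{\circ}} = \Delta_{H}^{*}$ (restricted appropriately, using Lemma \ref{6.0.1}(3)), $\epsilon_{H^{\circ}} = \iota_{H}^{*}$, and $\iota_{H^{\circ}} = \epsilon_{H}^{*}$, each of the four compatibility conditions in the definition of a Hopf pairing becomes a diagram that commutes by the very definition of these dual maps. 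This is purely formal diagram-chasing and I would state it as such rather than grind it out. Cocommutativity of $H^{\circ}$ (since $H$ is commutative), which we need for "Hopf pairing" to make sense on the coalgebra side, is Proposition \ref{coalg} as well.

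The substantive content is non-degeneracy. First I would handle the right kernel, i.e. the subobject of $H$ killed by all of $H^{\circ}$. Suppose $X \subseteq H$ is a subobject with $\ev|_{H^{\circ} \otimes X} = 0$. Since $H^{\circ} = \bigcup_{I} (H/I)^{*}$ over cofinite ideals $I$, and each $(H/I)^{*}$ is identified (via Proposition \ref{dualident}/\ref{dualident}-style orthocomplement identification) with the annihilator of $I$ inside $H^{*}$, the pairing of $(H/I)^{*}$ with $X$ being zero forces the image of $X$ in $H/I$ to be zero, i.e. $X \subseteq I$, for every cofinite ideal $I$. So the right kernel is $\bigcap_{I \text{ cofinite}} I$. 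The key claim is then that this intersection is $0$ for a finitely generated commutative ind-Hopf algebra in $\Ver_{p}$. Here I would use Corollary \ref{injectcomplete}: the map $H \to \prod_{\m} \widehat{H_{\m}}$ is injective, and $\widehat{H_{\m}} = \varprojlim H/\m^{n}$, so the kernel of $H \to \prod_{\m,n} H/\m^{n}$ is $0$; each $\m^{n}$ is a cofinite ideal (the quotients $H/\m^{n}$ have finite length since $H$ is Noetherian by Theorem \ref{Hilb2} and finite over $H^{\inv}$ by Theorem \ref{Inv}, and $\m^n$ has finite colength), so $\bigcap_{I} I \subseteq \bigcap_{\m,n} \m^{n} = 0$. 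This gives the right kernel is zero.

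For the left kernel, i.e. the subobject of $H^{\circ}$ killed by all of $H$: if $\phi \in H^{\circ}$ pairs to zero with all of $H$, then $\phi \in (H/I)^{*}$ for some cofinite $I$, and $\phi$ vanishing on all of $H$ in particular means $\phi$ vanishes on $H/I$, so $\phi = 0$ since the evaluation pairing $(H/I)^{*} \otimes (H/I) \to \mathbf{1}$ is non-degenerate (it is the evaluation pairing of a finite-length object in $\Ver_p$ with its dual, non-degenerate by the Example following the definition of pairings). So the left kernel vanishes immediately, with no geometric input needed.

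The main obstacle is the right-kernel computation: showing $\bigcap_{I \text{ cofinite}} I = 0$ in $H$. Everything else is formal. This is exactly where the commutative algebra machinery of Section 3 — Noetherianity (Theorem \ref{Hilb2}), module-finiteness over $H^{\inv}$ (Theorem \ref{Inv}), the Krull intersection theorem (Proposition \ref{Krull}), and especially the injectivity of $H \to \prod_{\m} \widehat{H_{\m}}$ (Corollary \ref{injectcomplete}) — gets used, and I would organize the proof so that this step is isolated as the one place those results enter. One small point to be careful about: I should confirm that the powers $\m^n$ of a maximal ideal are genuinely cofinite (finite colength), which follows because $H/\m$ is finite length and $\m^n/\m^{n+1}$ is a finitely generated module over the finite-length ring $H/\m$, hence finite length, so $H/\m^n$ is finite length by induction; alternatively this is immediate from $H$ being finite over the Noetherian ordinary ring $H^{\inv}$.
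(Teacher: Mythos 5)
Your proof is correct, and its skeleton matches the paper's: the Hopf-pairing axioms and the vanishing of the left kernel are formal, and all the real content is in showing that no nonzero subobject of $H$ lies in every cofinite ideal. Where you diverge is in how that last point is established. You identify the right kernel with $\bigcap_{I \text{ cofinite}} I$, check that each $\m^{n}$ is cofinite, and then quote Corollary \ref{injectcomplete} (injectivity of $H \rightarrow \prod_{\m} \widehat{H_{\m}}$), which packages exactly the statement $\bigcap_{\m, n} \m^{n} = 0$. The paper instead argues one simple subobject $X \subseteq H$ at a time: it takes $I = \Ann(X)$, chooses a maximal ideal $\m \supseteq I$, and applies the Krull intersection theorem in the localization $H_{\m}$; if $X \subseteq \bigcap_{n} \m^{n}$ then $X$ is killed by some $x \in H^{\inv} \setminus \m$, contradicting $\Ann(X) \subseteq \m$. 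The two arguments rest on the same Section 3 machinery (Noetherianity, finiteness over $H^{\inv}$, Krull intersection), since Corollary \ref{injectcomplete} is itself proved from it; your version is a bit slicker because the global injectivity statement is already available (the paper uses it this way only later, in Proposition \ref{connected}), while the paper's annihilator trick makes visible why the maximal ideal must be chosen to contain $\Ann(X)$ rather than arbitrarily. Your careful verification that $\m^{n}$ has finite colength (via Noetherianity of $H$ and $H/\m \cong \mathbf{1}$, or via module-finiteness over $H^{\inv}$) is also a point the paper passes over quickly, so it is worth keeping.
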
 

\begin{proof} It is obvious that the left kernel is $0$ since the kernel in $H^{*}$ of the evaluation pairing is $0$. To see that there is no right kernel, we just need to prove that for every simple subobject $X$ of $H$, there is a cofinite ideal of $H$ not containing $X$. Let $I = \Ann(X)$, the largest ideal in $H$ such that $X \otimes I$ is in the kernel of the multiplication map $H \otimes  H \rightarrow H$. Let $\m$ be a maximal ideal containing $I$. Since $\m$ contains the ideal generated by all simple subobjects of $H$ not isomorphic to $\mathbf{1}$ by Lemma \ref{nilpotence}, $\m^{n}$ is cofinite for any $n$, so we just need to show that $X \subsetneq  \displaystyle\bigcap_{i=1}^{\infty} \m^{i}.$
Assume the contrary. Then, by the Krull-Intersection theorem applied to $H_{\m}$, $X$ must be killed by some $x \in H_{0} \backslash \m$. But then $x \in I \backslash \m$ which is empty. This is a contradiction. Hence, the evaluation pairing between $H^{\circ}$ and $H$ is non-degenerate. The fact that it is a Hopf pairing is obvious from how the Hopf structure on $H^{\circ}$ was defined. 
\end{proof}

For connected Hopf algebras in $\Ver_{p}$, we can say a bit more.

\begin{definition} We say that a finitely generated commutative Hopf algebra $H$ in $\Ver_{p}^{\ind}$ is \emph{connected} if $\Spec(\overline{H})$ is a connected affine group scheme of finite type over $\k$. Here $\overline{H} = H/I$, where $I$ is the ideal generated by $H_{\not=0}$, the sum of all simple subobjects of $H$ not isomorphic to $\mathbf{1}$. 

\end{definition}

\begin{proposition} \label{connected} Let $H$ be a finitely generated connected commutative ind-Hopf algebra in $\Ver_{p}$. Then the evaluation pairing $(H^{\circ})^{1} \otimes H \rightarrow \mathbf{1}$ is a non-degenerate Hopf pairing.

\end{proposition}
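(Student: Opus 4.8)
The plan is to reduce the proposition to the single commutative-algebra statement $\bigcap_{n}I^{n}=0$, where $I=\ker\epsilon$, and then prove that using the localization results of Proposition~\ref{localizeprop} together with the group structure on $\Spec H$. For the reduction: by Proposition~\ref{non-degencirc} the evaluation pairing $H^{\circ}\otimes H\to\mathbf{1}$ is a non-degenerate Hopf pairing, and $(H^{\circ})^{1}=\bigcup_{n}(H/I^{n})^{*}$ is a sub-Hopf-algebra of $H^{\circ}$ by Lemma~\ref{6.0.1} and Proposition~\ref{coalg}, so the restricted pairing $(H^{\circ})^{1}\otimes H\to\mathbf{1}$ is automatically a Hopf pairing and only non-degeneracy is at issue. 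Its left kernel vanishes since $(H^{\circ})^{1}\subseteq H^{*}$ and the evaluation pairing $H^{*}\otimes H\to\mathbf{1}$ has zero left kernel. For the right kernel, Proposition~\ref{dualident} identifies each $(H/I^{n})^{*}$ with the subobject $(I^{n})^{\perp}\subseteq H^{*}$ of functionals killing $I^{n}$; since $I^{n}$ is cofinite the pairing $(I^{n})^{\perp}\otimes(H/I^{n})\to\mathbf{1}$ is non-degenerate, so the largest subobject of $H$ annihilated by $(I^{n})^{\perp}$ is $I^{n}$ itself. Hence the right kernel of $(H^{\circ})^{1}\otimes H\to\mathbf{1}$ equals $\bigcap_{n}I^{n}$, and the proposition reduces to $\bigcap_{n}I^{n}=0$; note that $I$ is a maximal ideal of $H$, as $H/I\cong\mathbf{1}\cong\k$.

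To prove $\bigcap_{n}I^{n}=0$ I would localize at $I$. By Proposition~\ref{localizeprop}, $H_{I}$ is Noetherian and local with maximal ideal $I_{I}$, and the Krull Intersection Theorem holds there, so $\bigcap_{n}(I_{I})^{n}=0$; therefore the image of $\bigcap_{n}I^{n}$ in $H_{I}$ is $0$, i.e., $\bigcap_{n}I^{n}\subseteq\ker(H\to H_{I})$. By Proposition~\ref{localizeprop}(1) this kernel is the sum of the annihilators $\Ann_{H}(s)$ over $s$ in the multiplicative set $S=H^{\inv}\setminus(I\cap H^{\inv})$, so it suffices to show that every $s\in S$ acts injectively on $H$. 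By Theorem~\ref{Inv}, $H$ is a finitely generated module over the Noetherian commutative $\k$-algebra $H^{\inv}$, so its zerodivisors over $H^{\inv}$ are the union of the finitely many associated primes of the $H^{\inv}$-module $H$; thus it is enough to check that $S$ avoids every such associated prime.

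Here connectedness is used, through translation. Each $\k$-point $g$ of $G_{0}=\Spec\overline{H}$ gives a character $\chi_{g}\colon H\twoheadrightarrow\overline{H}\to\k=\mathbf{1}$, hence a left-translation automorphism $\tau_{g}=(\chi_{g}\otimes\id_{H})\circ\Delta$ of $H$ as an algebra in $\Ver_{p}$; being a $\Ver_{p}$-automorphism it preserves $H^{\inv}$, is semilinear over the induced automorphism $\tau_{g}^{\inv}$ of $H^{\inv}$, and therefore permutes the associated primes of $H$ via the induced automorphism of $\Spec H^{\inv}$. By Lemma~\ref{nilpotence} every maximal ideal of $H$ has $H/\m\cong\mathbf{1}$, so the maximal ideals of $H$ are identified with $G_{0}(\k)$ and the $\tau_{g}$ realize the regular action of $G_{0}(\k)$ on them; hence the $\tau_{g}^{\inv}$ act transitively on the closed points of $\Spec H^{\inv}$, so every nonempty closed $\tau$-stable subset of $\Spec H^{\inv}$ is the whole space. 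Applied to the union of the $V(\mathfrak{q})$ over the finite $\tau$-orbit of a given associated prime $\mathfrak{q}$, this shows the orbit intersects down to the nilradical; and since $\Spec H^{\inv}$ is irreducible — being the image of the irreducible connected group scheme $G_{0}=\Spec\overline{H}$ under a finite surjection (Theorem~\ref{Inv} and nilpotence of the ideal generated by $H_{\not=0}$) — the nilradical is prime, forcing $\mathfrak{q}=\mathrm{Nil}(H^{\inv})$. So the zerodivisors of $H$ over $H^{\inv}$ are exactly the nilpotent elements of $H^{\inv}$; but $\epsilon(s)\not=0$ for $s\in S$, so $s$ is not nilpotent and acts injectively. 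This yields $\bigcap_{n}I^{n}=0$ and proves the proposition.

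The main obstacle is the third step: making precise that left translation by $\k$-points of $G_{0}$ gives algebra automorphisms of $H$ acting transitively on its maximal ideals (via Lemma~\ref{nilpotence}), and that connectedness of $G_{0}$ forces every associated prime of $H$ over $H^{\inv}$ to be the nilradical. An alternative route, closer in spirit to the proof of Proposition~\ref{non-degencirc}, first reduces modulo the nilpotent ideal $N$ generated by $H_{\not=0}$, invokes the classical fact that the augmentation ideal $\overline{I}$ of a connected affine group scheme of finite type over $\k$ satisfies $\bigcap_{n}\overline{I}^{\,n}=0$, and then clears the nilpotent part using the Artin--Rees lemma (Proposition~\ref{Artin-Rees}); but removing the thickening still requires essentially the same homogeneity input, so it is not meaningfully shorter.
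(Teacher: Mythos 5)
Your proposal is correct, and it takes a genuinely different route from the paper's. The paper makes the same reduction to $\bigcap_{n}I^{n}=0$, but then argues globally: it shows $I^{\infty}=\bigcap_{n}I^{n}$ is a Hopf ideal, so $H/I^{\infty}=\O(G')$ for a closed subgroup $G'$; notes that $G$ and $G'$ have the same completion at the identity; uses translation by $\k$-points (the same homogeneity you invoke) to upgrade this to isomorphisms of completions at every maximal ideal containing $I^{\infty}$; uses connectedness of $G_{0}$ in the classical form $\bigcap_{n}\overline{I}^{\,n}=0$ to conclude $I^{\infty}$ lies in the nilpotent ideal generated by $H_{\not=0}$, so $G$ and $G'$ have the same closed points; and finishes with the embedding of $H$ into the product of its completions (Corollary \ref{injectcomplete}). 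You instead localize at the augmentation ideal, use Proposition \ref{localizeprop} (Krull intersection in $H_{I}$ and the description of $\ker(H\to H_{I})$) to reduce everything to showing that each $s\in H^{\inv}\setminus I$ acts injectively on $H$, and obtain that from a structural fact: translation-homogeneity plus irreducibility of the connected $G_{0}$ forces every associated prime of $H$ over $H^{\inv}$ to equal the nilradical of $H^{\inv}$. So both proofs consume the same two inputs — transitivity of translations on closed points, and connectedness of $G_{0}$ (used by the paper as vanishing of $\bigcap_{n}\overline{I}^{\,n}$, by you as irreducibility) — but your route avoids Corollary \ref{injectcomplete} and the comparison of completed local rings altogether, at the price of the associated-primes machinery, and yields the stronger byproduct that the only elements of $H^{\inv}$ that are zerodivisors on $H$ are the nilpotents; the paper's route instead produces the local isomorphisms $\widehat{H}_{\m}\cong\widehat{(H/I^{\infty})}_{\m/I^{\infty}}$ along the way.

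Two small points to make explicit in a final write-up, neither of which is a real gap: (i) since objects of $\Ver_{p}^{\ind}$ have no underlying vector space, ``associated primes of the $H^{\inv}$-module $H$'' should be read through the multiplicity spaces $\mathrm{Hom}_{\Ver_{p}}(L_{i},H)$, which are finitely generated $H^{\inv}$-modules (apply $\mathrm{Hom}_{\Ver_{p}}(L_{i},-)$ to the surjection $H^{\inv}\otimes X\to H$ from Theorem \ref{Inv}), detect the categorical kernels of multiplication by $s$, and are permuted semilinearly by the translations $\tau_{g}$, so their finitely many associated primes form a $\tau$-stable finite set as you claim; (ii) transitivity of the induced action on closed points of $\Spec H^{\inv}$ uses that every maximal ideal of $H^{\inv}$ is the contraction of a maximal ideal of $H$, which follows from module-finiteness of $H$ over $H^{\inv}$ (Theorem \ref{Inv}) exactly as in your finite-surjection remark. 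These are the same kind of reductions to ordinary commutative algebra that the paper itself performs in Corollary \ref{injectcomplete}.
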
 

\begin{proof} As in the previous proposition, we just need to prove that the right kernel is $0$. This is equivalent to saying that, for $I$ the augmentation ideal of $H$, $I^{\infty}:= \cap_{n=1}^{\infty} I^{n} = 0.$ 

We first show that $I^{\infty}$ is a Hopf ideal by showing that $\pi(\Delta(I^{\infty})) = 0,$
where $\pi: H \otimes H \rightarrow H/I^{\infty} \otimes H/I^{\infty}$
is the natural projection. To show this, we just need to show that $\Delta(I^{\infty}) \subseteq I^{n} \otimes H + H \otimes I^{n}$ for all $n$. This follows from $I^{\infty} \subseteq I^{2n}$. Hence, $I^{\infty}$ corresponds to a closed subgroup $G'$ of $G = \Spec(H)$, with $\O(G') = H/I^{\infty}.$ 

Let $\widehat{G}_{1}$ be the completion of $G$ at the identity, i.e., $\O(\widehat{G}_{1})$ is the inverse limit of $H/I^{n}$ over all $n$. Then, since the kernel of the map from $H$ to $\O(\widehat{G}_{1})$ is $I^{\infty}$, $G'$ and $G$ are isomorphic when completed at the identity. 

Homogeneity now implies that $H$ and $G$ are isomorphic when completed at every maximal ideal that contains $I^{\infty}$, i.e., at every point of $H$. Let us explain in more detail what we mean by homogeneity here. Since $H$ is a Hopf algebra, the set $G(\mathbf{1}) = \mathrm{Hom}_{\alg}(H, \mathbf{1})$ is a group. This group acts on $\O(G)$ by algebra automorphisms. If $f \in \mathrm{Hom}_{\alg}(H, \mathbf{1})$, then the automorphism corresponding to $f$ is $(f \otimes 1) \circ \Delta_{H}$. The set of maximal ideals in $\O(G)$ is a torsor for this group. Homogeneity is the transitive action of this group on the set of maximal ideals. Hence, if we use this action in $G'$, we get the local isomorphism between $G'$ and $G$ at every closed point in $G'$, i.e., isomorphisms after completion at every maximal ideal containing $I^{\infty}$. 

Now, as $G_{0} = \Spec(\overline{H})$ is a connected ordinary affine group scheme over $\k$, we know that $(I/J)^{\infty} = 0$, where $J$ is the ideal defining $G_{0}$. Hence, $I^{\infty} \subseteq J$. But $J$ is a nilpotent ideal by Lemma \ref{nilpotence}. Hence, $I^{\infty}$ is contained in the nilradical of $A$, and hence $G$ and $G'$ have the same closed points, the same maximal ideals. 

Thus, we have the following commutative diagram. 

$$\begin{tikzpicture}
\matrix (m) [matrix of math nodes,row sep=5em,column sep=5em,minimum width=5em]
{H & \prod_{\m} \widehat{H_{\m}}\\
\O(G') &  \prod_{\n = \m/I^{\infty}} \widehat{\O(G')_{\n}}& \\};;
\path[-stealth]
(m-1-1) edge (m-1-2)
(m-1-2) edge (m-2-2)
(m-2-1) edge (m-2-2)
(m-1-1) edge (m-2-1);
\end{tikzpicture}$$
where the product is taken over all maximal ideals $\m$ of $H$. The right map is an isomorphism, the left map is a surjection. So, to finish the proof we just need to show the top and bottom maps are both injections. But this follows from Corollary \ref{injectcomplete}.

\end{proof} 

Let us examine the structure of $H^{\circ}$ and $(H^{\circ})^{1}$ in more detail. We begin with a description of the primitives.

\begin{proposition} \label{prim} Let $G$ be an affine group scheme of finite type in $\Ver_{p}$ with function algebra $H$. Let $\g = (I/I^{2})^{*} \subseteq (H^{\circ})^{1}.$ Then, $\g = \Prim(H^{\circ})$ and $\g$ is closed under the commutator bracket on $H^{\circ}$.

\end{proposition}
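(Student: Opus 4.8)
The plan is to identify $\Prim(H^{\circ})$ with $(I/I^2)^*$ by working through the pairing between $(H^{\circ})^1$ and $H$. Recall that by Proposition \ref{connected} (or Proposition \ref{non-degencirc} if one is careful about which piece of the dual one uses), the evaluation pairing $(H^{\circ})^1 \otimes H \to \mathbf{1}$ is a non-degenerate Hopf pairing, and $(H^{\circ})^1 = \bigcup_n (H/I^n)^*$ where $I = \ker(\epsilon)$. Dualizing the algebra filtration $H \supseteq I \supseteq I^2 \supseteq \cdots$ via Proposition \ref{dualident} gives an ascending filtration on $(H^{\circ})^1$ whose $n$-th term is $(H/I^n)^*$, i.e., the subobject of $(H^{\circ})^1$ orthogonal to $I^n$. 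First I would observe that the relative coradical filtration $F_\bullet((H^{\circ})^1)$ (defined just before Proposition \ref{relativecoradical}, but here everything is irreducible/connected so $J = \mathbf{1}$ and it is just the ordinary coradical filtration shifted) is precisely dual to the $I$-adic filtration: this is essentially the content of the remark following the definition of the relative coradical filtration, and it follows formally from the fact that $\Delta^\circ$ is dual to $m$ and that $I^n$ is the $n$-fold product $m^{(n)}(I^{\otimes n})$.

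Granting that, we get $(H^{\circ})^1(1) = (H/I^2)^*$, the orthocomplement of $I^2$. Now apply Proposition \ref{corad-prop}(3): since $(H^{\circ})^1$ is an irreducible cocommutative coalgebra (its coradical is $\mathbf{1}$, spanned by the counit $\epsilon$ of $H$ viewed as a grouplike element), we have $(H^{\circ})^1(1) = \mathbf{k}\epsilon \oplus \Prim(H^{\circ})$. Comparing with the orthocomplement description: $(H/I^2)^* = (\mathbf{1} \oplus I/I^2)^* = \mathbf{1} \oplus (I/I^2)^*$ since $H = \mathbf{1}\oplus I$ as objects, the $\mathbf{1}$ summand being $\mathbf{k}\epsilon$. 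Hence $\Prim(H^{\circ}) = (I/I^2)^* = \g$. I should double-check that the grouplike element $\epsilon$ does correspond to the $\mathbf{1}$-summand of $H$ under the pairing — this is clear since $\epsilon = \iota^*$ pairs as the identity on the unit line and by the Hopf pairing axiom (3) kills nothing relevant.

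For the second assertion, that $\g$ is closed under the commutator bracket of $H^{\circ}$: since $H^{\circ}$ is a (cocommutative, possibly non-commutative) ind-Hopf algebra by Proposition \ref{coalg}, its primitives $\Prim(H^{\circ})$ form a Lie algebra in $\Ver_p$ under the commutator $[x,y] = m_{H^{\circ}}(x\otimes y) - m_{H^{\circ}} \circ c_{H^{\circ},H^{\circ}}(x\otimes y)$. This is the standard fact that primitives of a Hopf algebra are closed under bracket, and its proof (checking $\Delta([x,y]) = [x,y]\otimes 1 + 1\otimes [x,y]$) is purely diagrammatic and carries over verbatim to any symmetric tensor category — one uses cocommutativity to rearrange the relevant braidings. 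So $\g = \Prim(H^{\circ})$ is automatically a Lie subalgebra.

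The main obstacle I expect is the identification of the relative coradical filtration on $(H^{\circ})^1$ with the dual of the $I$-adic filtration on $H$ — i.e., verifying $(H^{\circ})^1(n) = (I^{n+1})^\perp$. One inclusion (that the $n$-th coradical term is orthogonal to $I^{n+1}$) is straightforward by induction using $\Delta^\circ = m^*$ and the definition of the coradical filtration; the reverse inclusion requires knowing that the $I$-adic filtration is exhausting in the appropriate dual sense, which is where finite generation of $H$, Noetherianity (Theorem \ref{Hilb2}), and the Krull intersection / completion results (Proposition \ref{Krull}, Corollary \ref{injectcomplete}) enter — this is precisely the machinery assembled in Section 3 and already exploited in the proof of Proposition \ref{connected}. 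Everything else is formal.
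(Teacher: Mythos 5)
Your identification of the coradical filtration on $(H^{\circ})^{1}$ with the dual of the $I$-adic filtration, and the ensuing computation of primitives via Proposition \ref{corad-prop}(3), does work, and in fact needs less machinery than you fear: the inclusion $(H^{\circ})^{1}(n) \subseteq (I^{n+1})^{\perp}$ is a formal induction using only $\Delta' = m^{*}$ and $\epsilon(I) = 0$, so Noetherianity, Krull intersection and Corollary \ref{injectcomplete} play no role, and for your purposes only the $n=1$ step is needed. But there is a genuine gap: the proposition asserts $\g = \Prim(H^{\circ})$, primitives of the \emph{whole} dual Hopf algebra, while your filtration argument only identifies $\Prim((H^{\circ})^{1})$. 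You never show $\Prim(H^{\circ}) \subseteq (H^{\circ})^{1}$, and you cannot quote the later statement $(H^{\circ})^{1} = H^{\circ}_{1}$ for this, since its proof in the paper uses Proposition \ref{prim} itself. The missing inclusion is exactly what the paper proves directly: if $X \subseteq \Prim(H^{\circ})$, the counit axiom forces $X$ to kill the unit of $H$, and since $\Delta'$ restricted to $X$ equals $\iota' \otimes \id + \id \otimes \iota'$, which kills $I \otimes I$ under the evaluation pairing, the Hopf-pairing compatibility gives that $X$ kills $I^{2} = m(I \otimes I)$; hence $X \subseteq (I/I^{2})^{*} \subseteq (H/I^{2})^{*} \subseteq (H^{\circ})^{1}$. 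Add this and your argument closes.

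A secondary problem: you invoke Proposition \ref{connected} for non-degeneracy of the pairing $(H^{\circ})^{1} \otimes H \rightarrow \mathbf{1}$, but $G$ is not assumed connected, and for disconnected $G$ that pairing is degenerate (its right kernel is $\bigcap_{n} I^{n}$, nonzero already for the constant group scheme $\Z/2\Z$). Fortunately you never actually need it: the orthocomplement description of $\g$ should be taken, as the paper does, inside the non-degenerate pairing of $H^{\circ}$ with $H$ from Proposition \ref{non-degencirc}, and the filtration comparison uses only the Hopf-pairing compatibilities of Lemma \ref{6.0.1}. For comparison, the paper's proof avoids coradical filtrations entirely and establishes both inclusions $\Prim(H^{\circ}) \subseteq \g$ and $\g \subseteq \Prim(H^{\circ})$ by short direct pairing computations; your route through Proposition \ref{corad-prop} is a legitimate alternative for the inclusion $\g \subseteq \Prim(H^{\circ})$, but must be supplemented as above for the other one. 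Your bracket-closure argument is fine and agrees with the paper's.
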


\begin{proof} We will use the identification of $(I/I^2)^{*}$ as the complement of $I^{2}$ under the evaluation pairing between $H^{\circ}$ and $H$ given by Proposition \ref{dualident} and Proposition \ref{non-degencirc}. Let $m, \Delta, \epsilon, \iota$ be the Hopf algebra structure maps for $H$ and $m', \Delta', \epsilon', \iota'$ be those for $H^{\circ}$ (recall that $m' = \Delta^{*}$ and so on). 

Note that $I^{*}$ is in fact the augmentation ideal of $H^{\circ}$. Hence, if $X$ is a subobject of $\Prim(H^{\circ})$, then by the counit axioms, it is immediate that $X \subseteq I^{*}$. Now, to check that the evaluation pairing kills $X \otimes I^{2}$, it suffices to check $\Delta'(X)$ kills $I \otimes I$ under the evaluation pairing, as this pairing is a Hopf pairing by Proposition \ref{non-degencirc}. But $\Delta'$ is the same as $\iota' \otimes \id + \id \otimes \iota'$ on $X$ and the image of $\iota'$ is the kernel of $I$ under the evaluation pairing. Hence, $\Prim(H^{\circ}) \subseteq \g$.

On the other hand, $\g \subseteq I^{*}$, the augmentation ideal in $H^{\circ}$, and by the counit axiom,

$$(\Delta' - \iota' \otimes \id - \id \otimes \iota') (I^{*}) \subseteq I^{*} \otimes I^{*}.$$
Since $\g$ is the kernel of $I^{2}$, again by the fact that evaluation is a Hopf pairing,

$$(\Delta' - \iota' \otimes \id - \id \otimes \iota')(\g) \cap (I^{*} \otimes I^{*}) = 0.$$
Hence, $\g \subseteq \Prim(H^{\circ})$, as desired. The rest follows from the standard fact that primitives are closed under bracket since $(\Delta' - \iota' \otimes \id - \id \otimes \iota') $ is a Lie homomorphism from $H^{\circ}$ to itself.

\end{proof} 

In fact, we can say more.

\begin{proposition} $(H^{\circ})^{1} = H^{\circ}_{1}$, the irreducible component of $H^{\circ}$ containing the unit.

\end{proposition}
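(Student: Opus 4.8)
The plan is to prove the two inclusions $(H^{\circ})^{1} \subseteq H^{\circ}_{1}$ and $H^{\circ}_{1} \subseteq (H^{\circ})^{1}$ separately, the main tool being Proposition \ref{irreducible-local}, which converts ``local'' into ``irreducible'', together with the structure of finitely generated commutative algebras in $\Ver_{p}$ coming from Lemma \ref{nilpotence}. Write $I$ for the augmentation ideal of $H$ and $\epsilon = \epsilon_{H}\colon H \to \mathbf{1}$ for the counit, regarded as the grouplike element of $H^{\circ}$ which is the unit of the Hopf algebra $H^{\circ}$; recall that $(H^{\circ})^{1} = \bigcup_{n \ge 1}(H/I^{n})^{*}$ (these terms being finite length since $H$ is finitely generated), while $H^{\circ}_{1}$ is the maximal irreducible subcoalgebra of $H^{\circ}$ containing $\epsilon$.

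For $(H^{\circ})^{1} \subseteq H^{\circ}_{1}$, I would first note that each $H/I^{n}$ is a finite-length local commutative algebra in $\Ver_{p}$: it is a quotient of $H$, hence finitely generated; the image of $I$ in it is a nilpotent ideal whose quotient is $H/I \cong \mathbf{1} = \k$, and an algebra that is a nilpotent extension of $\k$ is local, its maximal ideals being those of the quotient. Hence each $(H/I^{n})^{*}$ is an irreducible subcoalgebra of $H^{\circ}$ by Proposition \ref{irreducible-local}, and it contains $\epsilon$ since $\epsilon$ factors through $H/I^{n}$. A directed union of irreducible subcoalgebras all containing the same grouplike element is again irreducible, because the grouplike elements of the union are the union of those of the terms and $\Corad$ is their span. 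So $(H^{\circ})^{1}$ is an irreducible subcoalgebra of $H^{\circ}$ containing $\epsilon$, and therefore lies inside $H^{\circ}_{1}$.

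For the reverse inclusion I would use that the cocommutative coalgebra $H^{\circ}_{1}$ is the sum of its finite-length subcoalgebras, so it suffices to show that every finite-length subcoalgebra $C \subseteq H^{\circ}_{1}$ lies in $(H^{\circ})^{1}$. Such a $C$ is irreducible with $\Corad(C) = \k\epsilon$, since $\Corad(C)$ is a nonzero subobject of $\Corad(H^{\circ}_{1}) \cong \mathbf{1}$; in particular $\epsilon \in C$ and it is the unique grouplike element of $C$. Dualizing $C \hookrightarrow H^{\circ}$, and using that a finite-length subobject of the directed union $H^{\circ} = \bigcup_{K}(H/K)^{*}$ lies in a single term, we see that $C^{*}$ is a finite-length quotient algebra of $H$, and it is local by Proposition \ref{irreducible-local}; let $\mathfrak{n}$ be its maximal ideal. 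Since maximal ideals of finitely generated commutative algebras in $\Ver_{p}$ have residue field $\mathbf{1}$ (a consequence of Lemma \ref{nilpotence}), the composite $H \twoheadrightarrow C^{*} \twoheadrightarrow C^{*}/\mathfrak{n} \cong \k$ is an algebra map; it equals $\epsilon$ because it is the grouplike element of $C$, of which $\epsilon$ is the only one. Hence the preimage of $\mathfrak{n}$ under $H \twoheadrightarrow C^{*}$ is $I$, so $I$ maps onto $\mathfrak{n}$; as $C^{*}$ is Artinian local, $\mathfrak{n}^{N} = 0$ for some $N$, whence $I^{N} \subseteq \ker(H \to C^{*})$ and $C^{*}$ is a quotient of $H/I^{N}$. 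Dualizing back gives $C \subseteq (H/I^{N})^{*} \subseteq (H^{\circ})^{1}$, completing the proof.

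The step I expect to be the real obstacle is this last inclusion, specifically pinning down that an abstract irreducible subcoalgebra $C$ of $H^{\circ}$ --- a priori only known to sit inside $(H/K)^{*}$ for some unstructured cofinite ideal $K$ --- is actually controlled by powers of the augmentation ideal. The crux is that irreducibility of $C$ forces $C^{*}$ to be local with its unique maximal ideal pulling back precisely to $I$, after which nilpotence of $\mathfrak{n}$ in the Artinian algebra $C^{*}$ finishes the argument; the remainder is routine dualization bookkeeping.
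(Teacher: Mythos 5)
Your argument is correct, and for the harder inclusion it takes a genuinely different route from the paper. For $(H^{\circ})^{1} \subseteq H^{\circ}_{1}$ you do essentially what the paper does: locality (of $H/I^{n}$, resp.\ of $\widehat{H_{I}}$ in the paper) dualizes to irreducibility via Proposition \ref{irreducible-local}. For the reverse inclusion the paper instead invokes Corollary \ref{coideal-primitives} together with Proposition \ref{prim}, i.e.\ a primitives-based argument resting on the identification $\Prim(H^{\circ}) = (I/I^{2})^{*} \subseteq (H^{\circ})^{1}$, whereas you run the classical Sweedler-style argument: a finite-length subcoalgebra $C$ of $H^{\circ}_{1}$ is irreducible, so $C^{*}$ is a local finite-length quotient algebra of $H$ (via the Hopf pairing of Proposition \ref{non-degencirc}) whose maximal ideal has residue $\mathbf{1}$ by Lemma \ref{nilpotence}, pulls back to $I$, and is nilpotent, forcing $C \subseteq (H/I^{N})^{*}$. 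Your route is more self-contained -- it needs only the local/irreducible duality and Lemma \ref{nilpotence}, not the computation of $\Prim(H^{\circ})$ -- and it makes explicit exactly where finite generation of $H$ enters (cofiniteness of $I^{n}$, residue fields); the paper's route is shorter given the coradical-filtration and primitives machinery already in place. Two small points you assert without proof are genuinely fine but worth a line each in $\Ver_{p}$: nilpotence of the maximal ideal of the finite-length local algebra $C^{*}$ (descending chain plus Proposition \ref{Nakayama}), and the containment $\Corad(C) \subseteq \Corad(H^{\circ}_{1})$, which follows since coradicals are spanned by grouplike elements and a grouplike of $C$ is one of $H^{\circ}_{1}$.
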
 

\begin{proof} Let $I$ be the augmentation ideal in $H$. $(H^{\circ})^{1}$ is the restricted dual to the local algebra $\widehat{H_{I}}$ and is hence an irreducible coalgebra. Hence, it is contained in $H^{\circ}_{1}$. The reverse follows from Corollary \ref{coideal-primitives} and Proposition \ref{prim}.
\end{proof}

To finish the description of $H^{\circ}$ as a coalgebra, we need to describe its grouplike elements.

\begin{lemma} The grouplike elements of $H^{\circ}$ correspond to $\Spec(H)(\k) = \mathrm{Hom}_{\mathrm{alg}}(\mathbf{1}, \O(H))$. 

\end{lemma}

\begin{proof} Grouplike elements correspond to coalgebra homomoprhisms $\mathbf{1} \rightarrow H^{\circ}$, which are the same as algebra homomorphisms $H \rightarrow \mathbf{1}$. Hence, the grouplike elements of $H^{\circ}$ correspond to $\Spec(H)(\k).$ 
\end{proof}

\begin{corollary} \label{coalgebra-semidirect} Let $H$ be a finitely generated commutative Hopf algebra in $\Ver_{p}^{\ind}$. Then $H^{\circ} = \k \Spec(H)(\k) \otimes (H^{\circ})^{1}$
as a coalgebra, where $\k \Spec(H)(\k)$ is the free $\k$-vector space on the set $\Spec(H)(\k)$ and the coalgebra structure on $\k \Spec(H)(\k)$ is defined by making $\Spec(H)(\k)$ grouplike.

\end{corollary}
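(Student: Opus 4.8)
The plan is to combine the structural decomposition of $H^{\circ}$ coming from the preceding results with the general complete-reducibility statement for cocommutative coalgebras. First I would invoke Proposition~\ref{complete-reducibility}: since $H^{\circ}$ is a cocommutative ind-coalgebra in $\Ver_{p}$, it decomposes as $H^{\circ} \cong \bigoplus_{g \in G(H^{\circ})} H^{\circ}_{g}$, a direct sum of its irreducible components indexed by grouplike elements. By the Lemma just proved, $G(H^{\circ}) = \Spec(H)(\k)$, so this reads $H^{\circ} \cong \bigoplus_{g \in \Spec(H)(\k)} H^{\circ}_{g}$.

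The second step is to identify each summand $H^{\circ}_{g}$ with $(H^{\circ})^{1}$. I would use homogeneity of the group scheme $G = \Spec(H)$: a rational point $g \in \Spec(H)(\k)$ gives an algebra automorphism $\tau_g = (g \otimes \id) \circ \Delta_H$ of $H$ (translation by $g$), which by functoriality of $(-)^{\circ}$ (Lemma~\ref{6.0.1}) induces a coalgebra automorphism of $H^{\circ}$ sending the grouplike element at the identity to $g$, hence sending the irreducible component $H^{\circ}_1 = (H^{\circ})^1$ (using the previous proposition) isomorphically onto $H^{\circ}_g$. Thus every irreducible component is isomorphic to $(H^{\circ})^1$, and the decomposition becomes $H^{\circ} \cong \bigoplus_{g} (H^{\circ})^1 = \k\Spec(H)(\k) \otimes (H^{\circ})^1$ as objects in $\Ver_p^{\ind}$.

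Finally I would check that this identification respects the coalgebra structure. On the right-hand side, $\k\Spec(H)(\k)$ carries the grouplike coalgebra structure and $(H^{\circ})^1$ its own, and the tensor product coalgebra structure is the natural one. On the left, the component $H^{\circ}_g$ has comultiplication landing in $H^{\circ}_g \otimes H^{\circ}_g$ (irreducible components are subcoalgebras), and under the translation isomorphism this matches $\{g\} \otimes (H^{\circ})^1$ with its induced comultiplication; assembling over all $g$ gives exactly the claimed coalgebra isomorphism.

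The main obstacle I expect is making the translation-automorphism argument clean: one must verify that $\tau_g$ is genuinely an algebra automorphism of the (possibly infinite-length) ind-Hopf algebra $H$ — this uses that $g$ is a grouplike element of $H^{\circ}$, equivalently an algebra map $H \to \mathbf{1}$, together with the antipode to produce the inverse $\tau_{g^{-1}}$ — and that the induced map on $H^{\circ}$ indeed carries the identity-component onto the $g$-component. Everything else is a bookkeeping exercise in matching the two coalgebra structures, which I would not spell out in full.
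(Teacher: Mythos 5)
Your proposal is correct and assembles exactly the ingredients the paper intends for this corollary: Proposition \ref{complete-reducibility}, the lemma identifying grouplikes of $H^{\circ}$ with $\Spec(H)(\k)$, the identification $(H^{\circ})^{1}=H^{\circ}_{1}$, and the homogeneity/translation argument already spelled out in the proof of Proposition \ref{connected} (note your $\tau_g^{*}$ on $H^{\circ}$ is precisely left multiplication by the grouplike $g$, which is how the paper's subsequent Theorem \ref{Kostant} realizes the decomposition). No gaps.
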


\section{\Large{\textbf{Harish-Chandra pairs and dual Harish-Chandra pairs in $\Ver_{p}$}}}

In this section, we will define Harish-Chandra pairs in $\Ver_{p}$ via dual Harish-Chandra pairs and show that we have a functor from the category of affine group schemes of finite type in $\Ver_{p}$ to the category of Harish-Chandra pairs in $\Ver_{p}.$ To do so, we first need to carefully define Lie algebras in $\Ver_{p}$. 

\subsection{Lie algebras in symmetric tensor categories in characteristic $p$}

The definition of a Lie algebra in $\Ver_{p}$ is not as elementary as it sounds. This entire section is largely a transcription of \cite[Section 4]{Et1}, kept here for the convenience of the reader.

\begin{definition} Let $\C$ be a symmetric tensor category in characteristic $p$. An \emph{operadic Lie (ind-)algebra} is an (ind-)object $L \in \C$ equipped with the a map $[-,-]: \wedge^{2}L \rightarrow L$ that satisfies the Jacobi identity

$$[-,-] \circ ([-,-] \otimes \id_{L}) \circ (\id_{L^{\otimes 3}} + c_{123} + c_{123}^{2})(L^{\otimes 3}) = 0$$
where $c_{123}$ is the $3$-cycle $(123) \in S_{3}$ acting on $L^{\otimes 3}$ by permuting the tensor factors. 

\end{definition}

\begin{remark} Note that even for $\C = \Vec$, an operadic Lie algebra is not a Lie algebra in characteristic $2$, as the relation $[x, x] = 0$ is missing. Similarly, if the characteristic is $3$, then for $\C = \sVec$, we are missing the relation $[x, [x, x]] = 0$ for odd elements $x$. It is no surprise that we need some additional relations to define Lie algebras in symmetric tensor categories in general.

\end{remark}

\begin{example} Associative algebras are operadic Lie algebras under the commutator bracket.

\end{example}

Here is an alternative way to present this definition. Recall the notion of the Lie operad (see \cite{LV}) ${\bf Lie} :=\displaystyle\bigoplus_{n \ge 1} {\bf Lie}_{n}$
generated over $\Z$ by a single antisymmetric element $b \in {\bf Lie}_{2}$ with Jacobi identity as the defining relation. An operadic (ind-)Lie algebra in $\C$ is an (ind-)object equipped with the structure of an algebra over ${\bf Lie}$.

Note that ${\bf Lie}_{n}$ is equipped with the natural action of the symmetric group $S_{n}$. Additionally, the braiding in a symmetric tensor category induces an $S_{n}$ action on $V^{\otimes n}$ for any object $V$. Hence, we can define a free operadic Lie algebra as follows.

\begin{definition} Let $V \in \C$. Define the \emph{free operadic Lie algebra} $\FOLie(V)$ as

$$\FOLie(V) = \bigoplus_{n \ge 1} \FOLie_{n}(V) = \bigoplus_{n \ge 1} (V^{\otimes n} \otimes {\bf Lie_{n}})_{S_{n}}$$
where the subscript indicated coinvariants.

\end{definition}

This has an obvious bracket induced by $\mathbf{Lie}$ which makes it an operadic Lie algebra. Moreover, it is generated as an operadic Lie algebra in degree $1$ and has a universal property that immediately follows from the definition.

\begin{proposition} The space of Lie (i.e. bracket-preserving) homomorphisms from $\FOLie(V)$ to any operadic Lie algebra $L$ is in natural bijection with $\mathrm{Hom}_{\C^{\ind}}(V, L)$.

\end{proposition}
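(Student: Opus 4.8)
The plan is to recognize this proposition as the universal property of the free algebra over the operad $\mathbf{Lie}$ and to verify it directly from the construction $\FOLie(V) = \bigoplus_{n \ge 1} (V^{\otimes n} \otimes \mathbf{Lie}_{n})_{S_{n}}$, using only that $\C^{\ind}$ is a cocomplete $\k$-linear symmetric monoidal abelian category with exact tensor product. First I would isolate the natural map whose bijectivity is claimed. Since $\mathbf{Lie}_{1} = \k$ carries the trivial $S_{1}$-action, the arity-$1$ summand is $\FOLie_{1}(V) = V$, and the inclusion $V \hookrightarrow \FOLie(V)$ generates $\FOLie(V)$ as an operadic Lie algebra, because $\mathbf{Lie}_{n}$ is spanned by iterated brackets of $n$ letters, so each $\FOLie_{n}(V)$ is covered by iterated brackets of copies of $V$. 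Restriction along this inclusion gives a natural transformation $\rho \colon \Hom_{\mathrm{Lie}}(\FOLie(V), L) \to \Hom_{\C^{\ind}}(V, L)$, and it then suffices to produce a two-sided inverse.

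To build the inverse I would use that an operadic Lie algebra structure on $L$ amounts to a family of action maps $\gamma_{n} \colon L^{\otimes n} \otimes \mathbf{Lie}_{n} \to L$, equivariant for the $S_{n}$-actions (the braiding action on $L^{\otimes n}$, the tautological action on $\mathbf{Lie}_{n}$, the trivial action on $L$) and compatible with operadic composition in $\mathbf{Lie}$; here $\gamma_{1} = \id_{L}$, and $\gamma_{2}$ is the bracket $[-,-] \colon \wedge^{2} L \to L$ precomposed with the natural surjection $L^{\otimes 2} \otimes \mathbf{Lie}_{2} \to \wedge^{2} L$. Given $f \in \Hom_{\C^{\ind}}(V, L)$, the morphism $f^{\otimes n}$ is $S_{n}$-equivariant by naturality of the braiding, so $\gamma_{n} \circ (f^{\otimes n} \otimes \id_{\mathbf{Lie}_{n}}) \colon V^{\otimes n} \otimes \mathbf{Lie}_{n} \to L$ is $S_{n}$-equivariant and hence factors through the coinvariants $\FOLie_{n}(V)$; summing over $n$ yields a morphism $\tilde f \colon \FOLie(V) \to L$. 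I would then check that $\tilde f$ intertwines the two brackets, which is exactly the compatibility of the maps $\gamma_{n}$ with operadic composition, so that $\tilde f$ is a homomorphism of operadic Lie algebras, and set $\sigma(f) = \tilde f$.

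Finally I would verify that $\rho$ and $\sigma$ are mutually inverse and that the resulting bijection is natural in $V$ and $L$. One direction is immediate: since $\gamma_{1} = \id_{L}$ we get $\tilde f|_{V} = f$, so $\rho \circ \sigma = \id$. For the other direction, given a Lie homomorphism $\phi \colon \FOLie(V) \to L$, both $\phi$ and $\sigma(\rho(\phi))$ restrict to $\phi|_{V}$ on $V$; the kernel of their difference is a subobject of $\FOLie(V)$ that is closed under the bracket and contains $V$, hence a sub-operadic-Lie-algebra containing the generating object $V$, hence all of $\FOLie(V)$, so $\phi = \sigma(\rho(\phi))$. Naturality is read off directly from the formulas for $\rho$ and $\sigma$.

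The one step that is not pure diagram-chasing — and the place I would be careful — is the descent of $\gamma_{n} \circ (f^{\otimes n} \otimes \id_{\mathbf{Lie}_{n}})$ to the $S_{n}$-coinvariants: this genuinely uses both that $f^{\otimes n}$ intertwines the braiding-induced $S_{n}$-actions (naturality of $c$) and that coinvariants are computed as cokernels in the abelian category $\C^{\ind}$. I expect no real difficulty here, and in particular the argument is insensitive to the characteristic of $\k$; the coinvariants are built into the definition of $\FOLie(V)$ precisely so that it becomes a left adjoint to the forgetful functor, and that adjunction is exactly the content of the proposition.
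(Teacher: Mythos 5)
Your argument is correct and is precisely the standard free-operad-algebra adjunction argument that the paper has in mind when it says the universal property ``immediately follows from the definition'': restriction to the arity-one summand $V=\FOLie_{1}(V)$, extension of $f\colon V\to L$ via the structure maps $\gamma_{n}$ descending through the $S_{n}$-coinvariants, and generation of $\FOLie(V)$ by $V$ under the bracket to force uniqueness. The only point worth making explicit is the identification (which the paper itself asserts) of a bracket satisfying antisymmetry and the Jacobi identity with a full algebra structure over the operad $\mathbf{Lie}$, since your construction of $\sigma(f)$ uses the higher structure maps $\gamma_{n}$.
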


In particular, we have a natural Lie algebra map $\phi^{V}: \FOLie(V) \rightarrow TV$
induced by the inclusion of $V$ into its tensor algebra. Let $\phi_{n}^{V}$ be the restriction of this map to $\FOLie_{n}(V) \rightarrow V^{\otimes n}$ and define $E_{n}(V):= \Ker(\phi_{n}^{V}) \text{ and } E(V) = \displaystyle\bigoplus_{n\ge 1} E_{n}(V).$

\begin{definition} A \emph{Lie algebra} $L$ in $\C$ is an operadic Lie algebra such that the natural map $\beta^{L}: \FOLie(L) \rightarrow L$
induced by the identity on $L$ is $0$ on $E(L)$. 

\end{definition}

This definition seems somewhat involved, since $E_{n}(V)$ can be fairly tricky to compute for large values of $n$. However, for our purposes, we have the following very nice fact.

\begin{proposition} \label{Lie-assoc} Any associative algebra $A$ or its subobject closed under bracket is a Lie algebra.

\end{proposition}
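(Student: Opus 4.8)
The plan is to verify the defining condition of a Lie algebra from the previous definition directly: I must show that for an associative algebra $A$ in $\C$ (or a subobject $L \subseteq A$ closed under the commutator bracket), the structure map $\beta^{L}\colon \FOLie(L) \rightarrow L$ induced by the identity vanishes on $E(L) = \bigoplus_{n} E_{n}(L)$, where $E_{n}(L) = \ker(\phi_{n}^{L}\colon \FOLie_{n}(L) \to L^{\otimes n})$. Since $A$ is an operadic Lie algebra under the commutator (by the example in the excerpt) and a subobject closed under the bracket inherits this structure, the only thing at issue is this $E(L)$-vanishing. The key observation is functoriality of the whole setup: the commutator bracket on $A$ is itself an operadic Lie algebra map $\mathbf{m}\colon TA \to A$ from the tensor algebra with its commutator structure, and $\phi^{A}\colon \FOLie(A) \to TA$ is a Lie algebra map, so $\beta^{A} = \mathbf{m} \circ \phi^{A}$ as Lie algebra maps out of $\FOLie(A)$ — both are induced by $\mathrm{id}_{A}$ via the universal property. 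Consequently $\beta^{A}$ restricted to $E_{n}(A) = \ker(\phi_{n}^{A})$ factors through $\phi_{n}^{A}$ and is therefore zero. For a subobject $L \subseteq A$ closed under bracket, one uses that the inclusion $L \hookrightarrow A$ is an operadic Lie map, hence induces $\FOLie(L) \to \FOLie(A)$ carrying $E_{n}(L)$ into $E_{n}(A)$ (here one needs that $\phi_{n}$ is natural in the object, which is immediate from its construction via the inclusion of $V$ into $TV$), and then chases the square.

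Concretely, the steps in order: (1) Recall that $A$, with commutator bracket, is an operadic Lie algebra, and note that a bracket-closed subobject $L$ is an operadic Lie subalgebra. (2) Observe that multiplication assembles into a morphism of operads-algebras $\mathbf{m}\colon (TA, [\,,\,]) \to (A,[\,,\,])$, i.e.\ an operadic Lie algebra homomorphism, since $TA \to A$ is an associative algebra map and commutator brackets are natural under associative algebra maps. (3) By the universal property of $\FOLie$, the operadic Lie maps $\FOLie(A) \to A$ induced by $\mathrm{id}_A$ are unique; since both $\beta^{A}$ and $\mathbf{m}\circ\phi^{A}$ are such maps, $\beta^{A} = \mathbf{m}\circ\phi^{A}$. (4) Therefore $\beta^{A}|_{E_n(A)} = \mathbf{m}\circ\phi_n^{A}|_{E_n(A)} = \mathbf{m}\circ 0 = 0$, so $A$ is a Lie algebra. (5) For $L \subseteq A$ bracket-closed, naturality of $\FOLie$ and of $\phi_n$ gives a commuting square relating $\FOLie(L) \to \FOLie(A)$ and $\phi_n^{L}, \phi_n^{A}$; since the inclusion $L \hookrightarrow A$ is injective, the induced map on $\FOLie_n$ carries $E_n(L)$ into $E_n(A)$, and $\beta^{L}$ is the restriction of $\beta^{A}$ along $\FOLie(L) \to \FOLie(A) \to A$ (as the composite $\FOLie(L)\to\FOLie(A)\to A$ equals the map induced by $L\hookrightarrow A$, which on $\FOLie(L)$ agrees with $\beta^L$ followed by the inclusion), so it kills $E_n(L)$.

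The main subtlety — not so much an obstacle as a point demanding care — is step (3) together with step (5): pinning down precisely which maps are "induced by the identity/inclusion" and invoking the universal property correctly, so that the identification $\beta^{A} = \mathbf{m}\circ\phi^{A}$ is legitimate rather than a notational coincidence. One must be careful that $\phi^A$ is a Lie map (this is asserted in the excerpt), that $\mathbf m$ is a Lie map (this needs the one-line check that $TA \to A$ respects commutators), and that the composite of two Lie maps inducing $\mathrm{id}_A$ on generators is the canonical one. The naturality statement for $\phi_n$ in the object argument is true essentially by construction — $\phi_n^V$ comes from $V \hookrightarrow TV$, which is natural in $V$ — but it should be stated explicitly since the definition of $E_n$ in the excerpt does not emphasize it. Everything else is formal diagram-chasing in $\C^{\ind}$, using only that $\otimes$ is exact and that the symmetric-group actions involved are the braided ones, which is exactly the setting in which all the cited constructions were made.
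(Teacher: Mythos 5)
Your proposal is correct and follows essentially the same route as the paper: the identity $\beta^{A} = \mathbf{m}\circ\phi^{A}$ is exactly the paper's statement that $\beta^{A}$ factors through $T(A)$ (hence kills $E(A)$), and your step (5) is the paper's ``reduction to $A$'' for a bracket-closed subobject, just written out with the naturality square made explicit.
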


\begin{proof} Since $A$ is an associative algebra $\beta^{A}: \FOLie(A) \rightarrow A$ factors through $T(A)$ and hence, is automatically zero on $E(A)$. The case of a subobject closed under bracket follows immediately.
\end{proof}

The only operadic Lie algebras we will consider in this paper are those that arise as Lie subalgebras of associative algebras and are hence automatically Lie algebras. 

\subsection{Lie algebra of an affine group scheme in $\Ver_{p}$ and the underlying ordinary affine group scheme}
 
Let $G$ be an affine group scheme in $\Ver_{p}$ and let $H= \O(G)$ be its ind-algebra of functions. Then, $H$ is a commutative ind-Hopf algebra in $\Ver_{p}$. Let $I$ be its augmentation ideal. Note that $H$ has a canonical decomposition as $\mathbf{1} \oplus I$ via the unit and counit maps. 

\begin{definition} The Lie algebra of $G$, denoted $\Lie(G)$ or $\g$, is $(I/I^{2})^{*} \subseteq H^{\circ}$ in $\Ver_{p}^{\ind}$.

\end{definition} 

Let us elaborate on some properties of $\Lie(G)$.

\begin{proposition} If $G$ is of finite type, then $\g$ is an object in $\Ver_{p}$ of finite length.

\end{proposition}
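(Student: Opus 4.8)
The plan is to show that $\g = (I/I^2)^*$ has finite length in $\Ver_p$ by proving that $I/I^2$ is an \emph{object} (not just an ind-object) of $\Ver_p^{\ind}$, i.e., that it has finite length; then its dual also has finite length. Since $G$ is of finite type, $H$ is a finitely generated commutative ind-algebra in $\Ver_p$, so there is a surjection of algebras $S(X) \twoheadrightarrow H$ for some $X \in \Ver_p$. I would first reduce to the case $H = S(X)$: a surjection $H' \twoheadrightarrow H$ of augmented algebras induces a surjection $I'/I'^2 \twoheadrightarrow I/I^2$ on the cotangent spaces at the identity (where $I', I$ are the respective augmentation ideals), so it suffices to bound the length of $I'/I'^2$ when $H' = S(X)$.

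For $H = S(X)$, the augmentation ideal is $I = \bigoplus_{n \ge 1} S^n(X)$, and $I^2 = \bigoplus_{n \ge 2} S^n(X)$ (the image of multiplication), so $I/I^2 \cong S^1(X) = X$, which is an object of $\Ver_p$ of finite length. Hence $I'/I'^2$ has finite length, and so does its quotient $I/I^2$; therefore $\g = (I/I^2)^*$ is an object of $\Ver_p$ of finite length as claimed. (One should double-check that dualization of a finite-length object of $\Ver_p^{\ind}$ lands back in $\Ver_p$ — this is immediate since $\Ver_p$ is a fusion category, so every finite-length ind-object is an actual object and rigid.)

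The only genuine subtlety — and the step I expect to require the most care — is justifying that the surjection $S(X) \twoheadrightarrow H$ really does induce a surjection on $I/I^2$, and that passing to a quotient algebra behaves well with respect to the augmentation ideal filtration. Concretely: if $\pi : H' \to H$ is a surjective homomorphism of commutative ind-Hopf algebras compatible with counits, then $\pi(I') = I$ (since $\pi$ preserves the counit, hence the canonical splitting $\mathbf 1 \oplus I$), and $\pi(I'^2) = m(\pi(I') \otimes \pi(I')) = I^2$ by surjectivity and compatibility with multiplication; so $\pi$ descends to a surjection $I'/I'^2 \to I/I^2$. This is a routine diagram chase using exactness of $\otimes$ in $\Ver_p^{\ind}$, but it is worth spelling out since the category is not $\Vec$. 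Everything else is the elementary computation $I/I^2 \cong X$ for a symmetric algebra, which holds verbatim in any symmetric tensor category because the symmetric algebra is graded with $S^0 = \mathbf 1$ and $S^1 = X$.
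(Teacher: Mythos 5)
Your computation $I/I^{2} \cong X$ for a symmetric algebra is correct, and the descent of an \emph{augmented} surjection to a surjection of cotangent spaces is indeed routine; the gap is at the very first reduction. The surjection $\pi: S(X) \twoheadrightarrow H$ provided by finite generation is only a homomorphism of algebras, not of augmented algebras (let alone of Hopf algebras), so there is no reason for it to be compatible with the counit of $H$. Consequently $\pi^{-1}(I)$ is merely some maximal ideal $\m \subseteq S(X)$ with residue $\mathbf{1}$, not the augmentation ideal $\bigoplus_{n \ge 1} S^{n}(X)$, and the graded computation of $I/I^{2}$ does not apply to it as written. This is precisely where the paper's proof does its work: it bounds $\m/\m^{2}$ for an \emph{arbitrary} maximal ideal of $S(X)$, using Lemma \ref{nilpotence} to split $S(X) \cong \k[x_{1}, \ldots, x_{n}] \otimes Y$ with $Y$ of finite length, whence $\m = \m_{1} \otimes Y + \k[x_{1}, \ldots, x_{n}] \otimes \m_{2}$ and $\m/\m^{2} \cong \m_{1}/\m_{1}^{2} \oplus \m_{2}/\m_{2}^{2}$ is of finite length by classical commutative algebra together with the finiteness of $Y$.

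Your route can be repaired rather than abandoned: the character $\chi = \epsilon_{H} \circ \pi: S(X) \rightarrow \mathbf{1}$ restricts on $X$ to a morphism $X \rightarrow \mathbf{1}$, which necessarily kills $X_{\not=0}$ and is just a linear functional on $X_{0}$; precomposing $\pi$ with the translation automorphism of $S(X)$ determined by $x \mapsto x - \chi(x)1$ yields a surjection that does preserve the counit, and after this modification your argument goes through verbatim. But as written the counit-compatibility is assumed rather than arranged, and it is the one genuinely nontrivial point — the step you flagged as the main subtlety (exactness of the passage to $I/I^{2}$ under a counit-preserving surjection) is the easy part.
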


\begin{proof} We show that for any maximal ideal $\m$ in a finitely generated commutative algebra $A$ in $\Ver_{p}^{\ind}$, $\m/\m^{2} \in \Ver_{p}$. Since $A$ is finitely generated, we have a surjection $f: S(X) \rightarrow A.$ $f^{-1}(\m)$ is a maximal ideal in $S(X)$ and $\m/\m^{2}$ is bounded in length by $f^{-1}(\m)/(f^{-1}(\m))^{2}$. Hence, we can reduce to the case where $A$ is a symmetric algebra. By Lemma \ref{nilpotence}, $S(X) = \k[x_{1}, \ldots x_{n}] \otimes Y,$ where $Y$ is a commutative algebra in $\Ver_{p}$ (hence of finite length). Then, $\m = \m_{1} \otimes Y + \k[x_{1}, \ldots, x_{n}] \otimes \m_{2}$, with $\m_{1}, \m_{2}$ maximal ideals in the respective tensor factors. This implies that $\m/\m^{2} = \m_{1}/\m_{1}^{2} \oplus \m_{2}/\m_{2}^{2},$
and from here the result follows from classical commutative algebra and $Y$ and hence $\m_{2}$ being finite length.
\end{proof}

To justify the terminology of a Lie algebra, Proposition \ref{prim} tells us that $\g$ is the space of primitives inside $A^{\circ}$. Hence, $\g$ is a subobject of an associative algebra closed under commutator.

\begin{proposition} \label{LieisLie} If $G$ is a an affine group scheme in $\Ver_{p}$ of finite type, then $\g$ is a Lie algebra in $\Ver_{p}$.

\end{proposition}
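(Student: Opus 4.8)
The plan is to exhibit $\g$ as a subobject of an associative ind-algebra that is closed under the commutator bracket, and then invoke Proposition \ref{Lie-assoc}, which promotes such a subobject from a mere operadic Lie algebra to a genuine Lie algebra in $\Ver_{p}$. Essentially all of the content has already been packaged into Propositions \ref{prim} and \ref{Lie-assoc}; the task is just to assemble it correctly.

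Concretely, write $H = \O(G)$ and let $I$ be its augmentation ideal, so that $\g = (I/I^{2})^{*} \subseteq H^{\circ}$, and recall from the preceding proposition that $\g$ has finite length because $G$ is of finite type. By Proposition \ref{coalg}, $H^{\circ}$ is a cocommutative ind-Hopf algebra in $\Ver_{p}$; in particular its multiplication makes $H^{\circ}$ into an associative unital ind-algebra, hence (by the Example following the definition of operadic Lie algebras) an operadic Lie ind-algebra with bracket the commutator. By Proposition \ref{prim}, $\g = \Prim(H^{\circ})$ and $\g$ is closed under this commutator bracket, so $\g$ is a bracket-closed subobject of the associative ind-algebra $H^{\circ}$. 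Proposition \ref{Lie-assoc} then applies directly and yields that $\g$ is a Lie algebra in $\Ver_{p}$.

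The only point that deserves a remark, and which I expect to be the sole (mild) obstacle, is that $H^{\circ}$ is an ind-algebra rather than one of finite length, so one must make sure Proposition \ref{Lie-assoc} is valid in $\Ver_{p}^{\ind}$. This is harmless: its proof is formal and uses no finiteness. Indeed, the iterated-bracket map $\beta^{\g}: \FOLie(\g) \to \g$, postcomposed with the inclusion $\g \hookrightarrow H^{\circ}$, is a Lie homomorphism $\FOLie(\g) \to H^{\circ}$ restricting to $\g \hookrightarrow H^{\circ}$ on the generators, hence by the universal property of $\FOLie$ it coincides with the composite of $\phi^{\g}: \FOLie(\g) \to T(\g)$ with the algebra homomorphism $T(\g) \to H^{\circ}$ extending that inclusion; the latter composite visibly kills $E(\g) = \ker(\phi^{\g})$, and since $\g \hookrightarrow H^{\circ}$ is monic, $\beta^{\g}$ vanishes on $E(\g)$, which is exactly the defining condition for $\g$ to be a Lie algebra. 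Thus the whole argument is short, with no computational step of substance.
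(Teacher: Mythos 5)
Your proposal is correct and is essentially the paper's own argument: the paper deduces the proposition from Proposition \ref{prim} (identifying $\g = \Prim(H^{\circ})$, closed under the commutator in the associative ind-algebra $H^{\circ}$) together with Proposition \ref{Lie-assoc}. Your extra remark verifying that Proposition \ref{Lie-assoc} is harmless in the ind-setting is a reasonable elaboration of a point the paper leaves implicit, but introduces nothing new in substance.
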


As $\g$ is the space of primitives inside $H^{\circ}$, it also acquires the structure of a left $H^{\circ}$-module.

\begin{definition} The \emph{left adjoint action} of $H^{\circ}$ on itself is given by the action map $\ad: H^{\circ} \otimes H^{\circ} \rightarrow H^{\circ}$
where $\ad$ is the composite map

$$\begin{tikzpicture}
\matrix (m) [matrix of math nodes,row sep=3em,column sep=3em,minimum width=3em]
{ H^{\circ} \otimes H^{\circ} & H^{\circ} \otimes H^{\circ} \otimes H^{\circ} & H^{\circ} \otimes H^{\circ} \otimes H^{\circ} & H^{\circ} \otimes H^{\circ} \otimes H^{\circ}  & H^{\circ} \\};;
\path[-stealth]
(m-1-1) edge node[auto] {$\Delta_{1}$} (m-1-2)
(m-1-2) edge node[auto] {$c_{2,3}$} (m-1-3)
(m-1-3) edge node[auto] {$S_{3}$} (m-1-4)
(m-1-4) edge node[auto] {$m$} (m-1-5);
\end{tikzpicture}$$
Here, $\Delta_{1}$ is comultiplication in the first component, $c_{23}$ is the swap map in the second and third component, $S_{3}$ is the antipode on the third component and $m$ is multiplication.

\end{definition}

Since $\g$ is the space of primitives inside $H^{\circ}$, which is a cocommutative Hopf algebra in $\Ver_{p}^{\ind}$, we have the following proposition.

\begin{proposition} \label{adjoint} $\g$ is a submodule of $H^{\circ}$ under the left adjoint action. \end{proposition}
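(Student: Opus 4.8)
The plan is to show that the adjoint action of $H^{\circ}$ on itself restricts to an action on $\g = \Prim(H^{\circ})$, i.e., that $\ad(H^{\circ} \otimes \g) \subseteq \g$. Since $\g$ is, by Proposition \ref{prim}, exactly the kernel of $(\Delta' - \iota' \otimes \id - \id \otimes \iota')$ on $H^{\circ}$ (equivalently the space of primitives), it suffices to check that for the composite map $\ad$ defined above, the output lands in the primitives whenever the second input is primitive. The natural approach is to dualize: $H^{\circ}$ is a cocommutative ind-Hopf algebra in $\Ver_{p}^{\ind}$, so this is a purely Hopf-algebraic statement that can be verified by the same diagrammatic manipulation that works in $\Vec$ (and in $\sVec$), once one is careful that the braiding $c$ appears in all the right places.

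First I would recall the standard fact (valid in any symmetric tensor category, hence in $\Ver_{p}^{\ind}$) that for a Hopf algebra $C$ with multiplication $m$, comultiplication $\Delta$, antipode $S$, the left adjoint action $\ad\colon C \otimes C \to C$ makes $C$ into a module algebra over itself, and in particular $\Prim(C)$ is a subcomodule/submodule: this is because $\ad$ is a coalgebra-module map in an appropriate sense, so it sends the primitives of the acted-upon factor to primitives. Concretely, using Sweedler-type notation adapted categorically, $\ad(c \otimes x) = m(m \otimes S)(\id \otimes c_{C,C})(\Delta \otimes \id)(c \otimes x)$, and one computes $\Delta(\ad(c \otimes x))$ using cocommutativity of $C$, the fact that $\Delta$ is an algebra map, and the antipode axiom; when $x$ is primitive, $\Delta(x) = x \otimes 1 + 1 \otimes x$, and the computation collapses to show $\ad(c \otimes x)$ is again primitive. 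I would carry this out as an explicit commuting-diagram argument, invoking naturality of the braiding to move the $c$'s past the structure maps, exactly as in \cite{S}[Chapter 5] but with swaps replaced by $c$.

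Alternatively — and this may be the cleaner route to write — I would reduce to $\Vec$ directly. By the Proposition earlier in Section 4, $C := H^{\circ}$ is the union of cocommutative subcoalgebras of finite length in $\Ver_{p}$; any finite-length subobject of $\g$ lies inside some finite-length sub-Hopf-algebra $C'$ of $C$ (take the subalgebra it generates, which is still finite length by Lemma \ref{nilpotence} since its image in the underlying ordinary Hopf algebra is finite-dimensional and the rest is nilpotent), and then dualize to $(C')^{*}$, a finitely generated commutative algebra in $\Ver_{p}$, where the statement becomes the dual assertion that $(C')^{*}$ acts on $\g^{*} = I/I^{2}$ compatibly — a statement that in turn descends, via the $A^{\inv}$-module structure and the nilpotent thickening $A \to A/(A_{\neq 0})$, to classical commutative algebra.

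The main obstacle is bookkeeping rather than conceptual: one must verify that the braiding insertions in the definition of $\ad$ (the $c_{2,3}$ in the middle) are precisely what is needed for the primitivity computation to go through, and that cocommutativity of $H^{\circ}$ (which holds because $H$ is commutative, by Proposition \ref{coalg}) is used in the right spot. Since $\g$ is a subobject of the associative algebra $H^{\circ}$ closed under commutator and the adjoint action of a Lie algebra element $y \in \g$ on $z \in \g$ is just $[y,z]$, the computation is essentially the assertion that $\ad$ extends the Lie bracket from $\g$ to an $H^{\circ}$-action, and this is forced by the Hopf axioms. I expect the whole argument to be short once the diagrammatic identity is set up correctly.
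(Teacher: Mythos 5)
Your primary argument---the standard diagrammatic computation showing that $\ad$ sends primitives to primitives, using the antipode axiom, cocommutativity of $H^{\circ}$ and naturality of the braiding---is exactly the paper's proof, which simply notes that the classical element-free argument carries over verbatim to $\Ver_{p}^{\ind}$. The alternative reduction to $\Vec$ you sketch is unnecessary (and its claim that a finite-length subobject of $\g$ sits inside a finite-length sub-Hopf-algebra of $H^{\circ}$ is not justified), but since it is offered only as an aside, the proposal stands as essentially the paper's argument.
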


\begin{proof} The standard proof is element free and works perfectly in this setting too. It uses the above diagram and the antipode axiom only.
\end{proof} 

In addition to the Lie algebra of an affine group scheme, we also have an underlying ordinary affine group scheme.

\begin{definition} Let $G$ be an affine group scheme of finite type in $\Ver_{p}$ with algebra of functions $H$. Let $J$ be the ideal in $H$ generated by all simple subobjects not isomorphic to $\mathbf{1}$. Then, the \emph{underlying ordinary affine subgroup scheme}, denoted $G_{0}$ is $\Spec(\overline{H})$, with $\overline{H} = H/J.$

\end{definition} 

Semisimplicity of $\Ver_{p}$ immediately implies that $J$ is a Hopf ideal in $H$ and hence $\overline{H}$ is a finitely generated commutative Hopf algebra over $\k$. We end this subsection with the following compatibility between $G_{0}$ and $\g$.

\begin{proposition} \label{Lie-Vec} Let $G$ be an affine group scheme of finite type in $\Ver_{p}$ with Lie algebra $\g = \g_{0} \oplus \g_{\not=0}$, where $\g_{0}$ is the isotypic component of $\g$ coming from $\mathbf{1}$ and $\g_{\not=0}$ is the direct sum of all other isotypic components. Then, $\g_{0}$ is a Lie subalgebra of $\g$ and is isomorphic to $\Lie(G_{0})$. 

\end{proposition}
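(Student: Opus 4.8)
The plan is to identify $\g_0$ with the primitives of $\overline{H}^{\circ}$ and then invoke Proposition \ref{prim} applied to the ordinary affine group scheme $G_0$. First I would observe that, since $\Ver_p$ is semisimple, the decomposition $\g = \g_0 \oplus \g_{\not=0}$ as objects in $\Ver_p^{\ind}$ is canonical, and $\g_0 = \Hom_{\Ver_p}(\mathbf{1}, \g)$. The bracket $[-,-] : \wedge^2 \g \to \g$ restricts to a map $\wedge^2 \g_0 \to \g$, and I would check its image lands in $\g_0$: this is because a morphism $\mathbf{1} \to \g$ composed with the bracket can only hit the $\mathbf{1}$-isotypic part, or more structurally because $\g$ sits inside the associative algebra $H^{\circ}$ and $\g_0 = (H^{\circ})_0 \cap \g$ is closed under the commutator of $H^{\circ}$ (the product of two trivial subobjects is trivial, and $\mathbf{1}$ is the monoidal unit so $(H^{\circ})_0$ is an ordinary subalgebra of $H^{\circ}$). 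Hence $\g_0$ is a Lie subalgebra of $\g$.

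Next I would set up the comparison with $\Lie(G_0)$. Writing $J \subseteq H$ for the ideal generated by all nontrivial simple subobjects, so $\overline{H} = H/J$, the quotient map $H \twoheadrightarrow \overline{H}$ is a Hopf algebra homomorphism, inducing by Lemma \ref{6.0.1}(1) an injection $\overline{H}^{\circ} \hookrightarrow H^{\circ}$ of cocommutative coalgebras (in fact Hopf algebras). By Proposition \ref{prim}, $\Lie(G_0) = \Prim(\overline{H}^{\circ})$, while $\g = \Lie(G) = \Prim(H^{\circ})$, again by Proposition \ref{prim}. Under the inclusion $\overline{H}^{\circ} \subseteq H^{\circ}$, a primitive of $\overline{H}^{\circ}$ is automatically primitive in $H^{\circ}$, so $\Lie(G_0) \subseteq \g$; and since $\overline{H}^{\circ}$ is an ordinary coalgebra over $\k$ (because $\overline{H}$ is an ordinary $\k$-algebra, all its finite-dimensional quotients are ordinary, so $\overline{H}^{\circ}$ is a trivial object), this inclusion lands inside $\g_0$.

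For the reverse containment $\g_0 \subseteq \Lie(G_0)$ I would argue dually at the level of algebras. Using the identification of $\g = (I/I^2)^{*}$ from Proposition \ref{prim} and the non-degenerate Hopf pairing of Proposition \ref{non-degencirc}, $\g_0$ is the annihilator of $I^2$ inside $(H^{\circ})_0$, which by Proposition \ref{dualident} is dual to the trivial-isotypic part of $I/I^2$. The point is that $J$, being generated by nontrivial simples, satisfies $J \subseteq I$ and the composite $J \to I/I^2$ has image contained in $(I/I^2)_{\not=0}$: indeed $J$ modulo $I^2$ is spanned by the nontrivial simple generators and their products, and products of a nontrivial simple with anything in $I$ lie in $I^2$, so only the degree-one nontrivial part survives — hence the trivial part of $I/I^2$ coincides with the trivial part of $\overline{I}/\overline{I}^2$ where $\overline{I} = I/J$ is the augmentation ideal of $\overline{H}$. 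Dualizing, $\g_0 = (\overline{I}/\overline{I}^2)^{*} = \Lie(G_0)$, and one checks this identification is compatible with the brackets since both are induced from the commutator on $H^{\circ}$ restricted through $\overline{H}^{\circ} \subseteq H^{\circ}$.

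The main obstacle I anticipate is the bookkeeping in the previous paragraph: showing cleanly that passing from $H$ to $\overline{H} = H/J$ does not change the trivial part of the cotangent space $I/I^2$, i.e. that $(I/I^2)_0 \cong (\overline{I}/\overline{I}^2)_0$ as vector spaces with their bracket structure. This requires knowing that $J/(J\cap I^2)$ contributes nothing trivial, which in turn uses Lemma \ref{nilpotence} (nontrivial simples generate a nilpotent ideal, so $J$'s nontrivial generators multiply into $I^2$) together with the semisimplicity of $\Ver_p$ to split off isotypic components functorially. Everything else — the Lie subalgebra claim, the two inclusions at the coalgebra level — is then essentially formal given Propositions \ref{prim}, \ref{non-degencirc}, \ref{dualident} and Lemma \ref{6.0.1}.
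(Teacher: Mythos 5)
Your proposal is correct and, at its core, is the same argument as the paper's: the paper also proves the dual statement about the cotangent space, writing $I = I' \oplus J$ with $I' \cong I/J$ and using that $J$ (generated by nontrivial simples, hence contained in $I$) contributes only to the non-trivial isotypic part of $I/I^{2}$, so that $(I/I^{2})_{0} \cong \overline{I}/\overline{I}^{2}$ and dually $\g_{0} \cong \Lie(G_{0})$, while the Lie subalgebra claim follows from $\mathbf{1}\otimes\mathbf{1}\cong\mathbf{1}$. Your additional step identifying $\Lie(G_{0}) = \Prim(\overline{H}^{\circ}) \hookrightarrow \Prim(H^{\circ})$ via Lemma \ref{6.0.1} is not in the paper but is harmless and conveniently makes the bracket compatibility explicit.
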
 

\begin{proof} The fact that $\g_{0}$ is a Lie subalgebra of $\g$ is immediate from the fact that $\mathbf{1} \otimes \mathbf{1} \cong \mathbf{1}$. For the second half of the proposition, we will prove the dual statements. Let $H$ be the algebra of functions of $G$ and let $I$ be the augmentation ideal. Let $J$ be the ideal which we quotient by to get $\overline{H}$. Since $J$ is nilpotent by Lemma \ref{nilpotence} and $I$ is a maximal ideal, $J \subseteq I$. Write $I = I' \oplus J$, picking some arbitrary lift $I' \cong I/J$ in $\Ver_{p}^{\ind}$. It is thus immediate that $(I/I^{2}) \text{ mod } J$ is the same as $I'/(I')^{2} \text{ mod } J$.

Now, $\g_{\not=0} \subseteq J/I^{2} \subseteq I/I^{2}$. Hence, $\g_{0} \subseteq I'/I^{2} = I'/(I')^{2} \text{ mod } J$. The reverse inclusion is obvious as $I'$ has only $\mathbf{1}$ as a simple subobject.
\end{proof} 

Motivated by this proposition, we have the following definition.

\begin{definition} Let $\g$ be an ind-Lie algebra in $\Ver_{p}$. The \emph{underlying ordinary Lie subalgebra}, denoted $\g_{0}$, is the isotypic component of $\g$ coming from $\mathbf{1}$.

\end{definition}

\subsection{PBW theorem for Lie algebras in $\Ver_{p}$}

\begin{definition} Let $\g$ be an operadic Lie algebra in $\Ver_{p}$. The universal enveloping algebra $U(\g)$ is the quotient of the tensor algebra $T(\g)$ by the ideal generated by the image of $a: \g \otimes \g \rightarrow \g \oplus \g^{\otimes 2} \subseteq T(\g)$
where $a$ is the difference between the commutator in $T(\g)$ and the Lie bracket on $\g$.

\end{definition}

This universal enveloping algebra satisfies the standard universal property.

\begin{proposition} The space of unital algebra homomorphisms from $U(\g)$ to any associative, unital ind-algebra $A \in \Ver_{p}$ is naturally isomorphic to the space of Lie algebra homomorphisms from $\g$ to $A$.

\end{proposition}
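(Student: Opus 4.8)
The plan is to derive the universal property from that of the tensor algebra together with the presentation $U(\g) = T(\g)/\mathcal{I}$, where $\mathcal{I}$ is the two-sided ideal generated by the image $a(\g\otimes\g)\subseteq \g\oplus\g^{\otimes 2}\subseteq T(\g)$. First I would invoke the standard universal property of the tensor algebra $T(\g)$: for any associative unital ind-algebra $A$ in $\Ver_p$, restriction along the inclusion $\g\hookrightarrow T(\g)$ is a bijection
$$\Hom_{\mathrm{alg}}(T(\g), A) \;\longrightarrow\; \Hom_{\Ver_p^{\ind}}(\g, A),$$
natural in $A$. Write $\widetilde{f}: T(\g)\to A$ for the algebra homomorphism corresponding to $f:\g\to A$; it restricts on $\g^{\otimes n}\subseteq T(\g)$ to the iterated multiplication $m_A^{(n)}\circ f^{\otimes n}$, in particular to $f$ on $\g=\g^{\otimes 1}$ and to $m_A\circ(f\otimes f)$ on $\g^{\otimes 2}$.

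Next I would use that the quotient map $\pi: T(\g)\twoheadrightarrow U(\g)$ is an algebra epimorphism with kernel $\mathcal{I}$, so $\widetilde{f}$ factors (then uniquely) through $\pi$ if and only if $\widetilde{f}|_{\mathcal{I}}=0$. Since $\widetilde{f}$ is an algebra homomorphism it carries $\mathcal{I}$ into the two-sided ideal of $A$ generated by $\widetilde{f}\big(a(\g\otimes\g)\big)$; hence $\widetilde{f}|_{\mathcal{I}}=0$ if and only if $\widetilde{f}\circ a=0$ already as a morphism on $\g\otimes\g$. Combining with the first step gives, naturally in $A$,
$$\Hom_{\mathrm{alg}}(U(\g), A) \;\cong\; \{\, f\in\Hom_{\Ver_p^{\ind}}(\g, A) \ :\ \widetilde{f}\circ a = 0 \,\}.$$

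Finally I would identify the condition $\widetilde{f}\circ a=0$. By definition $a$ is the difference of the commutator morphism $\g\otimes\g\to\g^{\otimes 2}\subseteq T(\g)$, i.e. $(\mathrm{id}_{\g\otimes\g}-c_{\g,\g})$ followed by the inclusion of the degree-$2$ part, and the bracket morphism $\g\otimes\g\to\wedge^{2}\g\to\g\subseteq T(\g)$, the first arrow being the canonical projection and the second the Lie bracket. Composing with $\widetilde{f}$ and using its values on $\g^{\otimes 1}$ and $\g^{\otimes 2}$ found above, we get
$$\widetilde{f}\circ a \;=\; m_A\circ(f\otimes f)\circ(\mathrm{id}_{\g\otimes\g}-c_{\g,\g}) \;-\; f\circ[-,-],$$
the last term understood composed with the projection $\g\otimes\g\to\wedge^{2}\g$. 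This morphism is zero precisely when $f$ intertwines the bracket of $\g$ with the commutator bracket of $A$, i.e. when $f$ is a bracket-preserving (Lie algebra) homomorphism $\g\to A$, where $A$ is regarded as a Lie algebra via its commutator (Proposition \ref{Lie-assoc}). Assembling the three displayed bijections yields $\Hom_{\mathrm{alg}}(U(\g), A)\cong\Hom_{\mathrm{Lie}}(\g, A)$; naturality in $A$ is clear since each step is visibly natural, and the argument uses only the bracket, so it is indifferent to whether $\g$ is merely an operadic Lie algebra or a Lie algebra in the strict sense.

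The only point demanding a bit of care --- and the mild obstacle of the proof --- is the categorical bookkeeping of the last paragraph: presenting the commutator and the Lie bracket as genuine morphisms in $\Ver_p^{\ind}$ built from $m_A$, the braiding $c_{\g,\g}$, and the projection $\g\otimes\g\to\wedge^{2}\g$, and checking that $\widetilde{f}$ really restricts to $m_A\circ(f\otimes f)$ on the degree-$2$ summand of $T(\g)$. Everything else is the usual free--forgetful adjunction formalism.
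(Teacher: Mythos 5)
Your argument is correct and is exactly the standard free--forgetful argument the paper implicitly invokes (the paper states this proposition without proof, as the "standard universal property"): lift along the universal property of $T(\g)$, then observe that the lifted map kills the ideal generated by the image of $a$ precisely when it vanishes on that image, i.e. precisely when $f$ is bracket-preserving for the commutator on $A$. Your categorical bookkeeping of the commutator, the braiding, and the projection $\g\otimes\g\to\wedge^{2}\g$ is the only content beyond the adjunction formalism, and you handle it correctly.
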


Note that $U(\g)$ is a filtered quotient of $T(\g)$. Taking associated graded objects gives us an algebra homomorphism $S(\g) \rightarrow \gr U(\g)$
that is always surjective.

\begin{definition} We say that an operadic Lie algebra $\g$ in $\Ver_{p}$ \emph{satisfies PBW} if this map is an isomorphism.

\end{definition}

The question of which operadic Lie algebras satisfy the PBW theorem is fairly involved and is studied extensively in \cite{Et1}. One useful result from that article is the following (\cite[Theorem 6.6]{Et1}).

\begin{proposition} Let $\g$ be an operadic Lie algebra in $\Ver_{p}$. Then the following are equivalent:

\begin{enumerate}

\item[1.] $\g$ is a Lie algebra.

\item[2.] $\g$ satisfies PBW.

\end{enumerate}

\end{proposition}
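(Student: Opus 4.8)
The plan is to handle the two implications separately. The implication (2) $\Rightarrow$ (1) is elementary; the implication (1) $\Rightarrow$ (2) is the genuine PBW theorem and is exactly \cite[Theorem 6.6]{Et1}, so in the paper it suffices to cite that reference. I sketch both directions.

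For (2) $\Rightarrow$ (1): assume $\g$ satisfies PBW, so the natural surjection $S(\g) \to \gr U(\g)$ is an isomorphism. Comparing degree-one pieces, $\gr^{1}U(\g) \cong S^{1}(\g) = \g$, and since $F_{0}U(\g) = \mathbf{1}$ the object $F_{1}U(\g)$ sits in a two-step filtration with associated graded pieces $\mathbf{1}$ and $\g$, the composite $\g = \g^{\otimes 1} \hookrightarrow T(\g) \to U(\g) \to \gr^{1}U(\g) = \g$ being the identity. Hence $\g \to U(\g)$ is a split monomorphism, so $\g$ is a subobject of the associative ind-algebra $U(\g)$. By the defining relations of $U(\g)$ the commutator in $U(\g)$ of elements of $\g$ again lies in $\g$ and equals $[-,-]_{\g}$, so this subobject is closed under bracket, and Proposition \ref{Lie-assoc} shows $\g$ is a Lie algebra. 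This uses no characteristic-$p$ input.

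For (1) $\Rightarrow$ (2): I would use the theory of inhomogeneous quadratic algebras in a symmetric tensor category. Present $U(\g) = \U(\g, R)$ with $R = \mathrm{im}(a)$ the image of the relation map $a\colon \g \otimes \g \to \g \oplus \g^{\otimes 2}$ from the definition of $U(\g)$. One first checks $R \cap \g = 0$: if $a(\xi)$ has trivial $\g^{\otimes 2}$-component then $\xi \in \ker(\id - c)$, and $a$ vanishes there. Thus $R$ is the graph of a map $\wedge^{2}\g \to \g$, the quadratic part of $U(\g)$ is $\U(\g, \wedge^{2}\g) = S(\g)$, and there is always a surjection $S(\g) \to \gr U(\g)$; injectivity of this map is what has to be proved. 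The plan is to construct a filtered left $U(\g)$-module structure on $S(\g)$ by operators of filtered degree $\le 1$ with principal symbol equal to multiplication, built inductively over the symmetric powers; at each stage the obstruction to consistently pushing the operators through the relations $R$ is measured by the failure of $\beta^{\g}\colon \FOLie(\g) \to \g$ to annihilate $E(\g) = \ker(\phi^{\g}\colon \FOLie(\g) \to T(\g))$ — that is, by the hypothesis that $\g$ is a Lie algebra. Given (1) the obstruction vanishes, $S(\g)$ becomes a faithful filtered $U(\g)$-module with associated graded $S(\g)$, and this forces $\gr U(\g) \cong S(\g)$.

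The hard part is this obstruction analysis in positive characteristic. Classically (characteristic $0$, or over $\Vec$) one shortcuts it: the quadratic part $S(\g)$ is Koszul, so by the Braverman--Gaitsgory / Polishchuk--Positselski criterion PBW for the filtered deformation reduces to the Jacobi identity (a degree-three condition) together with an automatically satisfied degree-four condition. In $\Ver_{p}$ this fails, because $S(\g)$ is in general not Koszul — for instance it can be of finite length by Lemma \ref{nilpotence} — so compatibility must be controlled in every degree $n$, which is precisely why a Lie algebra in this setting is required to satisfy the full condition on $E(\g)$ rather than merely the Jacobi identity. Matching up the degree-$n$ obstruction with the $E_{n}(\g)$-component, via the Koszul duality between the commutative operad and the Lie operad, is the technical heart; the details are carried out in \cite{Et1}.
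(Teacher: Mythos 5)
Your proposal is fine and matches what the paper does: the paper gives no proof of this proposition at all, simply citing \cite[Theorem 6.6]{Et1}, and your treatment likewise defers the substantive direction (1) $\Rightarrow$ (2) to that reference, with a heuristic sketch that is consistent with Etingof's obstruction/Koszul-duality analysis. Your added elementary argument for (2) $\Rightarrow$ (1) — PBW forces $\g \hookrightarrow U(\g)$ as a subobject closed under commutator, so Proposition \ref{Lie-assoc} applies — is correct, though in $\Ver_{p}$ it is not needed beyond the citation.
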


By Proposition \ref{LieisLie}, we have the following consequence.

\begin{corollary} \label{PBWgroup} Let $G$ be an affine group scheme of finite type in $\Ver_{p}$. Then, $\Lie(G)$ satisfies PBW.

\end{corollary}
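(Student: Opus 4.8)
The plan is to deduce this immediately from the two results that precede it. First I would recall that, by definition, $\Lie(G) = \g = (I/I^{2})^{*}$ is realized inside $H^{\circ}$, where $H = \O(G)$ and $I$ is the augmentation ideal; by Proposition \ref{prim} it is exactly the space $\Prim(H^{\circ})$ of primitives, hence a subobject of the associative ind-algebra $H^{\circ}$ closed under the commutator bracket. By Proposition \ref{Lie-assoc}, made explicit in Proposition \ref{LieisLie}, this forces $\g$ to be not merely an operadic Lie algebra but a genuine Lie algebra in $\Ver_{p}$ in the sense of Section 5.1, i.e. the natural map $\beta^{\g} : \FOLie(\g) \to \g$ vanishes on $E(\g)$.

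Next I would invoke \cite[Theorem 6.6]{Et1}, restated just above: for any operadic Lie algebra in $\Ver_{p}$, being a Lie algebra is equivalent to satisfying PBW, i.e. to the canonical surjection $S(\g) \to \gr U(\g)$ being an isomorphism. Applying this equivalence to $\g = \Lie(G)$, which we have just seen is a Lie algebra, yields at once that $S(\g) \to \gr U(\g)$ is an isomorphism; this is precisely the assertion that $\Lie(G)$ satisfies PBW.

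I do not expect any genuine obstacle here: the substantive content is entirely contained in Proposition \ref{LieisLie} (that the primitives of a finitely generated commutative ind-Hopf algebra in $\Ver_{p}$ form a bona fide Lie algebra, which itself rests on Proposition \ref{prim} and Proposition \ref{Lie-assoc}) and in the imported PBW criterion of Etingof. The only minor point worth a sentence is that the notion of operadic Lie algebra appearing in that criterion coincides with the one under which $\g$ was shown to be a Lie algebra, which is immediate from the definitions; so the corollary is a one-line consequence of chaining Proposition \ref{LieisLie} into \cite[Theorem 6.6]{Et1}.
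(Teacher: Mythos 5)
Your proposal is correct and matches the paper's argument exactly: the paper obtains this corollary by the same chain, namely Proposition \ref{prim} and Proposition \ref{Lie-assoc} give Proposition \ref{LieisLie} (that $\Lie(G)$ is a genuine Lie algebra), and then \cite[Theorem 6.6]{Et1} converts this into the PBW property. No gaps.
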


\subsection{Dual Harish-Chandra pairs and Harish-Chandra pairs}

In this section we finally give the formal definition of a Harish-Chandra pair. Informally, the data of a Harish-Chandra pair is an ordinary affine group scheme of finite type $G_{0}$, a Lie algebra $\g$ in $\Ver_{p}$ and compatibility between $\Lie(G_{0})$ and $\g_{0}$. More formally, we first need the notion of a dual Harish-Chandra pair and then we define Harish-Chandra pairs by dualizing.

\begin{definition} A \emph{dual Harish-Chandra pair} in $\Ver_{p}$ is a pair $(J, \g)$, where $J$ is a cocommutative Hopf algebra in $\Vec$ and $\g$ is a Lie algebra in $\Ver_{p}$ that is also a left $J$-module, equipped with an isomorphism $i: \Prim(J) \rightarrow \g_{0}$ of ordinary Lie algebras such that:

\begin{enumerate}
\item[1.] The bracket on $\g$ is a $J$-module map.
\item[2.] The map $i$ is an isomorphism of $J$-modules, with $\Prim(J)$ given the left adjoint action of $J$.
\item[3.] The two actions of $\Prim(J)$ on $\g$ via the $J$-module action and the adjoint action of $\g_{0}$ coincide.

\end{enumerate}

\end{definition}

\begin{remark} While the morphism $i$ allows for a more precise definition of a dual Harish-Chandra pair, it is largely irrelevant in applications, and we can simply think of a dual Harish-Chandra pair in $\Ver_{p}$ as a pair $(J, \g)$ of a cocommutative ind-Hopf algebra $J$ over $\k$ and a Lie algebra $\g \in \Ver_{p}$ with $\Prim(J) = \g_{0}$, such that the adjoint action of $J$ on $\g_{0} = \Prim(J)$ extends to an action of $J$ on $\g$ that restricts to the adjoint action of $\Prim(J) = \g_{0}$ on $\g$.

\end{remark}

Harish-Chandra pairs are defined via duality. 

\begin{definition} A \emph{Harish-Chandra pair} in $\Ver_{p}$ is a pair $(H, W)$ of a finitely generated commutative Hopf algebra $H$ in $\Vec$ and a right $H$-comodule $W$ in $\Ver_{p}$ such that $(H^{\circ}, W^{*})$ is equipped with the structure of a dual Harish-Chandra pair.

\end{definition} 

\begin{remark} Note that if $W$ is a right comodule for $H$, then $W^{*}$ is naturally a left module for $H^{\circ}$ via the following procedure: take coevaluation for $W$ on the right to get a map

$$H^{\circ } \otimes W^{*} \rightarrow H^{\circ} \otimes W^{*} \rightarrow W \otimes W^{*}.$$
Coact on $W$ and evaluate with $W^{*}$ to get a map to $H^{\circ} \otimes H \otimes W^{*}$ and then pair $H^{\circ}$ with $H$.

\end{remark}

We want Harish-Chandra pairs to form a category. Hence, we also need the notion of a morphism of Harish-Chandra pairs. 

\begin{definition} A morphism between dual Harish-Chandra pairs $(J, \g), (J', \g')$ in $\Ver_{p}$ is a pair $(f, \rho)$ where $f: J \rightarrow J'$ is a homomorphism of Hopf algebras over $\k$, $\rho : \g \rightarrow \g'$ is a morphism of Lie algebras in $\Ver_{p}$ that is a morphism of left $J$-modules (with the left $J$-action on $\g'$ coming from $f$), such that $\rho|_{\g_{0}} \circ i = f|_{\Prim(J)}.$

\end{definition}

\begin{remark} Note that this is not the same as the notion of Harish-Chandra pairs that already exists for ordinary algebraic groups. The terminology used here comes from \cite{M1}. 

\end{remark}

\begin{definition} A morphism between Harish-Chandra pairs $(H, W), (H', W')$ is a pair $(f, \rho)$ with $f$ a Hopf algebra homomorphism from $A$ to $A'$ over $\k$ and $\rho$ a comodule map from $W$ to $W'$ such that $(f^{\circ}, \rho^{*})$ have the structure of a morphism of dual Harish-Chandra pairs.

\end{definition}

\begin{remark} It is clear from this definition that we get categories of Harish-Chandra pairs and dual Harish-Chandra pairs in $\Ver_{p}$ and that the category of Harish-Chandra pairs is equipped with a functor $\mathbf{D}$ to the category of dual Harish-Chandra pairs. Morally speaking, dual Harish-Chandra pairs are the same as cocommutative Hopf algebras, which are essentially formal group schemes, while Harish-Chandra pairs are the same as affine group schemes. Hence, the functor $\mathbf{D}$ is roughly the same as taking the distribution algebra dual to functions on the formal neighborhood at the identity.

\end{remark}

The constructions of the last section allow us to associate a Harish-Chandra pair to any affine group scheme of finite type in $\Ver_{p}$.

\begin{theorem} \label{forwardfunctor} If $G$ is an affine group scheme of finite type in $\Ver_{p}$, then there is a natural structure of a Harish-Chandra pair on $(\O(G_{0}), \g^{*})$. This defines a functor $\mathbf{HC}$ from the category of affine group schemes of finite type in $\Ver_{p}$ to Harish-Chandra pairs in $\Ver_{p}$.

\end{theorem}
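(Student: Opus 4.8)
The plan is to assemble the data $(\O(G_0),\g^*)$ as a Harish-Chandra pair out of the constructions already built in this section, and then check functoriality. First I would unwind the definition: showing $(\O(G_0),\g^*)$ is a Harish-Chandra pair amounts to showing $((\O(G_0))^\circ,(\g^*)^*)=(\O(G_0)^\circ,\g)$ is a dual Harish-Chandra pair. So the real content is to equip the pair $(J,\g)$ with $J:=\O(G_0)^\circ$ and $\g:=\Lie(G)$ with the structure of a dual Harish-Chandra pair. Note $J=\O(G_0)^\circ$ is a cocommutative Hopf algebra over $\k$ (Proposition \ref{coalg} applied in $\Vec$), and $\g$ is a finite-dimensional Lie algebra in $\Ver_p$ by Proposition \ref{LieisLie} and the finiteness proposition preceding it.

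The three pieces of structure needed are: (i) a left $J$-action on $\g$, (ii) the bracket being a $J$-module map, and (iii) an isomorphism $i:\Prim(J)\to\g_0$ of ordinary Lie algebras that is $J$-linear for the adjoint action on $\Prim(J)$, with the induced $\Prim(J)$-action on $\g$ agreeing with $\ad_{\g_0}$. For (iii), $\Prim(J)=\Prim(\O(G_0)^\circ)=\Lie(G_0)$, and Proposition \ref{Lie-Vec} gives exactly the canonical identification $\Lie(G_0)\cong\g_0$ of Lie algebras; I would take $i$ to be this isomorphism. For (i), the key point is that the adjoint action of $\O(G_0)^\circ$ on $H^\circ=\O(G)^\circ$ restricts to $\g=\Prim(H^\circ)$: there is a Hopf algebra surjection $H\twoheadrightarrow\O(G_0)=\overline H$ (the quotient by the ideal generated by $H_{\neq0}$, which is a Hopf ideal by semisimplicity of $\Ver_p$), hence by Lemma \ref{6.0.1}(1) a Hopf algebra inclusion $\O(G_0)^\circ\hookrightarrow H^\circ$; then the left adjoint action of $H^\circ$ on itself (the map $\ad$ defined above) restricts along this inclusion in the first slot and along $\g\subseteq H^\circ$ in the second, and Proposition \ref{adjoint} shows $\g$ is carried into itself. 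That $\Prim(J)$ acts on $\g$ through $\ad_{\g_0}$ is then immediate: this is just the adjoint action of primitives of $H^\circ$ on primitives of $H^\circ$, and primitives bracket to primitives. For (ii), that the bracket $\wedge^2\g\to\g$ is $J$-linear, I would argue that $\ad_j$ for $j\in\O(G_0)^\circ$ acts on $H^\circ$ by a (generalized, $\Delta$-twisted) algebra derivation-type identity — more precisely, the adjoint action of a Hopf algebra on itself makes the multiplication, and hence the commutator bracket, a module map in the appropriate sense — so restricting to $\g$ the Lie bracket (which is the commutator in $H^\circ$ by Proposition \ref{Lie-assoc} and the discussion around Proposition \ref{prim}) is $J$-linear; this is a standard element-free Hopf-algebraic computation using cocommutativity of $J$ and the antipode axiom.

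Having produced the dual Harish-Chandra pair, I would then observe that $\g^*=W$ is the finite-dimensional right $\O(G_0)$-comodule dual to the left $\O(G_0)^\circ$-module $\g$ (finite length $\Rightarrow$ module/comodule duality is honest), so $(\O(G_0),\g^*)$ is a Harish-Chandra pair by definition. Finally, for functoriality: a morphism $G\to G'$ of affine group schemes of finite type is a Hopf algebra map $\O(G')\to\O(G)$; it induces $\O(G'_0)\to\O(G_0)$ (the ideal generated by the non-trivial part is respected), hence a Hopf algebra map $\O(G'_0)^\circ\to\O(G_0)^\circ$ by Lemma \ref{6.0.1}, and a Lie algebra map $\Lie(G)\to\Lie(G')$ via $(I'/I'^2)^*\to(I/I^2)^*$ which is $\O(G'_0)^\circ$-linear by naturality of $\ad$ and compatible with $i$ by naturality of Proposition \ref{Lie-Vec}; dualizing gives a morphism of Harish-Chandra pairs, and the assignment is visibly compatible with composition and identities.

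The main obstacle I expect is step (ii) together with the precise verification that the adjoint action of $\O(G_0)^\circ$ genuinely preserves $\g\subseteq H^\circ$ and is compatible with all the structure — i.e., checking that all the Hopf-algebraic compatibilities (bracket is a module map, $i$ is $J$-linear, the two $\Prim(J)$-actions agree) hold simultaneously. None of these is deep, but each requires a careful element-free diagram chase in $\Ver_p^{\ind}$ using cocommutativity and the antipode axiom, and getting the variances and the placement of the braiding $c$ right is where the work lies; everything else is bookkeeping with results already established.
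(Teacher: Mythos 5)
Your proposal is correct and follows essentially the same route as the paper's proof: the dual pair $(\O(G_0)^{\circ},\g)$ is built from the adjoint action of $H^{\circ}$ restricted along $\O(G_0)^{\circ}\hookrightarrow H^{\circ}$ (Lemma \ref{6.0.1}, Proposition \ref{adjoint}), with the identification $\Prim(\O(G_0)^{\circ})\cong\g_0$ and the agreement of the two $\g_0$-actions supplied by Propositions \ref{prim} and \ref{Lie-Vec} and the fact that the antipode is $-\id$ on primitives. You simply spell out in more detail the compatibilities (bracket as a $J$-module map, functoriality) that the paper's terse proof leaves as immediate from the preceding section.
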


\begin{proof} Let $A = \O(G)$ and $H = \overline{H} = \O(G_{0})$. It is clear from the previous section that $J:= \overline{H}^{\circ} $ is a cocommutative Hopf algebra over $\k$ and that $\g$ is a Lie algebra in $\Ver_{p}$ that acquires a left action of $J$ via the adjoint action. Note that the adjoint action of $\g$ on itself induced from $J$ is the same as the adjoint action coming from the Lie algebra structure on $\g$, since the antipode on primitive elements is just $-\id$. This proves everything else we need, since we have already checked that $\Prim((A/I)^{\circ}) = \g_{0}$
in Proposition \ref{Lie-Vec} and Proposition \ref{prim}.
\end{proof} 

We can now restate Theorem \ref{Harish-Chandra} more precisely as stating that $\mathbf{HC}$ is an equivalence of categories. To prove this we will use a related functor in the dual cocommutative setting.

\begin{theorem} \label{coforwardfunctor} Let $C$ be a cocommutative Hopf algebra in $\Ver_{p}^{\ind}$. Let $C_{0}$ be its $\mathbf{1}$-isotypic component. Then, $(\Delta^{-1}(C_{0} \otimes C_{0}), \Prim(C))$ has the natural structure of a dual Harish-Chandra pair in $\Ver_{p}$ and we get a functor $\mathbf{DHC}$ from the category of cocommutative Hopf algebras in $\Ver_{p}^{\ind}$ to the category of dual Harish-Chandra pairs in $\Ver_{p}$.

\end{theorem}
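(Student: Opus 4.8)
The plan is to build the dual Harish-Chandra pair directly from the coalgebra structure of $C$ and then check functoriality. First I would verify that $J = \Delta^{-1}(C_0 \otimes C_0)$ is a Hopf subalgebra of $C$: it is a subcoalgebra essentially by construction (the dual diagrams for $\Delta$ force $\Delta(J) \subseteq J \otimes J$ once one knows $C_0 \otimes C_0$ behaves well under coassociativity), and since $C_0$ is a subalgebra of $C$ — because $\mathbf{1} \otimes \mathbf{1} \cong \mathbf{1}$ forces the product of two trivial subobjects to be trivial — one gets $m(J \otimes J) \subseteq J$; the antipode preserves $J$ because $S$ sends $C_0$ to $C_0$ and is an anti-coalgebra map. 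Thus $J$ is a cocommutative Hopf algebra \emph{over $\k$} (it lies in $\Vec$ since it is concentrated in the $\mathbf{1}$-isotypic component). Next, $\g := \Prim(C)$ is a Lie algebra in $\Ver_p$ by Proposition \ref{Lie-assoc} (it is a subobject of $C$ closed under the commutator bracket), and $\g$ is closed under the left adjoint action $\ad : C \otimes C \to C$ of $C$ on itself — this is Proposition \ref{adjoint}, whose element-free proof uses only the comultiplication and antipode axioms. Restricting $\ad$ along $J \hookrightarrow C$ makes $\g$ a left $J$-module.

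The second block of work is to identify $\Prim(J)$ with $\g_0$ and to check the three compatibility axioms of a dual Harish-Chandra pair. Since $J \subseteq C$ and $J$ lies in $\Vec$, any primitive of $J$ is a primitive of $C$ lying in the $\mathbf{1}$-isotypic component, so $\Prim(J) \subseteq \g_0$; conversely a primitive of $C$ lying in $C_0$ has $\Delta$-image in $C_0 \otimes C_0$, hence lies in $J$, so $\Prim(J) = \g_0$, and this identification is one of ordinary Lie algebras because the bracket on both sides is the commutator inherited from $C$. This supplies the isomorphism $i$. Axiom 1 (the bracket $\wedge^2 \g \to \g$ is a $J$-module map): the bracket is the commutator $m - m \circ c$, and $\ad$ is compatible with $m$ in the usual Hopf-algebraic way — $\ad_g(xy) = \sum \ad_{g_{(1)}}(x)\,\ad_{g_{(2)}}(y)$ — so the bracket, built from $m$, is $J$-equivariant; I would write this as a diagram chase using $\Delta_J$ being an algebra map. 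Axiom 2 ($i$ is a map of $J$-modules for the adjoint actions): immediate, since $i$ is literally the inclusion $\g_0 = \Prim(J) \hookrightarrow \g$ and the $J$-action on $\g$ restricts on $\g_0$ to the adjoint action of $J$ on $\Prim(J)$ by construction. Axiom 3 (the two actions of $\Prim(J) = \g_0$ on $\g$ — via the $J$-module structure and via $\ad$ of $\g_0 \subseteq \g$ — coincide): both are the restriction of the single adjoint action of $C$ on $C$ to $\g_0 \otimes \g \to \g$, once one notes that on primitive elements the formula for $\ad$ collapses (the antipode acts by $-\id$) to the commutator; so this is a tautology after unwinding definitions.

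Finally, functoriality: given a homomorphism $f : C \to C'$ of cocommutative ind-Hopf algebras, Proposition \ref{coradical-filtration}(4) (or directly the fact that $f$ respects $\Delta, \epsilon$) gives $f(C_0) \subseteq C'_0$, hence $f(J) \subseteq J'$ and $f(\Prim(C)) \subseteq \Prim(C')$; the restrictions $f|_J$ and $f|_{\g}$ form a morphism of dual Harish-Chandra pairs because $f$ intertwines $\ad$ and brackets on the nose (these are built from $\Delta, m, S$, all preserved by $f$), and the compatibility $\rho|_{\g_0}\circ i = f|_{\Prim(J)}$ holds since both sides are just $f$ restricted to $\Prim(C)$. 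Checking that this assignment respects composition and identities is then formal. I expect the only place requiring genuine care — rather than pure diagram-chasing — is Axiom 1, verifying $J$-equivariance of the bracket; the subtlety is purely in organizing the braidings $c$ correctly so that the Hopf-pairing-style identity $\ad_g \circ m = m \circ (\ad \otimes \ad)\circ(\id \otimes c \otimes \id)\circ(\Delta_J \otimes \id \otimes \id)$ is applied consistently, but it involves no new idea beyond the cocommutative-Hopf-algebra identities already in the literature (and transcribed from \cite{M1} in the $\sVec$ case).
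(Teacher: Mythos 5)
Your proposal is correct and follows essentially the same route as the paper, which simply defers this statement to (the proof of) Theorem \ref{forwardfunctor}: the key ingredients there are exactly yours — primitives form a Lie algebra via Proposition \ref{Lie-assoc}, stability under the adjoint action via Proposition \ref{adjoint}, the identification $\Prim(J)=\g_{0}$, and the observation that the antipode is $-\id$ on primitives so the $J$-action restricts to the bracket action of $\g_{0}$. You merely spell out the diagram checks (Hopf subalgebra property of $J$, $J$-equivariance of the bracket, functoriality) that the paper leaves implicit.
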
 

\begin{proof} 

This is part of Theorem \ref{forwardfunctor}.
\end{proof}

We have the compatibility property that is immediate from the definitions and Proposition \ref{dualident}. 

\begin{proposition} \label{HC-DHC} Let $G$ be an affine group scheme of finite type in $\Ver_{p}$. Then $\mathbf{D} \circ \mathbf{HC}(G) = \mathbf{DHC}(\O(G)^{\circ}).$

\end{proposition}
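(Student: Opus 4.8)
The plan is to unwind the definitions on both sides and match them termwise; once the objects are identified, the agreement of structures is forced by construction. Write $H = \O(G)$, let $I$ be its augmentation ideal, set $\g = \Lie(G) = (I/I^{2})^{*} \subseteq H^{\circ}$, and let $\overline{H} = H/J = \O(G_{0})$, where $J$ is the ideal generated by $H_{\not=0}$. By the definition of $\mathbf{HC}$ in Theorem \ref{forwardfunctor} and of the functor $\mathbf{D}$, the left-hand side is $\mathbf{D}(\mathbf{HC}(G)) = \mathbf{D}(\overline{H},\g^{*}) = (\overline{H}^{\circ},(\g^{*})^{*})$, whereas by Theorem \ref{coforwardfunctor} the right-hand side is $\mathbf{DHC}(H^{\circ}) = \left(\Delta^{-1}\bigl((H^{\circ})_{0}\otimes(H^{\circ})_{0}\bigr),\ \Prim(H^{\circ})\right)$. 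So I must produce canonical identifications of the two first components and of the two second components, and then verify that they intertwine the dual-Harish-Chandra-pair structures.

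The second components are the easy part: since $G$ is of finite type, $\g$ has finite length, so the evaluation morphism is a canonical isomorphism $(\g^{*})^{*} \cong \g$, and Proposition \ref{prim} already identifies $\g$ with $\Prim(H^{\circ})$ inside $H^{\circ}$; the comodule structure on $(\g^{*})^{*}$ transports back to the original one on $\g$.

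The first components require the identification $\overline{H}^{\circ} = \Delta^{-1}\bigl((H^{\circ})_{0}\otimes(H^{\circ})_{0}\bigr)$ as subobjects of $H^{\circ}$. For the inclusion $\subseteq$: the quotient $H \twoheadrightarrow \overline{H}$ induces an inclusion $\overline{H}^{\circ}\hookrightarrow H^{\circ}$ by Lemma \ref{6.0.1}(1); since $\overline{H}$ is an ordinary Hopf algebra over $\k$, $\overline{H}^{\circ}$ is an ordinary coalgebra over $\k$, hence a trivial object, hence contained in $(H^{\circ})_{0}$, and being a subcoalgebra it satisfies $\Delta(\overline{H}^{\circ})\subseteq\overline{H}^{\circ}\otimes\overline{H}^{\circ}\subseteq(H^{\circ})_{0}\otimes(H^{\circ})_{0}$. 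For $\supseteq$: set $D := \Delta^{-1}\bigl((H^{\circ})_{0}\otimes(H^{\circ})_{0}\bigr)$; applying $\epsilon\otimes\id$ and the counit axiom gives $D\subseteq(H^{\circ})_{0}$, so $D$ is a trivial object and an ordinary subcoalgebra of $H^{\circ}$. Write $D$ as the union of its finite-length subcoalgebras $D'$. For each such $D'$, restricting the non-degenerate Hopf pairing $H^{\circ}\otimes H\to\mathbf{1}$ (Proposition \ref{non-degencirc}) to $D'\otimes H$ and dualizing, via Proposition \ref{dualident}, exhibits $(D')^{*}$ as a finite-length quotient algebra of $H$, and it is an ordinary $\k$-algebra since $D'$ is trivial. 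But $\overline{H}$ is the \emph{largest} trivial quotient algebra of $H$: any algebra homomorphism from $H$ to a trivial algebra kills $H_{\not=0}$ (there are no nonzero morphisms from $H_{\not=0}$ to a trivial object in $\Ver_{p}$) and hence kills the ideal $J$ it generates. So $H\twoheadrightarrow(D')^{*}$ factors through $\overline{H}$, i.e. $D'\subseteq\overline{H}^{\circ}$; taking the union, $D\subseteq\overline{H}^{\circ}$. Since $\overline{H} = \O(G_{0})$, this identifies the first components.

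Finally, the two dual-Harish-Chandra-pair structures agree under these identifications because they are built the same way. In $\mathbf{D}(\mathbf{HC}(G))$ the structure on $(\overline{H}^{\circ},\g)$ is, by the proof of Theorem \ref{forwardfunctor}, the one in which $\overline{H}^{\circ}\subseteq H^{\circ}$ acts on $\g=\Prim(H^{\circ})$ through the restriction of the left adjoint action of $H^{\circ}$ on itself (Proposition \ref{adjoint}), with the operadic Lie bracket $\g$ inherits as a subobject of $H^{\circ}$ closed under commutator and the identification $\Prim(\overline{H}^{\circ})=\g_{0}$. In $\mathbf{DHC}(H^{\circ})$ the structure on $\bigl(\Delta^{-1}((H^{\circ})_{0}\otimes(H^{\circ})_{0}),\Prim(H^{\circ})\bigr)$, supplied by Theorem \ref{coforwardfunctor} (whose proof is ``part of Theorem \ref{forwardfunctor}''), is literally the same data once we use the identification $\Delta^{-1}((H^{\circ})_{0}\otimes(H^{\circ})_{0})=\overline{H}^{\circ}$ just established. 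Hence the two dual Harish-Chandra pairs coincide. I expect the only real content to be the identification of first components in the third paragraph — concretely, that $\overline{H}$ is the largest trivial quotient algebra of $H$ and that this dualizes to a description of the largest trivial subcoalgebra of $H^{\circ}$; everything else is unwinding definitions and citing Propositions \ref{prim}, \ref{adjoint}, \ref{dualident}, \ref{non-degencirc} and Lemma \ref{6.0.1}.
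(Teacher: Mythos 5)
Your proposal is correct and follows essentially the route the paper intends: the paper dismisses this as immediate from the definitions together with Proposition \ref{dualident}, and your argument is exactly that unwinding, with the only substantive step being the identification $\overline{H}^{\circ}=\Delta^{-1}\bigl((H^{\circ})_{0}\otimes(H^{\circ})_{0}\bigr)$ inside $H^{\circ}$, which you establish correctly via the orthogonal-complement/duality argument (largest trivial quotient algebra of $H$ dualizing to the largest trivial subcoalgebra of $H^{\circ}$, using Propositions \ref{dualident} and \ref{non-degencirc} and Lemma \ref{6.0.1}). The remaining checks (Proposition \ref{prim} for the second components, and the coincidence of the adjoint actions) are definition-chasing, just as in the paper.
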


\subsection{Tensor algebras and coalgebras}

\begin{definition} Let $X$ be an object in $\Ver_{p}$. The tensor algebra $T(X)$ is the Hopf algebra which has the same algebra structure as the ordinary tensor algebra of $X$ and the comultiplication is the unique one in which $X$ is primitive. Explicitly, if $\iota$ is the unit map $\mathbf{1} \rightarrow T(X)$ and $\Delta$ the comultiplication map on $T(X)$, then $\Delta : X \rightarrow T(X) \otimes T(X)$
is the map $\id \otimes \iota + \iota \otimes \id$ (identifying $X$ with $X \otimes \mathbf{1}$ and $\mathbf{1} \otimes X$).

\end{definition}

\vspace{0.2cm}

\begin{definition} Let $X$ be an object in $\Ver_{p}$. The tensor coalgebra $T_{c}(X)$ is the Hopf algebra that is the graded dual to $T(X^{*}).$ Explicitly, if $\Delta$ is the comultiplication and $m$ the multiplication,

$\Delta: X^{\otimes n} \rightarrow \bigoplus_{i+j = n} X^{\otimes i} \otimes X^{\otimes j}$
is the sum of all the natural identifications $X^{\otimes n} \cong X^{\otimes i} \otimes X^{\otimes j}$ and $m : X^{\otimes i} \otimes X^{\otimes n-i} \rightarrow X^{n}$
is the shuffle product $\displaystyle{\sum_{\tau^{-1} \in S_{n, i}}} \tau,$ where $S_{n, i} = \{\sigma \in S_{n} : \sigma(1) < \cdots < \sigma(i), \sigma(i+1) < \cdots < \sigma(n)\}$
is the set of $i$-shuffles in $S_{n}$. Here the action of a permutation comes from the braiding $c$, since a symmetric braiding induces an action of $S_{n}$ on $X^{\otimes n}$.

\end{definition}

\begin{remark}

Note that $T(X)$ is a cocommutative Hopf algebra in $\Ver_{p}^{\ind}$ and $T_{c}(X^{*})$ is a commutative Hopf algebra in $\Ver_{p}^{\ind}$ and there is a nondegenerate $\mathbb{N}$-graded Hopf pairing between the two.

\end{remark}

\begin{definition} If $C$ is a cocommutative Hopf algebra in $\Ver_{p}^{\ind}$, and $X$ is a left $C$-module in $\Ver_{p}$, then we turn $T(X)$ into a left $C$-module Hopf algebra via the diagonal action

$$C \otimes X^{\otimes n} \rightarrow (C\otimes \cdots \otimes \mathbf{1} \oplus \cdots \oplus \mathbf{1} \otimes \cdots \otimes C) \otimes (X^{\otimes n}) \rightarrow X^{\otimes n}$$
where the first map is just $n$-fold comultiplication and the second map is the action in each tensor component, using the braiding to move tensor factors around.

Similarly, if $A$ is a commutative Hopf algebra and $X$ is a right $A$-comodule, we turn $T_{c}(X)$ into a right $A$-comodule Hopf coalgebra via the map $\rho : X^{\otimes n} \rightarrow X^{\otimes n} \otimes A$
by coacting in each component, moving all the components of $A$ next to each other using the braiding and then multiplying in $A$.

\end{definition}

\begin{remark} This construction turns $T(X)$ into a Hopf algebra object in the category of left $C$-modules and $T_{c}(X)$ into a Hopf algebra object in the category of right $A$-comodules.

\end{remark}

We end this section with a construction of smash products in this specialized setting of tensor algebras and coalgebras.

\begin{definition} Let $C$ be a cocommutative Hopf algebra in $\Ver_{p}^{\ind}$ and $X$ a left $C$-module. There is a Hopf algebra structure on $T(X) \otimes C$ as follows.

\begin{enumerate}

\item[1.] The unit map is just $\iota_{C} \otimes \iota_{T(X)}.$ 

\item[2.] The counit map is also just $\epsilon_{C} \otimes \epsilon_{T(X)}.$

\item[3.] The antipode is also simply $S_{C} \otimes S_{T(X)}$. 

\item[4.] Comultiplication is $\Delta_{C} \otimes \Delta_{T(X)}$ followed by $c$ in the middle. 

\item[5.] Multiplication $T(X) \otimes C \otimes T(X) \otimes C \rightarrow T(X) \otimes C$
is obtained by first comultiplying in the left $C$ tensor factor to obtain $C \otimes C$ in the middle, then using the braiding to permute the second of these $C$ factors past the $T(X)$ and acting on $T(X)$ by the leftmost $C$ factor to land in $T(X) \otimes T(X) \otimes C \otimes C$, and then multiplying in each factor. In Sweedler notation using elements and suppressing the braiding, this can be written as $(x, c)(x',c') = (xc_{(1)}(x'), c_{(2)}c').$

\end{enumerate}

Similarly, if $A$ is a commutative ind-Hopf algebra in $\Ver_{p}$ and $X$ is a right comodule for $A$ in $\Ver_{p}$, then we can put a Hopf algebra structure on $A \otimes T_{c}(X)$. As in the above situation, the unit, counit, antipode and now multiplication are just the tensor products (using the braiding as necessary to move factors around). Comultiplication is twisted in a dual manner to the way multiplication is twisted is above:

$$\Delta: A \otimes T_{c}(X) \rightarrow (A \otimes T_{c}(X)) \otimes (A \otimes T_{c}(X))$$
is obtained by first comultiplying in $A$ and $T_{c}(X)$ to land in $A \otimes A \otimes T_{c}(X) \otimes T_{c}(X)$, then permuting the right $A$ past the $T_{c}(X)$ and coacting in the left $T_{c}(X)$ to get $A \otimes T_{c}(X) \otimes A \otimes A \otimes T_{c}(X)$ and finally multiplying in $A\otimes A.$ 

We denote these algebras as $T(X) \rtimes C$ and $A \ltimes T_{c}(X)$ respectively and call them the \emph{smash product}.

\end{definition}

An important property of the construction is the following, readily checked from the definition.

\begin{proposition} \label{smash} If $C$ is a cocommutative ind-Hopf algebra in $\Ver_{p}$ and $X$ is a left $C$-module, then 

\begin{enumerate}

\item[1.] $C \cong \mathbf{1} \otimes C$ and $T(X) \cong T(X) \otimes \mathbf{1}$ are cocommutative Hopf subalgebras of $T(X) \rtimes C.$

\item[2.] In $T(X) \rtimes C$, the left adjoint action of $C$ preserves $T(X)$ and coincides with the original action of $C$ on $T(X).$ 

\item[3.] $T(X)_{\ge 1}$, the positive degree tensors, form an ideal in $T(X) \rtimes C$ and $C$ is the quotient by this ideal.

\item[4.] Powers of $X$ generate $T(X) \rtimes C$ as a right $C$-module via right multiplication. In particular, the smash product is generated as an algebra in $\Ver_{p}^{\ind}$ by $X$ and $C$, subject to the relation that equates the left adjoint action of $C$ on $X$ with the original one. 

\item[5.] Let $I$ be the ideal in $T(X)$ generated by the image of $c_{X, X} - \id_{X \otimes X} \subseteq X^{\otimes 2}$. Then, $I \rtimes J$ is a Hopf ideal in $T(X) \rtimes C$ and hence we get a smash product Hopf algebra structure on $S(X) \rtimes C$ as well.

\end{enumerate}

Dual statements hold for commutative ind-Hopf algebras $A$ and right comodules $X$.

\end{proposition}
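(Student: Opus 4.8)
The plan is to check the five claims in turn, each being a diagram chase with the explicit structure maps of $T(X)\rtimes C$ recorded in the definition; the only feature distinguishing this from the classical smash-product computation is that the braiding $c$ must be inserted wherever tensor factors are transposed, and since $\Ver_p$ is symmetric this causes no trouble, except in (5), which carries the only real content and which I treat last.

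First, for (1) I would restrict the twisted multiplication $(x,c)(x',c')=(xc_{(1)}(x'),c_{(2)}c')$ and the comultiplication $\Delta_C\otimes\Delta_{T(X)}$ (twisted by a braiding in the middle) to the subobjects $\mathbf{1}\otimes C$ and $T(X)\otimes\mathbf{1}$: using that $1_{T(X)}$ is $C$-invariant and $\Delta_C(1_C)=1_C\otimes1_C$, both maps collapse to the original ones on each subobject, so these are Hopf subalgebras, and cocommutativity passes to them. For (2) I would expand the adjoint action $\ad(c\otimes y)=c_{(1)}\,y\,S(c_{(2)})$ for $y\in T(X)\otimes\mathbf{1}$ inside $T(X)\rtimes C$; $C$-invariance of $1_{T(X)}$ together with the antipode axiom $\sum c_{(1)}S(c_{(2)})=\epsilon(c)1_C$ collapses it to $(c\cdot y)\otimes1_C$, the original $C$-action. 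For (3), $T(X)_{\ge1}\otimes C$ is a two-sided ideal since the diagonal $C$-action is grading-preserving and the multiplication adds degrees in the $T(X)$-slot; it equals the augmentation ideal $\ker(\epsilon_{T(X)\rtimes C})$, hence is automatically a coideal and antipode-stable, so it is a Hopf ideal with quotient $\mathbf{1}\otimes C=C$. For (4), the identity $(x,1_C)(1_{T(X)},c)=(x,c)$, immediate from the multiplication formula, shows the image of $X^{\otimes n}$ generates $X^{\otimes n}\otimes C$ over $C$ on the right, so powers of $X$ generate $T(X)\rtimes C$ as a right $C$-module; combined with the fact that $X$ generates the subalgebra $T(X)\otimes\mathbf{1}$ and $C=\mathbf{1}\otimes C$ is a subalgebra, $X$ and $C$ generate the smash product as an algebra. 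The presentation statement then follows from the object-level decomposition $T(X)\rtimes C\cong T(X)\otimes C$: every element is uniquely a word in $X$ times an element of $C$, the multiplication law is forced by the cross relation equating $cx$ with $c_{(1)}(x)c_{(2)}$ (equivalently, equating the adjoint action of $C$ on $X$ inside the smash product with the given module action), and conversely any algebra homomorphism out of $T(X)\rtimes C$ is pinned down by its restrictions to $X$ and $C$, which may be prescribed freely subject to this relation.

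The one step with genuine content is (5). Write $I\subseteq T(X)$ for the ideal generated by $\Im(c_{X,X}-\id_{X\otimes X})\subseteq X^{\otimes2}$, so that $T(X)/I=S(X)$; as $S(X)$ is a quotient Hopf algebra of $T(X)$, $I$ is already a Hopf ideal of $T(X)$. By the multiplication formula, showing that $I\rtimes C$ (that is, $I\otimes C$ as a subobject) is a two-sided ideal of $T(X)\rtimes C$ reduces to showing that $I$ is stable under the diagonal $C$-action, and this is where I expect the only real work: I would argue that, because $C$ is cocommutative and $\Ver_p$ is symmetric, both $c_{X,X}$ and $\id_{X\otimes X}$ are morphisms of $C$-modules for the diagonal action on $X^{\otimes2}$, so $\Im(c_{X,X}-\id)$ is a $C$-submodule, and hence the ideal it generates in the $C$-module algebra $T(X)$ is again a $C$-submodule. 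Granting this, $\Delta_C(C)\subseteq C\otimes C$ together with $I$ being a Hopf ideal of $T(X)$ makes the twisted comultiplication send $I\otimes C$ into $(I\otimes C)\otimes(T(X)\rtimes C)+(T(X)\rtimes C)\otimes(I\otimes C)$, and $S_{T(X)}\otimes S_C$ preserves it, so $I\rtimes C$ is a Hopf ideal and the quotient is the smash product $S(X)\rtimes C$ of the induced $C$-module algebra $S(X)$, a Hopf algebra by (1)--(4). Finally, the dual statements for $A\ltimes T_{c}(X)$ are obtained by reading every diagram upside down: the anti-equivalence between $\Ver_p^{\ind}$ and $\Ver_p^{\pro}$ sends $T(X)\rtimes C$ to $A\ltimes T_{c}(X)$, a left $C$-module algebra to a right $A$-comodule coalgebra, the adjoint action to the adjoint coaction, and $T(X)/I=S(X)$ to the dual relation realizing the symmetric coalgebra as a subcoalgebra of $T_{c}(X)$, so each dual assertion is the mirror image of its primal counterpart and needs no separate argument.
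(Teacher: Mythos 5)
Your verification is correct and is exactly what the paper intends: the paper gives no written proof, asserting the proposition is ``readily checked from the definition,'' and your diagram chases with the explicit structure maps (including the observation that cocommutativity of $C$ is what makes $c_{X,X}-\id_{X\otimes X}$ a map of $C$-modules, so that $I$ is $C$-stable in part (5)) are that check. One small slip: in (3), $T(X)_{\ge 1}\otimes C$ is \emph{not} the augmentation ideal of $T(X)\rtimes C$ (that is $T(X)_{\ge 1}\otimes C+\mathbf{1}\otimes\ker\epsilon_{C}$), so your justification of the coideal property via that identification is off; but the conclusion is unaffected, since $\Delta$ preserves the tensor degree, hence $\Delta\bigl(T(X)_{\ge 1}\otimes C\bigr)$ lands in $(T(X)_{\ge 1}\otimes C)\otimes(T(X)\rtimes C)+(T(X)\rtimes C)\otimes(T(X)_{\ge 1}\otimes C)$ directly, and the quotient is $C$ as claimed.
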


We end this section with another important example of a smash product. The proof of the following theorem follows from Corollary \ref{coalgebra-semidirect}.

\begin{theorem} \label{Kostant} If $G$ is an affine group scheme of finite type in $\Ver_{p}$ with the ind-Hopf algebra of functions $H$, then, as a Hopf algebra in $\Ver_{p}^{\ind}$, $H^{\circ} \cong (H^{\circ})^{1} \rtimes \k G(\k),$
where $G(\k) = \mathrm{Hom}_{alg}(H , \k)$ acts on $(H^{\circ})^{1}$ via the dual of the conjugation action on $H$ (which preserves the augmentation ideal). Here, $\k G(\k) $ is the group algebra on $G(\k)$ (with its standard Hopf algebra structure) and the conjugation action of $g: H \rightarrow \mathbf{1} \in G(\k)$ on $H$ is described as $H \rightarrow H \otimes H \otimes H \rightarrow H,$
where the first map is $\Delta^{2}$ and the second is $g$ in the first component and $g^{-1}$ in the last.

\end{theorem}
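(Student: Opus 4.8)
The plan is to deduce this structural decomposition of $H^\circ$ directly from the coalgebra decomposition established in Corollary \ref{coalgebra-semidirect} together with the functorial properties of the dual coalgebra construction. First I would recall that Corollary \ref{coalgebra-semidirect} already gives the isomorphism $H^\circ \cong \k\,\Spec(H)(\k) \otimes (H^\circ)^1$ as coalgebras, where $\k\,\Spec(H)(\k) = \k G(\k)$ is the grouplike part and $(H^\circ)^1$ is the irreducible component of the unit. So the content of the theorem is really that this coalgebra splitting upgrades to a \emph{smash product} of Hopf algebras in the sense of the smash product construction $T(X)\rtimes C$, specialized to the case where the ``tensor algebra part'' is replaced by the irreducible Hopf subalgebra $(H^\circ)^1$; concretely, we must identify the multiplication on $H^\circ$ with the twisted one $(x,g)(x',g') = (x\cdot g(x'), gg')$ where $g$ acts on $(H^\circ)^1$ by the stated conjugation.

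The key steps, in order, are: (1) Verify that $\k G(\k)$ is a Hopf subalgebra of $H^\circ$ — grouplikes are always closed under multiplication and the antipode sends a grouplike to its inverse, which matches the group structure on $G(\k) = \Hom_{\alg}(\O(G),\k)$ under convolution. (2) Verify that $(H^\circ)^1$ is a Hopf subalgebra; this is essentially already recorded in the excerpt (it is the restricted dual of the local algebra $\widehat{H_I}$, closed under comultiplication, and closed under the Hopf operations because it is the irreducible component of the identity, using Proposition \ref{coradical-filtration}(5) for multiplicativity). (3) Show that the conjugation action of $g \in G(\k)$ on $H$, given by $(g \otimes \id \otimes g^{-1})\circ \Delta^2$, is a Hopf algebra automorphism of $H$ preserving the augmentation ideal $I$, hence dualizes to an action of $G(\k)$ on $H^\circ$ preserving $(H^\circ)^1$ — this is where one checks that conjugation by a grouplike fixes the counit and so preserves $I$ and all its powers $I^n$, so that the dual action restricts to the directed union $\bigcup_n (H/I^n)^*$. (4) Compute the multiplication in $H^\circ$ of a grouplike $g$ against an element of $(H^\circ)^1$ and observe, via the formula for the coproduct on $H$ and the fact that $g$ is grouplike, that $g \cdot x = g(x) \cdot g$ in the ``twisted'' sense; equivalently, that the left adjoint action of $\k G(\k)$ on $H^\circ$ restricts on $(H^\circ)^1$ to the dual of conjugation. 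Combining (1)–(4) with the coalgebra isomorphism of Corollary \ref{coalgebra-semidirect} and the universal description of smash products in Proposition \ref{smash} (dual statements) identifies $H^\circ$ with $(H^\circ)^1 \rtimes \k G(\k)$ as a Hopf algebra.

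The main obstacle I anticipate is step (4): carefully matching the \emph{intrinsic} multiplication on $H^\circ$ (which is $\Delta_H^*$, the transpose of comultiplication on $H$) against the \emph{twisted} multiplication of the abstract smash product, i.e. showing that for $\phi \in (H^\circ)^1$ and $g \in G(\k)$ one has $\phi \ast g = g \ast (g^{-1}\cdot \phi)$ where $g^{-1}\cdot\phi$ is the conjugated functional. This is a bookkeeping computation with the coproduct, the braiding, and Sweedler-type notation in $\Ver_p^{\ind}$; the subtlety is purely that one must keep the braiding isomorphisms straight, since $H^\circ$ is not an honest vector space. Everything else reduces to citing Corollary \ref{coalgebra-semidirect}, Proposition \ref{smash}, and the already-established structure of $(H^\circ)^1$ and of grouplikes, so the proof should be short modulo that one verification.
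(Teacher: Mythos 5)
Your proposal follows essentially the same route as the paper, whose entire proof is the one-line observation that the theorem follows from Corollary \ref{coalgebra-semidirect}; your steps (1)--(4) simply make explicit the verification (Hopf subalgebra structure of $\k G(\k)$ and $(H^{\circ})^{1}$, dualized conjugation action, and matching of the intrinsic multiplication with the twisted smash-product multiplication) that the paper leaves implicit. The plan is correct, with the only unstated ingredient being that right multiplication by a grouplike $g$ carries $(H^{\circ})^{1}$ isomorphically onto the irreducible component $H^{\circ}_{g}$, which is what turns the direct-sum decomposition of Corollary \ref{coalgebra-semidirect} into the multiplication map $(H^{\circ})^{1}\otimes \k G(\k)\rightarrow H^{\circ}$; this is subsumed in your step (4).
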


\section{\Large{\textbf{Construction of an inverse to the functor $\mathbf{DHC}$ via PBW theorems}}}

In this section, we construct a functor from the category of dual Harish-Chandra pairs in $\Ver_{p}$ to the category of cocommutative ind-Hopf algebras in $\Ver_{p}$ that is inverse to $\mathbf{DHC}$. We use the notation from \cite{M1}. Throughout this section, we will use $(J, \g)$ to denote a dual Harish-Chandra pair in $\Ver_{p}$ and $C$ to denote a cocommutative Hopf algebra in $\Ver_{p}^{\ind}$. 

\begin{definition}  Define the Hopf algebra $\H(J, \g)$ as the Hopf smash product $T(\g_{\not=0}) \rtimes J.$

\end{definition} 

Note that $\g_{0} = \Prim(J)$ and that the action of $\g_{0}$ on $\g_{\not=0} \subseteq T(\g)$ is the adjoint action,  by definition of a dual Harish-Chandra pair and the Hopf smashed product. Hence, we can identify $\g$ as a subobject of $\H(J, \g)$. 

\begin{definition} Define $I(J, \g)$ as the ideal in $\H(J, \g)$ generated by the image of the difference between the commutator map $\g_{\not=0} \otimes \g_{\not=0} \rightarrow T(\g_{\not=0}) \subseteq \H(J, \g)$
and the Lie bracket map  $\g_{\not=0} \otimes \g_{\not=0} \rightarrow \g \subseteq \H(J, \g).$
Define $H(J, \g)$ as the quotient of $\H(J, \g)$ by $I(J, \g)$.

\end{definition}

\subsection{PBW filtrations for dual Harish-Chandra pairs}

The functor that sends $(J, \g)$ to $H(J, \g)$ will be the inverse to $\mathbf{DHC}$ that we desire. To show that this is the case, we need some additional constructions. We begin by defining another cocommutative Hopf algebra associated to $(J, \g)$. 

\begin{definition} Define $U(J, \g) := U(\g) \otimes_{U(\g_{0})} J$
as an object in $\Ver_{p}^{\ind}$ that is the quotient of $U(\g) \otimes J$ by the image of  $R_{0} - L_{0}: U(\g) \otimes U(\g_{0}) \otimes J \rightarrow U(\g) \otimes J$
where $R_{0}$ is right multiplication by $U(\g_{0})$ in $U(\g)$ and $L_{0}$ is left multiplication by $U(\g_{0})$ in $J$.

\end{definition} 

We can view $U(\g) \otimes J$ as a quotient of the Hopf smash product $T(\g) \rtimes J$ and denote this Hopf algebra as $U(\g) \rtimes J$. It is clear that the image of $R_{0} - L_{0}$ is a coideal of $U(\g) \otimes J$ with this Hopf algebra structure, as both coalgebras are cocommutative. This image is also preserved by the antipode. 

\begin{proposition} \label{semidirect} The image of $R_{0} - L_{0}$ is an ideal in $U(\g) \rtimes J$ and hence we get a Hopf algebra structure on $U(\g) \otimes_{U(\g_{0})} J$, which we denote by $U(J, \g)$ as well.

\end{proposition}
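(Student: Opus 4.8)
We are already told that $K:=\mathrm{Im}(R_{0}-L_{0})$ is a coideal of $U(\g)\rtimes J$ and is preserved by the antipode, so the whole content of the proposition is that $K$ is a two-sided ideal; once this is established, $U(J,\g)=(U(\g)\rtimes J)/K$ inherits its Hopf algebra structure automatically. To organize the verification I will write $\alpha\colon U(\g_{0})\hookrightarrow U(\g)$ for the inclusion induced by the Lie subalgebra $\g_{0}\subseteq\g$ (an embedding, by PBW for $\g$ and $\g_{0}$) and $\beta\colon U(\g_{0})\to J$ for the algebra map induced by the isomorphism $\g_{0}\cong\Prim(J)\subseteq J$, so that $R_{0}(y\otimes u\otimes c)=y\alpha(u)\otimes c$ and $L_{0}(y\otimes u\otimes c)=y\otimes\beta(u)c$. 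I also write $c\rightharpoonup z$ for the $J$-action on $U(\g)$ extending the $J$-module structure on $\g$. A first observation, used throughout, is that this action preserves the subalgebra $\alpha(U(\g_{0}))$: since $J$ is a trivial object of $\Ver_{p}$, on isotypic-component grounds the $J$-action on $\g$ carries $\g_{0}$ into $\g_{0}$, and a module-algebra action preserves the subalgebra generated by an invariant subobject; I denote the restricted action on $U(\g_{0})$ by $\rightharpoonup_{0}$.

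Since $U(\g)\rtimes J$ is generated as an algebra by $\g\otimes\mathbf{1}$ and $\mathbf{1}\otimes J$, it suffices to check that $K$ absorbs left and right multiplication by each of these two subobjects. Two of the four cases are immediate from the smash-product formula: $(x\otimes 1)\cdot(R_{0}-L_{0})(y\otimes u\otimes c)=(R_{0}-L_{0})(xy\otimes u\otimes c)$ and $(R_{0}-L_{0})(y\otimes u\otimes c)\cdot(1\otimes d)=(R_{0}-L_{0})(y\otimes u\otimes cd)$, giving $(U(\g)\otimes 1)\cdot K\subseteq K$ and $K\cdot(1\otimes J)\subseteq K$. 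For $(1\otimes J)\cdot K\subseteq K$ one expands $(1\otimes d)\cdot(R_{0}-L_{0})(y\otimes u\otimes c)$ and finds that it equals $(R_{0}-L_{0})\big((d_{(1)}\rightharpoonup y)\otimes(d_{(2)}\rightharpoonup_{0} u)\otimes d_{(3)}c\big)$, \emph{provided} the identity $\beta(d_{(1)}\rightharpoonup_{0} u)\,d_{(2)}=d\,\beta(u)$ holds in $J$; equivalently $\beta(d\rightharpoonup_{0} u)=d_{(1)}\beta(u)S(d_{(2)})$, i.e. $\beta$ intertwines $\rightharpoonup_{0}$ with the adjoint action of $J$. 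This last identity holds on the generating subobject $\g_{0}$ by the compatibility conditions in the definition of a dual Harish--Chandra pair (recall $\beta$ restricts to the identity on $\Prim(J)$ and $c\rightharpoonup_{0} i(v)=i(c_{(1)}vS(c_{(2)}))$), and both sides of it, as morphisms $J\otimes U(\g_{0})\to J$, satisfy the same rule $\Phi(c\otimes uu')=\Phi(c_{(1)}\otimes u)\,\Phi(c_{(2)}\otimes u')$ — using that $\rightharpoonup_{0}$ is a module-algebra action, $\beta$ an algebra map, and the antipode axioms — so agreement on a generating subobject of the algebra $U(\g_{0})$ forces agreement on all of $U(\g_{0})$.

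The remaining and decisive case is $K\cdot(\g\otimes 1)\subseteq K$, and I expect the heart of the argument to be the \emph{straightening identity}
$$\alpha(u)\,z=(\beta(u_{(1)})\rightharpoonup z)\,\alpha(u_{(2)})\qquad\text{in }U(\g),\quad u\in U(\g_{0}),\ z\in U(\g).$$
Granting it, one expands $(R_{0}-L_{0})(y\otimes u\otimes c)\cdot(x\otimes 1)$ for $x\in\g$, uses that $c_{(1)}\rightharpoonup x$ again lies in $\g$, and substitutes the straightening identity with $z=c_{(1)}\rightharpoonup x$ to rewrite the product as $\sum(R_{0}-L_{0})\big(y\,(\beta(u_{(1)})\rightharpoonup(c_{(1)}\rightharpoonup x))\otimes u_{(2)}\otimes c_{(2)}\big)\in K$. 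The straightening identity itself I will prove in two steps. For $u\in\g_{0}$ it reduces to the axiom that the $J$-action of $\Prim(J)$ on $\g$ coincides with the adjoint action of $\g_{0}$, combined with the remark that $z\mapsto\beta(u)\rightharpoonup z$ and $z\mapsto\alpha(u)z-z\alpha(u)$ are both derivations of $U(\g)$ that agree on the generating subobject $\g$, hence everywhere. For general $u$ one bootstraps: both sides, viewed as morphisms $U(\g_{0})\otimes U(\g)\to U(\g)$, transform the same way under $u\rightsquigarrow uu'$ (via coassociativity, the module-algebra axioms for $\rightharpoonup$, and $\alpha,\beta$ being algebra maps), so agreement on $\mathbf{1}\oplus\g_{0}$ — which generates $U(\g_{0})$ as an algebra — forces agreement throughout. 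One point worth flagging is that in characteristic $p$ the map $\beta\colon U(\g_{0})\to J$ need not be injective, so all of these manipulations must be read as identities between morphisms in $\Ver_{p}^{\ind}$ rather than as computations with elements of $U(\g_{0})$ regarded as sitting inside $J$. Assembling the four cases shows $K$ is a two-sided ideal of $U(\g)\rtimes J$, and together with the coideal and antipode statements already recorded this makes $U(J,\g)=U(\g)\otimes_{U(\g_{0})}J$ a Hopf algebra in $\Ver_{p}^{\ind}$.
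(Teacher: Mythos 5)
Your proof is correct, and it is organized differently from the paper's. The paper exploits the $(U(\g),J)$-bimodule structure on $U(\g)\rtimes J$ and its tensor square: since the subobject to be killed is generated as a bimodule by relations involving only primitive $z\in\g_{0}$, the whole verification collapses to two short Sweedler-style congruences computed \emph{modulo} the defining relations (i.e.\ in the quotient $U(\g)\otimes_{U(\g_{0})}J$), carried out in the super case with the remark that the computation categorifies because only the $J$-action on $\g$ is used. You instead check absorption of $K=\mathrm{Im}(R_{0}-L_{0})$ under multiplication by the algebra generators $\g\otimes\mathbf{1}$ and $\mathbf{1}\otimes J$ directly, and the price of not reducing to $z\in\g_{0}$ is that you must establish two genuine identities in the smash product for arbitrary $u\in U(\g_{0})$: the intertwining identity $\beta(d_{(1)}\rightharpoonup_{0}u)d_{(2)}=d\beta(u)$ and the straightening identity $\alpha(u)z=(\beta(u_{(1)})\rightharpoonup z)\alpha(u_{(2)})$, each proved on the generating subobject ($\Prim(J)$, resp.\ $\g_{0}$ acting on $\g$, where they are exactly the two compatibility axioms of a dual Harish--Chandra pair) and then extended multiplicatively. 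Both arguments rest on the same two axioms plus the module-algebra, antipode and counit identities, so the content is the same; what your route buys is that all manipulations are honest morphism identities (legitimate element-style computations, since $J$ and $U(\g_{0})$ are trivial objects of $\Ver_{p}$), so the categorification the paper leaves implicit is addressed explicitly, while the paper's bimodule reduction buys brevity by confining the computation to primitives. One small point of care in your multiplicative extension of the straightening identity: the clean induction is ``if it holds for $u$ and for $u'$ then it holds for $uu'$,'' obtained by writing $\alpha(uu')z=\alpha(u)\bigl(\alpha(u')z\bigr)$, applying the identity for $u'$, reassociating so that the identity for $u$ is applied to the product $\alpha(u)\cdot\bigl(\beta(u'_{(1)})\rightharpoonup z\bigr)$ alone, and only then multiplying by $\alpha(u'_{(2)})$ on the right; stated as ``both sides transform the same way under $u\rightsquigarrow uu'$'' it is slightly loose (multiplicativity of the right-hand side in $u$ by itself needs an extra observation), but the inductive version you clearly intend goes through with exactly the ingredients you cite.
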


\begin{proof} It is instructive to first give the proof of the proposition if $\g$ is a Lie superalgebra rather than one in $\Ver_{p}$. In this proof we can use elements for clarity, so we use Sweedler notation for comultiplication here, namely $\Delta(x) = x_1 \otimes x_2$
with an implicit summation, and $\Delta^{2}(x) = x_1 \otimes x_2 \otimes x_3.$ We also use $x(y)$ to denote the action of $x$ on $y$ if $x \in J, y \in U(\g)$.

We can give both $U(\g) \rtimes J \otimes U(\g) \rtimes J$ and $U(\g) \rtimes J$ the structure of a $(U(\g), J)$-bimodule and multiplication in the smash product is compatible with this structure. Since $M:= \mathbf{1} \otimes J \otimes U(\g) \otimes \mathbf{1}$
generates $U(\g) \ltimes J \otimes U(\g) \ltimes J$ as a $(U(\g), J)$-bimodule, it suffices to check that for all $x \in J, y \in U(\g)$ and $z \in \g_{0}$,

$$(1 \otimes zx)(y \otimes 1) = (z \otimes x)(y \otimes 1)$$
and that

$$(1 \otimes x)(y \otimes z) = (1 \otimes x)(yz \otimes 1).$$
Now,

\begin{align*}
(1 \otimes zx)(y \otimes 1) &= z_1(x_1(y)) \otimes z_2 x_2\\
&= z_1(x_1(y))z_2 \otimes x_2\\
&= z(x_1(y)) \otimes x_2 + x_1(y)z \otimes x_2\\
&= [z, x_1(y)] \otimes x_2 + x_1(y)z \otimes x_2\\
&= z x_1(y) \otimes x_2\\
&= (z \otimes x)(y \otimes 1)
\end{align*}
since by the definition of a dual Harish-Chandra pair $\g_0$ acts on $U(\g)$ via the adjoint, i.e., commutator action. This gives the first equality. For the second, we have

\begin{align*}
(1 \otimes x)(yz \otimes 1)&= x_1(yz) \otimes x_2\\
&= x_1(y)x_2(z) \otimes x_3\\
&= x_1(y) \otimes x_2(z)x_3\\
&= x_1(y) \otimes x_2 z S(x_2) x_3\\
&= x_1(y) \otimes x_2 z\epsilon(x_2)\\
&= x_1(y) \otimes x_2 z.
\end{align*}
Here, for the second equality, we use the fact that for $U(\g)$ is a left $H$-module algebra, namely for $y, z\in U(\g)$ and $x \in J$, $x(yz) = x_1(y)x_2(z)$. For the fourth equality, we use the fact that the action of $J$ on $\g_0$ is the adjoint action by definition of a dual Harish-Chandra pair. For the fifth equality, we use the antipode axiom and for the last equality we use the counit axiom.

All of the properties we use to prove the result for $\g$ being a supervector space hold when $\g \in \Ver_{p}$ instead. Two of the facts come from the definition of a dual Harish-Chandra pair and the others follow from definitions of Hopf algebras. Additionally, we only use the action of $J$ on $\g$ and nothing special about $\g$ being a supervector space and hence having elements. Hence, this proof is easily categorified when $\g$ is a Lie algebra in $\Ver_{p}$ with the underlying ordinary Lie algebra being $\g_{0}$, i.e, when $(J, \g)$ is really a dual Harish-Chandra pair in $\Ver_{p}$. This categorical version of the proof is the same in spirit, and is less illuminating than the given proof.

\end{proof}

We view $U(J, \g)$ as a Hopf algebra in $\Ver_{p}^{\ind}$ with the above structure. Since $\g$ satisfies the PBW theorem, we have the following lemma. 

\begin{lemma} \label{PBWnormal} Filter $U(J, \g)$ by putting $\g_{\not=0}$ in degree $1$ and $J$ in degree $0$. Then, $\gr(U(J, \g)) \cong S(\g_{\not=0}) \otimes J.$
Hence, as right $J$-modules in $\Ver_{p}^{\ind}$, $U(J, \g) \cong S(\g_{\not=0}) \otimes J.$

\end{lemma}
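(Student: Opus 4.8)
The plan is to deduce the lemma from the PBW theorem for $\g$ (which holds because $\g$ is a Lie algebra in $\Ver_{p}$ by definition of a dual Harish-Chandra pair, hence satisfies PBW by \cite[Theorem 6.6]{Et1}, as already recorded for $\Lie(G)$ in Corollary \ref{PBWgroup}) together with semisimplicity of $\Ver_{p}^{\ind}$. The strategy is to first establish a \emph{relative} PBW theorem for the subalgebra inclusion $\g_{0}\subseteq\g$, and then base-change along $U(\g_{0})\to J$.

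First I would record the structural facts. Since $\g_{0}=\Prim(J)$ is a trivial object, $\g=\g_{0}\oplus\g_{\not=0}$ as objects, $\g_{0}$ is a Lie subalgebra, and $\g_{\not=0}$ is a $\g_{0}$-submodule: indeed $\g_{0}\otimes\g_{\not=0}$ has no $\mathbf{1}$-isotypic summand, so the $\g_{0}$-component of $[\g_{0},\g_{\not=0}]$ vanishes by semisimplicity; the same reasoning shows the $J$-action preserves $\g_{\not=0}$. Now put the filtration $F$ on $U(\g)$ in which $\g_{\not=0}$ has degree $1$ and $\g_{0}$ has degree $0$ (so $F_{n}$ is the span of products of generators using at most $n$ factors from $\g_{\not=0}$). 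Because $[\g_{\not=0},\g_{\not=0}]\subseteq\g_{0}\oplus\g_{\not=0}$ strictly lowers $F$-degree while $[\g_{0},\g_{\not=0}]\subseteq\g_{\not=0}$ preserves it, this is an algebra filtration; the generators $\g_{\not=0}$ pairwise commute in $\gr_{F}U(\g)$, so there is a graded surjection of right $U(\g_{0})$-modules $S(\g_{\not=0})\otimes U(\g_{0})\to\gr_{F}U(\g)$ (here $U(\g_{0})\hookrightarrow U(\g)$ is injective, as one checks on associated graded for the ordinary PBW filtration using $S(\g_{0})\hookrightarrow S(\g)$).

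The heart of the matter is that this surjection is an isomorphism; this is exactly the step where PBW for $\g$ enters. I would refine by the bifiltration of $U(\g)$ counting $\g_{\not=0}$-factors and $\g_{0}$-factors separately, whose associated bigraded algebra is $S(\g)=S(\g_{\not=0})\otimes S(\g_{0})$ by Corollary \ref{PBWgroup}. Passing from the surjection above to associated graded objects with respect to the $\g_{0}$-count then produces a graded surjective endomorphism $S(\g_{\not=0})\otimes S(\g_{0})\to S(\g_{\not=0})\otimes S(\g_{0})$; since each graded component has finite length, it is an isomorphism, and hence so was the original map. Consequently $U(\g)$ is free as a right $U(\g_{0})$-module: lifting a ``basis object'' $\bigoplus_{n}S^{n}(\g_{\not=0})$ from $\gr_{F}U(\g)$ to $U(\g)$ — possible since $\Ver_{p}^{\ind}$ is semisimple and each $S^{n}(\g_{\not=0})$ has finite length — yields a right $U(\g_{0})$-module map $S(\g_{\not=0})\otimes U(\g_{0})\to U(\g)$ that is a filtered isomorphism, equivalently the relative PBW statement $\gr_{F}U(\g)\cong S(\g_{\not=0})\rtimes U(\g_{0})$.

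Finally I would base-change. Freeness of $\gr_{F}U(\g)$, hence of each $F_{n}/F_{n-1}$, over $U(\g_{0})$ makes the extensions $0\to F_{n-1}\to F_{n}\to F_{n}/F_{n-1}\to 0$ split, so they remain exact after $-\otimes_{U(\g_{0})}J$; thus this functor commutes with $\gr_{F}$ and with the module isomorphism above, giving
\[
U(J,\g)=U(\g)\otimes_{U(\g_{0})}J\;\cong\;\bigl(S(\g_{\not=0})\otimes U(\g_{0})\bigr)\otimes_{U(\g_{0})}J\;\cong\;S(\g_{\not=0})\otimes J
\]
as right $J$-modules, an identification that is filtered, so passing to associated graded objects gives $\gr(U(J,\g))\cong S(\g_{\not=0})\otimes J$ as well. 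The only real obstacle is the middle step — the relative PBW isomorphism $\gr_{F}U(\g)\cong S(\g_{\not=0})\rtimes U(\g_{0})$ — which rests squarely on the PBW theorem for $\g$; the base change and the bookkeeping with filtrations are routine given semisimplicity of $\Ver_{p}^{\ind}$.
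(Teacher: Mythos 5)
Your route is the paper's route: the paper gives no argument beyond the remark that the lemma holds since $\g$ satisfies PBW, and your plan — a relative PBW statement for $\g_{0}\subseteq\g$ (freeness of $U(\g)$ as a right $U(\g_{0})$-module on $S(\g_{\not=0})$) followed by base change along $U(\g_{0})\to J$ — is the natural way to fill that in; the lifting-by-semisimplicity and base-change steps are fine. The one step that does not hold up as written is the one you call the heart: the claim that the bifiltration counting $\g_{\not=0}$-factors and $\g_{0}$-factors has associated bigraded $S(\g_{\not=0})\otimes S(\g_{0})$ ``by PBW''. Read literally, with bigraded pieces $F_{a,b}/(F_{a-1,b}+F_{a,b-1})$, this is false: for a general dual Harish-Chandra pair the bracket $\g_{\not=0}\otimes\g_{\not=0}\to\g$ has a nonzero component in $\g_{0}$ (already for $\gl(X)$ with $X$ decomposable, where the two off-diagonal copies of a nontrivial simple bracket into $\g_{0}$, as $[e,f]=h$ does classically), and such a component lies in $F_{2,0}$ but not in $F_{1,0}$, so the $(2,0)$ piece is strictly larger than $S^{2}(\g_{\not=0})$. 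Read instead as the iterated graded (first $\gr_{F}$, then the $\g_{0}$-count), the statement is true — the offending element dies in $F_{2}/F_{1}$ — but then it is essentially equivalent to the injectivity you are trying to establish, so citing PBW for the total filtration does not by itself dispose of it.

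The fix is to run the construction you give in your next sentence against the total PBW filtration rather than against $\gr_{F}$. Use semisimplicity to choose a section $S(\g_{\not=0})\to T(\g_{\not=0})$ and let $\mu:S(\g_{\not=0})\otimes U(\g_{0})\to U(\g)$ be the section followed by multiplication, a right $U(\g_{0})$-module map. Filter the source by symmetric degree plus the classical PBW filtration of $U(\g_{0})$, and the target by the total PBW filtration; then $\mu$ is filtered, and its associated graded map is the multiplication $S^{a}(\g_{\not=0})\otimes S^{b}(\g_{0})\to S^{a+b}(\g)$, which is an isomorphism by PBW for $\g$ (Etingof's theorem, as the pair's $\g$ is a Lie algebra) together with ordinary PBW for $\g_{0}$ and $S(\g)\cong S(\g_{\not=0})\otimes S(\g_{0})$. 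Since both filtrations are exhaustive and bounded below, $\mu$ is an isomorphism of right $U(\g_{0})$-modules; it is compatible with the $\g_{\not=0}$-count filtration (your surjectivity argument plus injectivity of $\mu$ gives $\gr_{F}U(\g)\cong S(\g_{\not=0})\otimes U(\g_{0})$), and then applying $-\otimes_{U(\g_{0})}J$ exactly as in your last paragraph yields both assertions of the lemma.
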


\begin{lemma} \label{HtoUmap} The inclusion of $J, \g$ into $U(J, \g)$ induces an isomorphism of Hopf algebras $\phi: H(J, \g) \rightarrow U(J, \g)$.

\end{lemma}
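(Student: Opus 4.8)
The plan is to construct the map $\phi$ explicitly, show it is a surjective Hopf algebra homomorphism, and then prove injectivity by comparing associated graded objects using the PBW filtration.

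First I would define $\phi$ on generators. We have the Hopf algebra $\H(J,\g) = T(\g_{\not=0}) \rtimes J$, and by Proposition \ref{smash}, this is generated as an algebra by $\g_{\not=0}$ and $J$ subject to the relation identifying the left adjoint action of $J$ on $\g_{\not=0}$ with the original module action. Since $U(J,\g)$ contains copies of both $J$ and $\g_{\not=0}$ (via the inclusions $J \hookrightarrow U(\g) \otimes_{U(\g_0)} J$ and $\g_{\not=0} \subseteq U(\g) \hookrightarrow U(J,\g)$), and since the left adjoint action of $J$ on $\g_{\not=0}$ inside $U(J,\g)$ coincides with the original action (this follows from the computation $(1 \otimes zx)(y \otimes 1) = (z \otimes x)(y \otimes 1)$ type identities established in the proof of Proposition \ref{semidirect}, exactly as for the smash product), the universal property of the smash product gives a Hopf algebra map $\tilde\phi : \H(J,\g) \to U(J,\g)$. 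Next I would check that $\tilde\phi$ kills the ideal $I(J,\g)$: this ideal is generated by the difference between the commutator of $\g_{\not=0}$ inside $T(\g_{\not=0})$ and the Lie bracket $\g_{\not=0}\otimes\g_{\not=0}\to\g$. Inside $U(J,\g)$, by the defining relations of $U(\g)$, the commutator of elements of $\g_{\not=0}$ equals the Lie bracket in $\g$, so $\tilde\phi$ factors through $H(J,\g)$, yielding the Hopf algebra homomorphism $\phi : H(J,\g) \to U(J,\g)$.

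Surjectivity is then immediate: the image contains $J$ and $\g_{\not=0}$, hence contains $\g$, and by Lemma \ref{PBWnormal} $U(J,\g) \cong S(\g_{\not=0}) \otimes J$ as right $J$-modules, so $U(J,\g)$ is generated as an algebra by $\g_{\not=0}$ and $J$; therefore $\phi$ is onto.

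For injectivity — which I expect to be the main obstacle — I would put the PBW filtration on $H(J,\g)$ by declaring $\g_{\not=0}$ to sit in degree $1$ and $J$ in degree $0$ (this is compatible with the filtration on $\H(J,\g)$ and descends to the quotient), and observe that $\phi$ is a filtered map. Passing to associated graded objects, I get a map $\gr\phi : \gr H(J,\g) \to \gr U(J,\g) \cong S(\g_{\not=0}) \otimes J$, where the last identification is Lemma \ref{PBWnormal}. On the other hand, the relations defining $H(J,\g)$ from $\H(J,\g) = T(\g_{\not=0}) \rtimes J$ — namely the commutator-equals-bracket relations — have top-degree part equal to the symmetrization relations $\mathrm{image}(c_{\g_{\not=0},\g_{\not=0}} - \id)$, so $\gr H(J,\g)$ is a quotient of $S(\g_{\not=0}) \otimes J$, and $\gr\phi$ is the identity on this quotient presentation. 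Hence $\gr\phi$ is surjective with a right inverse, forcing it to be an isomorphism; this requires knowing that $S(\g_{\not=0})\otimes J$ admits no proper quotient with this property, which is where I would invoke that $\g$ satisfies PBW (Corollary \ref{PBWgroup} applied via $\g = \Lie$ of an affine group scheme, or rather the abstract PBW property assumed here since $(J,\g)$ is a dual Harish-Chandra pair and $\g$ is a Lie algebra). The delicate point is verifying that the filtration on $H(J,\g)$ is exhaustive and that $\gr$ does not collapse anything unexpectedly — i.e., that the surjection $S(\g_{\not=0})\otimes J \twoheadrightarrow \gr H(J,\g)$ is actually an isomorphism, which is equivalent to the PBW statement for the pair; once $\gr\phi$ is an isomorphism, injectivity of $\phi$ follows from a standard filtered-to-graded argument, and combined with surjectivity we conclude $\phi$ is an isomorphism of Hopf algebras.
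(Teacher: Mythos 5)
Your construction of $\phi$ is the same as the paper's: both present $\H(J,\g)=T(\g_{\not=0})\rtimes J$ by generators $J,\g_{\not=0}$ and the adjoint-action relation, check that this relation and the commutator-equals-bracket relation hold in $U(J,\g)$, and conclude that $\phi$ exists as a Hopf algebra map. (Minor quibble: the cleanest justification that the adjoint action of $J$ on $\g_{\not=0}$ inside $U(J,\g)$ is the original one is not the bimodule computation of Proposition \ref{semidirect} but simply that $U(J,\g)$ is a quotient Hopf algebra of the smash product $T(\g)\rtimes J$, and Hopf quotients intertwine adjoint actions; this is how the paper phrases it.) Where you genuinely diverge is in proving $\phi$ is an isomorphism. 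The paper never touches filtrations here: it observes that $U(\g)\rtimes J$ is itself generated by $J$ and $\g$ subject to the same adjoint relation plus commutator-equals-bracket, so one gets a Hopf map $U(\g)\rtimes J\to H(J,\g)$ which kills the image of $R_0-L_0$ (since $\g_0\subseteq J$ maps compatibly), hence descends to $U(J,\g)\to H(J,\g)$; this is visibly a two-sided inverse to $\phi$, and the lemma follows purely formally, with no PBW input. Proposition \ref{dHCPBW} ($\gr_F H(J,\g)\cong S(\g_{\not=0})\rtimes J$) is then deduced \emph{from} the lemma together with Lemma \ref{PBWnormal}. You instead prove surjectivity directly and injectivity by passing to associated graded: the surjection $S(\g_{\not=0})\rtimes J\twoheadrightarrow \gr H(J,\g)$ (leading terms of the defining relations are the symmetrizer relations) composed with $\gr\phi$ is the identity of $S(\g_{\not=0})\otimes J$ under the identification of Lemma \ref{PBWnormal}, so both maps are isomorphisms and a standard filtered argument (the filtration is nonnegative and exhaustive) gives injectivity of $\phi$. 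This is correct and not circular, since you use only Lemma \ref{PBWnormal} (PBW for $\g$), not Proposition \ref{dHCPBW}; your hedging about "$S(\g_{\not=0})\otimes J$ admitting no proper quotient" is unnecessary, as the split-surjection argument you set up already closes the loop. The trade-off: the paper's route is shorter and needs no PBW at this stage, while your route consumes Lemma \ref{PBWnormal} but yields Proposition \ref{dHCPBW} as a byproduct rather than as a separate consequence.
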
 

\begin{proof} $\H(J, \g)$ is generated by $J, \g$ subject to the relation that the adjoint action of $J$ on $\g$ in this cocommutative Hopf algebra is the same as the left action given in the definition of a dual Harish-Chandra pair. This relation holds in $U(J, \g)$, since it is also defined as a quotient of the smash product between $J$ and $T(\g)$. Hence, the inclusion of $J, \g$ in $U(J, \g)$ induces a homomorphism of Hopf algebras $\H(J, \g) \rightarrow U(J, \g)$. This map descends to a homomorphism $\phi: H(J, \g) \rightarrow U(J, \g)$, since the only additional relation in $H(J, \g)$ is that the commutator is the Lie bracket on $\g \subseteq H(J, \g)$, which clearly holds in $U(J, \g)$ as well. 

Similarly, $U(\g) \rtimes J$ is generated by $J$ and $\g$ subject to the commutator in $\g$ being the Lie bracket and the same relation between $J$ and $\g$ as above. Hence, we have a homomorphism of Hopf algebras $U(\g) \rtimes J \rightarrow H(J, \g)$
and this descends to a homomorphism $U(J, \g) \rightarrow H(J, \g)$. This map is clearly inverse to $\phi$, which is thus an isomorphism.
\end{proof}

We end this subsection by constructing a PBW filtration on $H(J, \g)$ that will be useful in a later subsection.

\begin{definition} Define a grading on $\H(J, \g)$ by putting $J$ in degree $0$ and $\g_{\not=0}$ in degree $1$. This descends to a filtration $F$ on $H(J, \g)$.

\end{definition}

\begin{proposition} \label{dHCPBW} The associated graded of this filtration is described as $\gr_{F}(H(J, \g)) \cong S(\g_{\not=0}) \rtimes J$
the Hopf smashed product of $J$ with $S(\g_{\not=0})$. 

\end{proposition}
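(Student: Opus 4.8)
The plan is to imitate the proof of Lemma~\ref{PBWnormal} using the isomorphism $\phi: H(J,\g) \to U(J,\g)$ from Lemma~\ref{HtoUmap}, but keeping track of the smash-product structure rather than just the right $J$-module structure. The filtration $F$ on $H(J,\g)$ is the image of the grading on $\H(J,\g) = T(\g_{\not=0})\rtimes J$ that places $J$ in degree $0$ and $\g_{\not=0}$ in degree $1$; since $I(J,\g)$ is generated by the degree-$\le 2$ element (commutator minus bracket on $\g_{\not=0}\otimes\g_{\not=0}$, which lands in $T^{\le 2}(\g_{\not=0})\oplus\g\cdot J$), the quotient inherits a well-defined algebra filtration, and because $I(J,\g)$ is a Hopf ideal and the grading on $\H(J,\g)$ is a Hopf-algebra grading, $\gr_F(H(J,\g))$ is a graded Hopf algebra in $\Ver_p^{\ind}$. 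There is an obvious surjection of graded Hopf algebras $S(\g_{\not=0})\rtimes J \twoheadrightarrow \gr_F(H(J,\g))$: the relation $c_{X,X}=\mathrm{id}$ defining $S(\g_{\not=0})$ from $T(\g_{\not=0})$ is exactly the top-degree part of the relation cutting out $H(J,\g)$, and the smash structure with $J$ is untouched by the filtration. The content of the proposition is that this surjection is an isomorphism.

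First I would set up the comparison with $U(J,\g)$. Filter $U(J,\g)$ by putting $\g_{\not=0}$ in degree $1$ and $J$ in degree $0$; Lemma~\ref{PBWnormal} gives $\gr\, U(J,\g)\cong S(\g_{\not=0})\otimes J$ as graded objects (and in fact as right $J$-modules). The isomorphism $\phi$ of Lemma~\ref{HtoUmap} is induced by the inclusions of $J$ and $\g$, hence is filtered for these two filtrations: it sends the generators $J$ to degree $0$ and $\g_{\not=0}$ to degree $1$, and its inverse (also built from the inclusions of $J$ and $\g$) is likewise filtered. Therefore $\phi$ is a filtered isomorphism and induces an isomorphism $\gr_F(H(J,\g))\cong \gr\,U(J,\g)$ of graded Hopf algebras in $\Ver_p^{\ind}$. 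Combining this with Lemma~\ref{PBWnormal} already gives $\gr_F(H(J,\g))\cong S(\g_{\not=0})\otimes J$ as graded objects, in particular the two sides have the same length in every degree (working over the length function / Frobenius–Perron dimension in $\Ver_p$, which is additive and detects isomorphism of objects since $\Ver_p$ is semisimple). Since the natural surjection $S(\g_{\not=0})\rtimes J \twoheadrightarrow \gr_F(H(J,\g))$ is a surjection between objects of the same finite length in each graded piece (note $S(\g_{\not=0})$ is finite length by Lemma~\ref{nilpotence}, so each graded piece is an actual object), it is an isomorphism. It remains only to check that the isomorphism respects the smash-product Hopf structure, not merely the underlying graded object; but the surjection $S(\g_{\not=0})\rtimes J\to\gr_F(H(J,\g))$ is a map of Hopf algebras by construction, so being a bijection it is automatically a Hopf isomorphism.

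The main obstacle I anticipate is the bookkeeping showing that $\phi$ and its inverse are genuinely \emph{filtered} maps for the two a priori different filtrations — i.e. that the filtration $F$ on $H(J,\g)$ (pushed down from the grading on the smash product $T(\g_{\not=0})\rtimes J$) matches, under $\phi$, the filtration on $U(J,\g)$ used in Lemma~\ref{PBWnormal}. This is straightforward on generators but one must be slightly careful that the relation defining $I(J,\g)$, namely commutator-minus-bracket on $\g_{\not=0}\otimes\g_{\not=0}$, is filtered of degree $\le 2$: the commutator term lies in degree $2$ and the bracket term, landing in $\g = \g_0\oplus\g_{\not=0}$, lies in degree $\le 1$ inside $H(J,\g)$ (recalling $\g_0 = \Prim(J)\subseteq J$ sits in degree $0$). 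Hence the ideal is generated in filtration degree $\le 2$ and the associated graded relation is precisely $c-\mathrm{id}$ on $\g_{\not=0}^{\otimes 2}$, which is what converts $T(\g_{\not=0})$ into $S(\g_{\not=0})$. A secondary point worth a sentence is the appeal to semisimplicity of $\Ver_p$ to conclude that a surjection between objects of equal (finite) length is an isomorphism; this is where we use that each graded component is an actual object, which in turn uses $S^N(L_i)=0$ for $N>p-i$ so that $S(\g_{\not=0})$ is of finite length.
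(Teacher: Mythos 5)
Your overall route is the paper's: the paper's proof is exactly ``reduce to $U(J,\g)$ via the isomorphism $\phi$ of Lemma \ref{HtoUmap} and invoke Lemma \ref{PBWnormal} (PBW for $U(\g)$)'', and your setup of the natural graded surjection $S(\g_{\not=0}) \rtimes J \rightarrow \gr_{F}(H(J,\g))$, the check that $I(J,\g)$ is generated in filtration degree $\le 2$ with symbol $m - m\circ c$ on $\g_{\not=0}^{\otimes 2}$, and the observation that $\phi$ and $\phi^{-1}$ are filtered (so $\gr\phi$ is an isomorphism of graded Hopf algebras) are all fine and in the spirit of the intended argument.

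However, your concluding step has a genuine gap. You finish by saying the surjection is a map between objects of the same \emph{finite} length in each graded degree, ``since $S(\g_{\not=0})$ is finite length by Lemma \ref{nilpotence}, so each graded piece is an actual object.'' This is false in general: the degree-$n$ piece is $S^{n}(\g_{\not=0}) \otimes J$, and $J$ is typically of infinite length (e.g.\ $J = \O(G_{0})^{\circ}$ for $G_{0}$ a positive-dimensional affine group scheme of finite type, which is the main case of interest). For ind-objects of infinite length, a surjection onto an abstractly isomorphic object need not be injective, so the counting argument does not close the proof; knowing $\gr_{F}(H(J,\g)) \cong S(\g_{\not=0}) \otimes J$ abstractly is not enough. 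The repair is short and is what the paper's one-line proof intends: compose your natural surjection with $\gr\phi$ and observe that the composite $S(\g_{\not=0}) \rtimes J \rightarrow \gr(U(J,\g))$ is precisely the map whose bijectivity is the content of Lemma \ref{PBWnormal} (the associated graded of $T(\g_{\not=0}) \rtimes J \rightarrow U(J,\g)$, i.e.\ the PBW map for $U(\g)$ relative to $U(\g_{0})$, tensored along $J$). Since that composite is an isomorphism, your surjection is also injective, and being a Hopf algebra map by construction it is the desired isomorphism $\gr_{F}(H(J,\g)) \cong S(\g_{\not=0}) \rtimes J$. (A minor additional caution: even in finite length, equality of lengths alone does not detect isomorphism in a semisimple category --- one needs equality of multiplicities of each simple --- but this point is subsumed by the fix above.)
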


\begin{proof} This follows from the previous lemma and the PBW theorem for $U(\g)$.

\end{proof}

\subsection{PBW property for cocommutative ind-Hopf $C$ algebras in $\Ver_{p}$ with $\Delta^{-1}(C_{0} \otimes C_{0}) = \mathbf{1}$.}

In this subsection, fix $C$ to be a cocommutative Hopf algebra in $\Ver_{p}^{\ind}$ with $J = \Delta^{-1}(C_{0} \otimes C_{0}) = \mathbf{1}$. Then, $\g = \Prim(C)$ has $\g_{0} = 0$.  Note that in particular, this implies that $C$ is irreducible.

The goal of this subsection is to prove the following theorem.

\begin{theorem} \label{purelyoddcocomm} The natural map $U(\g) \rightarrow C$ is an isomorphism of Hopf algebras.

\end{theorem}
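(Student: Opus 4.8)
The plan is to prove that the canonical Hopf algebra map $\psi: U(\g) \to C$ (induced by the inclusion $\g = \Prim(C) \hookrightarrow C$, using the universal property of $U$) is an isomorphism by controlling both sides with the coradical filtration. First I would observe that, since $\g_0 = 0$, the object $\g$ lies entirely in $\Ver_p^{\ind}{}_{\not=0}$, and $C$ is irreducible with unique grouplike the unit, so its coradical filtration $C(n)$ is an exhaustive algebra filtration with $C(0) = \mathbf{1}$ and $\Prim(C) = C(1)^+ = \g$ by Proposition \ref{corad-prop}. On the $U(\g)$ side, I would use the standard PBW filtration $U(\g)_{\le n}$ (image of $\bigoplus_{k \le n} \g^{\otimes k}$); since $\g$ is a Lie algebra in $\Ver_p$, Proposition \ref{Lie-assoc} applies and $\g$ satisfies PBW, so $\gr U(\g) \cong S(\g)$. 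The map $\psi$ is filtered: $\psi(U(\g)_{\le n}) \subseteq C(n)$, because $\g$ is primitive hence lands in $C(1)$, and multiplication respects the coradical filtration by Proposition \ref{coradical-filtration}(5).

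The heart of the argument is to show that $\gr \psi : S(\g) = \gr U(\g) \to \gr C$ is an isomorphism; then $\psi$ itself is an isomorphism since both filtrations are exhaustive and bounded in each degree (here I would note that each $C(n)$ is an actual object of finite length, as $C$ is a sum of finite-length subcoalgebras and each $C(n)$ is finitely generated under $\Delta$ — or equivalently dualize to the finitely generated local commutative algebra $C(n)^*$). For surjectivity of $\gr\psi$: $\gr C$ is a coradically graded cocommutative Hopf algebra in $\Ver_p^{\ind}$ by Lemma \ref{corad-grading}, and it is commutative since $C$ is irreducible; moreover $\Prim(\gr C) = \gr C[1] = \g$. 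A graded-commutative cocommutative Hopf algebra in $\Ver_p^{\ind}$ generated in degree one by its primitives $\g$ is a quotient of $S(\g)$; but I need that $\gr C$ is in fact generated in degree one. This is the standard fact that a coradically graded cocommutative Hopf algebra (equivalently, its dual is a connected graded commutative algebra, which in the connected graded case is generated by indecomposables, i.e. $\gr C$ is generated by $\gr C[1]$) — this is where I'd invoke the element-free version of the argument from \cite[Chapter 11 or 12]{S}, exactly as the paper has been doing for the other coradical-filtration facts. Hence $\gr\psi$ is surjective.

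For injectivity of $\gr\psi$, equivalently injectivity of $\psi$, I would apply Corollary \ref{coalgebra-semidirect}... actually more directly Corollary following Proposition \ref{corad-prop}: a coalgebra map out of an irreducible cocommutative coalgebra is injective iff it is injective on primitives. Here $\psi: U(\g) \to C$ is a map from the irreducible cocommutative coalgebra $U(\g)$ (it is irreducible because $\g_0 = 0$, so $\Delta^{-1}(U(\g)_0 \otimes U(\g)_0) = \mathbf{1}$ and its coradical is $\mathbf{1}$), and $\psi|_{\Prim(U(\g))} = \psi|_{\g} = \mathrm{id}_\g$ is injective — using that $\Prim(U(\g)) = \g$, which follows from PBW (the degree-one part of $S(\g) = \gr U(\g)$ is exactly $\g$ and there are no primitives in higher degree of a symmetric algebra in $\Ver_p$). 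Therefore $\psi$ is injective. Combined with surjectivity of $\gr\psi$ (which forces surjectivity of $\psi$ degree by degree on the exhaustive filtrations), $\psi$ is an isomorphism of Hopf algebras.

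The main obstacle I anticipate is justifying that $\gr C$ is generated in degree one — i.e. the structure theorem for coradically graded cocommutative Hopf algebras — in a way that is genuinely valid in $\Ver_p^{\ind}$ rather than just $\Vec$ or $\sVec$. The safe route is to dualize: $\gr C$ is dual (in the $\pro$ sense) to a connected $\N$-graded commutative Hopf algebra $\mathcal{H}$ over the pro-category with $\mathcal{H}[0] = \mathbf{1}$, and for such an $\mathcal{H}$ one shows $\mathcal{H}$ is generated by $\mathcal{H}[1]$ using the nilpotence results (Lemma \ref{nilpotence}, its corollary) to reduce the indecomposables computation to the $\mathbf{1}$-isotypic part plus a finite-length correction, paralleling the arguments already established in Sections 3 and 4. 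Alternatively, one avoids this entirely by a direct induction: assuming $\gr\psi$ is an isomorphism in degrees $< n$, use the fact that any $x \in \gr C[n]$ has $\bar\Delta(x) \in \bigoplus_{0 < i < n} \gr C[i] \otimes \gr C[n-i]$ landing in the image, together with $\ker(\bar\Delta|_{\gr C[n]}) = \Prim(\gr C) \cap \gr C[n] = 0$ for $n > 1$, to conclude $x$ is in the image — this is the cleanest path and is exactly the content of \cite[Chapter 11]{S} transcribed categorically, as the paper does repeatedly.
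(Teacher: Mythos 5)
Your overall skeleton (reduce to the associated graded, get injectivity from injectivity on primitives, then induct on the coradical filtration for surjectivity) matches the paper's, but the step you flag as the "main obstacle" is a genuine gap, and none of the three ways you offer to close it works. The claim that a coradically graded cocommutative Hopf algebra is generated in degree one by its primitives is \emph{false} in characteristic $p$: the divided power Hopf algebra $\Gamma[x]$ over $\Vec\subset\Ver_p$ is irreducible, cocommutative and coradically graded with $\Prim(\Gamma[x])=\k x^{(1)}$, yet it is not generated by its primitives, and correspondingly $U(\g)\to C$ fails to be an isomorphism whenever $\g_{0}\neq 0$. So the hypothesis $\g_{0}=0$ must enter the surjectivity argument in an essential way, whereas in your proposal it is only used for irreducibility of $U(\g)$ and for placing $\g$ in the non-trivial isotypic part. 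Your proposed dualization does not repair this: generation of the dual connected graded commutative algebra by its indecomposables dualizes to the statement that coalgebra maps out of $C$ are injective iff injective on primitives (the easy half, Corollary \ref{coideal-primitives}), not to generation of $\gr C$ in degree one; dually, the obstruction is exactly that the dual algebra can have primitives ($p$-th powers) in degrees $\ge p$. Your "direct induction" has the same hole: knowing $\bar\Delta(x)\in S(\g)_{<n}\otimes S(\g)_{<n}$ and that $\bar\Delta$ has no kernel in degree $n>1$ does not let you conclude $x$ lies in the image --- you need that the symmetric $2$-cocycle $\bar\Delta(x)$ is of the form $\bar\Delta(y)$ for some $y\in S(\g)[n]$, i.e.\ that symmetric $2$-cocycles for $S(\g)$ are coboundaries, which is precisely what fails for $\Vec$ in characteristic $p$ (this failure is what creates $x^{(p)}$) and is not "the content of Sweedler transcribed categorically."

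The paper closes exactly this gap with a cohomological input you do not have: Lemma \ref{oddKoszul}, stating that the coHochschild (Koszul) cohomology of $S(\g_{\neq 0})$ is the exterior algebra $\bigwedge(\g)$ with $\wedge^{i}(\g)$ in homological degree $i$. This rests on Etingof's computation (Lemma \ref{Koszulcomplex}), valid only in graded degrees $<p$, combined with the nilpotence Lemma \ref{nilpotence}, which guarantees that $S(\g_{\neq 0})$ is concentrated entirely in degrees $<p$ because $\g_{0}=0$. With that lemma, the symmetric $2$-cocycle $(\Delta-\id\otimes\iota-\iota\otimes\id)(C(n))\subseteq S^{2}(S(\g)(n-1))$ is a coboundary, which is what produces the element of $S(\g)(n)$ making the antidiagonal primitive and completes the induction. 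Without Lemma \ref{oddKoszul} (or an equivalent "no divided powers off the unit" statement), your surjectivity argument does not go through.
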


To prove this, we first need some cohomological facts.

\begin{definition} Let $C$ be a irreducible cocommutative coalgebra in $\Ver_{p}^{\ind}$ with comultiplication $\Delta$ and $\iota$ the inclusion of the unique grouplike element. The \emph{coHochschild complex} of $C$ is $\mathbf{1} \rightarrow C \rightarrow C^{\otimes 2} \rightarrow \cdots$
with the maps defined as $\iota: \mathbf{1} \rightarrow C$ and 

$$\iota \otimes \id_{C}^{\otimes n} - \Delta \otimes \id_{C}^{\otimes n-1} + \id_{C} \otimes \Delta \otimes \id_{C}^{ \otimes n-2} + \cdots +  (-1)^{n+1} \id_{C}^{\otimes n} \otimes \iota.$$

\end{definition}

If $X \in \Ver_{p}$, then $S(X)$ is a cocommuative coalgebra in $\Ver_{p}^{\ind}$ and the associated coHochschild complex is a graded complex with $S(X)$ given the natural grading. A result of Etingof in \cite{Et1} implies the following lemma.

\begin{lemma} \label{Koszulcomplex} The cohomology of the coHochschild complex in graded degree $i < p$ is $\wedge^{i}(X)$ and is concentrated in homological degree $i$.

\end{lemma}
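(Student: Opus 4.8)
The plan is to reduce the statement to a computation in the category of vector spaces by exploiting the structure of $S(X)$ in $\Ver_p$ provided by Lemma \ref{nilpotence}, and then to invoke the corresponding classical fact about the co-Hochschild (cobar/Koszul) complex. First I would recall that, by Lemma \ref{nilpotence}, for any object $X \in \Ver_p$ the symmetric algebra splits as $S(X) = S(X_{0}^{\text{ev}}) \otimes S(X_{\neq 0})$ where $S(X_0^{\text{ev}})$ is an ordinary polynomial algebra on the part of $X$ that behaves like an even vector space and $S(X_{\neq 0})$ is a finite-length truncated/exterior-type algebra; more precisely each simple summand $L_i$ with $i>1$ contributes only finitely many symmetric powers. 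The co-Hochschild complex of $S(X)$ is the graded dual of the bar (Hochschild-type) complex of the dual algebra $T_c(X^*)$, or better, it is the cobar-type complex computing $\mathrm{Cotor}$ over the coalgebra $S(X)$; in graded degree $i$ it is a complex of finite-length objects of $\Ver_p$ sitting in homological degrees $0,1,\dots,i$.

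The key steps, in order, are: (1) Identify the co-Hochschild complex of $S(X)$ in graded degree $i$ with the (categorified) Koszul-type complex whose terms are built from tensor powers of $X$ with the symmetric-group actions coming from the braiding $c$ — this is the dual of the statement that the bar complex of a symmetric (polynomial-type) algebra is the Koszul complex. (2) Invoke the result of Etingof (\cite{Et1}) computing this cohomology: in the classical setting the cohomology of the Koszul complex of $S(X)$ in internal degree $i$ is $\wedge^i(X)$ concentrated in homological degree $i$, and Etingof's analysis of PBW/Koszulity in $\Ver_p$ shows this persists in $\Ver_p$ precisely in the range $i < p$, where the relevant multinomial coefficients are invertible and the operadic obstructions (the objects $E_n(X)$ appearing in the definition of a Lie algebra) vanish. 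This is exactly the range where $S(X)$ behaves Koszul-dually to the exterior algebra. (3) Conclude that $H^j$ of the degree-$i$ co-Hochschild complex is $\wedge^i(X)$ if $j = i$ and $0$ otherwise, for $i < p$.

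The main obstacle I expect is step (2): making precise which result of \cite{Et1} is being cited and why the restriction $i < p$ is exactly the right one. In characteristic $p$, Koszul duality between $S(X)$ and $\wedge(X)$ fails in internal degrees $\geq p$ even classically (divided powers intervene), and in $\Ver_p$ there is the additional subtlety that $\wedge^i(X)$ and the higher symmetric powers $S^{\geq p-i+1}(L_i)$ vanish or degenerate; the content of the cited lemma from \cite{Et1} is that below the critical degree $p$ the complex is still exact except at the top, with cohomology $\wedge^i(X)$, because this is precisely the PBW range. I would handle this by citing \cite{Et1} for the exactness/cohomology statement in the range $i<p$ and noting that the identification of the complex as the co-Hochschild complex of $S(X)$ is formal (graded dualization of the bar resolution), with the grading by $S(X)$ being the one dual to the grading on $T(X^*)$ by tensor degree.
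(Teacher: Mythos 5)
Your proposal is essentially the paper's own treatment: the paper gives no independent argument for this lemma, simply citing the result of Etingof in \cite{Et1} and remarking that it amounts to Koszul duality (the computation of $\Ext_{S(X)^{*}}(\mathbf{1},\mathbf{1})$) holding in internal degrees below $p$, which is exactly your key step (2). The extra framing in your write-up (the splitting of $S(X)$ from Lemma \ref{nilpotence}, the mention of the operadic obstructions $E_{n}$) is not needed and not used by the paper, but the core citation-based argument coincides.
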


\begin{remark} What this lemma is really saying is that Koszul duality holds in degree smaller than the characteristic, since the cohomology of the dual complex is $\Ext_{S(X)^{*}}(\mathbf{1}, \mathbf{1})$, where $S(X)^{*}$ is the graded dual to $S(X)$.

\end{remark}

As a consequence of this Lemma, we have the following result.

\begin{lemma} \label{oddKoszul} For $\g = \Prim(C)$ as defined in the section, the cohomology of the coHochschild complex of $S(\g)$ is the exterior algebra $\bigwedge(\g)$, with $\wedge^{i}(\g)$ sitting in homological degree $i$.

\end{lemma}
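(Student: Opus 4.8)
The plan is to reduce to the case of a single simple object, handle the purely ``super'' simple directly, and for the other simples combine Lemma~\ref{Koszulcomplex} in low internal degree with a vanishing argument in high internal degree.

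First I would make two reductions. In this subsection $J=\Delta^{-1}(C_{0}\otimes C_{0})=\mathbf{1}$, so $\g_{0}=\Prim(J)=0$ and $\g=\g_{\not=0}$; by Lemma~\ref{nilpotence}, $S^{k}(L_{i})=0$ for $k>p-i$, so $S$ of any finite‑length subobject of $\g$ is again of finite length (a finite tensor product of the $S(L_{i})$). Since $S$, $\bigwedge$, the coHochschild complex, and its cohomology all commute with filtered colimits, one may assume $\g$ is of finite length, say $\g\cong\bigoplus_{j}L_{i_{j}}$ with $2\le i_{j}\le p-1$. Next, $S(\g\oplus\g')\cong S(\g)\otimes S(\g')$ and $\bigwedge(\g\oplus\g')\cong\bigwedge(\g)\otimes\bigwedge(\g')$ as Hopf algebras, and by the dual of the Eilenberg--Zilber theorem the coHochschild complex of a tensor product of coalgebras is homotopy equivalent to the total complex of the tensor product of the individual coHochschild complexes; since $\otimes$ is exact on $\Ver_{p}^{\ind}$ (semisimplicity), the Künneth formula holds with no correction terms and cohomology is multiplicative. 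So it suffices to treat $\g=L_{i}$ with $2\le i\le p-1$.

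For $i=p-1$ the statement is immediate: $S(L_{p-1})=\mathbf{1}\oplus L_{p-1}$ by Lemma~\ref{nilpotence}, so in the normalized coHochschild complex the reduced coalgebra is just $L_{p-1}$, which is primitive; hence every differential vanishes and the cohomology is $\bigoplus_{n}L_{p-1}^{\otimes n}$ in homological degree $n$, which equals $\bigwedge(L_{p-1})$ (the exterior relations on $L_{p-1}$ being vacuous as $p>2$, since $\langle L_{1},L_{p-1}\rangle\cong\sVec$). For $2\le i\le p-2$, both $S(L_{i})$ and $\bigwedge(L_{i})$ are concentrated in internal degrees $<p$: the first by Lemma~\ref{nilpotence}, and for the second, $\wedge^{p-1}(L_{i})$ is a direct summand of $L_{i}^{\otimes(p-1)}$ (the antisymmetrizer idempotent exists since $p-1<p$), hence is the image under the equivalence $\Ver_{p}(SL_{2})\cong\Ver_{p}$ of $\wedge^{p-1}(V_{i})=0$; as $\bigwedge(L_{i})$ is a quotient of $T(L_{i})$ it is generated in degree $1$, which then forces $\wedge^{d}(L_{i})=0$ for all $d\ge p-1$. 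Lemma~\ref{Koszulcomplex} now identifies, for every internal degree $d<p$, the cohomology of the coHochschild complex of $S(L_{i})$ with $\wedge^{d}(L_{i})$ placed in homological degree $d$ --- exactly the claimed answer there.

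The remaining point, and the main obstacle, is to show that this cohomology vanishes in internal degrees $d\ge p$, matching $\wedge^{d}(L_{i})=0$; here Lemma~\ref{Koszulcomplex} no longer applies. The approach I would take is to exploit that $S(L_{i})$, being finite, is the image under semisimplification of the finite‑dimensional tilting subcoalgebra $\bigoplus_{k\le p-i}S^{k}(V_{i})\subseteq S(V_{i})$, that the coHochschild cohomology of $S(L_{i})$ is computed by the cobar DG algebra and is therefore a bigraded algebra with $H^{n}$ supported in internal degrees $\ge n$ (each factor of the normalized complex has internal degree $\ge1$), and that in internal degrees $<p$ this cohomology algebra has already been pinned down to $\bigwedge(L_{i})$, which is generated in bidegree $(1,1)$. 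The target statement is then that $H^{\bullet}$ is generated by $H^{1}$, which would force every $H^{n}$ into internal degree exactly $n$ and hence $H^{n}=\wedge^{n}(L_{i})$ (so $0$ for $n\ge p-1$). Controlling the coHochschild differential in internal degrees $\ge p$, where the characteristic‑$p$ behaviour of divided powers could in principle intervene, is where the real work lies; an alternative would be a direct spectral‑sequence analysis of the normalized complex.
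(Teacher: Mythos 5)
Your reductions, and everything you actually prove, are fine as far as they go, and they coincide with the paper's own argument: the paper proves this lemma by exactly your K\"unneth reduction to simple summands together with the observation (Lemma \ref{nilpotence}) that $S(L_{i})$ is concentrated in internal degrees $<p$, and then appeals to Lemma \ref{Koszulcomplex}; your separate treatment of $L_{p-1}$ is a correct special case. But the proposal stops at the decisive point: for $2\le i\le p-2$ you never establish anything in internal degrees $\ge p$, and the route you sketch --- that the cohomology algebra is generated by $H^{1}$ --- is precisely the statement that fails for $L_{1}$ and is nowhere justified for $L_{i}$, while the alternative ``spectral sequence analysis'' is not carried out. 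So this is a genuine, self-acknowledged gap, not a finished proof.

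You should also know that this gap is not one you could have closed, and the paper's two-line proof does not close it either: concentration of $S(L_{i})$ in degrees $<p$ does not kill the coHochschild complex in internal degrees $\ge p$ (for instance $S^{p-i}(L_{i})^{\otimes n}$ contributes there), and Lemma \ref{Koszulcomplex} is silent about those degrees. In fact the statement fails there when $2\le i\le p-2$. Since $\wedge^{d}(L_{i})=0$ for $d>i$, the lemma would say that the cohomology --- which, as the paper itself notes after Lemma \ref{Koszulcomplex}, is the internal $\Ext$ of $\mathbf{1}$ over the finite-length algebra $S(L_{i})^{*}\cong S(L_{i}^{*})$ --- vanishes in homological degrees $>i$; by minimality of a projective (or injective) resolution this would yield a finite resolution of $\mathbf{1}$ by direct sums of the modules $S(L_{i}^{*})\otimes L_{j}$, and taking categorical dimensions would force $\dim S(L_{i})$ to be a unit in the dimension ring of $\Ver_{p}$. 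Already for $p=5$, $\g=L_{3}$ this is impossible: $S(L_{3})=\mathbf{1}\oplus L_{3}\oplus\mathbf{1}$ has dimension $\tfrac{5+\sqrt{5}}{2}$, of norm $5$ in $\Z[\tfrac{1+\sqrt{5}}{2}]$, so the cohomology is nonzero in infinitely many homological degrees; working out the minimal resolution one finds an extra class already in homological degree $4$, internal degree $5$. The part of the lemma that is true, and that the later argument actually uses (Theorem \ref{purelyoddcocomm} only needs cocycles of homological degree $2$, which after K\"unneth land in the controlled range), is the internal-degree-$<p$ statement of Lemma \ref{Koszulcomplex} together with the purely $L_{p-1}$ case you did by hand. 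Your instinct that internal degrees $\ge p$ are ``where the real work lies'' was exactly right; the honest conclusion is that the statement has to be weakened there rather than proved.
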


This follows from using the Kunneth isomorphism to reduce to a computation on each simple summand of $\g$ and then using Lemma \ref{nilpotence} to prove that $S(X)$ for each such simple summand is concentrated entirely in degrees $< p$. We can now prove the theorem.

\begin{proof}[Proof of Theorem \ref{purelyoddcocomm}] By taking associated graded under the coradical filtration (which is the PBW filtration on $U(\g)$), we can assume $C$ and $\g$ are both commutative. Consider the map $S(\g) \rightarrow C.$
This map is injective on primitives and is hence injective. So, we just need to prove it is surjective. Using injectivity, we identify $S(\g)$ with its image in $C$. We inductively show that the image contains $C(n)$, the $n$th piece of the coradical filtration on $C$. 

$$(\Delta - \id_{C} \otimes \iota - \iota \otimes \id_{C})(C(n)) \subseteq C(n-1) \otimes C(n-1)$$
and by cocommutativity, is a subset of $S^{2}(C(n-1))$, which is equal to the symmetric invariants, as we assume the characteristic is bigger than $2$ in this chapter. Hence, by induction, 

$$(\Delta - \id_{C} \otimes \iota - \iota \otimes \id_{C})(C(n)) \subseteq S^{2}(S(\g)(n-1)).$$
The image is a cocycle for the Koszul complex on $S(\g)$, but by the cohomological Lemma \ref{oddKoszul}, every symmetric cocycle is also a coboundary. Hence, for each simple object $X \in C(n)$, we can find a simple object $X \in S(\g)(n)$ such that the antidiagonal in $X \oplus X \subseteq C(n) \oplus S(\g)(n)$
is primitive, i.e., is killed by $\Delta - \id_{C} \otimes \iota - \iota \otimes \id_{C}.$
Hence, $C(n) \subseteq S(\g)(n) + C(1) = S(\g)(n) + \g.$
\end{proof}

As a consequence of the proof, we can actually state a slightly more general result.

\begin{corollary} \label{symcoalg} Let $C$ be an irreducible cocommutative coalgebra in $\Ver_{p}^{\ind}$ and let $X$ be an object in $\Ver_{p}$ that has no summand isomorphic to $\mathbf{1}$. If $\phi$ is a coalgebra map $S(X) \rightarrow C$, then $\phi$ is surjective if and only if it is surjective on primitives.

\end{corollary}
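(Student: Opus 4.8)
The plan is to imitate the proof of Theorem \ref{purelyoddcocomm} almost verbatim, tracking the extra generality. The map $\phi: S(X) \to C$ is a coalgebra map, so by the corollary after Proposition \ref{corad-prop} (the statement that a coalgebra map out of an irreducible cocommutative coalgebra is injective iff it is injective on primitives), injectivity is not the issue — we only care about surjectivity, and the ``only if'' direction is trivial since $\Prim(S(X)) \to \Prim(C)$ is the restriction of $\phi$. So the real content is: if $\phi$ is surjective on primitives, then $\phi$ is surjective. First I would reduce to the graded situation by passing to associated graded coalgebras under the coradical filtration; since $\phi$ respects coradical filtrations (Proposition \ref{coradical-filtration}(4)) and the coradical filtration on $S(X)$ is its grading by symmetric powers (which holds because $X$ has no $\mathbf{1}$-summand, so $S(X)$ is coradically graded by Lemma \ref{corad-grading} applied in the commutative-Hopf case, or directly from Proposition \ref{corad-prop}), we may assume $C$ is coradically graded and $\phi$ is graded. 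Note $\Prim(S(X)) = X$ exactly because $X$ has no trivial summand (degree-1 elements are the only primitives, as $S^n(L_i)$ vanishes for $n$ large by Lemma \ref{nilpotence} and there is no ``$p$-th power'' primitive in positive degree coming from a trivial summand).

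Next I would run the same inductive argument on the coradical filtration pieces $C(n)$. Identifying $\Prim(C)$ with a subobject $\g \subseteq X$ (via $\phi$ restricted to primitives being surjective onto $\Prim(C)$ — and in fact we may even replace $X$ by the image, but it is cleaner to keep $X$), we show $\phi(S(X)) \supseteq C(n)$ by induction on $n$. The base cases $n=0,1$ are immediate. For the inductive step, apply $\Delta - \id_C \otimes \iota - \iota \otimes \id_C$ to $C(n)$: by Proposition \ref{corad-prop}(4) the image lands in $C(n-1)^+ \otimes C(n-1)^+$, and by cocommutativity in the symmetric square $S^2(C(n-1)^+)$. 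By induction this lies in $S^2$ of (the image of) $S(X)(n-1)$. This expression is a symmetric $2$-cocycle for the coHochschild (Koszul) complex of $S(X)$; by Lemma \ref{oddKoszul} every symmetric cocycle is a coboundary — here is where one uses that the relevant symmetric powers of the simple summands of $X$ vanish above degree $p$, exactly as in the cited proof. Hence each simple constituent of $C(n)$ can be matched, modulo lower filtration, with a primitive correction coming from $S(X)(n)$, giving $C(n) \subseteq \phi(S(X)(n)) + C(n-1) \subseteq \phi(S(X))$.

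The only place where genuine care is needed — and the main obstacle — is the comparison $\Prim(S(X)) = X$ and the bookkeeping that $X$ having no $\mathbf{1}$-summand is exactly what makes the coradical filtration of $S(X)$ coincide with its grading, so that the Koszul-cohomology input (Lemma \ref{oddKoszul}) applies on the nose to $S(X)$ rather than to some subquotient. Once that is pinned down, the induction is formally identical to the proof of Theorem \ref{purelyoddcocomm}, with $\g = \Prim(C)$ replaced by its preimage in $X$ throughout; I would simply remark that the argument is ``the same as the proof of Theorem \ref{purelyoddcocomm}, replacing $\g$ by $X$ and using that $\Prim(S(X)) = X$ since $X$ has no trivial summand,'' rather than reproducing it in full.
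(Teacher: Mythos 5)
Your proposal is correct and follows essentially the same route as the paper: the paper's proof of this corollary simply says it is identical to the proof of Theorem \ref{purelyoddcocomm}, using only the coalgebra structure and the unique grouplike element, which is exactly the induction on the coradical filtration with the coHochschild/Koszul input (Lemma \ref{oddKoszul}) that you reproduce, together with the observation that $\Prim(S(X)) = X$ when $X$ has no trivial summand.
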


\begin{proof} The proof is identical to the theorem above, as we only use the coalgebra structure and the inclusion of the unique grouplike element.
\end{proof}

\subsection{PBW property for the coradical filtration on cocommutative Hopf algebras}

In this subsection, let $C$ be some fixed cocommutative Hopf algebra in $\Ver_{p}^{\ind}$. Let $(J, \g) = (\Delta^{-1}(C_{0} \otimes C_{0}), \Prim(C))$ be the corresponding dual Harish-Chandra pair.

From the definition of $H(J, \g)$, it is clear that there is a natural homomorphism of Hopf algebras in $\Ver_{p}^{\ind}$ $\phi: H(J, \g) \rightarrow C$
induced by the inclusion of $J$ and $\g$. The goal of this subsection is to prove the following theorem (which is a generalization of \cite[Theorem 3.6]{M2}).

\begin{theorem} \label{PBWcocomm} $\phi$ is an isomorphism.

\end{theorem}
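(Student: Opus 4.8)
The plan is to compare $\phi\colon H(J,\g)\to C$ through compatible filtrations and reduce the statement to an isomorphism of associated graded Hopf algebras, at which point the PBW results of the previous subsections do the work.

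\textbf{Step 1: $\phi$ is filtered; reduction to associated graded.} Equip $H(J,\g)$ with the PBW filtration $F$ of Proposition~\ref{dHCPBW} ($J$ in degree $0$, $\g_{\not=0}$ in degree $1$, with $\gr_F H(J,\g)\cong S(\g_{\not=0})\rtimes J$), and equip $C$ with the relative coradical filtration $F$ with respect to $J$ from Proposition~\ref{relativecoradical}. Since $J=F_0(C)$ and every primitive of $C$ maps to $0$ in $C/J\otimes C/J$ (the unit lies in $J$), one has $\g\subseteq F_1(C)$; as $F_\bullet(C)$ is an algebra filtration and $H(J,\g)$ is generated as an algebra by $J$ and $\g_{\not=0}$, the map $\phi$ satisfies $\phi(F_nH(J,\g))\subseteq F_n(C)$. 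Both filtrations begin in degree $0$ and are exhaustive — for $C$, reduce to finite length subcoalgebras $D$, where $D^*$ is a finitely generated commutative algebra in $\Ver_p$ whose ideal generated by $D^*_{\not=0}$ is nilpotent (Corollary to Lemma~\ref{nilpotence}), so the dual filtration stabilizes. A filtered morphism of such objects is an isomorphism once its associated graded is; hence it suffices to prove that $\gr_F(\phi)\colon S(\g_{\not=0})\rtimes J\to\gr_F C$ is an isomorphism.

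\textbf{Step 2: structure of $\gr_F C$.} By Proposition~\ref{relativecoradical}, $\gr_F C$ is a graded cocommutative Hopf algebra with $(\gr_F C)[0]=J$ and $\Prim(\gr_F C)=\g$, and the projection onto $J$ killing positive degrees splits the degree-$0$ inclusion $J\hookrightarrow\gr_F C$. Applying the cocommutative biproduct (bosonization) decomposition to this projection yields $\gr_F C\cong R\rtimes J$, where $R=(\gr_F C)^{\mathrm{co}\,J}$ is a graded irreducible cocommutative Hopf algebra in $\Ver_p^{\ind}$ carrying a left $J$-action, with $R[0]=\mathbf{1}$ and $\Prim(R)=R[1]=\g_{\not=0}$; in particular $R$ has finite length, since each simple summand $L_i$ of $\g_{\not=0}$ has $2\le i\le p-1$ and $S^N(L_i)=0$ for $N>p-i$ (Lemma~\ref{nilpotence}). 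Also $\Delta_R^{-1}(R_0\otimes R_0)=\mathbf{1}$: a nonzero homogeneous element of $R$ of positive degree lying in $\Delta_R^{-1}(R_0\otimes R_0)$ would have vanishing top-bidegree cross term in its comultiplication, impossible in a coradically graded coalgebra with $R[0]=\mathbf{1}$. Finally, since in $C$ the bracket satisfies $[\g_{\not=0},\g_{\not=0}]\subseteq\g$, which is degree-lowering for $F$, the elements of $\g_{\not=0}=R[1]$ commute in $\gr_F C$, so $R$ is commutative.

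\textbf{Step 3: identify $R$ and conclude.} Since $R$ is commutative irreducible cocommutative with $\Delta_R^{-1}(R_0\otimes R_0)=\mathbf{1}$ and $\Prim(R)=\g_{\not=0}$ (which has no trivial summand), the argument of Theorem~\ref{purelyoddcocomm} — pass to the coradical filtration of $R$ and apply Corollary~\ref{symcoalg}, noting that $S(\g_{\not=0})\to R$ is an isomorphism on primitives — shows that the canonical map $S(\g_{\not=0})\to R$ is an isomorphism of Hopf algebras, and it is $J$-equivariant, hence an isomorphism of Hopf algebras in left $J$-modules. Therefore $\gr_F C\cong S(\g_{\not=0})\rtimes J$, and under these identifications $\gr_F(\phi)$ restricts to the identity on $J$ and to the above isomorphism $S(\g_{\not=0})\to R$, so it is bijective; by Step~1, $\phi$ is an isomorphism. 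The main obstacle is Step~2: executing the biproduct decomposition of $\gr_F C$ inside $\Ver_p$ and verifying $\Prim(R)=\g_{\not=0}$ together with $\Delta_R^{-1}(R_0\otimes R_0)=\mathbf{1}$ — that is, cleanly separating the ordinary Hopf algebra $J$ from the reduced local part $R$, so that Theorem~\ref{purelyoddcocomm} genuinely applies; after that the argument is filtration bookkeeping.
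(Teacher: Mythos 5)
Your proposal is correct and follows the paper's strategy in its essentials: the same reduction to the associated graded of the relative coradical filtration on $C$ (matched against the PBW filtration of Proposition~\ref{dHCPBW} on $H(J,\g)$), the same coinvariant subobject (your $R$ is exactly the paper's $S$), and the same key input, Corollary~\ref{symcoalg}, to identify that coinvariant coalgebra with $S(\g_{\not=0})$. Where you genuinely differ is the finishing move: you invoke the Radford biproduct (bosonization) theorem for the graded Hopf projection $\gr_F C \rightarrow J$ to split $\gr_F C \cong R \rtimes J$, whereas the paper instead proves injectivity of $\phi$ on primitives and then deduces surjectivity from the fact that $H(J,\g)$ is an injective (cofree) $J$-comodule, comparing cosocles, citing Masuoka for the structure of the coinvariants. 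Your route is legitimate: the projection theorem is diagrammatic and valid in any symmetric tensor category, and cocommutativity forces the $J$-coaction on $R$ to be trivial, so the biproduct really is the plain smash product with tensor-product coalgebra structure; this also lets you bypass the paper's initial reduction to irreducible $C$ and $J$. Two small cautions: the inference ``the elements of $\g_{\not=0}$ commute in $\gr_F C$, so $R$ is commutative'' is an overreach, since commutativity of all of $R$ would presuppose that $R$ is generated in degree one, which is essentially what is being proved; but it is also unnecessary, because commuting of the degree-one elements is exactly what makes $T(\g_{\not=0}) \rightarrow R$ factor through $S(\g_{\not=0})$, after which Corollary~\ref{symcoalg} together with injectivity-on-primitives finishes the identification with no commutativity hypothesis on $R$. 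Likewise the condition $\Delta_R^{-1}(R_0 \otimes R_0) = \mathbf{1}$ is only needed if you insist on quoting Theorem~\ref{purelyoddcocomm} verbatim rather than Corollary~\ref{symcoalg}, and your justification of it (``coradically graded'') is loose, since $\gr_F C$ is graded by the relative, not the absolute, coradical filtration; irreducibility of $R$, which is all the corollary requires, follows cleanly from the fact that a grouplike must be concentrated in degree zero, where $R[0]=\mathbf{1}$.
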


\begin{proof} We may assume $C$ and $J$ are irreducible as coalgebras. We begin by reducing to the associated graded under relative coradical filtrations on $H(J, \g)$ and $C$ (as in Proposition \ref{relativecoradical}). For $H(J, \g)$ this filtration is the same as the PBW filtration obtained by setting $J$ in degree $0$ and $\g$ in degree $1$. Hence, by the PBW decomposition on $H(J, \g)$, we see that  $\gr(H(J, \g)) \cong S(\g_{\not=0}) \otimes \gr J$
as a Hopf algebra in $\Ver_{p}^{\ind}$. For $C$, Proposition \ref{relativecoradical} tells us that $\Prim(\gr_{F}(C)) = \g$ as a subobject of $\gr_{F}(C)$. Hence, by taking the associated graded Hopf algebra under this filtration, we reduce to the case where $C$ is an $\mathbb{N}$-graded cocommutative Hopf algebra with $C[0] = J$ and the Lie bracket on $\g_{\not=0}$ being trivial. Additionally, in this case, the homomorphism $\phi$ becomes a homomorphism $S(\g_{\not=0}) \rtimes J \rightarrow C.$

Now, $\phi$ is injective as it is injective on primitives. Hence, we just need to prove that $\phi$ is surjective. We may consider $C$ as a right $J$-comodule via the projection $\pi: C \rightarrow C[0] = J$. Let $S$ be the invariants of this coaction i.e. $S$ is the kernel of $(\id \otimes \pi) \circ \Delta - \iota_{J} \otimes \id: C \rightarrow C \otimes J.$
Then, as in \cite[Proposition 3.5]{M2}, $S$ is an irreducible cocommutative coalgebra in $\Ver_{p}^{\ind}$ and $\Prim(S) = \g_{\not=0}.$ Additionally, $\phi$ induces a map of coalgebras $S(\g_{\not=0}) \rightarrow S$ that is an isomorphism on primitives. Hence, by Corollary \ref{symcoalg}, $\phi$ induces a surjection $S(\g_{\not=0}) \rightarrow S$. 

$H(J, \g)$ is injective as a $J$-comodule, $S(\g_{\not=0})$ is the cosocle of $J$ in $H(J, \g)$ and $S$ is the cosocle of $J$ in $C$ (by the assumption of irreducibility). Hence, $\phi$ must be a surjection.
\end{proof} 

\begin{remark} Let us elaborate on the intuition behind the proof of this proposition when $C = (A^{\circ})^{1}$ for some finitely generated commutative Hopf algebra $A$. Here, $C$ is the distribution algebra on $\g = \Lie(\Spec(A))$. The proposition says that  $C = (\overline{A}^{\circ})^{1} \otimes S(\g_{\not=0})$
as a module over $(\overline{A}^{\circ})^{1}$. Normally, distribution algebras aren't enveloping algebras but rather divided power enveloping algebras. What this proposition is saying is that there are no divided powers in the part coming from $\g_{\not=0}$. This is because Lemma \ref{nilpotence} shows us that the Frobenius maps on simple objects $L_{i}$ for $i > 1$ are $0$. All of this informal divided power discussion is formally encoded in the computation of cohomology of the coHochschild complex for $S(\g_{\not=0})$.

\end{remark}

\subsection{Proof of equivalence between the categories of cocommutative Hopf algebras in $\Ver_{p}^{\ind}$ and dual Harish-Chandra pairs in $\Ver_{p}$}

\begin{theorem} \label{dHCPtheorem}

\begin{enumerate} \item[1.] Let $C$ be a cocommutative algebra in $\Ver_{p}^{\ind}$ and $(J, \g)$ the corresponding Harish-Chandra pair. Then, $H(J, \g) \cong C$. 

\item[2.] Let $(J, \g)$ be a Harish-Chandra pair in $\Ver_{p}$. Then, $\mathbf{DHC}(H(J, \g)) = (J, \g)$. 

\end{enumerate}

\end{theorem}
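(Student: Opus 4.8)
The plan is to reduce everything to associated‑graded computations. Part 1 requires no new work: it is precisely Theorem~\ref{PBWcocomm}, which says that for $(J,\g)=(\Delta^{-1}(C_0\otimes C_0),\Prim(C))$ the canonical Hopf algebra map $\phi\colon H(J,\g)\to C$ induced by the inclusions of $J$ and of $\g$ is an isomorphism.

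For part 2 I would write $H=H(J,\g)$ and verify that $\mathbf{DHC}(H)=(\Delta^{-1}(H_0\otimes H_0),\Prim(H))$ equals $(J,\g)$ in all of its structure: the Hopf algebra in $\Vec$, the Lie algebra in $\Ver_p$, the $J$-module structure, and the comparison isomorphism $i$. The backbone is the PBW filtration $F$ of Proposition~\ref{dHCPBW}, for which $\gr_F H\cong S(\g_{\not=0})\rtimes J$ as Hopf algebras; since the smash product does not twist the comultiplication, $\gr_F H\cong S(\g_{\not=0})\otimes J$ as coalgebras, so (reducing to identity components and applying Proposition~\ref{corad-prop}(5)) $\Prim(\gr_F H)=\Prim(S(\g_{\not=0}))\oplus\Prim(J)$. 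Here $\Prim(J)=\g_0$ by the definition of a dual Harish-Chandra pair, while $\Prim(S(\g_{\not=0}))=\g_{\not=0}$ because $\g_{\not=0}$ has no trivial summand, so each $S(L_i)$ with $i\ge 2$ vanishes above degree $p-i<p$ by Lemma~\ref{nilpotence} and there are no ``divided-power'' primitives (the precise statement being Lemma~\ref{oddKoszul}). Thus $\Prim(\gr_F H)=\g$.

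From this I would deduce $\Prim(H)=\g$: the inclusion $\g\subseteq\Prim(H)$ is clear ($\g_{\not=0}$ is primitive in $T(\g_{\not=0})$ and survives to the quotient, and $\g_0=\Prim(J)$ is primitive in the Hopf subalgebra $J\subseteq H$ by Proposition~\ref{smash}(1)); for the reverse inclusion, give $\Prim(H)$ the filtration induced from $F$, note that passing to $\gr_F$ sends $\Prim(H)$ into $\Prim(\gr_F H)=\g$ and sends $\g\subseteq H$ onto the copy of $\g$ in filtration degrees $0,1$, so $\gr_F(\Prim H)=\gr_F\g$, and as the filtrations are exhaustive $\Prim(H)=\g$. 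Next I would show $\Delta^{-1}(H_0\otimes H_0)=J$. Call this object $J'$; it is the largest Hopf subalgebra of $H$ lying in $\Vec$ (it is one by Theorem~\ref{coforwardfunctor}, and it contains every such), and $J\subseteq J'$. A simple summand $Y\subseteq J'$ has its $\gr_F$-image contained in $\Delta_{\gr}^{-1}((\gr_F H)_0\otimes(\gr_F H)_0)$, so it suffices to prove this last object equals $J$ in $S(\g_{\not=0})\rtimes J$. Writing $B$ for it, one has $J\subseteq B\subseteq S(\g_{\not=0})_0\otimes J$; the counit-tensor-identity coalgebra map $\epsilon_{S(\g_{\not=0})}\otimes\id_J\colon S(\g_{\not=0})\otimes J\to J$ restricts to a coalgebra surjection $B\to J$ splitting $J\hookrightarrow B$, with kernel $K=B\cap(S(\g_{\not=0})_{\ge 1}\otimes J)$, a coideal of $B$; since $\Prim(B)\subseteq\Prim(S(\g_{\not=0})\otimes J)\cap(S(\g_{\not=0})_0\otimes J)=\g_0$, which meets $S(\g_{\not=0})_{\ge 1}\otimes J$ trivially, $K\cap\Prim(B)=0$, hence $K=0$ by Corollary~\ref{coideal-primitives} (applied over the grouplikes of $J$), so $B=J$; consequently $\bar Y\subseteq J$, $Y\subseteq F_0H=J$, and $J'=J$.

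Finally, the remaining data match by construction of $H=T(\g_{\not=0})\rtimes J/I(J,\g)$: the adjoint action of $J$ on $\g_{\not=0}$ inside $H$ is the original action by Proposition~\ref{smash}(2), the adjoint action of $J$ on $\g_0=\Prim(J)$ coincides with the Lie action of $\g_0$ by the pair axioms, the commutator on $\g_{\not=0}$ is the Lie bracket by definition of $I(J,\g)$, and the comparison isomorphism attached to $\mathbf{DHC}(H)$ is just $\Prim(J)\hookrightarrow H$ with image in $\g_0$, which is the map $i$ used to build $H$; this gives $\mathbf{DHC}(H(J,\g))=(J,\g)$. I expect the main obstacle to be the identification $\Delta^{-1}(H_0\otimes H_0)=J$: the subtlety is to rule out Hopf subalgebras in $\Vec$ that ``entangle'' the nontrivial part of $S(\g_{\not=0})$ with $J$, which is exactly what the splitting $\epsilon_{S(\g_{\not=0})}\otimes\id_J$ together with Corollary~\ref{coideal-primitives} is designed to resolve; everything else is either immediate from Theorem~\ref{PBWcocomm} or a routine unwinding of the smash-product structure of Proposition~\ref{smash}.
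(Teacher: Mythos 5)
Your proposal is correct and follows the paper's own route: part 1 is exactly Theorem \ref{PBWcocomm}, and part 2 is deduced from the PBW filtration of Proposition \ref{dHCPBW}, which is precisely what the paper's (very terse) proof invokes. The only difference is that you spell out the details the paper leaves implicit, namely the associated-graded computation of $\Prim(H(J,\g))=\g$ (via Lemma \ref{oddKoszul}) and the identification $\Delta^{-1}(H(J,\g)_{0}\otimes H(J,\g)_{0})=J$, and these verifications are sound.
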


\begin{proof} Part 1 is Theorem \ref{PBWcocomm}. Part 2 follows from Proposition \ref{dHCPBW}.

\end{proof}

\section{\Large{\textbf{Inverse Functor for Harish-Chandra pairs: construction via duality}}}

In this section, we will give the construction of an inverse functor for Harish-Chandra pairs by first giving the definition of the inverse and then exploring some dualities that come out of the definition. 

\begin{definition} Let $(H, W)$ be a Harish-Chandra pair. Define $\A(H, W) := H \ltimes T_{c}(W_{\not=0}),$
the smash product Hopf algebra. 

\end{definition}

Note that $\A(H,W)$ is an $\mathbb{N}$-graded commutative Hopf algebra in $\Ver_{p}^{\ind}$, with the grading induced from the grading on $T_{c}(W)$. Hence, we can also define a completed version of this algebra that lives in $\Ver_{p}^{\pro}$.

\begin{definition} Define $\widehat{\A}(H, W) := \prod_{i=0}^{\infty} H \otimes T_{c}^{n}(W_{\not=0}).$

\end{definition}

\begin{proposition} Let $(H, W)$ be a Harish-Chandra pair and let $(H^{\circ}, W^{*})$ be the corresponding dual Harish-Chandra pair. Then, there is a unique non-degenerate $\mathbb{N}$-graded Hopf pairing $\A(H, W) \otimes \mathcal{H}(H^{\circ}, W^{*})$
induced from the pairings between $H$ and $H^{\circ}$ and between $T_{c}(W)$ and $T(W^{*})$.

\end{proposition}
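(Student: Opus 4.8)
\emph{Construction.} The plan is to build the pairing ``componentwise'' from the two given pairings, using that both smash products, viewed merely as ind-objects of $\Ver_{p}^{\ind}$, are honest tensor products: $\A(H,W)=H\ltimes T_{c}(W_{\not=0})\cong H\otimes T_{c}(W_{\not=0})$ and $\H(H^{\circ},W^{*})=T(W_{\not=0}^{*})\rtimes H^{\circ}\cong T(W_{\not=0}^{*})\otimes H^{\circ}$ (using $W_{\not=0}^{*}\cong(W^{*})_{\not=0}$). Write $b_{H}\colon H^{\circ}\otimes H\to\mathbf{1}$ for the non-degenerate Hopf pairing of Proposition \ref{non-degencirc}, and $b_{T}\colon T_{c}(W_{\not=0})\otimes T(W_{\not=0}^{*})\to\mathbf{1}$ for the $\mathbb{N}$-graded non-degenerate Hopf pairing between the tensor coalgebra and the tensor algebra of $W_{\not=0}$, whose degree-$0$ part is $\id_{\mathbf 1}$ and whose degree-$1$ part is the evaluation $\ev\colon W_{\not=0}\otimes W_{\not=0}^{*}\to\mathbf 1$. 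I would then define
$$b:=b_{H}\circ c_{H,H^{\circ}}\circ\big(\id_{H}\otimes b_{T}\otimes\id_{H^{\circ}}\big)\colon\ H\otimes T_{c}(W_{\not=0})\otimes T(W_{\not=0}^{*})\otimes H^{\circ}\longrightarrow\mathbf{1},$$
the inner two factors paired by $b_{T}$ (after the canonical unitor) and the outer two by $b_{H}$ after one braiding. It is automatically $\mathbb{N}$-graded, since $b_{T}$ is graded while $H$ and $H^{\circ}$ sit in degree $0$ of $\A$ and of $\H$ respectively.

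\emph{Non-degeneracy.} This would follow from non-degeneracy of $b_{H}$ and of $b_{T}$, together with the general fact that, in a semisimple symmetric tensor category and its ind-completion, the external tensor product of two non-degenerate pairings is again non-degenerate (decompose the relevant subobjects into sums of simples and run the usual finite-sum / linear-independence argument as in $\Vec$); here one uses that $b_{T}$ is graded with finite-length homogeneous pieces.

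\emph{Hopf-pairing axioms.} I expect this to be the main obstacle. The unit and counit axioms are immediate, as the units and counits of $\A$ and $\H$ are tensor products of those of $H$, $T_{c}(W_{\not=0})$, $H^{\circ}$, $T(W_{\not=0}^{*})$, and $b_{H}$, $b_{T}$ satisfy theirs. The two multiplicative axioms are the crux, and the key point is that the \emph{twists} defining the two smash products are mutually dual under $b$: the multiplication of $\A=H\ltimes T_{c}(W_{\not=0})$ is twisted by the coaction of $H$ on $T_{c}(W_{\not=0})$, while the comultiplication of $\H=T(W_{\not=0}^{*})\rtimes H^{\circ}$ is twisted by the action of $H^{\circ}$ on $T(W_{\not=0}^{*})$; but, by the definition of the dual Harish-Chandra pair $(H^{\circ},W^{*})$, that action is precisely the $b_{H}$-transpose of that coaction, and $b_{T}$ is the multiplicative (diagonal) extension of $\ev$, which trivially intertwines coaction and action. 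So upon expanding $b\circ(m_{\A}\otimes\id)$ and $b\circ(\id\otimes m_{\H})$ the braidings produced by the two smash-product twists cancel, and the axioms reduce to the corresponding Hopf-pairing axioms for $b_{H}$ and for $b_{T}$ separately. The cleanest way to organize the braiding bookkeeping is to note that $\A$ and $\H$ are the Radford biproducts (bosonizations) of the data $(H,T_{c}(W_{\not=0}))$ in $H$-comodules and $(H^{\circ},T(W_{\not=0}^{*}))$ in $H^{\circ}$-modules, that $(b_{H},b_{T})$ is a compatible pairing of such data, and that bosonization is functorial for this; failing that, one checks the two axioms directly on algebra generators.

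\emph{Uniqueness.} Any $\mathbb{N}$-graded Hopf pairing $b'\colon\A(H,W)\otimes\H(H^{\circ},W^{*})\to\mathbf{1}$ is determined by its restriction to $\A(H,W)\otimes V$ for any subobject $V$ generating $\H(H^{\circ},W^{*})$ as an algebra, since Hopf-pairing axiom (2) expresses $b'(x,y_{1}\cdots y_{m})$ through $\Delta_{\A}^{(m-1)}(x)$ and $b'|_{\A\otimes V}$. By Proposition \ref{smash}(4) one may take $V=H^{\circ}\oplus W_{\not=0}^{*}$. Gradedness forces $b'|_{\A\otimes H^{\circ}}$ to be supported on $\A[0]\otimes\H[0]=H\otimes H^{\circ}$, where it equals $b_{H}$ by the ``induced from $b_{H}$'' hypothesis; and it forces $b'|_{\A\otimes W_{\not=0}^{*}}$ to be supported on $\A[1]\otimes\H[1]=(H\otimes W_{\not=0})\otimes W_{\not=0}^{*}$, where Hopf-pairing axiom (1), primitivity of $W_{\not=0}^{*}$ in $\H$, the counit axiom, and the normalization $b_{T}|_{1}=\ev$ force $b'(h\cdot w,\xi)=\epsilon_{H}(h)\,\ev(w,\xi)$. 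Hence $b'=b$.
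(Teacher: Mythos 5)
Your proposal is correct and is essentially the detailed version of the paper's argument, which simply asserts existence and uniqueness and derives non-degeneracy from the non-degeneracy of the two component pairings (Proposition \ref{non-degencirc} and the graded pairing between $T_{c}(W_{\not=0})$ and $T(W_{\not=0}^{*})$) exactly as you do. One mislabeling to fix: with the paper's conventions the twists sit on the opposite structure maps from where you place them --- in $\A(H,W)=H\ltimes T_{c}(W_{\not=0})$ the multiplication is the plain tensor-product one and it is the \emph{comultiplication} that is twisted by the $H$-coaction, while in $\H(H^{\circ},W^{*})=T(W_{\not=0}^{*})\rtimes H^{\circ}$ the comultiplication is untwisted and it is the \emph{multiplication} that is twisted by the $H^{\circ}$-action. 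This does not damage your argument; it in fact makes the bookkeeping line up as a Hopf pairing should: the untwisted $m_{\A}$ pairs against the untwisted $\Delta_{\H}$, and the twisted $\Delta_{\A}$ pairs against the twisted $m_{\H}$, where the cancellation is exactly your key point that the $H^{\circ}$-action on $W^{*}$ is the $b_{H}$-transpose of the $H$-coaction on $W$ by the definition of the (dual) Harish-Chandra pair. Your non-degeneracy and uniqueness arguments are fine as written.
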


\begin{proof} The fact that an $\mathbb{N}$-graded Hopf pairing exists and is unique is obvious. The fact that it is non-degenerate follows from the fact that each pairing is non-degenerate, which follows from Proposition \ref{non-degencirc} for $H$ and $H^{\circ}$ and the definition of tensor algebras and co-algebras.
\end{proof}

Now, $\widehat{\A}(H, W)$ is not a Hopf algebra in $\Ver_{p}^{\ind}$. The muitiplication unit, counit, and antipode maps extend without a problem but the comultiplication requires a completed tensor product, which is the natural monoidal structure on the pro-completion of $\Ver_{p}$. Hence, it is a topological pro-Hopf algebra in $\Ver_{p}$. With this structure we can make sense of the following duality statement.

\begin{proposition} The non-degenerate pairing between $\A(H, W)$ and $\H(H^{\circ}, W^{*})$, extends to a non-degenerate pairing $\widehat{\A}(H, W) \otimes \H(H^{\circ}, W^{*}) \rightarrow \mathbf{1}.$

\end{proposition}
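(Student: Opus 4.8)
The plan is to exploit the $\mathbb{N}$-grading present on both sides, together with the fact that the pairing of the previous proposition vanishes between graded pieces of different degree. Write $\A(H,W)=\bigoplus_{n\ge 0}\A[n]$ with $\A[n]=H\otimes T_{c}^{n}(W_{\not=0})$, and $\H(H^{\circ},W^{*})=\bigoplus_{n\ge 0}\H[n]$ with $\H[n]=T^{n}(W^{*}_{\not=0})\otimes H^{\circ}$; the completed algebra has the same graded pieces, only assembled as a product, $\widehat{\A}(H,W)=\prod_{n\ge 0}\A[n]$. The graded Hopf pairing $b$ between $\A(H,W)$ and $\H(H^{\circ},W^{*})$ is the sum of its components $b_{n}\colon\A[n]\otimes\H[n]\to\mathbf{1}$, and I would first record that each $b_{n}$ is itself non-degenerate: if $K\subseteq\A[n]$ satisfied $b_{n}|_{K\otimes\H[n]}=0$, then $b|_{K\otimes\H}=0$ by gradedness, so $K=0$ since $b$ is non-degenerate, and symmetrically for subobjects of $\H[n]$.

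Next I would construct the extended pairing $\widehat{b}\colon\widehat{\A}(H,W)\otimes\H(H^{\circ},W^{*})\to\mathbf{1}$. Since $\H=\bigoplus_{n}\H[n]$ is a genuine direct sum and $\otimes$ commutes with it, $\widehat{\A}\otimes\H=\bigoplus_{n}(\widehat{\A}\otimes\H[n])$, so a morphism out of this object is the same as a family of morphisms out of the summands; I take the $n$-th one to be the composite $\widehat{\A}\otimes\H[n]\xrightarrow{\,p_{n}\otimes\id\,}\A[n]\otimes\H[n]\xrightarrow{\,b_{n}\,}\mathbf{1}$, where $p_{n}\colon\widehat{\A}\to\A[n]$ is the projection from the product onto its $n$-th factor. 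Because $\A\hookrightarrow\widehat{\A}$ is the canonical inclusion of the direct sum into the product, $\widehat{b}$ restricts to $b$ on $\A\otimes\H$, so it genuinely extends $b$; it is again $\mathbb{N}$-graded, vanishing off the diagonal $\bigoplus_{n}\A[n]\otimes\H[n]$.

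It then remains to verify that $\widehat{b}$ is non-degenerate. The right kernel is immediate: a subobject $Y\subseteq\H$ with $\widehat{b}|_{\widehat{\A}\otimes Y}=0$ is in particular killed by $\A\otimes Y\subseteq\widehat{\A}\otimes Y$, hence $Y=0$ by non-degeneracy of $b$. For the left kernel, suppose $V'\subseteq\widehat{\A}$ satisfies $\widehat{b}|_{V'\otimes\H}=0$. Fixing $n$, by construction $\widehat{b}|_{V'\otimes\H[n]}=b_{n}\circ(p_{n}|_{V'}\otimes\id)$; tensoring the epimorphism $V'\twoheadrightarrow p_{n}(V')$ with $\H[n]$ and using exactness of $\otimes$, the vanishing of this composite forces $b_{n}|_{p_{n}(V')\otimes\H[n]}=0$, whence $p_{n}(V')=0$ by non-degeneracy of $b_{n}$. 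Since this holds for every $n$, and a subobject of the product $\prod_{n}\A[n]$ with vanishing image under every coordinate projection is zero, we get $V'=0$.

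The underlying bookkeeping is elementary; the step that needs genuine care is the pro-object formalism around $\widehat{\A}$ — treating $\widehat{b}$ as an honest morphism pairing the pro-(ind-)object $\widehat{\A}$ with the ind-object $\H$, and justifying the two structural facts used above, that $\otimes$ distributes over the relevant colimits and that a subobject of $\prod_{n}\A[n]$ annihilated by every coordinate projection vanishes. The conceptual point that makes the argument work is precisely that the grading confines the pairing to the diagonal $\bigoplus_{n}\A[n]\otimes\H[n]$: enlarging the left-hand factor from the direct sum to the product contributes nothing new to any pairing against a fixed degree, so no new left-kernel appears and non-degeneracy survives the completion.
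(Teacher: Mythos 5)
Your proof is correct, and it supplies precisely the argument the paper leaves implicit: the paper states this proposition without proof, regarding it as immediate from the $\mathbb{N}$-gradedness of the pairing, which confines everything to the diagonal $\bigoplus_{n}\A[n]\otimes\H[n]$. Your degreewise construction of $\widehat{b}$ via the coordinate projections, together with the observation that a subobject of $\prod_{n}\A[n]$ with vanishing coordinate projections is zero (legitimate here since $\Ver_{p}^{\pro}$ embeds in $\Ver_{p}^{\ind}$ by semisimplicity), is exactly the intended justification.
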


To understand this pairing fully, we need to use the terminology of internal Homs in module categories (see \cite[Section 7.9]{EGNO}). Consider the category $\mathcal{C}^{\circ}$ of left $H^{\circ}$-modules in $\Ver_{p}^{\ind}$ that actually live inside $\Ver_{p}$. This is a module category over $\Ver_{p}$ and we have an internal Hom functor $\iHom: \mathcal{C}^{\circ} \times \mathcal{C}^{\circ} \rightarrow \Ver_{p}$
defined by the property that for any object $X \in \Ver_{p}$, $M_{1}, M_{2} \in \mathcal{C}^{\circ}$, $\mathrm{Hom}_{\mathcal{C}^{\circ}}(M_{1} \otimes X, M_{2}) = \mathrm{Hom}_{\Ver_{p}}(X, \iHom(M_{1}, M_{2})).$

This functor makes sense if $M_{1}$ is a left $H^{\circ}$-module in $\Ver_{p}^{\ind}$ as well. In this case, the internal Hom gives us a pro-object. This is because the internal Hom sends an inductive system in $M_{1}$ to the dualized projective system, which can be seen from the universal property defining the functor. Additionally, because $\Ver_{p}$ is semisimple and ind-objects are merely infinite direct sums, it also works if $M_{2}$ is an ind-object rather than an object of finite length in $\Ver_{p}$.

The reason to bring up this piece of machinery is the following fact:

\begin{proposition} Let $X$ be any object in $\Ver_{p}$ and $Y$ any right $H^{\circ}$-module in $\Ver_{p}^{\ind}$. Then, there is an isomorphism $\iHom(X \otimes H^{\circ} , Y) \cong X^{*} \otimes Y$
as objects in $\Ver_{p}^{\ind}$, with $X \otimes H^{\circ}$ given the free module structure.

\end{proposition}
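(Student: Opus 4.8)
The plan is to pin down $\iHom_{\mathcal{C}^{\circ}}(X \otimes H^{\circ}, Y)$ by showing that $X^{*} \otimes Y$ represents the same functor on $\Ver_{p}$ and then invoking Yoneda. Two adjunctions do all the work. First, the free module functor $Z \mapsto Z \otimes H^{\circ}$ from $\Ver_{p}^{\ind}$ to the category of right $H^{\circ}$-modules in $\Ver_{p}^{\ind}$ is left adjoint to the forgetful functor, and it is $\Ver_{p}$-linear: for $Z \in \Ver_{p}^{\ind}$ and $T \in \Ver_{p}$, the braiding gives an isomorphism of right $H^{\circ}$-modules $(Z \otimes H^{\circ}) \otimes T \cong (Z \otimes T) \otimes H^{\circ}$, since the $H^{\circ}$-action only touches the $H^{\circ}$-tensorand and the isomorphism merely rearranges tensor factors. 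Second, since $X$ is rigid in $\Ver_{p}$ with dual $X^{*}$, the functor $- \otimes X$ on $\Ver_{p}^{\ind}$ is left adjoint to $- \otimes X^{*}$; combined with the braiding this gives $\mathrm{Hom}_{\Ver_{p}^{\ind}}(X \otimes T, Y) \cong \mathrm{Hom}_{\Ver_{p}^{\ind}}(T, X^{*} \otimes Y)$, and this extends to ind-objects $Y$ because $- \otimes X^{*}$ is exact and commutes with filtered colimits.

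Putting these together, I would compute, for an arbitrary $T \in \Ver_{p}$,
\begin{align*}
\mathrm{Hom}_{\mathcal{C}^{\circ}}\big((X \otimes H^{\circ}) \otimes T,\, Y\big)
&\cong \mathrm{Hom}_{\mathcal{C}^{\circ}}\big((X \otimes T) \otimes H^{\circ},\, Y\big) \\
&\cong \mathrm{Hom}_{\Ver_{p}^{\ind}}(X \otimes T,\, Y) \\
&\cong \mathrm{Hom}_{\Ver_{p}^{\ind}}(T,\, X^{*} \otimes Y) \\
&= \mathrm{Hom}_{\Ver_{p}}(T,\, X^{*} \otimes Y),
\end{align*}
where the first isomorphism is the $\Ver_{p}$-linearity of the free functor, the second is the free–forgetful adjunction (with $X \otimes T$ of finite length but $Y$ allowed to be an ind-object), the third is the rigidity adjunction together with the braiding, and the last line just records that $T$ has finite length. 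All of these are natural in $T$. Since the internal Hom is characterized by $\mathrm{Hom}_{\mathcal{C}^{\circ}}\big((X \otimes H^{\circ}) \otimes T, Y\big) = \mathrm{Hom}_{\Ver_{p}}\big(T, \iHom_{\mathcal{C}^{\circ}}(X \otimes H^{\circ}, Y)\big)$, the Yoneda lemma yields $\iHom_{\mathcal{C}^{\circ}}(X \otimes H^{\circ}, Y) \cong X^{*} \otimes Y$.

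The one point that genuinely needs care is the completion bookkeeping flagged just before the statement: because $X \otimes H^{\circ}$ is only an ind-object, the internal Hom is a priori a pro-object, so one must check that the functor $T \mapsto \mathrm{Hom}_{\mathcal{C}^{\circ}}\big((X \otimes H^{\circ}) \otimes T, Y\big)$ is actually representable by an object of $\Ver_{p}^{\ind}$. The displayed computation is exactly what shows this: $X^{*}$ has finite length, so $X^{*} \otimes Y$ is an honest ind-object (a direct sum of simples), which we view inside $\Ver_{p}^{\ind}$ via the embedding $\Ver_{p}^{\pro} \hookrightarrow \Ver_{p}^{\ind}$. I expect that verifying the free functor commutes on the nose with tensoring by objects of $\Ver_{p}$, and keeping straight which completion each object lives in, will be the only real subtlety; everything else is a formal chase of adjunctions.
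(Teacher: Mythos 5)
Your proposal is correct and follows essentially the same route as the paper: the paper's proof is exactly the chain $\mathrm{Hom}_{\mathcal{C}^{\circ}}(H^{\circ} \otimes X \otimes Z, Y) = \mathrm{Hom}_{\Ver_{p}}(X \otimes Z, Y) = \mathrm{Hom}_{\Ver_{p}}(Z, X^{*} \otimes Y)$ combined with the defining property of the internal Hom, which is what your adjunction chase spells out in more detail.
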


\begin{proof} This follows from the defining property and the fact that for any object $Z \in \Ver_{p}$,

$$\mathrm{Hom}_{\mathcal{C}^{\circ}}(H^{\circ} \otimes X \otimes Z, Y) = \mathrm{Hom}_{\Ver_{p}}(X \otimes Z, Y) = \mathrm{Hom}_{\Ver_{p}}(Z, X^{*} \otimes Y).$$

\end{proof}

Now, if $(H, W)$ is a Harish-Chandra pair in $\Ver_{p}$, then $H$ is naturally a right $H^{\circ}$-module. The action map $a: H \otimes H^{\circ} \rightarrow H$
is defined by comultiplying in $H$ and then pairing $H^{\circ}$ with the right tensor factor. The fact that this is unital and associative as an action can be checked via the non-degenerate pairing $b$ between $H$ and $H^{\circ}$. Using the fact that this is a Hopf pairing, we can see that $b \circ (\id \otimes a) = b \circ (m \otimes \id)$
from which the properties can be deduced.

Combining the above facts, we get the following result.

\begin{proposition} \label{internalhom} Let $(H, W)$ be a Harish-Chandra pair in $\Ver_{p}$. There is an isomorphism in $\Ver_{p}^{\ind}$,

$$\xi: H \otimes T^{n}(W_{\not=0}) \rightarrow \Hom_{\mathcal{C}^{\circ}}(T^{n}(W_{\not=0}^{*}) \otimes H^{\circ}, H).$$
These isomorphisms glue together to give a pro-object isomorphism $\xi: \widehat{\A}(H, W) \rightarrow \iHom(\H(H^{\circ}, W^{*}), H).$

\end{proposition}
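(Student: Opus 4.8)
The plan is to deduce both isomorphisms from the computation of internal Homs out of free $H^{\circ}$-modules proved in the proposition immediately above, applied one graded piece at a time and then assembled. Throughout I would use that $H$ is a right $H^{\circ}$-module via the action map $a$ described just before the statement, and that $W_{\not=0}\in\Ver_p$ has finite length, so all the $T^{n}(W_{\not=0})$ and $T^{n}(W_{\not=0}^{*})$ are genuine objects of $\Ver_p$ with well-behaved duals.

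First I would fix $n$ and apply the preceding proposition with $X = T^{n}(W_{\not=0}^{*})$ and $Y = H$, obtaining an isomorphism in $\Ver_p^{\ind}$
$$\iHom_{\mathcal{C}^{\circ}}\bigl(T^{n}(W_{\not=0}^{*}) \otimes H^{\circ},\, H\bigr) \;\cong\; \bigl(T^{n}(W_{\not=0}^{*})\bigr)^{*} \otimes H.$$
Then I would identify $\bigl(T^{n}(W_{\not=0}^{*})\bigr)^{*} \cong T^{n}(W_{\not=0})$: by rigidity of $\Ver_p$ one has $(W_{\not=0}^{*})^{*}\cong W_{\not=0}$ canonically, and the dual of an $n$-fold tensor power is the $n$-fold tensor power of the dual, up to the order-reversal supplied by the braiding of $\Ver_p$. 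Composing and applying the symmetry to interchange the two tensor factors yields the claimed $\xi\colon H \otimes T^{n}(W_{\not=0}) \xrightarrow{\ \sim\ } \iHom_{\mathcal{C}^{\circ}}\bigl(T^{n}(W_{\not=0}^{*}) \otimes H^{\circ}, H\bigr)$, and naturality of the whole construction in the first variable makes these compatible as $n$ varies.

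Next I would assemble the graded pieces into the pro-object statement. By Proposition \ref{smash}(4), $\H(H^{\circ}, W^{*}) = T(W_{\not=0}^{*}) \rtimes H^{\circ}$ is free as a right $H^{\circ}$-module on $T(W_{\not=0}^{*}) = \bigoplus_{n\ge 0} T^{n}(W_{\not=0}^{*})$, i.e.\ $\H(H^{\circ}, W^{*}) \cong \bigoplus_{n\ge 0} T^{n}(W_{\not=0}^{*}) \otimes H^{\circ}$ as a right $H^{\circ}$-module in $\Ver_p^{\ind}$. As explained in the paragraph preceding the proposition, $\iHom_{\mathcal{C}^{\circ}}(-,H)$ sends an inductive system (here a direct sum) to the dual projective system (here a direct product of pro-objects), so
$$\iHom_{\mathcal{C}^{\circ}}\bigl(\H(H^{\circ}, W^{*}),\, H\bigr) \;\cong\; \prod_{n \ge 0} \iHom_{\mathcal{C}^{\circ}}\bigl(T^{n}(W_{\not=0}^{*}) \otimes H^{\circ},\, H\bigr) \;\cong\; \prod_{n\ge 0} H \otimes T^{n}(W_{\not=0}) \;=\; \widehat{\A}(H, W),$$
which is precisely the asserted pro-object isomorphism.

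The main obstacle I anticipate is bookkeeping rather than conceptual: checking that the right $H^{\circ}$-module structure on $H$ implicit in the internal Hom agrees with the one used to build $\H(H^{\circ},W^{*})$ and its freeness in Proposition \ref{smash}, and that the order-reversal and braiding factors introduced by dualizing each $T^{n}(W_{\not=0}^{*})$ are uniform in $n$, so that the summandwise isomorphisms genuinely glue rather than gluing only up to braiding discrepancies. This amounts to tracking the coherence isomorphisms of $\Ver_p$ through the smash-product construction; I expect no real difficulty once conventions are fixed, since the substantive content is already carried by the preceding proposition and by the behaviour of $\iHom$ on ind- and pro-objects.
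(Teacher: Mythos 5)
Your proposal is correct and matches the paper's intended argument: the paper states this proposition with no separate proof beyond ``combining the above facts,'' meaning exactly your combination of the free-module computation $\iHom_{\mathcal{C}^{\circ}}(X \otimes H^{\circ}, Y) \cong X^{*} \otimes Y$ applied with $X = T^{n}(W_{\not=0}^{*})$, $Y = H$ (with the right $H^{\circ}$-action on $H$ defined via the Hopf pairing), together with the freeness of $\H(H^{\circ}, W^{*})$ as a right $H^{\circ}$-module from Proposition \ref{smash} and the fact that $\iHom$ turns the direct sum over $n$ into a product, yielding $\widehat{\A}(H, W)$. The bookkeeping points you flag (duality of $T^{n}(W_{\not=0}^{*})$ and compatibility of module structures) are exactly the implicit checks the paper elides, so no gap remains.
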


\begin{proposition} \label{hompairing} 

\begin{enumerate}

\item[1.] There is a natural morphism in $\Ver_{p}^{\ind}$, $\eta :  \iHom(\H(H^{\circ}, W^{*}), H) \otimes \H(H^{\circ}, W^{*})  \rightarrow H$
that is the pullback of the identity along the identification 

\begin{align*}
\iHom((\H(H^{\circ}, W^{*}), H), H) & \otimes \H(H^{\circ}, W^{*})\cong \\
&\Hom_{\Ver_{p}}(\iHom(\H(H^{\circ}, W^{*}), H), \iHom(\H(H^{\circ}, W^{*}), H)).
\end{align*}

\vspace{0.2cm}

\item[2.] Via $\xi$, the non-degenerate pairing $b$ between $\widehat{\A}(H, W)$ and $\H(H^{\circ}, W^{*})$ is identified as the following composite $ \widehat{\A}(H, W) \otimes  \H(H^{\circ}, W^{*}) \rightarrow H \rightarrow H \otimes H^{\circ} \rightarrow \mathbf{1},$ where the first map is $\eta$, the second map is the inclusion of the unit into $H^{\circ}$ and the third map is the pairing between $H^{\circ}$ and $H$.

\vspace{0.2cm}

\item[3.] If $M$ is a right $H^{\circ}$-submodule of $\H(H^{\circ}, W^{*})$, then $\xi$ identifies $\iHom_{\mathcal{C}^{\circ}}(\H(H^{\circ}, W^{*})/M, H)$
with $M^{\perp}$ under the pairing with $\widehat{\A}(H, W)$.

\end{enumerate}

\end{proposition}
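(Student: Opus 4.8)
The plan is to handle the three assertions in order: item (1) is purely formal, item (2) is the computational heart, and item (3) will follow quickly from (2) together with the left exactness of internal Hom.

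For (1), I would observe that $\eta$ is nothing but the evaluation (counit) of the $\bigl(-\otimes-,\iHom_{\mathcal{C}^{\circ}}\bigr)$ adjunction: taking the defining isomorphism $\mathrm{Hom}_{\mathcal{C}^{\circ}}(M_{1}\otimes X,M_{2})\cong\mathrm{Hom}_{\Ver_{p}}(X,\iHom_{\mathcal{C}^{\circ}}(M_{1},M_{2}))$ with $M_{1}=\H(H^{\circ},W^{*})$, $M_{2}=H$ and $X=\iHom_{\mathcal{C}^{\circ}}(\H(H^{\circ},W^{*}),H)$, and pulling back the identity, produces a morphism in $\mathcal{C}^{\circ}$ of the stated shape. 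The only subtlety is pro/ind bookkeeping: $\H(H^{\circ},W^{*})$ is the filtered colimit of its finite-length subobjects, so $\iHom_{\mathcal{C}^{\circ}}(\H(H^{\circ},W^{*}),H)$ is the corresponding cofiltered limit and $\eta$ is assembled from the evaluation maps on those finite pieces. Nothing here is difficult.

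For (2), I would reduce to a single graded degree $n$ and compare $b$ and the composite on $\bigl(H\otimes T^{n}(W_{\not=0})\bigr)\otimes\bigl(T^{n}(W^{*}_{\not=0})\otimes H^{\circ}\bigr)$, the second factor being the degree-$n$ piece of $\H(H^{\circ},W^{*})=T(W^{*}_{\not=0})\rtimes H^{\circ}$. Unwinding $\xi$ from the proof of Proposition~\ref{internalhom}, the chain of natural isomorphisms used there shows that $h\otimes t$ is sent to the $H^{\circ}$-module map determined by $x\otimes c\mapsto\langle t,x\rangle\,a_{H}(h\otimes c)$, where $a_{H}\colon H\otimes H^{\circ}\to H$ is the action obtained by comultiplying in $H$ and pairing $H^{\circ}$ against the right tensor factor. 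Hence $\eta$ applied to $(h\otimes t)\otimes(x\otimes c)$ is $\langle t,x\rangle\,a_{H}(h\otimes c)$, and composing with the inclusion of the unit of $H^{\circ}$ and the evaluation pairing amounts to applying $\epsilon_{H}$; by the counit axiom $\epsilon_{H}(a_{H}(h\otimes c))=\langle c,h\rangle$, so the composite yields $\langle t,x\rangle\langle c,h\rangle$. This is exactly the value of $b$ in degree $n$, which by construction is the tensor product of the pairing between $T^{n}(W_{\not=0})$ and $T^{n}(W^{*}_{\not=0})$ and the evaluation pairing between $H$ and $H^{\circ}$. The genuinely delicate part is keeping the braidings and the two smash-product conventions consistent through this identification, and this is the step I expect to be the main obstacle of the whole proposition.

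For (3), I would apply the left-exact contravariant functor $\iHom_{\mathcal{C}^{\circ}}(-,H)$ to the short exact sequence $0\to M\to\H(H^{\circ},W^{*})\to\H(H^{\circ},W^{*})/M\to 0$ of right $H^{\circ}$-modules. This presents $\iHom_{\mathcal{C}^{\circ}}(\H(H^{\circ},W^{*})/M,H)$ as the kernel of the restriction $\iHom_{\mathcal{C}^{\circ}}(\H(H^{\circ},W^{*}),H)\to\iHom_{\mathcal{C}^{\circ}}(M,H)$, which under $\xi$ is $\{a\in\widehat{\A}(H,W):\eta(a\otimes m)=0\text{ for all }m\in M\}$. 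That this lies inside $M^{\perp}$ is immediate from (2) after applying $\epsilon_{H}$. For the reverse inclusion, given $a\in M^{\perp}$ I would use that $\eta$ is a map of right $H^{\circ}$-modules in its second variable, so $m\cdot c\in M$ for every $c\in H^{\circ}$ and $\langle c,\eta(a\otimes m)\rangle=\epsilon_{H}\bigl(\eta(a\otimes m\cdot c)\bigr)=b(a\otimes m\cdot c)=0$; non-degeneracy of the evaluation pairing between $H^{\circ}$ and $H$ (Proposition~\ref{non-degencirc}) then forces $\eta(a\otimes m)=0$, and the identification with $M^{\perp}$ follows.
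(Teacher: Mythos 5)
Your proposal is correct and follows essentially the same route as the paper: part (1) is the counit of the internal-Hom adjunction, part (2) is proved by working in a fixed graded degree, identifying $\xi$ explicitly via the value of the corresponding $H^{\circ}$-module map at the unit and extending by $H^{\circ}$-linearity, and part (3) combines part (2), the $H^{\circ}$-linearity of $\eta$ in the $\H(H^{\circ},W^{*})$ variable, and non-degeneracy of the $H$--$H^{\circ}$ pairing. Your element-wise rendering of (3) (moving the $H^{\circ}$-action onto $m$ rather than onto $\eta(K\otimes M)$) is just a rephrasing of the paper's argument, so no substantive difference.
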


\begin{proof}

\begin{enumerate}

\item[1.] This is just the universal property of $\iHom$.

\item[2.]  Note that the piece of $\eta$ in graded degree $n$ $\eta_{n}:  \Hom_{\mathcal{C}^{\circ}}( T^{n}(W_{\not=0})^{*} \otimes H^{\circ}, H) \otimes T^{n}(W_{\not=0}^{*}) \otimes H^{\circ} \rightarrow H$
is obtained by identifying the left tensor factor with $H \otimes T^{n}(W_{\not=0})$ (as in the previous proposition) and then pairing $T^{n}(W^{*}_{\not=0})$ with $T^{n}(W_{\not=0})$ while acting by $H^{\circ}$ on $H$ via the right module structure. To see this, look at the following argument. The left tensor factor of the domain of $\eta_{n}$ in $\Ver_{p}$ is just $H \otimes T^{n}(W_{\not=0})$. The identity map on this space gets identified with the map $\eta'_{n}: T^{n}(W_{\not=0}) \otimes T^{n}(W)^{*}_{\not=0} \otimes H \rightarrow H$
given by evaluation on the first two tensor factors followed by identity on the third tensor factor. This is the same as the map $\eta_{n}$ restricted to the unit in $H^{\circ}$ (in the right tensor factor). But since $\eta_{n}$ is a $H^{\circ}$-module map, it suffices to compute it on the unit.

The proof of part 2 is now easy. Inside $\eta$, we have already done the pairing between the tensor algebras. The pairing between $H^{\circ}$ and $H$ remains and this comes from the fact that the evaluation pairing between $H$ and $H^{\circ}$ is the same as the map obtained by acting on $H$ by $H^{\circ}$ (which happens inside $\eta$) and then evaluating with the unit in $H^{\circ}$.

\item[3.] We can identify $\iHom(\H(H^{\circ}, W^{*})/M, H)$
as the complement of $M$ under the $\eta$ pairing. By part 2, it is clear that this sits inside $M^{\perp, b}$, the complement of $M$ under the non-degenerate pairing $b$. Let $K = M^{\perp, b}$. Then, the image of $K \otimes M$ under $\eta$ is a $H^{\circ}$-submodule of $H$ that is a subobject of the complement of the image of $\iota_{H^{\circ}}$ under the evaluation pairing between $H$ and $H^{\circ}$. However, this implies that

$$0 = \ev(\eta (K \otimes M) \otimes \mathrm{im}(\iota_{H^\circ})) = \ev(\eta (K \otimes M) \cdot H^{\circ} \otimes \mathrm{im}(\iota_{H^{\circ}})) = \ev(\eta(K \otimes M) \otimes H^{\circ})$$
and hence $\eta(K \otimes M) = 0$ by non-degeneracy of $\ev.$  This proves the reverse inclusion.

\end{enumerate}

\end{proof}

Using these dualities, we can finally construct a potential quasi-inverse to $\mathbf{HC}$.

\begin{definition} Let $(H, W)$ be a Harish-Chandra pair in $\Ver_{p}^{\ind}$. Recall the construction of $H(H^{\circ}, W^{*})$ as the quotient of $\H(H^{\circ}, W^{*})$ by an ideal $I(H^{\circ}, W^{*})$. Define $A(H, W) \subseteq \widehat{\A}(H, W)$
as the complement of $I(H^{\circ}, W^{*})$ under the non-degenerate pairing between $\H(H^{\circ}, W^{*})$ and $\widehat{\A}(H, W).$

\end{definition}

Here are some properties of $A(H, W)$.

\begin{lemma} Keeping the notation from the above definition,

\begin{enumerate}

\item[1.] $A(H, W)$ is a subalgebra in the topological algebra $\widehat{\A}(H, W)$ and is stable under the antipode.

\item[2.] $A(H, W)$ is discrete in $\widehat{\A}(H, W)$. Moreover the coproduct on $\widehat{\A}(H, W)$ induces a coproduct 

$A(H, W) \rightarrow A(H, W) \otimes A(H, W).$

\item[3.] $A(H, W)$ is a commutative ind-Hopf algebra in $\Ver_{p}.$

\end{enumerate}

Hence, $A$ defines a functor from the category of Harish-Chandra pairs in $\Ver_{p}$ to the category of commutative ind-algebras in $\Ver_{p}$.

\end{lemma}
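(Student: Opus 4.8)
The plan is to deduce all three assertions from one structural fact — that $A(H,W) = I^{\perp}$ is the orthocomplement, under a non-degenerate Hopf pairing, of a \emph{Hopf} ideal of $\H(H^{\circ},W^{*})$ — together with the internal-Hom description of $A(H,W)$ from Proposition~\ref{hompairing}. The preliminary observation I would use throughout is that $I := I(H^{\circ},W^{*})$ is a Hopf ideal: by Lemma~\ref{HtoUmap} the quotient $\H(H^{\circ},W^{*})/I \cong U(H^{\circ},W^{*})$ is a Hopf algebra, so $I$ is simultaneously an ideal, a coideal, and $S$-stable, and in particular $I\subseteq\ker\epsilon_{\H}$.

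For part (1) I would argue entirely with the Hopf-pairing axioms for $b\colon \widehat{\A}(H,W)\otimes\H(H^{\circ},W^{*})\to\mathbf{1}$. If $a,a'\in I^{\perp}$, then $b(aa',\xi)$ is computed by applying $\Delta_{\H}$ to $\xi$ and pairing the resulting two factors against $a$ and $a'$; since $\Delta_{\H}(\xi)\in\H\otimes I + I\otimes\H$ for $\xi\in I$, every term vanishes, so $aa'\in I^{\perp}$. The unit of $\widehat{\A}(H,W)$ pairs with $\xi$ by $\epsilon_{\H}(\xi)$, which kills $I$, and $b(S(a),\xi)=b(a,S_{\H}(\xi))$ kills $I$ because $S_{\H}(I)\subseteq I$; hence $A(H,W)$ is a unital subalgebra stable under the antipode. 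The dual bookkeeping, $b(\widehat{\Delta}(a),\xi\otimes\xi')=b(a,\xi\xi')$ — which vanishes whenever $\xi\in I$ or $\xi'\in I$ since $I$ is an ideal — shows $\widehat{\Delta}(A(H,W))\subseteq(I\otimes\H+\H\otimes I)^{\perp}$, and non-degeneracy identifies the right side with $A(H,W)\widehat{\otimes}A(H,W)$.

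The heart of the lemma is part (2): showing $A(H,W)$ is an honest ind-object rather than just a pro-object, and upgrading $\widehat{\otimes}$ to $\otimes$ in the coproduct. Here I would use Proposition~\ref{hompairing}(3) with $M=I$, which identifies $A(H,W)$ via $\xi$ with $\iHom_{\C^{\circ}}(H(H^{\circ},W^{*}),H)$. By Lemma~\ref{PBWnormal} together with Lemma~\ref{HtoUmap}, $H(H^{\circ},W^{*})\cong S(W^{*}_{\not=0})\otimes H^{\circ}$ as a right $H^{\circ}$-module, and Lemma~\ref{nilpotence} forces $S(W^{*}_{\not=0})$ to have finite length, concentrated in $T_{c}$-degrees $\le N$ for some $N$. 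The isomorphism $\iHom_{\C^{\circ}}(X\otimes H^{\circ},Y)\cong X^{*}\otimes Y$ preceding Proposition~\ref{internalhom} then gives $A(H,W)\cong S(W^{*}_{\not=0})^{*}\otimes H$, an object of $\Ver_{p}^{\ind}$. Concretely, Proposition~\ref{dHCPBW} says that the part of $\H(H^{\circ},W^{*})$ of degree $\le N$ together with $I$ spans all of $\H(H^{\circ},W^{*})$, so any $a\in I^{\perp}$ whose components of $T_{c}$-degree $\le N$ vanish must pair trivially with all of $\H(H^{\circ},W^{*})$ and hence be zero; this is precisely discreteness of $A(H,W)$ in $\widehat{\A}(H,W)$, and it confines $A(H,W)$ inside $\bigoplus_{n\le N}H\otimes T^{n}_{c}(W_{\not=0})$. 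Bounded $T_{c}$-degree makes $A(H,W)\widehat{\otimes}A(H,W)$ coincide with $A(H,W)\otimes A(H,W)$, so the coproduct of part (1) descends to $A(H,W)\to A(H,W)\otimes A(H,W)$.

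For part (3), the underlying algebra of $\widehat{\A}(H,W)=H\ltimes T_{c}(W_{\not=0})$ is the (commutative) tensor-product algebra of $H$ with the shuffle algebra $T_{c}(W_{\not=0})$, so its subalgebra $A(H,W)$ is commutative; coassociativity, counitality and compatibility of the coproduct with multiplication are inherited from the topological Hopf structure on $\widehat{\A}(H,W)$ — with the coproduct now valued in the uncompleted tensor product by part (2) — and antipode-stability is part (1), so $A(H,W)$ is a commutative ind-Hopf algebra in $\Ver_{p}$. Finally, a morphism $(f,\rho)\colon(H,W)\to(H',W')$ of Harish-Chandra pairs induces a morphism of the dual Harish-Chandra pairs and thus a Hopf-algebra map $\H((H')^{\circ},(W')^{*})\to\H(H^{\circ},W^{*})$ carrying the generating commutator-minus-bracket relations into one another, so it sends $I((H')^{\circ},(W')^{*})$ into $I(H^{\circ},W^{*})$; transposing through the pairings, the smash-product map $\widehat{\A}(H,W)\to\widehat{\A}(H',W')$ then restricts to an algebra homomorphism $A(H,W)\to A(H',W')$, which gives the functor. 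I expect the boundedness/discreteness step in part (2) to be the main obstacle — it is the only place the PBW theorem for $H(H^{\circ},W^{*})$ and the nilpotence Lemma~\ref{nilpotence} genuinely enter — while parts (1), (3), and functoriality are formal manipulations of Hopf pairings and orthocomplements.
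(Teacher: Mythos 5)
Your part (1), the discreteness statement, the functoriality paragraph, and the identification $A(H,W)\cong S(W^{*}_{\not=0})^{*}\otimes H$ via Proposition \ref{hompairing}(3) and the free-module internal-Hom formula (which does show $A(H,W)$ is an honest ind-object) are all sound. The gap is the sentence ``and it confines $A(H,W)$ inside $\bigoplus_{n\le N}H\otimes T_{c}^{n}(W_{\not=0})$'', on which your entire treatment of the coproduct rests. That containment is false in general: the ideal $I=I(H^{\circ},W^{*})$ is not homogeneous (its generators are a degree-$2$ commutator minus bracket terms of degree $\le 1$), so orthogonality of $a\in I^{\perp}$ to $u\cdot(\mathrm{commutator}-\mathrm{bracket})\cdot v$ with $u,v\in H^{\circ}$ does not annihilate the high-degree components of $a$; it expresses them in terms of pairings of the lower components against $H^{\circ}$. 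What you actually proved is that an element of $I^{\perp}$ all of whose components in degrees $\le N$ vanish is zero (correct, and this is discreteness); what fails is a uniform degree bound on all of $A(H,W)$. Already in the toy case $\g_{0}=\langle z\rangle$, $\g_{\not=0}=\langle x\rangle$, $[x,x]=z$ central (and the same phenomenon occurs in $\Ver_{p}$, e.g.\ for $\gl(\mathbf{1}\oplus L_{i})$, where the bracket $\g_{\not=0}\otimes\g_{\not=0}\to\g_{0}$ is nonzero), the orthogonality relations read $2\langle a_{n+2},x^{n+2}z^{m}\rangle=\langle a_{n},x^{n}z^{m+1}\rangle$: an element whose degree-zero component pairs nontrivially with $z$ (such elements exist, since by your identification, or by Proposition \ref{HCPBW}, every element of $H$ occurs as an $a_{0}$) is forced to have $a_{2}\neq 0$, and in general nonzero components persist up to a degree governed by $a_{0}$, which is unbounded over $A(H,W)$. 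Note that Proposition \ref{HCPBW} only asserts that the \emph{projection} $\psi_{W}$ onto $H\otimes S(W_{\not=0})$ is an isomorphism, not that $A(H,W)$ lies in bounded $T_{c}$-degree.

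Consequently your justification that $A(H,W)\widehat{\otimes}A(H,W)$ coincides with $A(H,W)\otimes A(H,W)$, and hence that the coproduct descends to $A(H,W)\to A(H,W)\otimes A(H,W)$ --- the real content of part (2) --- is not established. It can be repaired inside your own framework: from part (1), $\widehat{\Delta}(A(H,W))$ is orthogonal to $I\otimes\H+\H\otimes I$, so run the Proposition \ref{hompairing}(3) argument for this right $H^{\circ}\otimes H^{\circ}$-submodule of $\H(H^{\circ},W^{*})\otimes\H(H^{\circ},W^{*})$: its orthocomplement gets identified with $\iHom\bigl(H(H^{\circ},W^{*})\otimes H(H^{\circ},W^{*}),\,H\otimes H\bigr)\cong\bigl(S(W^{*}_{\not=0})\otimes S(W^{*}_{\not=0})\bigr)^{*}\otimes H\otimes H=A(H,W)\otimes A(H,W)$, using once more that $S(W^{*}_{\not=0})$ has finite length. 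This is essentially how \cite[Lemma 4.20]{M1} --- which is all the paper itself offers as proof of this lemma --- handles the point, and it is the one step of the lemma that is more than formal bookkeeping with Hopf pairings and orthocomplements.
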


\begin{proof} The proof of this lemma is identical to that of \cite[Lemma 4.20]{M1}, which is the corresponding lemma for supervector spaces.

\end{proof}

\begin{proposition} \label{HCPBW} Let $(H, W)$ be a Harish-Chandra pair, let $(J, \g) = (H^{\circ}, W^{*})$ be the corresponding dual Harish-Chandra pair. Let $i_{\g}: S(\g_{\not=0}) \rightarrow T(\g_{\not=0})$ be the inclusion of $S(\g_{\not=0})$ as the subalgebra of invariants under the braiding $c$ (which makes sense by Lemma \ref{nilpotence}). Let $\phi_{\g}$ be the unit-preserving isomorphism of left $J$-module coalgebras $ S(\g_{\not=0}) \rtimes J \rightarrow H(J, \g)$
induced by $i_{\g}$. Define the map 

$$\psi_{W} : A(H, W) \rightarrow \widehat{\A}(H, W) = H \widehat{\otimes} T_{c}(W_{\not=0}) \rightarrow H \otimes S(W_{\not=0})$$
where the first map is the inclusion and the last map is the natural projection $\id_{H} \otimes \pi_{W}.$ Let $b$ be the non-degenerate Hopf pairing between $A(H, W)$ and $H(J, \g)$. 

\begin{enumerate}

\item[1.] $\psi_{W}$ is a counit preserving isomorphism of ind-algebras in $\Ver_{p}$, such that

$$b \circ (\phi_{\g} \otimes \id_{A(H, W)}) = b \circ (\id_{H(J, \g)} \otimes \psi_{W}).$$

\item[2.] $A(H, W)$ is a finitely generated commutative ind-Hopf algebra in $\Ver_{p}$.

\end{enumerate}

\end{proposition}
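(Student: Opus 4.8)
The plan is to read off both parts from the non-degenerate Hopf pairing $b$ between $\widehat{\A}(H,W)$ and $\H(H^\circ,W^*)$ together with the PBW isomorphism $\phi_\g$, whose existence is guaranteed by the PBW decomposition of $H(J,\g)$ (Proposition~\ref{dHCPBW}). First I would identify $A(H,W)$ with an internal Hom. The ideal $I(H^\circ,W^*)$ is two-sided in $\H(H^\circ,W^*)=T(\g_{\neq 0})\rtimes H^\circ$, hence a right $H^\circ$-submodule, so Proposition~\ref{hompairing}(3) identifies $A(H,W)=I(H^\circ,W^*)^\perp$, via $\xi$ (Proposition~\ref{internalhom}), with $\iHom_{\C^\circ}(H(J,\g),H)$; and by Proposition~\ref{dualident} the pairing $b$ descends to a non-degenerate pairing $A(H,W)\otimes H(J,\g)\to\mathbf{1}$, still written $b$.

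\textbf{Identifying the isomorphism.} Next I would transport along $\phi_\g$. As $\phi_\g$ is an isomorphism of right $H^\circ$-module coalgebras, $\iHom_{\C^\circ}(\phi_\g,H)$ gives an isomorphism $\iHom_{\C^\circ}(H(J,\g),H)\cong\iHom_{\C^\circ}(S(\g_{\neq 0})\rtimes J,H)$. Since $S(\g_{\neq 0})\rtimes J$ is, as a right $H^\circ$-module, the direct sum $\bigoplus_n S^n(\g_{\neq 0})\otimes H^\circ$ of free modules, the free-module formula $\iHom_{\C^\circ}(X\otimes H^\circ,H)\cong X^*\otimes H$ yields, degree by degree,
$$A(H,W)\;\cong\;\bigoplus_n S^n(\g_{\neq 0})^*\otimes H\;\cong\;H\otimes S(W_{\neq 0}),$$
using $\g_{\neq 0}=W^*_{\neq 0}$ and $(S^nX)^*\cong S^n(X^*)$, which holds here because $W_{\neq 0}$ and $\g_{\neq 0}$ have no trivial summand, so by Lemma~\ref{nilpotence} their symmetric powers coincide with the corresponding symmetric invariants. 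The crux is that this composite is exactly $\psi_W$: the isomorphism $\xi$ is in each degree the canonical identification $H\otimes T_c^n(W_{\neq 0})\cong\iHom_{\C^\circ}(T^n(\g_{\neq 0})\otimes H^\circ,H)\cong T_c^n(W_{\neq 0})\otimes H$, and under it restriction of internal Homs along the inclusion $i_\g: S^n(\g_{\neq 0})\to T^n(\g_{\neq 0})$ becomes $\id_H$ tensored with the transpose of $i_\g$, namely the natural projection $T_c^n(W_{\neq 0})=(T^n(\g_{\neq 0}))^*\to(S^n(\g_{\neq 0}))^*=S^n(W_{\neq 0})$, i.e.\ $\pi_W$; whereas restriction along the quotient map $\H(H^\circ,W^*)\to H(J,\g)$ corresponds to the inclusion $A(H,W)\to\widehat{\A}(H,W)$. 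Their composite is $\iHom_{\C^\circ}(\phi_\g,H)$, an isomorphism, so $\psi_W=(\id_H\otimes\pi_W)|_{A(H,W)}$ is an isomorphism in $\Ver_p^{\ind}$.

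\textbf{Counits, pairing identity, multiplicativity.} Counit preservation is immediate, since $\pi_W$ is homogeneous and the counit on each side is projection onto degree $0$ followed by $\epsilon_H$. The pairing identity — in which, on the right-hand side, $b$ is read through the pairing between $S(\g_{\neq 0})\rtimes J$ and $H\otimes S(W_{\neq 0})$ induced by the evaluations $S(W_{\neq 0})\otimes S(\g_{\neq 0})\to\mathbf{1}$ and $J\otimes H\to\mathbf{1}$ — I would obtain from the description of $b$ in Proposition~\ref{hompairing}(2), combined with the elementary fact that the evaluation $T^n(\g_{\neq 0})\otimes T_c^n(W_{\neq 0})\to\mathbf{1}$, restricted to the invariant subobject $S^n(\g_{\neq 0})$, factors through $S^n(\g_{\neq 0})\otimes S^n(W_{\neq 0})$. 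Multiplicativity of $\psi_W$ would then follow by transporting the subalgebra structure of $A(H,W)\subseteq\widehat{\A}(H,W)$: under the identification with $\iHom_{\C^\circ}(H(J,\g),H)$ this becomes the convolution algebra structure, which $\phi_\g$ and the free-module formula carry to the smash product $H\ltimes S(W_{\neq 0})$; equivalently, since $b$ is a non-degenerate Hopf pairing and $\phi_\g$ a Hopf algebra isomorphism, multiplicativity is forced. I expect this bookkeeping to be the main obstacle: everything is formally clear, but checking without elements that the chain of internal-Hom identifications is literally the concrete map $\psi_W=(\id_H\otimes\pi_W)|_{A(H,W)}$ and respects the algebra and pairing structures is delicate, and I would carry it out by paralleling Masuoka's computation in the super case (\cite{M1}).

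\textbf{Part 2.} By part 1, $A(H,W)\cong H\ltimes S(W_{\neq 0})$ as commutative ind-algebras in $\Ver_p$. Since $(H,W)$ is a Harish-Chandra pair, $H$ is a finitely generated commutative Hopf algebra over $\k$, hence a finitely generated commutative algebra in $\Ver_p^{\ind}$, a quotient of $S(X_H)$ for some $X_H\in\Vec$; and $W$ is a finite-dimensional object of $\Ver_p$, so $W_{\neq 0}\in\Ver_p$ is of finite length and $S(W_{\neq 0})$ is finitely generated. Hence $H\ltimes S(W_{\neq 0})$ is generated as an algebra by the image of the finite-length object $X_H\oplus W_{\neq 0}$, so $A(H,W)$ is finitely generated; together with the preceding lemma that $A(H,W)$ is a commutative ind-Hopf algebra in $\Ver_p$, this completes the proof.
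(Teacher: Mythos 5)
Your proposal is correct and follows essentially the same route as the paper: the paper's proof likewise restricts $\xi$ via Proposition \ref{hompairing} to an isomorphism $A(H,W) \to \iHom_{\mathcal{C}^{\circ}}(H(J,\g),H)$, invokes the PBW property to make $\phi_{\g}$ an isomorphism, uses $S(\g_{\not=0})^{*} = S(W_{\not=0})$ from Lemma \ref{nilpotence}, and delegates the remaining bookkeeping to Masuoka's computation in \cite{M1}, which is exactly the computation you outline and propose to parallel.
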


\begin{proof} The proof of this theorem is identical to the proof \cite[Lemma 4.21]{M1}. Proposition \ref{hompairing} allows us to prove that the isomorphism $\xi: \widehat{\A}(H, W) \rightarrow \iHom_{J}(\H(J, \g), H)$
of Proposition \ref{internalhom} restricts to an isomorphism $A(H, W) \rightarrow \iHom_{J}(H(J, \g), H).$ Additionally, by the PBW property of dual Harish-Chandra pairs, $\phi_{\g}$ is an isomorphism. The rest of the proof follows identically to the one in \cite{M1}. Here we use the fact that $S(\g^{*}_{\not=0}) = S(\g_{\not=0})^{*}$ via Lemma \ref{nilpotence}.

\end{proof} 

\begin{theorem} \label{HCPPBW} Let $(H, W)$ be a Harish-Chandra pair in $\Ver_{p}$. Then, the Harish-Chandra pair corresponding to $A(H, W)$ is naturally isomorphic to $(H, W)$, i.e., the constructed functor going from the category of Harish-Chandra pairs in $\Ver_{p}$ to the category of affine group schemes of finite type in $\Ver_{p}$ is right inverse to the functor going in the other direction.

\end{theorem}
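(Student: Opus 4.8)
The plan is to show that $\mathbf{HC}(\Spec A(H,W)) \cong (H,W)$ by unwinding the two constructions and using the duality established in Proposition~\ref{HCPBW}. Write $(J,\g) = (H^\circ, W^*)$ for the dual Harish-Chandra pair attached to $(H,W)$. By definition, $\mathbf{HC}$ of an affine group scheme $G$ with function algebra $A$ is the pair $(\O(G_0), \g_A^*)$, where $G_0 = \Spec(\overline{A})$, $\overline{A} = A/(\text{ideal generated by } A_{\not=0})$, and $\g_A = (I_A/I_A^2)^* = \Prim(A^\circ)$ with $I_A$ the augmentation ideal. So I need two things: first, that $\overline{A(H,W)} \cong H$ as a commutative Hopf algebra over $\k$; second, that $\Prim(A(H,W)^\circ) \cong W^*$ as a Lie algebra in $\Ver_p$ carrying the correct $H^\circ$-module structure, equivalently (dualizing) that $W$ is recovered as the appropriate comodule. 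The compatibility conditions of a Harish-Chandra pair are then automatic because they already hold for $(H,W)$.

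First I would establish $\overline{A(H,W)} \cong H$. By Proposition~\ref{HCPBW}(1), the map $\psi_W : A(H,W) \to H \otimes S(W_{\not=0})$ is an isomorphism of ind-algebras in $\Ver_p$, and it is compatible with the pairing $b$ against $H(J,\g)$ via the PBW isomorphism $\phi_\g : S(\g_{\not=0}) \rtimes J \to H(J,\g)$. Under this identification $A(H,W)_{\not=0}$ corresponds to the part of $H \otimes S(W_{\not=0})$ living in nontrivial isotypic components, so the ideal generated by $A(H,W)_{\not=0}$ is $H \otimes S(W_{\not=0})_{\geq 1}$ (using that $S(W_{\not=0})_{\geq 1}$ is nilpotent, Lemma~\ref{nilpotence}, and that it generates an ideal whose quotient kills precisely the non-$\mathbf 1$ isotypic part — here one checks the Hopf algebra structure is the smash product so the ideal is a Hopf ideal, as in Proposition~\ref{smash}(3)). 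The quotient is then $H \otimes S(W_{\not=0})/S(W_{\not=0})_{\geq 1} = H \otimes \mathbf 1 = H$ as algebras, and the Hopf structure descends correctly since the smash product $H \ltimes T_c(W_{\not=0})$ has $H$ as a Hopf quotient (the dual statement to Proposition~\ref{smash}(3)). This gives $G_0 = \Spec H$, i.e.\ the ordinary group scheme underlying $\Spec A(H,W)$ is the one with function algebra $H$.

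Next I would identify the Lie algebra. Dualizing, $A(H,W)^\circ$ should be $H(J,\g) = H(H^\circ, W^*)$: indeed $A(H,W)$ is defined precisely as the orthocomplement of $I(H^\circ, W^*)$ inside $\widehat{\A}(H,W)$ under the non-degenerate pairing with $\H(H^\circ,W^*)$, so $A(H,W)$ is dual to $\H(H^\circ,W^*)/I(H^\circ,W^*) = H(H^\circ,W^*)$ — one checks the pairing $b$ of Proposition~\ref{HCPBW} is a non-degenerate Hopf pairing between the finitely generated commutative Hopf algebra $A(H,W)$ and $H(H^\circ,W^*)$, hence (by Proposition~\ref{non-degencirc} and the uniqueness of such restricted duals) identifies $H(H^\circ,W^*)$ with $(A(H,W)^\circ)^1$ or more precisely with the relevant sub-coalgebra. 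Then $\Prim(A(H,W)^\circ) = \Prim(H(H^\circ,W^*))$, and by Theorem~\ref{dHCPtheorem}(2) (i.e.\ $\mathbf{DHC}(H(J,\g)) = (J,\g)$) this equals $\g = W^*$, with its Lie bracket and its $J = H^\circ$-module structure exactly the ones prescribed by the dual Harish-Chandra pair $(H^\circ, W^*)$. Dualizing back, $W$ is recovered as an $H$-comodule. Finally, the isomorphism $\mathbf{HC}(\Spec A(H,W)) \cong (H,W)$ is natural because all the maps used ($\psi_W$, $\phi_\g$, the pairing $b$) are natural in $(H,W)$, and the structure maps $i$ and the compatibility axioms (1)--(3) of a dual Harish-Chandra pair transport along these isomorphisms.

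The main obstacle I anticipate is the bookkeeping in the second step: carefully checking that the pairing between $A(H,W)$ and $H(H^\circ, W^*)$ is genuinely non-degenerate and Hopf (so that one may legitimately identify $H(H^\circ,W^*)$ with a restricted dual of $A(H,W)$ rather than merely with a quotient of $\widehat{\A}(H,W)^*$), and then verifying that the $H^\circ$-module/Lie structure on $\Prim$ obtained this way is the original one and not some twist. This is where Masuoka's arguments in \cite{M1} for supervector spaces are delicate, and the categorical version requires using Proposition~\ref{hompairing}(3) to match orthocomplements with internal Homs; once that identification is in place, Theorem~\ref{dHCPtheorem}(2) does the rest essentially formally.
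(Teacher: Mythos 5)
Your proposal takes essentially the same route as the paper: identify the underlying ordinary Hopf algebra with $H$ via Proposition \ref{HCPBW}, then recover the Lie algebra by identifying $H(H^{\circ}, W^{*})$ with the dual coalgebra of $A(H,W)$ and invoking Theorem \ref{dHCPtheorem}, which is exactly the paper's argument (the paper defers that duality to the analogue of Masuoka's Proposition 4.22, the bookkeeping you flag as the main obstacle). The only imprecision is your suggestion that $H(H^{\circ},W^{*})$ should be matched with $(A(H,W)^{\circ})^{1}$: since it contains the group algebra of all grouplike elements of $H^{\circ}$, it is identified with the full dual coalgebra $A(H,W)^{\circ}$, which is what the argument needs.
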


\begin{proof} Recall the definition of the underlying ordinary commutative algebra: given a commutative algebra $A$ in $\Ver_{p}$, this is the commutative $k$-algebra $A/\langle A_{\not=0}\rangle$. It is clear from Proposition \ref{HCPBW} that the underlying ordinary commutative algebra associated to $A(H, W)$ is $H$. We need to check that $W$ is the dual to the Lie algebra of $A(H, W)$. Let $(J, \g) = (H^{\circ}, W^{*})$ be the corresponding dual Harish-Chandra pair. Using Theorem \ref{dHCPtheorem}, we just need to check that $H(J, \g) = A(H, W)^{\circ}.$ This follows in the exact same manner as \cite[Proposition 4.22]{M1}.

\end{proof}

In order to prove that the two functors are fully inverse to each other, we need to study the geometry of $G = \Spec(A)$ a little more, for $G$ an affine group scheme of finite type in $\Ver_{p}$.

\begin{lemma} \label{formalsmooth} Let $G$ be an affine group scheme of finite type in $\Ver_{p}$ corresponding to the finitely generated commutative Hopf algebra $\O(G)$. Let $G_{0}$ be the underlying ordinary affine group scheme, corresponding the the Hopf algebra $\O(G_{0}) = \O(G)/\langle\O(G)_{\not=0}\rangle$. Let 

$$\widehat{\rho}: \widehat{\O(G)}_{\id} \rightarrow S(\g^{*}_{\not=0})$$
be any algebra homomorphism dual to an inclusion of $S(\g_{\not=0}) \rightarrow \O(G)^{\circ}$ that determines a PBW isomorphism $\O(G)^{\circ} \cong \O(G_{0})^{\circ} \otimes S(\g_{\not=0})$. Define a homomorphism

$$\widehat{\O(G)}_{\id} \cong    \widehat{\O(G_{0})}_{\id} \otimes S(\g^{*}_{\not=0}) $$
by comultiplying in $\widehat{\O(G)}_{\id}$, using the completed tensor product, followed by applying $\widehat{\rho}$ in the right tensor and the natural projection in the left tensor. Then, this homomorphism is an isomorphism of left $\widehat{\O(G_{0})}_{\id}$-comodule algebras in $\Ver_{p}^{\pro}$.

\end{lemma}

\begin{proof} This follows from the PBW decomposition on $(\O(G)^{\circ})^{1}$ and the fact that $((\O(G)^{\circ})^{1})^{*} \cong \widehat{\O(G)}_{\id}.$

\end{proof} 

Our goal is to show that such an isomorphism exists before completing the algebras as well. To do so, we will need the following result that is a direct generalization of Radford's Bosonization to the setting of $\Ver_{p}$. 

\begin{proposition} \label{boson} Let $A$ and $H$ be ind-Hopf algebras in $\Ver_{p}$ and suppose we have Hopf algebra maps $\pi: A \rightarrow H$ and $i: H \rightarrow A$ such that $\pi \circ i = \id_{H}$. Define $\rho: A \rightarrow A$ as $m_{A} \circ (\id_{A} \otimes (i \circ S_{H} \circ \pi)) \circ \Delta_{A}$ and define $B = \rho(A)$. Then:

\begin{enumerate}

\item[(a)] $B$ is a left $H$-module under the adjoint action of $i(H)$ and is a left $H$-comodule under the coadjoint action via $\pi$.

\item[(b)] $B$ is a left $H$-comodule Hopf algebra and $\rho$ is a Hopf algebra map. 

\item[(c)] The map $m \circ (i_{H} \otimes i_{B}) : H \ltimes B \rightarrow A$ is an isomorphism of left $H$-comodule Hopf algebras.

\end{enumerate}

\end{proposition} 

\begin{proof} This lemma is identical to Theorem 3 in \cite{R}. That theorem, along with the prerequisite Theorem 1 in the same paper, have proofs that can be written entirely in terms of the structural Hopf algebra and module-comodule morphisms of $H, A$ and $B$ and can thus be generalized to the categorical setting with no change.

\end{proof}

We will also need the following result regarding the associated graded of $A$ under a suitable filtration.

\begin{lemma} \label{descfilt} Let $A$ be a commutative ind-Hopf algebra in $\Ver_{p}$. Let $J$ be the ideal in $A$ generated by $A_{\not=0}$ and let $F$ be the descending $J$-adic filtration on $A$. Then,$\gr_{F}A$ is a commutative ind-Hopf algebra in $\Ver_{p}$ and 

$$(\mathrm{gr}_{F}A)^{\circ} \cong \mathrm{gr}_{F}(A^{\circ})$$
where the filtration $F$ on $A^{\circ}$ is the relative coradical filtration from Proposition \ref{relativecoradical}.

\end{lemma}

\begin{proof} For convenience of notation, let $C$ denote $A^{\circ}$ and $D$ denote $\Delta^{-1}(C_{0} \otimes C_{0})$. Note that 

$$\mathrm{gr}_{F}A = \bigoplus_{i=0}^{\infty} J^{i}/J^{i+1}.$$
Using the perfect pairing between $C$ and $A$, we can identify $(J^{i}/J^{i+1})^{\circ}$ with $K_{i+1}/K_{i}$, where $K_{i} \subseteq C$ is the complement of $J^{i}$ under the pairing. So, we just need to show that $F_{i}(C) = K_{i+1}$ for all $i$. 

We show this via induction on $i$. Let $b$ be the pairing map. The base case is fairly straightforward. Comparing isotypic components, the complement of $A_{\not=0}$ must be $C_{0}$. $J$ is the image under multiplication of $A \otimes A_{\not=0}$. Hence, as $b$ is a Hopf pairing, $K_{1}$, the complement under $b$ of $J$, is the subobject of $C$ that satisfies $\Delta(K_{1}) \subseteq C \otimes C_{0}$, which by cocommutativity is the same as $\Delta^{-1}(C_{0} \otimes C_{0})$. Hence, $K_{1} = F_{0}(C) = D.$

Assume now for sake of induction that $K_{i+1} = F_{i}(C)$ for $i < n$. Note that $J^{n}$ is the image under multiplication in $A$ of $J^{n-1} \otimes J$. Under the tensor product pairing, the complement of $J \otimes J^{n-1}$ is 

$$K_{n-1} \otimes C \oplus C \otimes K_{1} = F_{n-2}(C) \otimes C \oplus C \otimes F_{0}(C)$$
by the inductive hypothesis. Hence, $K_{n} = \Delta^{-1}(F_{n-2}(C) \otimes C \oplus C \otimes F_{0}(C)) = F_{n-1}(C).$ by the inductive definition of the relative coradical filtration.

\end{proof}

\begin{lemma} \label{smoothness} If $A$ is a finitely generated commutative ind-Hopf algebra in $\Ver_{p}$, then $A \cong  \overline{A} \otimes S(\g^{*}_{\not = 0})$
as a left $\overline{A}$-comodule algebra.

\end{lemma}

\begin{proof} Recall that $\overline{A}$ is $A/\langle A_{\not=0} \rangle$. Pick some isomorphism $\widehat{A}_{\id} \cong \widehat{\overline{A}}_{\id} \otimes S(\g^{*}_{\not=0})$ as in Lemma \ref{formalsmooth}. We begin by defining an algebra homomorphism $\rho: A \rightarrow S(\g^{*}_{\not=0})$ by composing the natural map from $A \rightarrow \widehat{A}_{\id}$ with the projection from $\widehat{A}_{\id} \rightarrow S(\g^{*}_{\not=0})$ obtained by taking the quotient by the ideal generated by the maximal ideal of $\widehat{\overline{A}}_{\id}.$ We use this to define a homomorphism of algebras 

$$\eta: A \rightarrow  \overline{A}\otimes S(\g^{*}_{\not=0}) $$
by first comultiplying $A \rightarrow A \otimes A$ and then applying $\rho$ in the left tensor component and the natural projection $A \rightarrow \overline{A}$ in the right tensor component. $\eta$ is a homomorphism of left $\overline{A}$-comodule algebras. We will show $\eta$ is an isomorphism. 

To do this, we will show that the associated graded of $\eta$ under the suitable descending filtration on $A$ and $\overline{A} \otimes S(\g^{*}_{\not=0})$ is an isomorphism. Let $J$ be the ideal in $A$ generated by $A_{\not=0}$. Let $F$ be the $J$-adic filtration on $A$, as in the previous lemma, and let $F'$ be the descending filtration on $S(\g^{*}_{\not=0}) \otimes \overline{A}$ determined by the ideal generated by $\g^{*}_{\not=0}$. It is clear that $\eta$ is filtration preserving, since $A_{\not=0}$ must land inside the ideal generated by $\g^{*}_{\not=0}$. by semi-simplicity of $\Ver_{p}$. Additionally, by Lemma \ref{descfilt} and Proposition \ref{relativecoradical}, $\g$ is the Lie algebra of $\mathrm{gr}_{F}(A)$. Hence, by taking associated graded, we reduce to the case where $A$ is a graded Hopf algebra with $\overline{A}$ as the degree $0$ term.

In this setting, we have Hopf algebra maps $\pi: A \rightarrow \overline{A}$ and $i: \overline{A} \rightarrow A$ such that $\pi \circ i = \id_{\overline{A}}$. Hence, we can apply Lemma \ref{boson} to find a left $\overline{A}$-comodule Hopf algebra $B$ in $\Ver_{p}^{\ind}$ such that $A \cong \overline{A} \ltimes B$. Additionally, we know from Lemma \ref{nilpotence} that $A$ has finite length as a left $\overline{A}$-module. Hence, $B$ is actually an object in $\Ver_{p}$ and not an ind-object. 

Now, by the PBW decomposition on $(A^{\circ})^{1}$, we have

$$(\overline{A}^{\circ})^{1} \ltimes S(\g_{\not=0})  \cong (A^{\circ})^{1} \cong(\overline{A}^{\circ})^{1} \ltimes B^{*}$$
as left $(\overline{A}^{\circ})^{1}$-module Hopf algebra.

Hence, $B \cong S(\g_{\not=0})^{*} \cong S(\g^{*}_{\not=0})$ as a left $\overline{A}$-comodule Hopf algebra, with comultiplication in $S(\g^{*}_{\not=0})$ determined by setting $\g^{*}_{\not=0}$ primitive.

\end{proof}

We can now finish the proof that the functor $\mathbf{A}$ that sends $(H, W)$ to $A(H, W)$ is a quasi-inverse to $\mathbf{HC}$.

\begin{theorem} \label{HCPtheorem} Let $A$ be a  finitely generated commutative ind-Hopf algebra in $\Ver_{p}$. Then, $A(\overline{A}, \g^{*})$ is naturally isomorphic to $A$.

\end{theorem}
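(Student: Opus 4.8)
Write $G = \Spec(A)$, $G_{0} = \Spec(\overline A)$ and $\g = \Lie(G)$, so that $\mathbf{HC}(G) = (\overline A,\g^{*})$ and the assertion is that $A' := A(\overline A,\g^{*}) = \mathbf{A}(\mathbf{HC}(G))$ is naturally isomorphic to $A$. The plan is to produce the comparison isomorphism first on the level of \emph{dual} coalgebras, where everything has already been computed, and then to transport it back to $A$ using the non-degenerate Hopf pairings together with the relative smoothness decomposition of Lemma \ref{smoothness}. This mirrors, on the commutative side, the way Theorem \ref{PBWcocomm} and Theorem \ref{HCPPBW} were organized.

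\emph{Step 1: the dual coalgebras agree.} By Proposition \ref{HC-DHC} and Theorem \ref{coforwardfunctor} one has $\mathbf{DHC}(A^{\circ}) = \mathbf{D}(\mathbf{HC}(G)) = (\overline A^{\circ},\g)$, so Theorem \ref{dHCPtheorem}(1) (equivalently Theorem \ref{PBWcocomm}), applied to the cocommutative ind-Hopf algebra $A^{\circ}$, shows that the canonical map $H(\overline A^{\circ},\g)\to A^{\circ}$ is an isomorphism. On the other hand, the proof of Theorem \ref{HCPPBW} identifies $A'^{\circ} = A(\overline A,\g^{*})^{\circ}$ with $H(\overline A^{\circ},\g)$. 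Composing yields an isomorphism of cocommutative ind-Hopf algebras $\sigma\colon A^{\circ}\xrightarrow{\ \sim\ } A'^{\circ}$. Moreover $\sigma$ is compatible with the non-degenerate Hopf pairings $A\otimes A^{\circ}\to\mathbf{1}$ of Proposition \ref{non-degencirc} and $A'\otimes A'^{\circ}\to\mathbf{1}$, the latter obtained from the non-degenerate pairing $\widehat{\A}(\overline A,\g^{*})\otimes\H(\overline A^{\circ},\g)\to\mathbf{1}$ by descending along $I(\overline A^{\circ},\g)$ via Proposition \ref{dualident}; indeed on the copies of $\overline A^{\circ}$ and of $\g=\Prim$ both identifications are the tautological ones, and these generate $H(\overline A^{\circ},\g)$ as an algebra.

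\emph{Step 2: transport back to $A$.} Fix a projection $\pi_{\g}\colon A\to S(\g^{*}_{\not=0})$ annihilating the augmentation ideal of $\overline A$; by Lemma \ref{smoothness} it exhibits $A\cong\overline A\otimes S(\g^{*}_{\not=0})$ as a left $\overline A$-comodule algebra, and by Proposition \ref{formalsmooth} the profinite completion of $A$ is correspondingly $\widehat{\overline A}\otimes S(\g^{*}_{\not=0})$. By Proposition \ref{HCPBW}, $\psi_{\g^{*}}\colon A'\xrightarrow{\ \sim\ }\overline A\otimes S(\g^{*}_{\not=0})$ is a counit-preserving isomorphism of ind-algebras intertwining the pairing with the PBW decomposition $S(\g_{\not=0})\rtimes\overline A^{\circ}\cong H(\overline A^{\circ},\g)$. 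Dualizing $\sigma^{-1}$ gives an isomorphism $(\sigma^{-1})^{*}\colon(A^{\circ})^{*}\to(A'^{\circ})^{*}$ of pro-commutative Hopf algebras in $\Ver_{p}^{\pro}$, and I would check that, under the two decompositions above, it carries the dense sub-ind-Hopf-algebra $A$ isomorphically onto $A'$: the point is that $A$ and $A'$ are each exactly the ``algebraic part'' $\overline A\otimes S(\g^{*}_{\not=0})$ of the corresponding completion, and $\sigma$ restricts, through $H(\overline A^{\circ},\g)$, to the dual of precisely this identification. The resulting map $\theta\colon A\to A'$ is an isomorphism of ind-algebras; it preserves comultiplication, counit, unit and antipode because it is pinned down by the relation $b'\circ(\theta\otimes\mathrm{id}) = b\circ(\mathrm{id}\otimes\sigma)$ with $\sigma$ already a Hopf isomorphism. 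Naturality in $A$ is then automatic, since every ingredient — $\g$, $\overline A$, $A^{\circ}$, $H(\overline A^{\circ},\g)$, the pairings, and hence $\sigma$ — is functorial in $G$.

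\emph{Main obstacle.} The genuinely delicate step is the matching inside Step 2: passing from an isomorphism of dual coalgebras to an isomorphism of the Hopf algebras themselves, rather than merely of their algebra-plus-$\overline A$-comodule structures. A non-degenerate pairing against an ind-object identifies $A$ only with a \emph{dense} subobject of $(A^{\circ})^{*}$, so $A$ cannot be recovered from $A^{\circ}$ by formal double-dualization; the recovery must be routed through the explicit splittings of Lemma \ref{smoothness} and Proposition \ref{HCPBW}, and one must verify that these splittings correspond under $\sigma$ and that the twisted comultiplications on the two copies of $\overline A\otimes S(\g^{*}_{\not=0})$ are thereby matched. This is the commutative mirror of the argument of \cite[Proposition 4.22]{M1}, with Lemma \ref{smoothness} — itself a consequence of the nilpotence Lemma \ref{nilpotence} — playing the role of the smoothness input there. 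Once the matching is in place, Theorem \ref{Harish-Chandra} follows by combining this with Theorem \ref{HCPPBW}.
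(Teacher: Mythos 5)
Your overall strategy --- dualize, apply Theorem \ref{dHCPtheorem} to $A^{\circ}$, and then use the non-degenerate pairings together with Lemma \ref{smoothness} and Proposition \ref{HCPBW} to descend --- is the same circle of ideas as the paper's proof, but the step you yourself flag as the ``main obstacle'' is exactly where the paper does its real work, and your proposal leaves it unproved. Concretely: from the isomorphism $\sigma\colon A^{\circ}\to A'^{\circ}$ you cannot ``transport back'' to a map $A\to A'$ by dualizing, because $A$ and $A'$ are only dense subobjects of the pro-duals, and the identifications $A\cong\overline A\otimes S(\g^{*}_{\not=0})$ and $A'\cong\overline A\otimes S(\g^{*}_{\not=0})$ each depend on a choice of splitting; the assertion that these splittings ``correspond under $\sigma$'' is precisely the content of the theorem, and in your Step 2 it appears only as ``I would check that\dots''. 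What the paper does instead is construct an explicit candidate map on the level of $A$ itself: $\beta\colon A\to\widehat{\A}(\overline A,\g^{*})$, given by iterated comultiplication followed in each tensor factor by the canonical projection $\omega$ of $A$ onto $(I/I^{2})_{\not=0}$ (and by the identity on the $\overline A$-factor). One then checks that $\beta$ is adjoint, under the two non-degenerate pairings, to the Hopf algebra map $\gamma\colon\H(\overline A^{\circ},\g)\to A^{\circ}$ furnished by Theorem \ref{dHCPtheorem}; this adjointness gives at once that $\beta$ is injective and that its image lies in $A(\overline A,\g^{*})=I(\overline A^{\circ},\g)^{\perp}$, and it also pins down the Hopf compatibility --- the role your relation $b'\circ(\theta\otimes\mathrm{id})=b\circ(\mathrm{id}\otimes\sigma)$ is meant to play, but for a map that has actually been exhibited.

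Even granting a matching of splittings, your argument supplies no mechanism for surjectivity onto $A'$: knowing that $A$ and $A'$ are both abstractly isomorphic to $\overline A\otimes S(\g^{*}_{\not=0})$ does not show that the particular comparison map hits all of $A'$. The paper closes this by a cofreeness argument: composing $\beta$ with the isomorphism $i$ coming from Lemma \ref{smoothness} and the isomorphism $\rho$ of Theorem \ref{HCPPBW} yields a map $\phi\colon\overline A\otimes S(\g^{*}_{\not=0})\to\overline A\otimes S(\g^{*}_{\not=0})$ of cofree $\overline A$-comodules whose restriction to $\mathbf{1}\otimes S(\g^{*}_{\not=0})$ is the identity, and an injection of cofree comodules whose image contains $S(\g^{*}_{\not=0})$ is an isomorphism; hence $\beta$ is onto. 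So your proposal assembles the right ingredients, but the two decisive steps --- the explicit construction of $\beta$ with its adjointness to $\gamma$, and the surjectivity argument --- are missing, and these constitute the substance of the paper's proof.
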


\begin{proof} This follows in essentially the same manner as \cite[Theorem 4.23]{M1}, with all the prerequisites for the proof being taken care of in previous sections. We reprove it here for convenience of the reader. 

Let $(\overline{A}, \g^{*}) = \mathbf{HC}(A).$ Let $H = \overline{A}, W = \g^{*}_{\not=0}$. Let $I$ be the augmentation ideal of $A$. Note that $W = (I/I^{2})_{\not=0}$. Let $\omega$ be the canonical projection of $A$ onto $W$, i.e., the composite of the projection onto $I$ followed by the projection onto $I/I^{2}$ and then the projection onto the part not coming from vector spaces. 

Define $\omega^{(n)}: A \rightarrow T^{n}(W)$ as $n$-fold comultiplication followed by $\omega$ in each component, for $n > 0$ and $\epsilon$ for $n = 0$. Finally, define $\beta: A \rightarrow \widehat{\A}(H, \g^{*})$ as comultiplication followed by $\sum_{n} \id \otimes \omega^{(n)}.$ It suffices to prove that $\beta$ gives a Hopf algebra isomorphism from $A$ onto $A(H, \g^{*})$. 

Let $C = A^{\circ}$, $(J, \g) = \mathbf{DHC}(C, \g^{*})$ and $V = \g_{\not=0}$. Lemma \ref{smoothness} and the fact that $S(W^{*}) = S(W)^{*}$ (from Lemma \ref{nilpotence}) immediately imply that $C = S(V) \otimes \overline{A}^{\circ}$
and hence $J = \overline{A}^{\circ}$ and $V = W^{*}$ as a $J$-module. Additionally, from Theorem \ref{dHCPtheorem}, we have an isomorphism $H(J, \g) \rightarrow C$
induced by the natural maps from $J$ into $C$ and $T(V)$ into $C$. Let $\gamma: \H(J, \g) \rightarrow C$
be the composition of this isomorphism with the natural projection of $\H(J, \g)$ onto $H(J, \g)$. This is a Hopf algebra homomorphism in $\Ver_{p}^{\ind}$. It is easy to see that $\gamma$ is adjoint to $\beta$ via the non-degenerate pairings between $\H(J, \g)$ and $\A(C, \g^{*})$ and $H$ and $A$. Hence, $\beta$ is injective and maps into $A(H, W)$ as $\gamma$ kills $I(J, \g)$. 

Moreover, Theorem \ref{HCPPBW} implies that we have an isomorphism of left $\overline{A}$-comodules $\rho: A(H, W) \cong H \otimes S(W).$
Now, consider the following commutative diagram:

$$\begin{tikzpicture}
\matrix (m) [matrix of math nodes,row sep=5em,column sep=5em,minimum width=5em]
{A & A(H, W)  \\
H \otimes S(W)& H \otimes S(W)\\};;
\path[-stealth]
(m-1-1) edge node[auto] {$\beta$} (m-1-2)
(m-1-2) edge node[auto] {$\rho$} (m-2-2)
(m-2-1) edge node[auto] {$i$} (m-1-1)
(m-2-1) edge node[auto] {$\phi$} (m-2-2);
\end{tikzpicture}$$
where $i$ is inverse to an isomorphism constructed from Lemma \ref{smoothness}. Restricting $\phi$ to $\mathbf{1} \otimes S(W)$, we see that $\phi$ is simply a section $S(W) \rightarrow T(W)$ (which makes sense as $S(W)$ has no $p$th powers), followed by $\beta$ followed by $\rho$, which is just the identity. Hence, $\phi$ is an injection of cofree $H$-comodules that contains $S(W)$ in the image and is hence an isomorphism. Thus, $\beta$ is an isomorphism $A \rightarrow A(H,W)$ as desired. 

\end{proof}

\begin{corollary} The category of affine group schemes of finite type in $\Ver_{p}$ is equivalent to the category of Harish-Chandra pairs in $\Ver_{p}$.

\end{corollary}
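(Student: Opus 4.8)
This corollary is essentially immediate from Theorems \ref{HCPtheorem} and \ref{HCPPBW}; the plan is just to package those two reconstruction results into an equivalence of categories, the only remaining point of substance being naturality in morphisms.

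First I would fix the two functors between the categories in question. In one direction we have the Harish-Chandra functor $\mathbf{HC}$ of Theorem \ref{forwardfunctor}, sending an affine group scheme $G$ of finite type to $(\O(G_{0}),\g^{*})$ with $\g=\Lie(G)$; it lands among genuine Harish-Chandra pairs since $\g$ has finite length when $G$ is of finite type. In the other direction, the assignment $(H,W)\mapsto A(H,W)=H\ltimes T_{c}(W_{\not=0})$ is functorial into commutative ind-Hopf algebras in $\Ver_{p}$: by construction $A(H,W)$ is carved out of the functorially built object $\widehat{\A}(H,W)=H\widehat{\otimes}T_{c}(W_{\not=0})$ as the orthogonal complement, under the canonical non-degenerate pairing, of the functorially built ideal $I(H^{\circ},W^{*})$, and each ingredient --- the dual coalgebra $(-)^{\circ}$, the smash product, the tensor algebra $T$ and tensor coalgebra $T_{c}$, and the orthocomplement --- is compatible with morphisms. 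By Proposition \ref{HCPBW}(2) the algebra $A(H,W)$ is finitely generated, so $\mathbf{A}\colon(H,W)\mapsto A(H,W)$ may be regarded as a functor from Harish-Chandra pairs to affine group schemes of finite type in $\Ver_{p}$.

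Then I would invoke the reconstruction theorems at the level of objects. Theorem \ref{HCPtheorem} gives, for every finitely generated commutative ind-Hopf algebra $A$, an isomorphism $A(\overline{A},\g^{*})\cong A$, i.e. $\mathbf{A}\circ\mathbf{HC}\cong\mathrm{id}$ on objects, realized by the explicit map $\beta$ built from the coproduct of $A$ and the canonical projections $\omega^{(n)}\colon A\to T^{n}(W)$ onto powers of $W=(I/I^{2})_{\not=0}$. Theorem \ref{HCPPBW} gives, for every Harish-Chandra pair $(H,W)$, an isomorphism $\mathbf{HC}\circ\mathbf{A}\cong\mathrm{id}$ on objects, which unwinds to the canonical identifications $\overline{A(H,W)}=H$ and $\Lie(\Spec A(H,W))^{*}\cong W$ coming from Proposition \ref{HCPBW} and the Harish-Chandra pair structure on $(H,W)$.

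The step I expect to be the main (and essentially only) obstacle is checking that these two families of isomorphisms are natural in morphisms of the respective categories. Since $\beta$ is assembled solely from Hopf-structure maps and the canonical projections $\omega^{(n)}$, all of which are preserved by Hopf algebra homomorphisms, its naturality square commutes; and since the isomorphism $\mathbf{HC}\circ\mathbf{A}\cong\mathrm{id}$ reduces to the functorial identifications of the underlying ordinary Hopf algebra and of the dual of the Lie algebra, its naturality is immediate as well. The squares to be verified involve only the compatibility with morphisms of the constructions $(-)^{\circ}$, the smash product, $T$, $T_{c}$ and the orthocomplement --- the same compatibilities already invoked in defining $\mathbf{HC}$ and $\mathbf{A}$ --- so this is diagram-chasing bookkeeping rather than new mathematics, and it is the main obstacle only in being the most tedious part. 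With naturality in hand, $\mathbf{HC}$ and $\mathbf{A}$ are mutually quasi-inverse, so the two categories are equivalent, which is exactly the assertion of the corollary (equivalently Theorem \ref{Harish-Chandra}).
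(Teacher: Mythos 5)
Your proposal is correct and follows the paper's own route: the corollary is obtained exactly by combining Theorem \ref{HCPPBW} ($\mathbf{HC}\circ\mathbf{A}\cong\mathrm{id}$) with Theorem \ref{HCPtheorem} ($\mathbf{A}\circ\mathbf{HC}\cong\mathrm{id}$), the paper leaving the functoriality/naturality bookkeeping implicit just as you describe. Your explicit attention to naturality of $\beta$ and of the identifications in $\mathbf{HC}\circ\mathbf{A}$ is a harmless elaboration of the same argument.
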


\begin{corollary} Let $G$ be an affine group scheme of finite type in $\Ver_{p}$. Then the set of subgroup schemes of $G$ corresponds to the set 

$\{(H_{0}, \h): H_{0} \text{ a subgroup of } G_{0}, \, \h \text{ a Lie subalgebra of } \g, \, \h \text{ closed under the adjoint action of } H\}.$

\end{corollary}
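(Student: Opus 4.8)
The plan is to deduce this from Corollary \ref{subgroups} together with the equivalence $\mathbf{HC}$ of Theorem \ref{HCPtheorem}, the point being to identify the condition ``$\h$ a Lie subalgebra of $\g$, closed under the adjoint action of $H$'' with the requirement that $(H_{0},\h)$ be a \emph{sub}-Harish-Chandra pair of $(G_{0},\g)$. First I would check that $G'\mapsto(G'_{0},\Lie(G'))$ lands in the displayed set: if $G'\leq G$ is a closed subgroup, so $\O(G')=\O(G)/K$ for a Hopf ideal $K$, then quotienting further by the ideal generated by the non-trivial simples (which is functorial and carries Hopf ideals to Hopf ideals, by semisimplicity of $\Ver_{p}$) shows $G'_{0}\leq G_{0}$ is closed; the surjection $\O(G)\rightarrow\O(G')$ induces a surjection on the cotangent spaces of the augmentation ideals, whose dual is an injective Lie homomorphism $\Lie(G')\hookrightarrow\Lie(G)=\g$, with image stable under the adjoint action of $G'_{0}$ (hence of $H$ in the sense of the statement), and $\Lie(G')_{0}=\Lie(G'_{0})$ by Proposition \ref{Lie-Vec}.

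Conversely, given a pair $(H_{0},\h)$ from the set, I would observe that it is canonically a Harish-Chandra pair and that the inclusions $H_{0}\hookrightarrow G_{0}$, $\h\hookrightarrow\g$ form a morphism $\iota\colon(H_{0},\h)\rightarrow(G_{0},\g)$ of Harish-Chandra pairs. Indeed $H_{0}$ is of finite type over $\k$; $\h$ is a finite length Lie algebra in $\Ver_{p}$ and an $H_{0}$-submodule of $\g=\Lie(G)$ precisely because $\h$ is stable under the adjoint action (restricting the $H_{0}$-action on $\g$ obtained from the $\O(G_{0})^{\circ}$-module structure); the bracket on $\h$ is $H_{0}$-linear since that on $\g$ is; the isomorphism $\Prim(\O(H_{0})^{\circ})\cong\h_{0}$ is the hypothesis $\Lie(H_{0})=\h_{0}$; and the two actions of $\h_{0}=\Prim(\O(H_{0})^{\circ})$ on $\h$ agree because they do on $\g$. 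Applying the quasi-inverse $\mathbf{A}$ of Theorem \ref{HCPtheorem} to $\iota$ produces a morphism $G':=\Spec(A(H_{0},\h))\rightarrow G$, and the content is that this is a closed immersion, i.e.\ that $\O(G)\rightarrow A(H_{0},\h)$ is surjective. For this I would use the relative-smoothness decompositions of Lemma \ref{smoothness}: $\O(G)\cong\O(G_{0})\otimes S(\g^{*}_{\not=0})$ and $A(H_{0},\h)\cong\O(H_{0})\otimes S(\h^{*}_{\not=0})$ as left comodule algebras, the comparison isomorphisms being exhibited by projections that kill the relevant augmentation ideals (Lemma \ref{smoothness}, Proposition \ref{HCPBW}); choosing these compatibly with $\mathbf{A}(\iota)$, the map $\O(G)\rightarrow A(H_{0},\h)$ becomes the surjection $\O(G_{0})\rightarrow\O(H_{0})$ (surjective as $H_{0}\leq G_{0}$ is closed) tensored with $S$ applied to the surjection $\g^{*}_{\not=0}\rightarrow\h^{*}_{\not=0}$ dual to the injection $\h_{\not=0}\hookrightarrow\g_{\not=0}$ (using $S(\g^{*}_{\not=0})=S(\g_{\not=0})^{*}$ and its analogue for $\h$, via Lemma \ref{nilpotence}), hence surjective. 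Injectivity of the correspondence is then immediate: two closed subgroups with the same image sub-pair give, under the equivalence $\mathbf{HC}$, the same closed immersion into $G$, so they coincide.

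The step I expect to be the crux is the claim that $\mathbf{A}$ carries monomorphisms of Harish-Chandra pairs to closed immersions of group schemes (equivalently, that $\mathbf{HC}$ reflects closed immersions): unlike in the classical Tannakian setting one cannot simply appeal to faithful flatness, so one must reduce to the concrete surjectivity computation above, and the technical care lies in arranging the relative-smoothness isomorphism of Lemma \ref{smoothness} to be natural enough in the pair for the projections $\pi_{\g}$ and $\pi_{\h}$ to be chosen compatibly with $\mathbf{A}(\iota)$. Everything else is bookkeeping with the equivalence and the structure theory of Sections~5--7.
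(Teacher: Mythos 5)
Your proposal is correct and takes the same route as the paper, whose proof is simply to invoke the equivalence of Theorem \ref{HCPtheorem} between affine group schemes of finite type and Harish-Chandra pairs. The details you supply — in particular that an inclusion of sub-pairs yields a closed immersion, via the canonical (hence automatically compatible) decompositions of Proposition \ref{HCPBW} and Lemma \ref{smoothness} exhibiting $\O(G)\rightarrow A(\O(H_{0}),\h^{*})$ as a tensor product of surjections — are exactly the bookkeeping the paper leaves implicit, so there is no divergence to report.
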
 

This corollary follows immediately from the correspondence between Harish-Chandra pairs and affine group schemes of finite type in $\Ver_{p}$.

\subsection{Affine group schemes in $\Ver_{p}$ with trivial underlying ordinary group} \label{odd}

In this section, we will analyze those affine group schemes $G$ of finite type in $\Ver_{p}$ such that $G_{0} = \Spec(\k)=:1$ is the trivial group. Note that by Lemma \ref{nilpotence}, such groups have function algebras in $\Ver_{p}$ rather than in $\Ver_{p}^{\ind}$ and hence are finite group schemes with only one closed point. We begin by constructing some examples.

\begin{example} Let $\g$ be a Lie algebra in $\Ver_{p}$ with $\g_{0} = 0$. Define $\O(G) = U(\g)^{*}$, which is in $\Ver_{p}$ as $U(\g)$ has finite length by Lemma \ref{nilpotence} and the PBW theorem for $\g$. Then, $G$ is a finite group scheme in $\Ver_{p}$ with $G_{0} = 1$. 

\end{example}

In \cite{Et1}, Etingof shows that every operadic Lie algebra $\g$ in $\Ver_{p}$ with $\g_{0} = 0$ is a Lie algebra and satisfies PBW. Hence, $\g$ injects into $U(\g)$ and we have the following proposition.

\begin{proposition} $Lie(G) = \g$. Thus, $\O(G) \cong S(\g^{*})$ as an algebra with counit, and $U(\g) = \O(G)^{\circ}.$

\end{proposition}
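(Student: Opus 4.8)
The plan is to use the finite length of $U(\g)$ to pin down the dual coalgebra $\O(G)^\circ$ on the nose, and then read off $\Lie(G)$ and the algebra structure of $\O(G)$ from it. First I would observe that $U(\g)$ is an actual object of $\Ver_p$: every operadic Lie algebra with trivial $\mathbf 1$-isotypic part is a Lie algebra satisfying PBW (as recalled above, following \cite{Et1}), so $\gr U(\g)\cong S(\g)=S(\g_{\not=0})$, which has finite length by Lemma~\ref{nilpotence}. Hence $\O(G):=U(\g)^*$ is a finite-length commutative Hopf algebra in $\Ver_p$; for such an algebra the zero ideal is cofinite and is the smallest cofinite ideal, so by the definition of the dual coalgebra $\O(G)^\circ=(\O(G)/0)^*=\O(G)^*=U(\g)^{**}$, and since the Hopf structure on the double dual of a finite-length object is the original one, $\O(G)^\circ\cong U(\g)$ as cocommutative Hopf algebras in $\Ver_p$. (This also re-proves $G_0=\Spec(\overline{\O(G)})=1$, as the unit is the only simple subobject of $U(\g)^*$ surviving to $\overline{\O(G)}$, so that $U(\g)=\O(G)^\circ$ is the last assertion of the proposition.)

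Next I would compute $\Lie(G)=\Prim(\O(G)^\circ)=\Prim(U(\g))$ (the first equality is Proposition~\ref{prim}) and show this equals $\g$. The inclusion $\g\subseteq\Prim(U(\g))$ is the PBW embedding $\g\hookrightarrow U(\g)$. For the reverse inclusion, pass to the associated graded for the PBW filtration: a primitive element of $U(\g)$ has primitive leading symbol in $\gr U(\g)\cong S(\g)$, so it suffices that $\Prim(S(\g))$ be concentrated in degree $1$. This is precisely where $\Ver_p$ departs from ordinary characteristic $p$: the $p$th‑power (Frobenius) contributions to $\Prim(S(V))$ are absent because $\g=\g_{\not=0}$ has no $\mathbf 1$-summand, so by Lemma~\ref{nilpotence} every positive-degree element of $S(\g)$ is nilpotent of height below $p$; concretely this is the statement that $H^1$ of the coHochschild complex of $S(\g)$ is $\bigwedge^1(\g)=\g$, which is Lemma~\ref{oddKoszul}. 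Hence $\Prim(U(\g))=\g$ and $\Lie(G)=\g$. (Alternatively, $\O(G)^\circ\cong U(\g)$ and $\O(G)^\circ\cong U(\Lie(G))$ by Theorem~\ref{PBWcocomm} with $J=\mathbf 1$, and comparing the lengths of $S(\g)$ and $S(\Lie(G))$ through PBW forces $\Lie(G)=\g$.) This step is the main obstacle: the equality $\Prim(U(\g))=\g$ genuinely fails over a field of characteristic $p$, where divided/$p$th powers intervene, and rests essentially on the vanishing of Frobenius on the simple objects $L_i$ with $i>1$ — the phenomenon recorded in the remark after Theorem~\ref{PBWcocomm}.

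Finally, for $\O(G)\cong S(\g^*)$ as an algebra with counit, I would apply Lemma~\ref{smoothness} to $A=\O(G)$: here $\overline A=\O(G_0)=\mathbf 1$ and $\g_{\not=0}=\g$, so the lemma yields an algebra isomorphism $\O(G)\cong \overline A\otimes S(\g^*_{\not=0})=S(\g^*)$, realized by any counit-preserving projection of $\O(G)$ onto $S(\g^*)$ (using $S(\g^*)\cong S(\g)^*$ from Lemma~\ref{nilpotence}). Equivalently, this is the $G_0=1$ case of Theorem~\ref{HCPtheorem} together with Proposition~\ref{HCPBW}, which identify $A(\mathbf 1,\g^*)\subseteq \mathbf 1\ltimes T_c(\g^*)$ with $\O(G)$ and, as an algebra, with $S(\g^*)$. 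Collecting Steps 1–3 gives all three assertions of the proposition.
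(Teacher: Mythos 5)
Your argument is correct, and it assembles exactly the ingredients the paper intends: the paper states this proposition without proof, taking it to follow from the PBW theorem for $\g$ with $\g_{0}=0$, Lemma \ref{nilpotence} (which makes $U(\g)$ finite length, so $\O(G)^{\circ}=\O(G)^{*}=U(\g)$), Proposition \ref{prim} together with the vanishing of higher primitives in $S(\g)$ (Lemma \ref{oddKoszul}, equivalently Theorem \ref{purelyoddcocomm}), and the PBW/smoothness results of Section 7 (Lemma \ref{smoothness}, Proposition \ref{HCPBW}, Theorem \ref{HCPtheorem}) for $\O(G)\cong S(\g^{*})$. Your write-up is simply a detailed verification along the same lines, with no gaps.
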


Moreover, these are all the finite group schemes in $\Ver_{p}$ with trivial $G_{0}$.

\begin{theorem} \label{oddclassify} Let $G$ be an affine group scheme of finite type in $\Ver_{p}$ with trivial $G_{0}$. Let $\g$ be its Lie algebra. Then, $\O(G) \cong U(\g)^{*}$ as a commutative Hopf algebra in $\Ver_{p}$.

\end{theorem}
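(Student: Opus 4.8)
The plan is to realize $G$ via its Harish--Chandra pair and then evaluate the inverse functor $\mathbf{A}$ on that pair. Since $G_{0} = 1$, Proposition~\ref{Lie-Vec} gives $\g_{0} = \Lie(G_{0}) = 0$, so $\g = \g_{\neq 0}$; in particular, by the PBW theorem (Corollary~\ref{PBWgroup}) together with Lemma~\ref{nilpotence}, $\gr U(\g) \cong S(\g_{\neq 0})$ has finite length, so $U(\g)$ is an object of $\Ver_{p}$ and $U(\g)^{*}$ makes sense as a commutative Hopf algebra in $\Ver_{p}$. Moreover $\overline{\O(G)} = \O(G_{0}) = \k$.

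First I would apply Theorem~\ref{HCPtheorem} to $A = \O(G)$: it produces a natural isomorphism of commutative Hopf algebras $\O(G) \cong A(\overline{\O(G)}, \g^{*}) = A(\k, \g^{*})$, where $\g = \Lie(G)$. It then remains to identify $A(\k, \g^{*})$ with $U(\g)^{*}$. Unwinding the definitions: $\H(\k, \g) = T(\g_{\neq 0}) \rtimes \k = T(\g)$ (the smash product with the trivial Hopf algebra $\k$ is trivial, and $\g_{\neq 0} = \g$), and $I(\k, \g)$ is the two-sided ideal of $T(\g)$ generated by the image of $[-,-]_{\g}$ minus the commutator, so that $H(\k, \g) = T(\g)/I(\k,\g) = U(\g)$. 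By construction $A(\k, \g^{*})$ is the orthogonal complement of $I(\k,\g)$ inside $\widehat{\A}(\k, \g^{*})$ under the non-degenerate graded Hopf pairing between $\H(\k,\g) = T(\g)$ and $\widehat{\A}(\k,\g^{*})$. Applying Proposition~\ref{hompairing}(3) with $M = I(\k,\g)$ (a right $\k$-submodule of $\H(\k,\g)$) and the isomorphism $\xi$ of Proposition~\ref{internalhom}, this complement is identified with $\iHom_{\C^{\circ}}(\H(\k,\g)/I(\k,\g), \k) = \iHom_{\C^{\circ}}(U(\g), \k) = U(\g)^{*}$. Hence $A(\k, \g^{*}) \cong U(\g)^{*}$, and combining with the previous step gives $\O(G) \cong U(\g)^{*}$ as commutative Hopf algebras.

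The main obstacle is the bookkeeping in the last step: one must verify that the chain of identifications in Propositions~\ref{internalhom} and \ref{hompairing} transports the algebra and coalgebra structures correctly, so that $A(\k,\g^{*}) \cong U(\g)^{*}$ is genuinely an isomorphism of Hopf algebras and not merely of objects. Since everything in sight has finite length, a clean alternative that sidesteps $\widehat{\A}$ is to argue on the cocommutative side: $\O(G)$ is finite length (its ideal generated by $\O(G)_{\neq 0}$ is nilpotent by Lemma~\ref{nilpotence} and the quotient is $\k$, so all graded pieces of the corresponding adic filtration are finite-dimensional over $\k$), hence $\O(G)^{\circ} = \O(G)^{*}$; by Theorem~\ref{Kostant} the grouplikes of $\O(G)^{\circ}$ are $\Spec(\O(G))(\k) = \Spec(\k)(\k) = \{\ast\}$, so $\O(G)^{\circ}$ is irreducible, and then $\mathbf{DHC}(\O(G)^{\circ}) = (\k, \Prim(\O(G)^{\circ})) = (\k, \g)$ (using Proposition~\ref{prim}, together with the fact that an irreducible cocommutative Hopf algebra over $\k$ with no primitives is $\k$). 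Theorem~\ref{PBWcocomm} (or Theorem~\ref{dHCPtheorem}) then gives $\O(G)^{\circ} \cong H(\k, \g) = U(\g)$ as cocommutative Hopf algebras, and dualizing the finite-length, hence reflexive, object $\O(G)$ yields $\O(G) \cong U(\g)^{*}$ as commutative Hopf algebras.
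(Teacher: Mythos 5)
Your proposal is correct, and your ``clean alternative'' at the end is in fact exactly the paper's proof: the paper disposes of this theorem in one line by citing Theorem \ref{purelyoddcocomm} (the case $J=\mathbf{1}$ of Theorem \ref{PBWcocomm}), i.e., one passes to $C=\O(G)^{\circ}=\O(G)^{*}$, which has finite length because the ideal generated by $\O(G)_{\not=0}$ is nilpotent with quotient $\k$, observes that $C$ has a unique grouplike so $\Delta^{-1}(C_{0}\otimes C_{0})=\mathbf{1}$ and $\Prim(C)=\g$ by Proposition \ref{prim}, deduces $U(\g)\cong C$, and dualizes the finite-length object. Your primary route is genuinely different: it runs through the full equivalence machinery, applying Theorem \ref{HCPtheorem} to get $\O(G)\cong A(\k,\g^{*})$ and then unwinding $A(\k,\g^{*})$ via Propositions \ref{internalhom} and \ref{hompairing}. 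That route is also sound, and the bookkeeping you flag is not a real obstacle: the restriction of the graded Hopf pairing to the orthogonal complement of the Hopf ideal $I(\k,\g)$ is a non-degenerate Hopf pairing between $A(\k,\g^{*})$ and $H(\k,\g)=U(\g)$ (this is the pairing $b$ used in Proposition \ref{HCPBW}), and since $U(\g)$ has finite length (PBW plus Lemma \ref{nilpotence}) a non-degenerate Hopf pairing forces a Hopf algebra isomorphism $A(\k,\g^{*})\cong U(\g)^{*}$. The trade-off is that the first route leans on the heaviest theorems of the paper, while the cocommutative argument needs only the coradical-filtration PBW theorem and finite duality, which is why the paper takes that path.
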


\begin{remark} This is also directly provable from Lemma \ref{smoothness}, as it is the special case where $\g_{\not=0} = 0.$

\end{remark}

This follows from the dual result for cocommutative Hopf algebras in $\Ver_{p}$ stated in Theorem \ref{purelyoddcocomm}.

Hence, we see that the correspondence between Harish-Chandra pairs and affine group schemes in $\Ver_{p}$ is just the correspondence between a Lie algebra and its enveloping algebra, if the underlying ordinary affine group scheme is trivial.

\section{\Large{\textbf{{Representations of affine group schemes of finite type in $\Ver_{p}$}}}}

We apply Theorem \ref{HCPtheorem} to the representation theory of affine group schemes of finite type in $\Ver_{p}$.

\begin{definition} Let $V$ be an object in $\Ver_{p}$. A \emph{representation} of $G$ on $V$ is a right $\O(G)$-comodule structure on $V$.

\end{definition}

\begin{remark} If we think of $G$ as a functor from the category of commutative algebras in $\Ver_{p}^{\ind}$ to the category of groups, a representation $V$ of $G$ may be thought of as a functor from the category of commutative algebras in $\Ver_{p}^{\ind}$ to the category of vector spaces and a natural transformation $G \times V \rightarrow V$ such that for any commutative algebra $A$, $V(A)$ acquires the structure of a representation of $G(A)$.

\end{remark}

\begin{remark}  If $V$ is a representation of $G$, then $V$ is also a left $\O(G)^{\circ}$-module. The converse is not necessarily true, however.

\end{remark}

\begin{definition} Let $\g$ be a Lie algebra in $\Ver_{p}$. A representation of $\g$ in $\Ver_{p}$ is an object $V$ equipped with a map $a: \g \otimes V \rightarrow V$
such that 

$$a \circ ([-,-]_{\g} \otimes \id_{V}) - (a \circ (\id_{\g} \otimes a) \circ ((\id_{\g \otimes \g}- c_{\g, \g}) \otimes \id_{V})) = 0$$
as a map from $\g \otimes \g \otimes V \rightarrow V.$

\end{definition}

Note that for any object $V$, the object $V \otimes V^{*}$ is an associative algebra with unit given by $\coev_{V}$ and multiplication given by $\ev_{V}$ in the middle in $V \otimes (V^{*} \otimes V) \otimes V^{*}$. 

\begin{definition} The Lie algebra $\gl(V)$ is $V \otimes V^{*}$ equipped with the commutator. \end{definition}

The following facts follow in exactly the same manner as in the standard case. 

\begin{proposition} A Lie algebra representation of $\g$ on $V$ is equivalent to a Lie algebra homomorphism $\g \rightarrow \gl(V)$.

\end{proposition}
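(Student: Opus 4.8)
The plan is to promote the classical correspondence ``representation $=$ homomorphism to the matrix Lie algebra'' to $\Ver_p$ using only rigidity, naturality of the braiding, and the invertibility of $2$ (which holds since $p>3$). \emph{Step 1 (the underlying bijection).} For any $V\in\Ver_p$ the evaluation and coevaluation of $V$ give mutually inverse bijections
$$\mathrm{Hom}_{\Ver_p}(\g\otimes V,V)\ \xrightarrow{\ \sim\ }\ \mathrm{Hom}_{\Ver_p}(\g,V\otimes V^{*})=\mathrm{Hom}_{\Ver_p}(\g,\gl(V)),$$
sending $a$ to $\rho_a:=(a\otimes\id_{V^{*}})\circ(\id_{\g}\otimes\coev_V)$ and $\rho$ to $a_\rho:=(\id_V\otimes\ev_V)\circ(\rho\otimes\id_V)$, with $\ev_V\colon V^{*}\otimes V\to\mathbf 1$; that these are inverse to one another is precisely the two triangle identities for $(\ev_V,\coev_V)$. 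Note that $a_\rho$ is $\rho$ followed by the tautological action $\id_V\otimes\ev_V$ of the associative algebra $\gl(V)=V\otimes V^{*}$ on $V$, and that this action, like the multiplication $m_{\gl(V)}=\id_V\otimes\ev_V\otimes\id_{V^{*}}$, involves no braiding.

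\emph{Step 2 (actions compose as products).} The key computation is that under the bijection of Step 1 one has, as morphisms $\g\otimes\g\otimes V\to V$ and $\g\otimes\g\to\gl(V)$ respectively,
$$a\circ(\id_{\g}\otimes a)\ \longleftrightarrow\ m_{\gl(V)}\circ(\rho_a\otimes\rho_a),\qquad a\circ(f\otimes\id_V)\ \longleftrightarrow\ \rho_a\circ f$$
for any $f\colon\g\otimes\g\to\g$. The second is immediate from the formulas; the first is a short graphical chase: substitute $a=a_{\rho_a}$ in both occurrences and cancel the inner $\coev_V$ against the adjacent $\ev_V$ via one triangle identity. Since both $m_{\gl(V)}$ and the $\gl(V)$-action on $V$ are built from $\ev_V$ alone, no braiding enters and no reorderings are incurred.

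\emph{Step 3 (matching the axioms).} Apply Step 2 with $f=[-,-]_{\g}$, regarded via $\wedge^{2}\g\hookrightarrow\g^{\otimes2}$ or, equivalently since $p>3$, via the antisymmetrizer $\tfrac12(\id_{\g\otimes\g}-c_{\g,\g})$. By naturality of the braiding, $(\rho_a\otimes\rho_a)\circ c_{\g,\g}=c_{\gl(V),\gl(V)}\circ(\rho_a\otimes\rho_a)$, hence
$$m_{\gl(V)}\circ(\rho_a\otimes\rho_a)\circ(\id_{\g\otimes\g}-c_{\g,\g})=\bigl(m_{\gl(V)}-m_{\gl(V)}\circ c_{\gl(V),\gl(V)}\bigr)\circ(\rho_a\otimes\rho_a),$$
and the right-hand side is the commutator bracket of $\gl(V)$ applied to $\rho_a\otimes\rho_a$. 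Therefore the defining identity of a $\g$-representation, $a\circ([-,-]_{\g}\otimes\id_V)=a\circ(\id_{\g}\otimes a)\circ((\id_{\g\otimes\g}-c_{\g,\g})\otimes\id_V)$, is carried by the bijection of Step 1 exactly onto $\rho_a\circ[-,-]_{\g}=[-,-]_{\gl(V)}\circ(\rho_a\otimes\rho_a)$ on $\wedge^{2}\g$, i.e.\ onto the statement that $\rho_a$ is a homomorphism of Lie algebras (using that $\gl(V)$ is a Lie algebra by Proposition \ref{Lie-assoc}); invertibility of $2$ means nothing is lost in passing between $\g^{\otimes2}$ and $\wedge^{2}\g$. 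Running the same equivalence with $a_\rho$ in place of $a$ gives the converse, and naturality of $\ev$, $\coev$ and $c$ makes the whole correspondence functorial in $V$. The only delicate point is the bookkeeping of tensor factors in Step 2 — positioning $V$, $V^{*}$ and the duality morphisms so that composition of two actions becomes literally multiplication in $V\otimes V^{*}$ — which is routine string-diagram calculus, cleanest to record as a single commuting diagram.
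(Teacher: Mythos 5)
Your proof is correct and is exactly the argument the paper has in mind: the paper simply asserts that this follows ``in the same manner as in the standard case,'' and your Steps 1--3 are that standard rigidity/adjunction argument (the same $\rho_a=(a\otimes\id_{V^*})\circ(\id_{\g}\otimes\coev_V)$ construction the paper later uses when proving universality of the tautological representation of $GL(X)$). Nothing to add.
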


\begin{proposition} If $A$ is an associative, unital ind-algebra in $\Ver_{p}$, a left $A$-module structure on $V$ is equivalent to an associative algebra homomorphism from $A$ to $\gl(V)$ in $\Ver_{p}^{\ind}$. 

\end{proposition}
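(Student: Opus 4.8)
The approach is to recognize $\gl(V) = V \otimes V^{*}$ as the internal endomorphism object of $V$ and to read off both sides of the claimed equivalence from the rigidity adjunction. Since $V$ is an actual object of $\Ver_{p}$ it is rigid, so for every ind-object $X \in \Ver_{p}^{\ind}$ the assignment
$$\Phi\colon \mathrm{Hom}_{\Ver_{p}^{\ind}}(X \otimes V, V) \longrightarrow \mathrm{Hom}_{\Ver_{p}^{\ind}}(X, V \otimes V^{*}), \qquad a \longmapsto (a \otimes \id_{V^{*}}) \circ (\id_{X} \otimes \coev_{V})$$
is a bijection, with inverse sending $\tilde a$ to $(\id_{V} \otimes \ev_{V}) \circ (\tilde a \otimes \id_{V})$; this is the usual zig-zag argument using the triangle identities for $\ev_{V}, \coev_{V}$, and it is valid in $\Ver_{p}^{\ind}$ because $\otimes$ is exact there and commutes with the filtered colimits that build ind-objects. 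Taking $X = A$ identifies, before imposing any axioms, left $A$-actions $a\colon A \otimes V \to V$ with morphisms $\tilde a\colon A \to \gl(V)$ in $\Ver_{p}^{\ind}$.

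The key steps are then to match axioms across $\Phi$. Since $\Phi$ is natural in $X$, precomposing $a$ with $\iota_{A} \otimes \id_{V}$ corresponds to precomposing $\tilde a$ with $\iota_{A}$, so the module unit axiom becomes $\tilde a \circ \iota_{A} = \Phi(\mathrm{unitor}) = \coev_{V}$, which is exactly $\tilde a \circ \iota_{A} = \iota_{\gl(V)}$ since $\coev_{V}$ is the unit of $V \otimes V^{*}$. For associativity I would compute $\Phi\bigl(a \circ (m_{A} \otimes \id_{V})\bigr)$ and $\Phi\bigl(a \circ (\id_{A} \otimes a)\bigr)$ directly: the first equals $\tilde a \circ m_{A}$ by naturality again, and the second, after inserting $\coev_{V}$ and collapsing one zig-zag, equals $m_{\gl(V)} \circ (\tilde a \otimes \tilde a)$, where $m_{\gl(V)}$ is the multiplication $V \otimes V^{*} \otimes V \otimes V^{*} \to V \otimes V^{*}$ given by $\ev_{V}$ on the middle two factors. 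Hence $a$ is an associative, unital action if and only if $\tilde a$ is a homomorphism of associative unital algebras. The same recipe carries $A$-module morphisms to the evident morphisms on the homomorphism side, so the correspondence is compatible with morphisms; and the passage from $\Ver_{p}$ to $\Ver_{p}^{\ind}$ introduces nothing new, as $V$ and $V^{*}$ are of finite length.

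This argument is essentially formal, so I do not expect a genuine obstacle; the only points requiring care are bookkeeping the positions of $\ev_{V}$ and $\coev_{V}$ (equivalently, the order of tensor factors) so that module associativity matches the multiplication of $\gl(V)$ defined with "$\ev_{V}$ in the middle", and confirming that the colimit/exactness considerations in $\Ver_{p}^{\ind}$ cause no difficulty, which they do not. The companion statement for Lie algebras follows by the analogous adjunction applied to the $\wedge^{2}$-structure maps, or more cheaply by restricting an associative homomorphism $U(\g) \to \gl(V)$ along $\g \hookrightarrow U(\g)$ and invoking the universal property of $U(\g)$ together with Proposition \ref{Lie-assoc}.
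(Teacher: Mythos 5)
Your proof is correct and is exactly the standard rigidity-adjunction argument that the paper is invoking when it says these facts "follow in exactly the same manner as in the standard case" (the paper gives no further details). The identification $\Hom(A\otimes V,V)\cong\Hom(A,V\otimes V^{*})$, the matching of the unit axiom with $\coev_{V}$ and of associativity with the middle-$\ev_{V}$ multiplication, and the observation that rigidity of $V$ makes this work for ind-objects $A$, all align with the paper's intended proof.
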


\begin{proposition} If $V$ is a representation of an affine group scheme of finite type $G$ in $\Ver_{p}$, then $V$ is also a representation of $\g$ in a canonical manner. 

\end{proposition}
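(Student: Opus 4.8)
The plan is to reduce the statement to two facts already recorded above: a left module over an associative unital ind-algebra $A$ in $\Ver_{p}$ is the same data as an associative algebra homomorphism $A \to V\otimes V^{*}$ in $\Ver_{p}^{\ind}$, and a representation of a Lie algebra $\g$ on $V$ is the same data as a Lie algebra homomorphism $\g \to \gl(V)$, where $\gl(V)$ is the commutator Lie algebra of the associative algebra $V\otimes V^{*}$ (a Lie algebra by Proposition \ref{Lie-assoc}).

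First I would recall that if $V$ is a representation of $G$, i.e.\ a right $\O(G)$-comodule, then composing the coaction with the evaluation pairing $\O(G)^{\circ}\otimes\O(G)\to\mathbf{1}$ (and the braiding, as needed) makes $V$ into a left $\O(G)^{\circ}$-module, as noted in the remark above. By the first fact, this module structure is precisely an associative algebra homomorphism $\pi\colon \O(G)^{\circ}\to V\otimes V^{*}$ in $\Ver_{p}^{\ind}$. Next, by Proposition \ref{prim} the Lie algebra $\g=\Lie(G)=\Prim(\O(G)^{\circ})$ is a subobject of $\O(G)^{\circ}$ closed under the commutator bracket, and it is a genuine Lie algebra in $\Ver_{p}$ by Proposition \ref{LieisLie}. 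Since any homomorphism of associative algebras in $\Ver_{p}^{\ind}$ is automatically a homomorphism of the underlying commutator Lie algebras — the commutator bracket $m\circ(\id-c)$ being built functorially from multiplication — the composite $\g\hookrightarrow\O(G)^{\circ}\to V\otimes V^{*}$ (inclusion followed by $\pi$) is a Lie algebra homomorphism $\g\to\gl(V)$. By the second fact this is exactly the data of a representation of $\g$ on $V$, whose action map $a\colon\g\otimes V\to V$ is the restriction of the $\O(G)^{\circ}$-action to $\g\otimes V$.

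Canonicity is then clear: dualizing the coaction, passing to $V\otimes V^{*}$, and restricting to primitives are all natural in $V$ (and in $G$), so the assignment defines a functor $\Rep_{\Ver_{p}}(G)\to\Rep_{\Ver_{p}}(\g)$. I expect no serious obstacle here; the only point needing a line of verification is that restricting an associative-algebra homomorphism to a sub-Lie-algebra under the commutator yields a Lie-algebra homomorphism, which is a short diagram chase. As an alternative one may bypass the two structural facts altogether and check the Lie representation axiom directly: writing $[-,-]_{\g}=m\circ(\id_{\g\otimes\g}-c_{\g,\g})$ on $\g\otimes\g$ (valid since $\g$ is closed under commutator in $\O(G)^{\circ}$) and using associativity of the module action $a\circ(m\otimes\id_{V})=a\circ(\id\otimes a)$ gives $a\circ([-,-]_{\g}\otimes\id_{V})=a\circ(\id_{\g}\otimes a)\circ((\id_{\g\otimes\g}-c_{\g,\g})\otimes\id_{V})$, which is precisely the defining identity of a $\g$-representation.
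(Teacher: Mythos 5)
Your proof is correct and follows the same route the paper intends: the paper leaves this proposition to the standard argument, relying precisely on the preceding equivalences (module structure $=$ algebra map to $\gl(V)$, Lie representation $=$ Lie map to $\gl(V)$) together with the remark that a $G$-representation induces a left $\O(G)^{\circ}$-module and Proposition \ref{prim} identifying $\g$ as a bracket-closed subobject of $\O(G)^{\circ}$. Nothing further is needed.
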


We can now define representations of Harish-Chandra pairs.

\begin{definition} Let $(J, \g)$ be a dual Harish-Chandra pair in $\Ver_{p}$ with the left $J$-action on $\g$ given by $\rho: J \otimes \g \rightarrow \g$. A representation of $(J, \g)$ in $\Ver_{p}$ is an object $V \in \Ver_{p}$ equipped with 

\begin{enumerate}

\item[1.] A left $J$-module structure $a: J \otimes V \rightarrow V$

\item[2.] A left $\g$-module structure $b: \g \otimes V \rightarrow V.$

\item[3.] A compatibility relation: the diagram 

$$\begin{tikzpicture}
\matrix (m) [matrix of math nodes,row sep=5em,column sep=5em,minimum width=5em]
{ J \otimes \g \otimes V & J \otimes V \\
 J \otimes J \otimes \g \otimes V & V \\
J \otimes \g \otimes J \otimes V & V\\};;
\path[-stealth]
(m-1-1) edge node[auto] {$\id_{J} \otimes b$} (m-1-2)
(m-1-2) edge node[auto] {$a$} (m-2-2)
(m-1-1) edge node[auto] {$\Delta_{J} \otimes \id_{\g\otimes V}$} (m-2-1)
(m-2-1) edge node[auto] {$\id_{J} \otimes c_{J, \g} \otimes \id_{V}$} (m-3-1)
(m-3-1) edge node[auto] {$\rho \otimes a$} (m-3-2)
(m-3-2) edge node[right] {$a$} (m-2-2);
\end{tikzpicture}$$
commutes. This is equivalent to the action map $b: \g \otimes V \rightarrow V$ being $J$-equivariant.

\item[4.] The two actions of $\g_{0}$ on $V$ induced via restriction to $\Prim(J)$ from $J$ and the restriction from $\g$ to $\g_{0}$ coincide.

A \emph{homomorphism} of dual Harish-Chandra pair representations $V \rightarrow W$ is a morphism in $\Ver_{p}$ that is both a $J$-module homomorphism and a $\g$-module homomorphism.

\end{enumerate}

\end{definition}

Note that the compatibility relation, along with the equality of the two restrictions to $\g_{0}$, is defined in the precise manner needed to define a homomorphism $\H(J, \g) \rightarrow \gl(V)$. Moreover, since $V$ is a Lie algebra representation of $\g$, the ideal $I(J, \g)$ is contained in the annihilator of $V$. Hence, we get a representation of $H(J, \g)$. An immediate consequence of Theorem \ref{dHCPtheorem} is the following.

\begin{corollary} \label{repdHCP} The category of left modules in $\Ver_{p}$ of a cocommutative ind-Hopf algebra in $\Ver_{p}$ is equivalent to the category of representations of the associated dual Harish-Chandra pair.

\end{corollary}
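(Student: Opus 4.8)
The plan is to factor the claimed equivalence through the category of left modules over the Hopf algebra $H(J, \g)$. First I would set $(J, \g) = \mathbf{DHC}(C) = (\Delta^{-1}(C_0 \otimes C_0), \Prim(C))$ and invoke Theorem \ref{dHCPtheorem}(1): the canonical map $H(J, \g) \to C$ induced by the inclusions of $J$ and $\g$ is an isomorphism of Hopf algebras in $\Ver_p^{\ind}$, natural in $C$. Restriction of scalars along this isomorphism then identifies the category of left $C$-modules in $\Ver_p$ with the category of left $H(J, \g)$-modules in $\Ver_p$, by an equivalence that is the identity on underlying objects of $\Ver_p$ and commutes with the forgetful functors. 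So the problem reduces to matching left $H(J, \g)$-modules in $\Ver_p$ with representations of the dual Harish-Chandra pair $(J, \g)$.

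Second I would unwind both sides for a fixed $V \in \Ver_p$. A left $H(J, \g)$-module structure on $V$ is the same as an algebra homomorphism $H(J, \g) \to \gl(V)$ in $\Ver_p^{\ind}$, using the proposition identifying left module structures with such homomorphisms (valid since $V$ is rigid, so $\gl(V) = V \otimes V^{*}$ makes sense). By Proposition \ref{smash}, $\H(J, \g) = T(\g_{\not=0}) \rtimes J$ is generated as an algebra by $J$ and $\g_{\not=0}$ subject to equating the adjoint action of $J$ with the given module action, and $H(J, \g)$ is the further quotient imposing commutator $=$ Lie bracket on $\g_{\not=0}$; hence an algebra homomorphism out of $H(J, \g)$ is exactly the datum of a left $J$-action and a left $\g$-action on $V$ such that the $\g$-action is $J$-equivariant for the adjoint action of $J$, the $\g$-action satisfies the $\g$-module relation (equivalently, $I(J, \g)$ acts by zero, the remaining parts of the relation being automatic since $\g_0 = \Prim(J) \subseteq J$), and the two induced actions of $\g_0$, from $J$ and from $\g_0 \subseteq \g$, coincide. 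These are precisely conditions (1)--(4) in the definition of a representation of $(J, \g)$, as anticipated in the remark following that definition. A morphism in $\Ver_p$ intertwining two such homomorphisms into $\gl(V)$ is simultaneously a $J$-module map and a $\g$-module map, i.e. a morphism of $(J, \g)$-representations; so this is an isomorphism of categories over $\Ver_p$, and composing with the first step gives the desired equivalence.

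The step that requires real care — and the one I expect to be the main obstacle — is the categorical verification in the second paragraph: that conditions (1)--(4), stated as commuting diagrams in $\Ver_p$, are equivalent to the well-definedness of an algebra homomorphism out of the smash product $\H(J, \g)$ together with its descent to the quotient $H(J, \g)$. This must be checked with the universal properties of $\rtimes$ (Proposition \ref{smash}), of the tensor algebra, and of $U(\g)$ — the identification $H(J, \g) \cong U(J, \g)$ of Lemma \ref{HtoUmap} is convenient here — rather than with elements. It is exactly the content of the remark after the definition of a representation of a dual Harish-Chandra pair, and once it is in place the corollary follows formally.
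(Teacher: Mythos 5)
Your proposal is correct and follows essentially the same route as the paper: the text preceding the corollary observes that the compatibility conditions in the definition of a $(J,\g)$-representation are exactly what is needed to produce an algebra map $\H(J,\g)\rightarrow \gl(V)$ killing $I(J,\g)$, hence a left $H(J,\g)$-module, and the corollary is then immediate from Theorem \ref{dHCPtheorem}. Your extra care about verifying the universal-property bookkeeping categorically (via Proposition \ref{smash} and Lemma \ref{HtoUmap}) is exactly the content the paper leaves implicit.
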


Of course, we really want to understand representations of $G$ and not just left modules for $\O(G)^{\circ}$. These are not necessarily the same thing, $G$-representations are integrable representations of $\O(G)^{\circ}$. This means we need to define representations for Harish-Chandra pairs and not just their duals.

\begin{definition} Let $(G_{0}, \g^{*})$ be a Harish-Chandra pair in $\Ver_{p}$. A representation of this pair is an object $V \in \Ver_{p}$ equipped with 

\begin{enumerate}

\item[1.] The structure of a $G_{0}$-representation on $V$, or equivalently, the structure of a right $\O(G_{0})$-module on $V$

\item[2.] The structure of a $\g$-module on $V$

\end{enumerate}

such that the $\O(G_{0})^{\circ}$ and $\g$-module structures on $V$ satisfy the compatibility relation of a dual Harish-Chandra pair representation. 

\end{definition} 

Note that the twisted coalgebra structure on $\A(\O(G_{0}), \g^{*})$ is defined in a dual manner to the twisted algebra structure on $\H(\O(G_{0})^{\circ}, \g)$ and hence the compatibility relation for dual Harish-Chandra pair representations implies the following proposition.

\begin{proposition} If $V$ is a representation of $(G_{0}, \g^{*})$, then there is a unique right $\widehat{\A}(\O(G_{0})^{\circ}, \g^{*})$-comodule structure on $V$ whose projection to $T_{c}(\g^{*})$ and $\O(G_{0})$ respectively induce the structures of a left $\g$-module on $V$ and a right $\O(G_{0})$-comodule on $V$ involved in the definition of a representation of the Harish-Chandra pair on $V$.

\end{proposition}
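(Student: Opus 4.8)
The plan is to read off the coaction $\rho$ from the dual picture, using the non-degenerate $\mathbb{N}$-graded Hopf pairing $\widehat{\A}(\O(G_{0}),\g^{*})\otimes\H(\O(G_{0})^{\circ},\g)\to\mathbf{1}$ constructed above. The key observation, already flagged in the remark preceding the statement, is that in each graded degree the completed comultiplication of $\widehat{\A}(\O(G_{0}),\g^{*})$ is dual to the multiplication of $\H(\O(G_{0})^{\circ},\g)$; hence a right $\widehat{\A}(\O(G_{0}),\g^{*})$-comodule structure on the finite-length object $V$ is the transpose of a left $\H(\O(G_{0})^{\circ},\g)$-module structure on $V$, provided the degree-zero component is \emph{rational}, i.e.\ arises from a genuine $\O(G_{0})$-comodule and not merely from an $\O(G_{0})^{\circ}$-module. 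So the whole statement comes down to repackaging the hypothesis as such an $\H$-module structure, transporting it, and checking the projections.

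First I would repackage: by the discussion preceding Corollary \ref{repdHCP}, the data of a representation of $(G_{0},\g^{*})$ on $V$ --- a right $\O(G_{0})$-comodule structure, a left $\g$-module structure, the compatibility square, and the coincidence of the two restrictions to $\g_{0}$ --- is precisely a left $\H(\O(G_{0})^{\circ},\g)$-module structure $\alpha\colon\H(\O(G_{0})^{\circ},\g)\otimes V\to V$ (the first two pieces generate $\H$; the compatibility square is the smash-product relation between $\O(G_{0})^{\circ}$ and $T(\g_{\not=0})$; coincidence on $\g_{0}=\Prim(\O(G_{0})^{\circ})$ is the remaining defining relation, and $I(\O(G_{0})^{\circ},\g)$ acts by zero since $V$ is a Lie module), together with the extra information that the induced $\O(G_{0})^{\circ}$-action is rational. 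Next I would build $\rho\colon V\to V\,\widehat\otimes\,\widehat{\A}(\O(G_{0}),\g^{*})=\prod_{n}V\otimes\O(G_{0})\otimes T_{c}^{n}(\g^{*}_{\not=0})$ componentwise: $\rho_{0}$ is the given $\O(G_{0})$-coaction, and for $n\ge1$ the component $\rho_{n}$ is the $n$-fold iterate obtained by alternately coacting by $\O(G_{0})$ and acting by $\g_{\not=0}$ and then multiplying the $\O(G_{0})$-factors together --- the same recipe that produces the $n$-fold coproduct of $\widehat{\A}(\O(G_{0}),\g^{*})$ from its product and the $\O(G_{0})$-coaction on $\g^{*}$. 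Counitality of $\rho$ is immediate from counitality of $\alpha$; coassociativity is, degree by degree, the transpose of associativity of $\alpha$ under the non-degenerate Hopf pairing.

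The projections of $\widehat{\A}(\O(G_{0}),\g^{*})$ onto $\O(G_{0})$ (kill positive degrees) and onto $T_{c}(\g^{*}_{\not=0})$ (apply $\epsilon$ to the $\O(G_{0})$-factors) are the coalgebra maps dual, respectively, to the Hopf-subalgebra inclusions $\O(G_{0})^{\circ}\hookrightarrow\H(\O(G_{0})^{\circ},\g)$ and $T(\g_{\not=0})\hookrightarrow\H(\O(G_{0})^{\circ},\g)$ of Proposition \ref{smash}; composing $\rho$ with them therefore recovers on the nose the $\O(G_{0})$-coaction and, in degree one, the $\g_{\not=0}$-action, whence the full $\g$-action together with the $\g_{0}=\Lie(G_{0})$-action differentiated from the $\O(G_{0})$-coaction, these agreeing by the last axiom of a dual Harish-Chandra pair representation. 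Uniqueness is the cofreeness of $T_{c}(\g^{*}_{\not=0})$ as a coalgebra cogenerated in degree one: any right $\widehat{\A}(\O(G_{0}),\g^{*})$-comodule structure on $V$ is determined by its components in degrees $0$ and $1$, which are prescribed by the statement; equivalently, uniqueness follows from non-degeneracy of the pairing, which makes the passage from comodule structures to $\H(\O(G_{0})^{\circ},\g)$-module structures injective.

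The step I expect to be the main obstacle --- and the only one that is not a formal transcription --- is the topological bookkeeping in the pro/ind duality: making precise the dictionary ``right $\widehat{\A}(\O(G_{0}),\g^{*})$-comodule on a finite-length object $\leftrightarrow$ rational left $\H(\O(G_{0})^{\circ},\g)$-module'', and verifying that the $\rho$ constructed above genuinely is the transpose of $\alpha$, so in particular that its higher components land inside $V\otimes\O(G_{0})\otimes T_{c}^{n}(\g^{*}_{\not=0})$ rather than in the larger completion that a mere $\O(G_{0})^{\circ}$-module would force. This is exactly the point at which the hypothesis that $V$ is a representation of $(G_{0},\g^{*})$ --- a comodule, hence rational --- rather than only of the dual pair is used; otherwise the argument is a diagram-chase of the compatibility square and its higher analogues, carried out exactly as in Masuoka's treatment of the supervector-space case in \cite{M1}.
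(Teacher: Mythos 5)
Your proposal is correct and follows essentially the same route as the paper: the paper's entire justification is the one-line observation that the smash coproduct on $\widehat{\A}(\O(G_{0}),\g^{*})$ is defined dually to the smash product on $\H(\O(G_{0})^{\circ},\g)$, so that the compatibility relation (which, as noted before Corollary \ref{repdHCP}, packages the data as an $\H(\O(G_{0})^{\circ},\g)$-module) transposes under the non-degenerate graded Hopf pairing to the desired comodule structure. Your additional attention to rationality of the degree-zero component and to uniqueness via the degree $0$ and $1$ components simply makes explicit what the paper leaves implicit.
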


\begin{corollary} The category of representations of an affine group scheme of finite type in $\Ver_{p}$ is equivalent to the category of representations of the associated Harish-Chandra pair in $\Ver_{p}$.

\end{corollary}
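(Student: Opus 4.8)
The plan is to construct a pair of mutually quasi-inverse functors between the two representation categories and then to deduce that they are inverse from the Hopf-algebra isomorphisms already established. Throughout I write $H = \O(G)$, $\overline{H} = \O(G_{0})$ and $\g = \Lie(G)$, so that $\mathbf{HC}(G) = (\overline{H}, \g^{*})$, $\mathbf{DHC}(H^{\circ}) = \mathbf{D}(\mathbf{HC}(G)) = (\overline{H}^{\circ}, \g)$ (Proposition \ref{HC-DHC}), $H^{\circ} \cong H(\overline{H}^{\circ}, \g)$ (Theorem \ref{dHCPtheorem}), and $H \cong A(\overline{H}, \g^{*})$ (Theorem \ref{HCPtheorem}).

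First I would define the restriction functor $F$ from $G$-representations to representations of $(\overline{H}, \g^{*})$. Given a right $H$-comodule $V$, push the coaction forward along the Hopf surjection $H \rightarrow \overline{H}$ to obtain a right $\overline{H}$-comodule, that is, a $G_{0}$-representation, and let $\g = \Prim(H^{\circ})$ act on $V$ through the induced left $H^{\circ}$-module structure. By Corollary \ref{repdHCP} the left $H^{\circ}$-module $V$ is a representation of $\mathbf{DHC}(H^{\circ})$, which by the above equals $\mathbf{D}(\mathbf{HC}(G))$; this already supplies the $\g$-equivariance of the action maps and the equality of the two restrictions to $\g_{0}$, while naturality of $(-)^{\circ}$ and of the comodule-to-module passage identifies the $\overline{H}^{\circ}$-action occurring there with the one induced by the $\overline{H}$-coaction just constructed. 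Hence $F(V) := V$, equipped with these two structures, is a representation of $(\overline{H}, \g^{*})$; this direction is essentially formal.

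Next I would define the induction functor $E$ in the opposite direction. Given a representation $V$ of $(\overline{H}, \g^{*})$, the Proposition immediately preceding this corollary already equips $V$ with a canonical right $\widehat{\A}(\overline{H}, \g^{*})$-comodule structure $\rho$ whose degree-zero part is the $\overline{H}$-coaction and whose $T_{c}(\g^{*}_{\not=0})$-part encodes the $\g$-action. The task is to show that $\rho$ descends to the honest Hopf algebra $A(\overline{H}, \g^{*}) \cong H = \O(G)$. Under the non-degenerate pairing of Propositions \ref{internalhom} and \ref{hompairing}, $\rho$ corresponds to a left $\H(\overline{H}^{\circ}, \g)$-action on $V$, and the compatibility axioms of a Harish-Chandra-pair representation are exactly the statement that this action is annihilated by the ideal $I(\overline{H}^{\circ}, \g)$ (the observation preceding Corollary \ref{repdHCP}); hence by part 3 of Proposition \ref{hompairing} the coaction takes values in $I(\overline{H}^{\circ}, \g)^{\perp} = A(\overline{H}, \g^{*})$. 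That these values lie in the non-completed $V \otimes A(\overline{H}, \g^{*})$ is the step where I expect to have to work: it should follow from $A(\overline{H}, \g^{*}) \cong \overline{H} \otimes S(\g^{*}_{\not=0})$ (Lemma \ref{smoothness}) together with the finite length of $S(\g^{*}_{\not=0})$ (Lemma \ref{nilpotence}), so that only boundedly many tensor degrees of $\g^{*}_{\not=0}$ occur and the coaction is automatically discrete. This is the $\Ver_{p}$-analogue of the finite-dimensionality of the exterior algebra in Masuoka's supervector space setting, and I would carry out this step exactly as in \cite{M1}. Then $E(V) := V$, with the resulting $\O(G)$-comodule structure, is a $G$-representation.

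Finally, to see that $F$ and $E$ are mutually quasi-inverse I would note that on underlying objects both composites are the identity, so it remains to match the structure maps; this reduces to the identifications $H \cong A(\overline{H}, \g^{*})$ and $H^{\circ} \cong H(\overline{H}^{\circ}, \g) \cong A(\overline{H}, \g^{*})^{\circ}$ (Theorems \ref{HCPtheorem}, \ref{dHCPtheorem}, \ref{HCPPBW}) together with the fact that on a finite-length object an $H$-comodule structure is determined by the corresponding $H^{\circ}$-module structure, and a $T_{c}(\g^{*}_{\not=0})$-comodule structure by the corresponding $\g_{\not=0}$-action. Reading off, in $F \circ E$, the degree-zero and $T_{c}(\g^{*}_{\not=0})$-parts of the reconstructed $\widehat{\A}(\overline{H}, \g^{*})$-coaction, and, in $E \circ F$, the $H^{\circ}$-module structure used to build $F(V)$, then yields natural isomorphisms $E \circ F \cong \id$ and $F \circ E \cong \id$. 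The single genuinely non-formal ingredient is the discreteness of the induced coaction in the construction of $E$, flagged above.
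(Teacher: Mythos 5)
Your proposal follows the paper's own route: invoke Corollary \ref{repdHCP} together with Theorem \ref{HCPtheorem}, take the $\widehat{\A}(\O(G_{0}), \g^{*})$-coaction supplied by the preceding proposition, and show it factors through $A(\O(G_{0}), \g^{*})$ by dualizing to an $\H(\O(G_{0})^{\circ}, \g)$-action that kills $I(\O(G_{0})^{\circ}, \g)$ and appealing to non-degeneracy of the pairing (Proposition \ref{hompairing}), which is exactly the commutative-diagram argument in the paper. The extra points you flag (explicit restriction functor, discreteness of the descended coaction) are minor elaborations already covered by the discreteness lemma for $A(H,W)$ and do not change the argument.
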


\begin{proof} Via Corollary \ref{repdHCP} and Theorem \ref{HCPtheorem}, it suffices to show that the this coaction of $\widehat{\mathcal{A}}(\O(G_{0}), \g^{*})$ factors through $A(\O(G_{0}), \g^{*}).$ To see this, note that the following diagram commutes:

$$\begin{tikzpicture}
\matrix (m) [matrix of math nodes,row sep=5em,column sep=5em,minimum width=5em]
{ \H(\O(G_{0})^{\circ}, \g) \otimes V & \H(\O(G_{0})^{\circ}, \g) \otimes \widehat{\mathcal{A}}(\O(G_{0}), \g^{*}) \otimes V  \\
& V\\};;
\path[-stealth]
(m-1-1) edge node[auto] {$\id_{\H} \otimes \rho$} (m-1-2)
(m-1-2) edge node[auto] {$\langle -, -\rangle \otimes \id_{V} $} (m-2-2)
(m-1-1) edge node[auto] {$a$} (m-2-2);
\end{tikzpicture}$$
Here, $\rho: V \rightarrow V \otimes \widehat{\mathcal{A}}(\O(G_{0}), \g^{*})$ is the coaction map from the previous proposition, $a: \H(\O(G_{0})^{\circ}, \g) \otimes V \rightarrow V$ is the dual action map and $\langle -, - \rangle$ is the pairing between $\widehat{A}$ and $\H$. Since the diagonal map is $0$ when restricted to $\I(\O(G_{0})^{\circ}, \g)$ and the pairing is non-degenerate, the action map $\rho$ must factor through the orthogonal complement $A(G_{0}, \g^{*})$, as desired. 

\end{proof} 

\begin{remark} In a followup paper, we will use this theorem to classify irreducible representations of the group schemes $GL(X)$ corresponding to objects $X$ in $\Ver_{p}$.

\end{remark}

\end{document}